\def\r{\mathbb{R}}
\def\c{\mathbb{C}}
\def\q{\mathbb{Q}}
\def\z{\mathbb{Z}}
\def\p{\mathbb{P}}
\def\a{\alpha}
\newtheorem{thm}{Theorem}[section]
\newtheorem{defi}[thm]{Definition}
\newtheorem{rem}[thm]{Remark}
\newtheorem{prop}[thm]{Proposition}
\newtheorem{ex}[thm]{Example}
\newtheorem{cor}[thm]{Corollary}
\newtheorem{lem}[thm]{Lemma}
\newtheorem*{ack}{Acknowledgements}
\newtheorem*{thm1}{Theorem 1}
\newtheorem*{prop1}{Proposition}
\newtheorem*{cor1}{Corollary}
\newtheorem*{thm2}{Theorem 2}
\title[NEWTON-OKOUNKOV BODIES FOR BOTT-SAMELSON VARIETIES AND STRING POLYTOPES]{\fontsize{11pt}{11pt}\selectfont NEWTON-OKOUNKOV BODIES FOR BOTT-SAMELSON VARIETIES AND STRING POLYTOPES FOR GENERALIZED DEMAZURE MODULES}
\date{}
\author[N. Fujita]{\fontsize{10pt}{10pt}\selectfont Naoki Fujita}
\begin{document}
\ytableausetup{smalltableaux}
\address{Department of Mathematics, Tokyo Institute of Technology, 2-12-1 Oh-okayama, Meguro-ku, Tokyo 152-8551, Japan}
\email{fujita.n.ac@m.titech.ac.jp}
\subjclass[2010]{05E10, 14M15, 14M25, 17B37}
\keywords{Newton-Okounkov bodies, Bott-Samelson varieties, string polytopes, generalized Demazure crystals}
\begin{abstract}
A Newton-Okounkov convex body is a convex body constructed from a projective variety with a valuation on its homogeneous coordinate ring; this generalizes a Newton polytope for a toric variety. This convex body has various kinds of information about the original projective variety; for instance, Kaveh showed that the string polytopes from representation theory are examples of Newton-Okounkov bodies for Schubert varieties. In this paper, we extend the notion of string polytopes for Demazure modules to generalized Demazure modules, and prove that the resulting generalized string polytopes are identical to the Newton-Okounkov bodies for Bott-Samelson varieties with respect to a specific valuation. As an application of this result, we show that these are indeed polytopes.
\end{abstract}
\maketitle
\setcounter{tocdepth}{1}
\tableofcontents
\section[Introduction]{Introduction}
\indent
The theory of Newton-Okounkov bodies is introduced by Okounkov in order to study multiplicity functions for representations of a reductive group (\cite{O1}, \cite{O2}), and afterward developed independently by Kaveh-Khovanskii (\cite{KK1}) and by Lazarsfeld-Mustata (\cite{LM}). Let $X$ be a normal projective variety of complex dimension $r$, and $\mathcal{L}$ a very ample line bundle on $X$. Take a valuation $v$ on the function field $\c(X)$ with values in $\z^r$, and fix a nonzero section $\tau \in H^0(X, \mathcal{L})$; in addition, we assume that $v$ has one-dimensional leaves (see \S\S 2.1 for the definition). From these data, we construct a semigroup $S(X, \mathcal{L}, v, \tau) \subset \z_{>0} \times \z^r$ (see Definition \ref{Newton-Okounkov body}). If we denote by $C(X, \mathcal{L}, v, \tau) \subset \r_{\ge 0} \times \r^r$ the smallest real closed cone containing $S(X, \mathcal{L}, v, \tau)$, then the Newton-Okounkov body $\Delta(X, \mathcal{L}, v, \tau) \subset \r^r$ is defined to be the intersection of $C(X, \mathcal{L}, v, \tau)$ and $\{1\} \times \r^r$. The theory of Newton-Okounkov bodies is a generalization of that of Newton polytopes for toric varieties, and these convex bodies have important geometric information about $X$; for instance, we can systematically construct a series of toric degenerations of $X$ by the following result.

\vspace{2mm}\begin{prop1}[{\cite[Corollary 3.14]{HK} and \cite[Theorem 1]{A}}]
Assume that the semigroup $S(X, \mathcal{L}, v, \tau)$ is finitely generated, and hence the Newton-Okounkov body $\Delta(X, \mathcal{L}, v, \tau)$ is a rational convex polytope. Then, there exists a flat degeneration of $X$ to a $($not necessarily normal$)$ toric variety $X_0 := {\rm Proj} (\c[S(X, \mathcal{L}, v, \tau)]);$ note that the normalization of $X_0$ is the normal toric variety corresponding to the polytope $\Delta(X, \mathcal{L}, v, \tau)$.
\end{prop1}\vspace{2mm}

\noindent However, the semigroup $S(X, \mathcal{L}, v, \tau)$ is not finitely generated in general, and the Newton-Okounkov body $\Delta(X, \mathcal{L}, v, \tau)$ may not be a rational convex polytope. Hence the following is a fundamental question: ``when is the semigroup $S(X, \mathcal{L}, v, \tau)$ finitely generated?'' Although it is difficult to give a complete answer to this question, there exist some partial results in this direction (see, for instance, \cite{A}, \cite{SS}). In this paper, we provide a series of examples, in which the semigroup $S(X, \mathcal{L}, v, \tau)$ is indeed finitely generated, and hence the Newton-Okounkov body $\Delta(X, \mathcal{L}, v, \tau)$ is indeed a rational convex polytope.

A remarkable fact is that the theory of Newton-Okounkov bodies is deeply connected with representation theory; for instance, Kaveh (\cite{Kav}) proved that the Newton-Okounkov body of a Schubert variety with respect to a specific valuation is identical to the string polytope constructed from the string parameterization for a Demazure crystal. The purpose of this paper is to extend this result to Bott-Samelson varieties. 

To be more precise, let $G$ be a connected, simply-connected semisimple algebraic group over $\c$, $\mathfrak{g}$ its Lie algebra, $B \subset G$ a Borel subgroup, and $I$ an index set for the vertices of the Dynkin diagram. For simplicity, we deal with only finite-dimensional Lie algebra $\mathfrak{g}$; but, our result can be extended to a symmetrizable Kac-Moody Lie algebra without much difficulty. 

An arbitrary word ${\bf i} = (i_1, \ldots, i_r) \in I^r$ gives a smooth projective variety $Z_{\bf i}$, called a Bott-Samelson variety, and we can associate to ${\bf m} = (m_1, \ldots, m_r) \in \z_{\ge 0} ^r$ a line bundle $\mathcal{L}_{{\bf i}, {\bf m}}$ on $Z_{\bf i}$; note that we need not assume that $\mathcal{L}_{{\bf i}, {\bf m}}$ is very ample. We consider a specific local coordinate $(t_1, \ldots, t_r)$ in $Z_{\bf i}$ (see \S\S 2.3), and identify the function field $\c(Z_{\bf i})$ with the rational function field $\c(t_1, \ldots, t_r)$. Define a valuation $v_{\bf i}$ on $\c(Z_{\bf i})$ to be the highest term valuation on $\c(t_1, \ldots, t_r)$ with respect to the lexicographic order on $\z^r$ (see Example \ref{highest term valuation}). We then take a specific section of $\mathcal{L}_{{\bf i}, {\bf m}}$. Note that the space $H^0(Z_{\bf i}, \mathcal{L}_{{\bf i}, {\bf m}})$ of global sections has a natural $B$-module structure (see \S\S 2.2). For a dominant integral weight $\lambda$, let us denote by $V(\lambda)$ the irreducible highest weight $G$-module with highest weight $\lambda$. Then, the dual $B$-module $V_{{\bf i}, {\bf m}} := H^0(Z_{\bf i}, \mathcal{L}_{{\bf i}, {\bf m}})^\ast$ is realized as a $B$-submodule of $V(m_1 \varpi_{i_1}) \otimes \cdots \otimes V(m_r \varpi_{i_r})$, where $\varpi_i$, $i \in I$, denote the fundamental weights; this $B$-submodule $V_{{\bf i}, {\bf m}}$ is called a generalized Demazure module. We regard $H^0(Z_{\bf i}, \mathcal{L}_{{\bf i}, {\bf m}}) = V_{{\bf i}, {\bf m}} ^\ast$ as a quotient $B$-module of $(V(m_1 \varpi_{i_1}) \otimes \cdots \otimes V(m_r \varpi_{i_r}))^\ast$, and denote by $\tau_{{\bf i}, {\bf m}} \in H^0(Z_{\bf i}, \mathcal{L}_{{\bf i}, {\bf m}})$ the image of the lowest weight vector in $(V(m_1 \varpi_{i_1}) \otimes \cdots \otimes V(m_r \varpi_{i_r}))^\ast$. In this setting, we study the Newton-Okounkov body $\Delta(Z_{\bf i}, \mathcal{L}_{{\bf i}, {\bf m}}, v_{\bf i}, \tau_{{\bf i}, {\bf m}})$.

Let $U_q(\mathfrak{g})$ denote the quantized enveloping algebra of $\mathfrak{g}$, and $\mathcal{B}(\lambda)$ the crystal basis of the irreducible highest weight $U_q(\mathfrak{g})$-module $V_q(\lambda)$ with highest weight $\lambda$. About the generalized Demazure module $V_{{\bf i}, {\bf m}}$, Lakshmibai-Littelmann-Magyar (\cite{LLM}) introduced a certain subset $\mathcal{B}_{{\bf i}, {\bf m}} \subset \mathcal{B}(m_1 \varpi_{i_1}) \otimes \cdots \otimes \mathcal{B}(m_r \varpi_{i_r})$, called a generalized Demazure crystal, which gives the character of $V_{{\bf i}, {\bf m}}$; also, they constructed an explicit basis of $H^0(Z_{\bf i}, \mathcal{L}_{{\bf i}, {\bf m}})$ parameterized by $\mathcal{B}_{{\bf i}, {\bf m}}$, called a standard monomial basis. In this paper, we give a different basis of $H^0(Z_{\bf i}, \mathcal{L}_{{\bf i}, {\bf m}})$ parameterized by $\mathcal{B}_{{\bf i}, {\bf m}}$, which can be thought of as a perfect basis (see \cite[Definition 5.30]{BK} and \cite[Definition 2.5]{KOP} for the definition). In \S\S 5.1, we extend the notion of string parameterization (resp., string polytope) to the generalized Demazure crystal $\mathcal{B}_{{\bf i}, {\bf m}}$, which we denote by $\Omega_{\bf i}$ (resp., $\Delta_{{\bf i}, {\bf m}}$); see Appendix B for some examples of the generalized string polytope $\Delta_{{\bf i}, {\bf m}}$. Some properties of a usual string parameterization are naturally extended to this parameterization $\Omega_{\bf i}$ for $\mathcal{B}_{{\bf i}, {\bf m}}$. \S\S 5.2 is devoted to the study of these properties; for instance, we give a system of piecewise-linear inequalities defining $\Delta_{{\bf i}, {\bf m}}$, and show that $\Delta_{{\bf i}, {\bf m}}$ is a finite union of rational convex polytopes. 

In Section 6, we construct an upper global basis of a tensor product module, following \cite{Lus} and \cite{Kas5}. By specializing this basis at $q=1$, we obtain a specific basis $\{G^{\rm up}(b^\ast) \mid b \in \mathcal{B}(m_1 \varpi_{i_1}) \otimes \cdots \otimes \mathcal{B}(m_r \varpi_{i_r})\}$ of $(V(m_1 \varpi_{i_1}) \otimes \cdots \otimes V(m_r \varpi_{i_r}))^\ast$. Let $G^{\rm up} _{{\bf i}, {\bf m}}(b) \in H^0(Z_{\bf i}, \mathcal{L}_{{\bf i}, {\bf m}})$ denote the image of $G^{\rm up}(b^\ast)$ under the quotient map $(V(m_1 \varpi_{i_1}) \otimes \cdots \otimes V(m_r \varpi_{i_r}))^\ast \twoheadrightarrow H^0(Z_{\bf i}, \mathcal{L}_{{\bf i}, {\bf m}})$. The following is the first main result of this paper.

\vspace{2mm}\begin{thm1}
Let ${\bf i} \in I^r$ be an arbitrary word, and ${\bf m} \in \z_{\ge 0} ^r$. 
\begin{enumerate}
\item[{\rm (1)}] The set $\{G^{\rm up} _{{\bf i}, {\bf m}} (b) \mid b \in \mathcal{B}_{{\bf i}, {\bf m}}\}$ forms a $\c$-basis of $H^0(Z_{\bf i}, \mathcal{L}_{{\bf i}, {\bf m}})$.
\item[{\rm (2)}] The generalized string parameterization $\Omega_{\bf i} (b)$ is equal to $- v_{\bf i} (G^{\rm up} _{{\bf i}, {\bf m}} (b)/\tau_{{\bf i}, {\bf m}})$ for all $b \in \mathcal{B}_{{\bf i}, {\bf m}}$.
\item[{\rm (3)}] The generalized string polytope $\Delta_{{\bf i}, {\bf m}}$ is identical to $-\Delta(Z_{\bf i}, \mathcal{L}_{{\bf i}, {\bf m}}, v_{\bf i}, \tau_{{\bf i}, {\bf m}})$.
\end{enumerate}
\end{thm1}\vspace{2mm}

Since $\Delta_{{\bf i}, {\bf m}}$ is a finite union of rational convex polytopes and $\Delta(Z_{\bf i}, \mathcal{L}_{{\bf i}, {\bf m}}, v_{\bf i}, \tau_{{\bf i}, {\bf m}})$ is a convex body, we obtain the following.

\vspace{2mm}\begin{cor1}
The generalized string polytope $\Delta_{{\bf i}, {\bf m}}$ and the Newton-Okounkov body $\Delta(Z_{\bf i}, \mathcal{L}_{{\bf i}, {\bf m}}, v_{\bf i}, \tau_{{\bf i}, {\bf m}})$ are both rational convex polytopes.
\end{cor1}\vspace{2mm}

Now let $s_i \in G$, $i \in I$, be lifts of the simple reflections in the Weyl group, and assume that ${\bf i} = (i_1, \ldots, i_r) \in I^r$ is a reduced word. In Section 8, we consider a specific local coordinate $(t_1 ^\prime, \ldots, t_r ^\prime)$ at the image of $(s_{i_1}, \ldots, s_{i_r})$ in $Z_{\bf i}$, and define a valuation $v_{\bf i} ^\prime$ on $\c(Z_{\bf i})$ to be the highest term valuation on $\c(t_1 ^\prime, \ldots, t_r ^\prime)$ with respect to the lexicographic order on $\z^r$; note that $v_{\bf i} ^\prime$ is not equal to $v_{\bf i}$ as a valuation on $\c(Z_{\bf i})$. Also, we take another section $\tau_{{\bf i}, {\bf m}} ^\prime \in H^0(Z_{\bf i}, \mathcal{L}_{{\bf i}, {\bf m}})$, and introduce another parameterization $\Omega_{\bf i} ^\prime$ for $\mathcal{B}_{{\bf i}, {\bf m}}$. Replacing $\Omega_{\bf i}$ by $\Omega_{\bf i} ^\prime$ in the definition of $\Delta_{{\bf i}, {\bf m}}$, we obtain another compact set $\Delta_{{\bf i}, {\bf m}} ^\prime$. The following is the second main result of this paper.

\vspace{2mm}\begin{thm2}
Let ${\bf i} \in I^r$ be an arbitrary reduced word.
\begin{enumerate}
\item[{\rm (1)}] For all ${\bf m} \in \z^r _{\ge 0}$ and $b \in \mathcal{B}_{{\bf i}, {\bf m}}$, the parameterization $\Omega_{\bf i} ^\prime(b)$ is equal to $-v_{\bf i} ^\prime(G^{\rm up} _{{\bf i}, {\bf m}} (b)/\tau_{{\bf i}, {\bf m}} ^\prime)$. Moreover, the set $\Delta_{{\bf i}, {\bf m}} ^\prime$ is identical to the Newton-Okounkov body $-\Delta(Z_{\bf i}, \mathcal{L}_{{\bf i}, {\bf m}}, v_{\bf i} ^\prime, \tau_{{\bf i}, {\bf m}} ^\prime)$.
\item[{\rm (2)}] There exist explicit unimodular $r \times r$-matrices $A$ and $B$ such that $\Delta_{{\bf i}, {\bf m}} = A \Delta_{{\bf i}, {\bf m}} ^\prime + B {\bf m}$ for all ${\bf m} \in \z^r _{\ge 0}$.
\item[{\rm (3)}] The set $\Delta_{{\bf i}, {\bf m}} ^\prime$ and the Newton-Okounkov body $\Delta(Z_{\bf i}, \mathcal{L}_{{\bf i}, {\bf m}}, v_{\bf i} ^\prime, \tau_{{\bf i}, {\bf m}} ^\prime)$ are both rational convex polytopes.
\end{enumerate}
\end{thm2}\vspace{2mm}

Finally, we should mention that Schmitz and Sepp\"{a}nen (\cite{SS}) also proved that some Newton-Okounkov bodies of $Z_{\bf i}$ are indeed rational convex polytopes. However, our approach in this paper is quite different from theirs.

\vspace{2mm}\begin{ack}\normalfont
The author is deeply indebted to Professor S. Naito for numerous helpful suggestions and fruitful discussions. The author would also like to thank M. Harada for teaching me the background of the theory of Newton-Okounkov bodies.
\end{ack}
\section{Newton-Okounkov bodies}
\subsection{Basic definitions}

First of all, we review the definition of Newton-Okounkov bodies, following \cite{HK}, \cite{Kav}, \cite{KK1}, and \cite{KK2}. Let $R$ be a $\c$-algebra without nonzero zero-divisors, and $<$ the lexicographic order on $\z^r$, $r \ge 1$, which is given by $(a_1, \ldots, a_r) < (a_1 ^\prime, \ldots, a_r ^\prime)$ if and only if there exists $1 \le k \le r$ such that $a_1 = a_1 ^\prime, \ldots, a_{k-1} = a_{k-1} ^\prime$, $a_k < a_k ^\prime$. 

\vspace{2mm}\begin{defi}\normalfont\label{def,val}
A map $v: R \setminus \{0\} \rightarrow \z^r$ is called a {\it valuation} on $R$ with values in $\z^r$ if the following hold: for every $\sigma, \tau \in R \setminus \{0\}$ and $c \in \c \setminus \{0\}$,
\begin{enumerate}
\item[{\rm (i)}]  $v(\sigma \cdot \tau) = v(\sigma) + v(\tau)$,
\item[{\rm (ii)}] $v(c \cdot \sigma) = v(\sigma)$, 
\item[{\rm (iii)}] $v (\sigma + \tau) \ge \min\{v(\sigma), v(\tau)\}$ unless $\sigma + \tau = 0$. 
\end{enumerate}
\end{defi}\vspace{2mm}

In this paper, we always assume that the $\z$-lattice $\z^r$ is equipped with the lexicographic order. The following is a fundamental property of valuations.

\vspace{2mm}\begin{prop}[{see, for instance, \cite[Proposition 1.8]{Kav}}]\label{prop1,val}
Let $v$ be a valuation on $R$. Assume that $\sigma_1, \ldots, \sigma_s \in R \setminus \{0\}$, and that $v(\sigma_1), \ldots, v(\sigma_s)$ are distinct.
\begin{enumerate}
\item[{\rm (1)}] The elements $\sigma_1, \ldots, \sigma_s$ are linearly independent over $\c$.
\item[{\rm (2)}] For $c_1, \ldots, c_s \in \c$ such that $\sigma := c_1 \sigma_1 + \cdots + c_s \sigma_s \neq 0$, \[v(\sigma) = \min\{v(\sigma_t) \mid 1 \le t \le s,\ c_t \neq 0 \}.\]
\end{enumerate}
\end{prop}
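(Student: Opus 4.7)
The plan is to prove (2) first and then deduce (1) from (2). As a preliminary step, by an easy induction on the number of summands (applying axiom (iii) to the last summand, and handling separately the trivial case in which an intermediate partial sum vanishes), I extend (iii) to arbitrary finite sums: whenever $\tau_1 + \cdots + \tau_k \ne 0$ in $R$, one has $v(\tau_1 + \cdots + \tau_k) \ge \min\{v(\tau_j) : \tau_j \ne 0\}$.

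For (2), set $T := \{t : c_t \neq 0\}$ and let $t_0 \in T$ be the unique index achieving $m := \min_{t \in T} v(\sigma_t)$; uniqueness follows from the distinctness hypothesis. Combining (ii) with the extended (iii) gives at once $v(\sigma) \ge m$. For the reverse inequality I argue by contradiction: assume $v(\sigma) > m$. Rearranging $\sigma = \sum_{t \in T} c_t \sigma_t$ yields
\[
c_{t_0} \sigma_{t_0} \;=\; \sigma - \sum_{t \in T \setminus \{t_0\}} c_t \sigma_t,
\]
so by (ii) and the extended (iii),
\[
v(\sigma_{t_0}) \;\ge\; \min\bigl(\{v(\sigma)\} \cup \{v(\sigma_t) : t \in T \setminus \{t_0\}\}\bigr).
\]
However, $v(\sigma) > m$ by assumption, and $v(\sigma_t) > m = v(\sigma_{t_0})$ for every $t \in T \setminus \{t_0\}$ by the minimality and distinctness of $v(\sigma_{t_0})$; hence every element on the right is strictly greater than $m$, contradicting $v(\sigma_{t_0}) = m$.

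Statement (1) now follows from (2). If $\sum_{t=1}^s c_t \sigma_t = 0$ with some $c_t \neq 0$, choose $t_0$ in the support $T$ and rewrite the relation as $-c_{t_0}\sigma_{t_0} = \sum_{t \in T \setminus \{t_0\}} c_t \sigma_t$. If $T = \{t_0\}$, the right side is zero, forcing $\sigma_{t_0} = 0$, contrary to $\sigma_{t_0} \in R \setminus \{0\}$. Otherwise, applying (2) to the nonzero element on the left yields $v(\sigma_{t_0}) = \min_{t \in T \setminus \{t_0\}} v(\sigma_t) = v(\sigma_{t_1})$ for some $t_1 \neq t_0$, contradicting the distinctness hypothesis.

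There is no genuine obstacle here: the proof is purely formal manipulation of the three valuation axioms. The only points that require care are (a) handling intermediate vanishing sums when iterating (iii), and (b) invoking the distinctness hypothesis at the right moment, so that the minimum in (2) is achieved uniquely and the rearrangement argument produces a strict inequality rather than an equality.
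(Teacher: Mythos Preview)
Your argument is correct. The extension of axiom (iii) to finite sums is routine, the proof of (2) via contradiction using the uniqueness of the minimiser is clean, and the deduction of (1) from (2) is valid; the edge cases you flag (vanishing partial sums, singleton support) are handled properly.

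There is nothing to compare against in the paper itself: the authors do not prove this proposition but merely cite \cite[Proposition 1.8]{Kav} for it. Your proof is the standard one and is essentially what one finds in the cited reference.
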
\vspace{2mm}

For ${\bf a} \in \z^r$ and a valuation $v$ on $R$ with values in $\z^r$, we set $R_{\bf a} := \{\sigma \in R \mid \sigma = 0\ {\rm or}\ v(\sigma) \ge {\bf a}\}$; this is a $\c$-subspace of $R$. The {\it leaf} above ${\bf a} \in \z^r$ is defined to be the quotient space $\widehat{R}_{\bf a} := R_{\bf a}/\bigcup_{{\bf a}<{\bf b}} R_{\bf b}$. A valuation $v$ is said to have {\it one-dimensional leaves} if dim$(\widehat{R}_{\bf a}) = 0\ {\rm or}\ 1$ for all ${\bf a} \in \z^r$.

\vspace{2mm}\begin{prop}[{see, for instance, \cite[Proposition 1.9]{Kav}}]\label{prop2,val}
Let $v$ be a valuation on $R$ with one-dimensional leaves, and $H \subset R$ a finite-dimensional $\c$-subspace. Then, there exists a basis $\{\sigma_1, \ldots, \sigma_s \}$ of $H$ such that $v(\sigma_1), \ldots, v(\sigma_s)$ are distinct. In particular, the complex dimension of $H$ is equal to the number of distinct elements in $v(H \setminus \{0\})$.
\end{prop}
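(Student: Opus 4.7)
The plan is to filter $H$ by $v$, identify each graded piece with a subspace of a leaf, and exploit the one-dimensional leaves hypothesis to pin down the dimensions. Concretely, for each ${\bf a} \in \z^r$, put $H_{\bf a} := H \cap R_{\bf a}$ and $H_{>{\bf a}} := H \cap \bigcup_{{\bf b} > {\bf a}} R_{\bf b}$; both are $\c$-subspaces of $H$, and the inclusion $H_{\bf a} \hookrightarrow R_{\bf a}$ descends to an injection $H_{\bf a}/H_{>{\bf a}} \hookrightarrow \widehat{R}_{\bf a}$. By hypothesis the target has dimension $\le 1$, so each graded piece has dimension $0$ or $1$, with dimension $1$ occurring precisely for ${\bf a} \in v(H \setminus \{0\})$ (pick a representative with valuation equal to ${\bf a}$, and conversely).

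Next I would verify that $v(H \setminus \{0\})$ is finite. Since $H$ is finite-dimensional, there are at most $\dim H$ distinct subspaces of the form $H_{\bf a}$, so the set of ${\bf a}$ with $H_{\bf a}/H_{>{\bf a}} \neq 0$ is finite. Enumerate $v(H \setminus \{0\}) = \{{\bf a}_1 > {\bf a}_2 > \cdots > {\bf a}_s\}$ in the lex order. Because the only valuations realized in $H$ are these ${\bf a}_j$'s, one checks that $H_{>{\bf a}_k} = H_{{\bf a}_{k-1}}$ (with $H_{{\bf a}_0} := 0$), yielding a strictly increasing chain
\[0 = H_{{\bf a}_0} \subsetneq H_{{\bf a}_1} \subsetneq \cdots \subsetneq H_{{\bf a}_s} = H\]
whose $s$ successive quotients are each one-dimensional. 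Therefore $\dim H = s = |v(H \setminus \{0\})|$.

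To produce the required basis, I would pick $\sigma_k \in H_{{\bf a}_k} \setminus H_{>{\bf a}_k}$ for each $k = 1, \ldots, s$; by construction $v(\sigma_k) = {\bf a}_k$, so the valuations are pairwise distinct, and $\sigma_1, \ldots, \sigma_s$ are automatically linearly independent by Proposition~\ref{prop1,val}(1). Since $s = \dim H$, they form a basis of $H$. The only mildly subtle point is the identity $H_{>{\bf a}_k} = H_{{\bf a}_{k-1}}$, which fails for an arbitrary filtration indexed by a totally ordered set (the lex order has no immediate successors) and depends on the finiteness of $v(H \setminus \{0\})$; once this is in hand, the rest is a routine unpacking of definitions.
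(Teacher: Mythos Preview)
Your proof is correct and is essentially the standard filtration argument one finds in the literature; note, however, that the paper itself does not supply a proof of this proposition but merely cites \cite[Proposition~1.9]{Kav}, so there is no ``paper's own proof'' to compare against. One small streamlining: the finiteness of $v(H\setminus\{0\})$ follows immediately from Proposition~\ref{prop1,val}(1), since any finite subset of elements with pairwise distinct valuations is linearly independent, giving $|v(H\setminus\{0\})|\le\dim H$ directly.
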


\vspace{2mm}\begin{ex}\normalfont\label{highest term valuation}
Let $\c(t_1, \ldots, t_r)$ denote the rational function field in $r$ variables. The lexicographic order on $\z^r$ induces a total order (denoted by the same symbol $<$) on the set of all monomials in the variables $t_1, \ldots, t_r$ as follows: $t_1 ^{a_1} \cdots t_r ^{a_r} < t_1 ^{a_1 ^\prime} \cdots t_r ^{a_r ^\prime}$ if and only if $(a_1, \ldots, a_r) < (a_1 ^\prime, \ldots, a_r ^\prime)$. Let us define a map $v: \c(t_1, \ldots, t_r) \setminus \{0\} \rightarrow \z^r$ by $v(f/g) := v(f) - v(g)$ for $f, g \in \c[t_1, \ldots, t_r] \setminus \{0\}$, and by \[v(f) := -(a_1, \ldots, a_r)\ {\rm for}\ f = c t_1 ^{a_1} \cdots t_r ^{a_r} + ({\rm lower\ terms}) \in \c[t_1, \ldots, t_r] \setminus \{0\},\] where $c \in \c \setminus \{0\}$, and by ``lower terms'', we mean a linear combination of monomials smaller than $t_1 ^{a_1} \cdots t_r ^{a_r}$ with respect to the total order $<$ above. It is obvious that $v$ is a valuation with one-dimensional leaves. This valuation $v$ is called the {\it highest term valuation} with respect to the lexicographic order $<$ on $\z^r$.
\end{ex}\vspace{2mm}

Let $X$ be a normal projective variety over $\c$ of complex dimension $r$, and $\mathcal{L}$ a very ample line bundle on $X$. The {\it ring} $R(\mathcal{L})$ {\it of sections} is the $\z_{\ge 0}$-graded $\c$-algebra obtained from $\mathcal{L}$ by \[R(\mathcal{L}) := \bigoplus_{k \ge 0} H^0 (X, \mathcal{L}^{\otimes k}).\] If we take a nonzero section $\tau \in H^0 (X, \mathcal{L})$, then the $\c$-vector space $R(\mathcal{L})_k := H^0 (X, \mathcal{L}^{\otimes k})$ can be regarded as a $\c$-subspace of the function field $\c(X)$ as follows: \[R(\mathcal{L})_k \hookrightarrow \c(X),\ \sigma \mapsto \sigma/\tau^k.\] Hence a valuation on $\c(X)$ induces a map from $R(\mathcal{L})_k \setminus \{0\}$ to $\z^r$.

\vspace{2mm}\begin{defi}\normalfont\label{Newton-Okounkov body}
Let $v$ be a valuation on $\c(X)$ with values in $\z^r$, and $\tau \in H^0 (X, \mathcal{L})$ a nonzero section. Assume that $v$ has one-dimensional leaves. Define a subset $S(X, \mathcal{L}, v, \tau) \subset \z_{>0} \times \z^r$ by \[S(X, \mathcal{L}, v, \tau) := \bigcup_{k>0} \{(k, v(\sigma / \tau^k)) \mid \sigma \in R(\mathcal{L})_k \setminus \{0\}\},\] and denote by $C(X, \mathcal{L}, v, \tau) \subset \r_{\ge 0} \times \r^r$ the smallest real closed cone containing $S(X, \mathcal{L}, v, \tau)$, that is, \[C(X, \mathcal{L}, v, \tau) := \overline{\{c \cdot (k, {\bf a}) \mid c \in \r_{>0}\ {\rm and}\ (k, {\bf a}) \in S(X, \mathcal{L}, v, \tau)\}},\] where $\overline{H}$ means the closure of $H \subset \r_{\ge 0} \times \r^r$ with respect to the Euclidean topology. Now let us define a subset $\Delta(X, \mathcal{L}, v, \tau) \subset \r^r$ by \[\Delta(X, \mathcal{L}, v, \tau) := \{{\bf a} \in \r^r \mid (1, {\bf a}) \in C(X, \mathcal{L}, v, \tau)\};\] this is called the {\it Newton-Okounkov body} associated to $\mathcal{L}$, $v$, and $\tau$.
\end{defi}\vspace{2mm}

From the definition of valuations, it is obvious that $S(X, \mathcal{L}, v, \tau)$ is a semigroup. Hence it follows that $C(X, \mathcal{L}, v, \tau)$ is a closed convex cone, and that $\Delta(X, \mathcal{L}, v, \tau)$ is a convex set. Moreover, we deduce from \cite[Theorem 2.30 and Corollary 3.2]{KK2} that $\Delta(X, \mathcal{L}, v, \tau)$ is a convex body, i.e., a compact convex set, of real dimension $r$. Note that the Newton-Okounkov body $\Delta(X, \mathcal{L}, v, \tau)$ is not a polytope in general, that is, it may not be the convex hull of a finite number of points; if the semigroup $S(X, \mathcal{L}, v, \tau)$ is finitely generated, then the Newton-Okounkov body $\Delta(X, \mathcal{L}, v, \tau)$ is a rational convex polytope, i.e., the convex hull of a finite number of rational points.

\vspace{2mm}\begin{rem}\normalfont
Since $\mathcal{L}$ is a very ample line bundle, we can take a closed immersion $\rho: X \rightarrow \p (H^0 (X, \mathcal{L})^\ast )$ such that $\mathcal{L} = \rho^\ast (\mathcal{O}(1))$. Denote by $R = \bigoplus_{k \ge 0} R_k$ the homogeneous coordinate ring of $X$ with respect to the closed immersion $\rho$. In many literatures including \cite{HK}, Newton-Okounkov bodies are defined by using $R$ instead of $R(\mathcal{L})$. However, since $X$ is normal, we deduce from \cite[Chapter I\hspace{-.1em}I,  Ex.\ 5.14]{Hart} that $R_k = R(\mathcal{L})_k$ for all $k \gg 0$. In addition, since $S(X, \mathcal{L}, v, \tau)$ is a semigroup, the real closed cone $C(X, \mathcal{L}, v, \tau)$ is identical to the smallest real closed cone containing \[\bigcup_{k>k^\prime} \{(k, v(\sigma / \tau^k)) \mid \sigma \in R(\mathcal{L})_k \setminus \{0\}\}\] for $k^\prime \gg 0$. Therefore, $R$ and $R(\mathcal{L})$ are interchangeable in the definition of Newton-Okounkov bodies.
\end{rem}\vspace{2mm}

\begin{rem}\normalfont\label{independence}
If we take another section $\tau^\prime \in H^0 (X, \mathcal{L}) \setminus \{0\}$, then $S(X, \mathcal{L}, v, \tau^\prime)$ is the shift of $S(X, \mathcal{L}, v, \tau)$ by $k v(\tau/\tau^\prime)$ in $\{k\} \times \z^r$. Hence it follows that $\Delta(X, \mathcal{L}, v, \tau^\prime) = \Delta(X, \mathcal{L}, v, \tau) + v(\tau/\tau^\prime)$. Thus, the Newton-Okounkov body $\Delta(X, \mathcal{L}, v, \tau)$ does not essentially depend on the choice of $\tau \in H^0 (X, \mathcal{L}) \setminus \{0\}$.
\end{rem}

\subsection{Bott-Samelson varieties}

Here, we recall the definition of Bott-Samelson varieties and generalized Demazure modules, following \cite{LLM}. Let $G$ be a connected reductive algebraic group over $\c$ of rank $n$, $\mathfrak{g}$ its Lie algebra, $W$ the Weyl group, and $I = \{1, \ldots, n\}$ an index set for the vertices of the Dynkin diagram. Choose a Borel subgroup $B \subset G$ and a maximal torus $T \subset B$. We consider an arbitrary word ${\bf i} =(i_1, \ldots, i_r) \in I^r$; note that we do not necessarily assume that ${\bf i}$ is a reduced word. 

\vspace{2mm}\begin{defi}\normalfont
For a word ${\bf i} =(i_1, \ldots, i_r) \in I^r$, define the {\it Bott-Samelson variety} $Z_{\bf i}$ by \[Z_{\bf i} := (P_{i_1} \times \cdots \times P_{i_r})/B^r,\] where $P_i$, $i \in I$, denote the minimal parabolic subgroups, and $B^r$ acts on $P_{i_1} \times \cdots \times P_{i_r}$ on the right by $(p_1, \ldots, p_r)\cdot(b_1, \ldots, b_r) := (p_1 b_1, b_1 ^{-1} p_2 b_2, \ldots, b_{r-1} ^{-1} p_r b_r)$ for $p_1 \in P_{i_1}, \ldots, p_r \in P_{i_r}$, and $b_1, \ldots, b_r \in B$. 
\end{defi}\vspace{2mm}

\begin{rem}\normalfont
If we denote by $\widetilde{G}$ the connected, simply-connected semisimple algebraic group with the same Cartan matrix as $G$, then $Z_{\bf i}$ is also isomorphic to the Bott-Samelson variety for $\widetilde{G}$ corresponding to the same word ${\bf i}$.
\end{rem}\vspace{2mm}

By this remark, we may assume without loss of generality that $G$ is a connected, simply-connected semisimple algebraic group. Note that $Z_{\bf i}$ is a nonsingular (and hence normal) projective variety of complex dimension $r$. Denote by $\mathfrak{t}$ the Lie algebra of $T$, and set $\mathfrak{t}^\ast:=\ {\rm Hom}_\c(\mathfrak{t}, \c)$. Let $\{\alpha_i \mid i \in I\} \subset \mathfrak{t}^\ast$ be the set of simple roots, $\{h_i \mid i \in I\} \subset \mathfrak{t}$ the set of simple coroots, $\{\varpi_i \mid i \in I\} \subset \mathfrak{t}^\ast$ the set of fundamental weights, and $P \subset \mathfrak{t}^\ast$ the weight lattice. We regard a weight $\lambda \in P$ as a character of $B$. For ${\bf m} = (m_1, \ldots, m_r) \in \z^r$, define a variety $\mathcal{L}_{{\bf i}, {\bf m}}$ by \[\mathcal{L}_{{\bf i}, {\bf m}} := (P_{i_1} \times \cdots \times P_{i_r} \times \c)/B^r,\] where $B^r$ acts on $P_{i_1} \times \cdots \times P_{i_r} \times \c$ on the right by \[(p_1, \ldots, p_r, c) \cdot (b_1, \ldots, b_r) := (p_1 b_1, b_1 ^{-1} p_2 b_2, \ldots, b_{r-1} ^{-1} p_r b_r, (m_1 \varpi_{i_1})(b_1) \cdots (m_r \varpi_{i_r})(b_r) c)\] for $p_1 \in P_{i_1}, \ldots, p_r \in P_{i_r}, c \in \c$, and $b_1, \ldots, b_r \in B$. Then, the variety $\mathcal{L}_{{\bf i}, {\bf m}}$ induces a line bundle (denoted by the same symbol $\mathcal{L}_{{\bf i}, {\bf m}}$) on $Z_{\bf i}$ by the canonical projection \[\mathcal{L}_{{\bf i}, {\bf m}} \twoheadrightarrow Z_{\bf i},\ (p_1, \ldots, p_r, c) \bmod B^r \mapsto (p_1, \ldots, p_r) \bmod B^r.\] 

\vspace{2mm}\begin{prop}[{\cite{LT}}] Denote by {\rm Pic}$(Z_{\bf i})$ the Picard group of $Z_{\bf i}$.
\begin{enumerate}
\item[{\rm (1)}] The map $\z^r \xrightarrow{\sim}\ {\rm Pic}(Z_{\bf i})$, ${\bf m} \mapsto \mathcal{L}_{{\bf i}, {\bf m}}$, is an isomorphism of groups.
\item[{\rm (2)}] The line bundle $\mathcal{L}_{{\bf i}, {\bf m}}$ is very ample if and only if $m_1, \ldots, m_r >0$.
\item[{\rm (3)}] The line bundle $\mathcal{L}_{{\bf i}, {\bf m}}$ is generated by global sections if and only if $m_1, \ldots, m_r \ge 0$. 
\end{enumerate}
\end{prop}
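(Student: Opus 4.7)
The plan is induction on the length $r$ of ${\bf i}$, exploiting the tower structure of $Z_{\bf i}$. Write ${\bf i}' = (i_1, \ldots, i_{r-1})$, ${\bf m}' = (m_1, \ldots, m_{r-1})$, and let $\pi\colon Z_{\bf i} \to Z_{{\bf i}'}$ be the forgetful projection $(p_1, \ldots, p_r) \bmod B^r \mapsto (p_1, \ldots, p_{r-1}) \bmod B^{r-1}$. Each fiber is $P_{i_r}/B \cong \p^1$, so $\pi$ is a Zariski-locally trivial $\p^1$-bundle, and the section $s\colon (p_1, \ldots, p_{r-1}) \bmod B^{r-1} \mapsto (p_1, \ldots, p_{r-1}, e) \bmod B^r$ cuts out a smooth divisor $D_r \subset Z_{\bf i}$ realizing $Z_{{\bf i}'}$ inside $Z_{\bf i}$. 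The base case $r = 1$ is $Z_{(i_1)} \cong \p^1$ with $\mathcal{L}_{(i_1), (m_1)} \cong \mathcal{O}(m_1)$, which I would verify directly from the twisted-quotient definition.

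For (1), the inductive step uses the standard split short exact sequence for a $\p^1$-bundle with a section,
\[
0 \longrightarrow {\rm Pic}(Z_{{\bf i}'}) \xrightarrow{\pi^\ast} {\rm Pic}(Z_{\bf i}) \xrightarrow{\mathrm{res}} \z \longrightarrow 0,
\]
split by $1 \mapsto \mathcal{O}(D_r)$, where $\mathrm{res}$ is the degree on a fiber. Comparing the two definitions, one checks $\pi^\ast \mathcal{L}_{{\bf i}', {\bf m}'} \cong \mathcal{L}_{{\bf i}, ({\bf m}', 0)}$ directly from the actions of $B^{r-1}$ and $B^r$; restricting $\mathcal{L}_{{\bf i}, (0, \ldots, 0, 1)}$ to a fiber gives $\mathcal{O}_{\p^1}(1)$. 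Combined with the inductive hypothesis on $Z_{{\bf i}'}$, these yield the asserted group isomorphism ${\bf m} \mapsto \mathcal{L}_{{\bf i}, {\bf m}}$.

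For (3), necessity of $m_j \ge 0$ comes from restricting $\mathcal{L}_{{\bf i}, {\bf m}}$ (suitably twisted by pullbacks from $Z_{{\bf i}'}$) to a $\p^1$-fiber of each intermediate projection $Z_{\bf i} \to Z_{(i_1, \ldots, i_{j-1})}$, on which it becomes $\mathcal{O}_{\p^1}(m_j)$; a negative $m_j$ then prevents global generation. For sufficiency, I would use the pushforward formula $\pi_\ast \mathcal{L}_{{\bf i}, {\bf m}} \cong \mathcal{L}_{{\bf i}', {\bf m}'} \otimes \mathrm{Sym}^{m_r}(\mathcal{E}^\ast)$, where $\mathcal{E}$ is the rank-$2$ bundle with $\p(\mathcal{E}) \cong Z_{\bf i}$, together with the inductive hypothesis that $\mathcal{L}_{{\bf i}', {\bf m}'}$ is globally generated and the fact that $\mathcal{O}_{\p^1}(m_r)$ is globally generated for $m_r \ge 0$. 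For (2), strict positivity on every fiber (equivalently, $m_j > 0$ for all $j$) is necessary by the same fiberwise argument; for sufficiency, I would combine (3) with a separation-of-points-and-tangent-vectors check, either via the natural morphisms $Z_{\bf i} \to G/B$ through truncations (which realize $\mathcal{L}_{{\bf i}, {\bf m}}$ as dominating the pullback of an ample bundle from a product of flag varieties) or via a direct induction ascending the $\p^1$-bundle tower, using that the relative $\mathcal{O}(m_r)$ is relatively very ample when $m_r > 0$.

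The principal obstacle will be to make the geometric identification $\mathcal{L}_{{\bf i}, {\bf m}} \cong \pi^\ast \mathcal{L}_{{\bf i}, ({\bf m}', 0)} \otimes \mathcal{O}(m_r D_r)$ rigorous starting from the twisted-quotient definition, and to compute the pushforward $\pi_\ast \mathcal{L}_{{\bf i}, {\bf m}}$ explicitly as a symmetric-power twist of $\mathcal{L}_{{\bf i}', {\bf m}'}$; once these identifications are in hand, all three statements reduce to standard facts about $\p^1$-bundles with sections and descent of global generation and very ampleness.
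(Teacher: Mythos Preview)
The paper does not prove this proposition; it is stated with a citation to \cite{LT} (Lauritzen--Thomsen) and no argument is given in the present text. Your inductive sketch via the $\p^1$-bundle tower $\pi\colon Z_{\bf i}\to Z_{{\bf i}'}$, the split exact sequence for ${\rm Pic}$ of a $\p^1$-bundle with section, and the fiberwise restriction to $\mathcal{O}_{\p^1}(m_j)$ is the standard route and is essentially how \cite{LT} proceeds (they work more generally with effective divisor classes on Bott--Samelson varieties, identifying the nef and ample cones), so your plan is sound and aligned with the cited reference.
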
\vspace{2mm}

Let us define left actions of $P_{i_1}$ on $Z_{\bf i}$ and on $\mathcal{L}_{{\bf i}, {\bf m}}$ by
\begin{align*}
p \cdot ((p_1, \ldots, p_r) \bmod B^r) &:= (p p_1, p_2, \ldots, p_r) \bmod B^r,\\
p \cdot ((p_1, \ldots, p_r, c) \bmod B^r) &:= (p p_1, p_2, \ldots, p_r, c) \bmod B^r
\end{align*}
for $p, p_1 \in P_{i_1}$, $p_2 \in P_{i_2}, \ldots, p_r \in P_{i_r}$, and $c \in \c$. Since the projection $\mathcal{L}_{{\bf i}, {\bf m}} \twoheadrightarrow Z_{\bf i}$ is compatible with these actions, it follows that the space $H^0(Z_{\bf i}, \mathcal{L}_{{\bf i}, {\bf m}})$ of global sections has a natural $P_{i_1}$-module structure. In the following throughout this paper, we assume that $m_1, \ldots, m_r \ge 0$. The $P_{i_1}$-module $H^0(Z_{\bf i}, \mathcal{L}_{{\bf i}, {\bf m}})$ can be described more algebraically as follows. For a dominant integral weight $\lambda$, let $V(\lambda)$ denote the irreducible highest weight $G$-module with highest weight $\lambda$, and $v_\lambda \in V(\lambda)$ the highest weight vector. If we define a morphism $\Psi_{{\bf i}, {\bf m}}: Z_{\bf i} \rightarrow \p(V(m_1 \varpi_{i_1}) \otimes \cdots \otimes V(m_r \varpi_{i_r}))$ by \[\Psi_{{\bf i}, {\bf m}} ((p_1, \ldots, p_r) \bmod B^r) := \c(p_1 v_{m_1 \varpi_{i_1}} \otimes p_1 p_2 v_{m_2 \varpi_{i_2}} \otimes \cdots \otimes p_1 p_2 \cdots p_r v_{m_r \varpi_{i_r}}),\] then we have $\Psi_{{\bf i}, {\bf m}} ^\ast (\mathcal{O}(1)) = \mathcal{L}_{{\bf i}, {\bf m}}$. Hence the morphism $\Psi_{{\bf i}, {\bf m}}$ induces a surjection \[\Psi_{{\bf i}, {\bf m}} ^\ast: H^0(\p(V(m_1 \varpi_{i_1}) \otimes \cdots \otimes V(m_r \varpi_{i_r})), \mathcal{O}(1)) \twoheadrightarrow H^0(Z_{\bf i}, \mathcal{L}_{{\bf i}, {\bf m}}).\] For an arbitrary finite-dimensional $G$-module $V$ over $\c$, we remark that the space $H^0(\p(V), \mathcal{O}(1))$ of global sections is identified with the dual $G$-module $V^\ast :=\ {\rm Hom}_\c (V, \c)$. Therefore, the surjection $\Psi_{{\bf i}, {\bf m}} ^\ast$ is regarded as a $P_{i_1}$-module homomorphism from $(V(m_1 \varpi_{i_1}) \otimes \cdots \otimes V(m_r \varpi_{i_r}))^\ast$ onto the space $H^0(Z_{\bf i}, \mathcal{L}_{{\bf i}, {\bf m}})$ of global sections. Let us denote by $E_i , F_i, h_i \in \mathfrak{g}$, $i \in I$, the Chevalley generators.

\vspace{2mm}\begin{prop}[{\cite[Theorem 6]{LLM}}] Define a $P_{i_1}$-submodule $V_{{\bf i}, {\bf m}} \subset V(m_1 \varpi_{i_1}) \otimes \cdots \otimes V(m_r \varpi_{i_r})$ by \[V_{{\bf i}, {\bf m}} := \sum_{a_1, \ldots, a_r \ge 0} \c F_{i_1} ^{a_1} (v_{m_1 \varpi_{i_1}} \otimes F_{i_2} ^{a_2} (v_{m_2 \varpi_{i_2}} \otimes \cdots \otimes F_{i_{r-1}} ^{a_{r-1}} (v_{m_{r-1} \varpi_{i_{r-1}}} \otimes F_{i_r} ^{a_r} v_{m_r \varpi_{i_r}})\cdots)).\]
Then, the surjection $\Psi_{{\bf i}, {\bf m}} ^\ast$ induces an isomorphism of $P_{i_1}$-modules$:$ \[V_{{\bf i}, {\bf m}} ^\ast \xrightarrow{\sim} H^0 (Z_{\bf i}, \mathcal{L}_{{\bf i}, {\bf m}}).\]
\end{prop}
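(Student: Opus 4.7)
The plan is to prove the theorem in three steps. First, show that the image of $\Psi_{{\bf i},{\bf m}}$ lies in $\p(V_{{\bf i},{\bf m}})$, so that $\Psi_{{\bf i},{\bf m}}^{\ast}$ factors through $V_{{\bf i},{\bf m}}^{\ast}$. For this I would work on the dense open subset of $Z_{\bf i}$ whose points are represented by $(\exp(t_1 F_{i_1}),\ldots,\exp(t_r F_{i_r})) \bmod B^r$ with $(t_1,\ldots,t_r)\in \c^r$. Since $F_{i_k}$ is primitive in $U(\mathfrak{g})$, the action of $\exp(tF_{i_k})$ on a tensor product equals the tensor product of its actions on the factors, so iteratively pulling $\exp(t_k F_{i_k})$ out of the last $r-k+1$ tensor factors yields
\begin{align*}
p_1 v_{m_1\varpi_{i_1}} &\otimes p_1 p_2 v_{m_2\varpi_{i_2}} \otimes \cdots \otimes p_1\cdots p_r v_{m_r\varpi_{i_r}} \\
&= \exp(t_1 F_{i_1})\bigl(v_{m_1\varpi_{i_1}}\otimes \exp(t_2 F_{i_2})\bigl(v_{m_2\varpi_{i_2}}\otimes \cdots \otimes \exp(t_r F_{i_r}) v_{m_r\varpi_{i_r}}\bigr)\cdots\bigr).
\end{align*}
Expanding each exponential in powers of $t_k$ displays this vector as a $\c[t_1,\ldots,t_r]$-linear combination of the generators $F_{i_1}^{a_1}(v_{m_1\varpi_{i_1}}\otimes F_{i_2}^{a_2}(v_{m_2\varpi_{i_2}}\otimes \cdots))$ of $V_{{\bf i},{\bf m}}$, and Zariski density extends the inclusion to all of $Z_{\bf i}$.

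Dualizing, the given surjection $\Psi_{{\bf i},{\bf m}}^{\ast}$ factors as
\[
(V(m_1\varpi_{i_1})\otimes \cdots \otimes V(m_r\varpi_{i_r}))^{\ast} \twoheadrightarrow V_{{\bf i},{\bf m}}^{\ast} \twoheadrightarrow H^0(Z_{\bf i},\mathcal{L}_{{\bf i},{\bf m}}),
\]
so the right-hand arrow is surjective. To upgrade it to a $P_{i_1}$-module map I would verify, by induction on $r$, that $V_{{\bf i},{\bf m}}$ is $B$-stable (hence $P_{i_1}$-stable, since it is closed under $F_{i_1}$-exponentials by construction). The inductive hypothesis gives $B$-stability of $V_{{\bf i}',{\bf m}'}$ with ${\bf i}' = (i_2,\ldots,i_r)$ and ${\bf m}' = (m_2,\ldots,m_r)$; then $v_{m_1\varpi_{i_1}}\otimes V_{{\bf i}',{\bf m}'}$ is $B$-stable because $v_{m_1\varpi_{i_1}}$ is a $B$-eigenvector annihilated by every $E_j$, and closing under $F_{i_1}^{a_1}$ preserves $B$-stability via $[E_j, F_{i_1}^{a_1}] = 0$ for $j\neq i_1$ together with $[E_{i_1}, F_{i_1}^{a_1}] = a_1 F_{i_1}^{a_1-1}(h_{i_1}-a_1+1)$ on weight vectors.

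Finally I would promote the surjection to an isomorphism by a $T$-character count. The inductive description of $V_{{\bf i},{\bf m}}$ as $U(\mathrm{Lie}\,P_{i_1})$ applied to the $B$-stable subspace $v_{m_1\varpi_{i_1}}\otimes V_{{\bf i}',{\bf m}'}$, combined with the standard $\mathfrak{sl}_2$-argument for the Demazure operator $D_{i_1}$, yields
\[
\mathrm{ch}\,V_{{\bf i},{\bf m}} = D_{i_1}\bigl(e^{m_1\varpi_{i_1}} D_{i_2}\bigl(e^{m_2\varpi_{i_2}} \cdots D_{i_r}(e^{m_r\varpi_{i_r}})\cdots\bigr)\bigr).
\]
The same iterated Demazure-operator product computes the character of $H^0(Z_{\bf i},\mathcal{L}_{{\bf i},{\bf m}})$, obtained by repeatedly applying Leray to the $\p^1$-bundle projections $Z_{(i_1,\ldots,i_k)} \to Z_{(i_1,\ldots,i_{k-1})}$ and Borel--Weil on the fibres. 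Equality of characters combined with the surjection above yields the desired isomorphism. The main obstacle is the $B$-stability in the second step: one must carry full $B$-stability through the induction rather than merely $P_{i_k}$-stability at each stage, since establishing closure under $E_{i_1}$ at the outer step requires $V_{{\bf i}',{\bf m}'}$ to be $E_{i_1}$-stable even when $i_1$ does not occur among $i_2,\ldots,i_r$.
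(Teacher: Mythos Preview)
The paper does not supply its own proof of this proposition; it is quoted from \cite{LLM}. So there is nothing to compare against directly, and I will just assess your plan.

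Steps~1 and~2 are sound. In fact Step~1 proves more than you claim: once you have written
\[
\exp(t_1 F_{i_1})\bigl(v_{m_1\varpi_{i_1}}\otimes\cdots\otimes\exp(t_r F_{i_r})v_{m_r\varpi_{i_r}}\bigr)
=\sum_{a_1,\ldots,a_r\ge 0}\frac{t_1^{a_1}\cdots t_r^{a_r}}{a_1!\cdots a_r!}\,
F_{i_1}^{a_1}\bigl(v_{m_1\varpi_{i_1}}\otimes\cdots\bigr),
\]
a Vandermonde argument shows that the \emph{span} of the image of $\Psi_{{\bf i},{\bf m}}$ is exactly $V_{{\bf i},{\bf m}}$, not merely contained in it. Hence $\ker\Psi_{{\bf i},{\bf m}}^\ast = V_{{\bf i},{\bf m}}^\perp$, and the induced map $V_{{\bf i},{\bf m}}^\ast\to H^0(Z_{\bf i},\mathcal{L}_{{\bf i},{\bf m}})$ is already \emph{injective}. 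Combined with the surjectivity the paper states just before the proposition, this finishes the proof without any character count. So Step~3 is in fact unnecessary.

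This matters because Step~3, as you have written it, has a genuine gap. The identity
\[
\mathrm{ch}\Bigl(\sum_{a\ge 0} F_i^{\,a} W\Bigr)=D_i(\mathrm{ch}\,W)
\]
is \emph{false} for an arbitrary $B$-stable subspace $W$ of a $G$-module. Take $\mathfrak{g}=\mathfrak{sl}_2$, $V=V(2)$, and $W$ the span of $v_2$ and $F v_2$ (an $E$-stable, hence $B$-stable, subspace). Then $\sum_a F^a W = V(2)$ has character $e^2+e^0+e^{-2}$, whereas $D(e^2+e^0)=(e^2+e^0+e^{-2})+e^0$; these disagree. So ``the standard $\mathfrak{sl}_2$-argument'' does not produce the Demazure character formula for $V_{{\bf i},{\bf m}}$ from that of $v_{m_1\varpi_{i_1}}\otimes V_{{\bf i}',{\bf m}'}$. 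That equality \emph{is} true for generalized Demazure modules, but its proof uses deeper input---excellent filtrations/Joseph's Demazure functor, or the crystal-basis argument in \cite{LLM}---and is essentially equivalent in difficulty to the proposition itself. Your own (strengthened) Step~1 lets you sidestep the issue entirely.
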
\vspace{2mm}

The $P_{i_1}$-module $V_{{\bf i}, {\bf m}}$ is called a {\it generalized Demazure module}. As we will see in Section 4, this indeed generalizes the notion of Demazure module.

\subsection{Highest term valuations}

In this subsection, we introduce a specific valuation on $\c(Z_{\bf i})$, which we mainly use in this paper. Let $U$ (resp., $U^-$) denote the unipotent radical of the Borel subgroup $B$ (resp., the opposite Borel subgroup), and $U_i ^-:= \exp(\c F_i)$, $i \in I$, the opposite root subgroups. For the identity element $e \in G$, we can regard $U_{i_1} ^- \times \cdots \times U_{i_r} ^-$ as an affine open neighborhood of $(e, \ldots, e) \bmod B^r$ in $Z_{\bf i}$ by:
\begin{align*}
U_{i_1} ^- \times \cdots \times U_{i_r} ^- &\hookrightarrow Z_{\bf i},\\
(u_1, \ldots, u_r) &\mapsto (u_1, \ldots, u_r) \bmod B^r.
\end{align*}
By using the isomorphism of varieties \[\c^r \xrightarrow{\sim} U_{i_1} ^- \times \cdots \times U_{i_r} ^-,\ (t_1, \ldots, t_r) \mapsto (\exp(t_1 F_{i_1}), \ldots, \exp(t_r F_{i_r})),\] we identify the function field $\c(Z_{\bf i}) = \c(U_{i_1} ^- \times \cdots \times U_{i_r} ^-)$ with the rational function field $\c(t_1, \ldots, t_r)$. Define a valuation $v_{\bf i}$ on $\c(Z_{\bf i})$ to be the highest term valuation on $\c(t_1, \ldots, t_r)$ with respect to the lexicographic order on $\z^r$ (see Example \ref{highest term valuation}). We then take a specific section of $\mathcal{L}_{{\bf i}, {\bf m}}$. Let $\Phi_{{\bf i}, {\bf m}}: V_{{\bf i}, {\bf m}} \hookrightarrow V(m_1 \varpi_{i_1}) \otimes \cdots \otimes V(m_r \varpi_{i_r})$ be the inclusion map, and $\Phi_{{\bf i}, {\bf m}} ^\ast: (V(m_1 \varpi_{i_1}) \otimes \cdots \otimes V(m_r \varpi_{i_r}))^\ast \twoheadrightarrow V_{{\bf i}, {\bf m}} ^\ast = H^0 (Z_{\bf i}, \mathcal{L}_{{\bf i}, {\bf m}})$ the dual map. Also, we denote by $\tilde{\tau}_{{\bf i}, {\bf m}} \in (V(m_1 \varpi_{i_1}) \otimes \cdots \otimes V(m_r \varpi_{i_r}))^\ast$ the lowest weight vector such that $\tilde{\tau}_{{\bf i}, {\bf m}} (v_{m_1 \varpi_{i_1}} \otimes \cdots \otimes v_{m_r \varpi_{i_r}}) = 1$, and set $\tau_{{\bf i}, {\bf m}} := \Phi_{{\bf i}, {\bf m}} ^\ast (\tilde{\tau}_{{\bf i}, {\bf m}}) \in H^0 (Z_{\bf i}, \mathcal{L}_{{\bf i}, {\bf m}})$. In this setting, we will study the Newton-Okounkov body $\Delta(Z_{\bf i}, \mathcal{L}_{{\bf i}, {\bf m}}, v_{\bf i}, \tau_{{\bf i}, {\bf m}})$. Note that we do not necessarily assume that the line bundle $\mathcal{L}_{{\bf i}, {\bf m}}$ is very ample in this paper; hence the real dimension of $\Delta(Z_{\bf i}, \mathcal{L}_{{\bf i}, {\bf m}}, v_{\bf i}, \tau_{{\bf i}, {\bf m}})$ may be less than the complex dimension of $Z_{\bf i}$.

\vspace{2mm}\begin{rem}\normalfont
Since $\mathcal{L}_{{\bf i}, {\bf m}} ^{\otimes k} = \mathcal{L}_{{\bf i}, k{\bf m}}$ and $\tau_{{\bf i}, {\bf m}} ^k = \tau_{{\bf i}, {k {\bf m}}}$ in $H^0 (Z_{\bf i} , \mathcal{L}_{k{\bf m}})$ for all $k \in \z_{>0}$, it follows that \[S(Z_{\bf i}, \mathcal{L}_{{\bf i}, {\bf m}}, v_{\bf i}, \tau_{{\bf i}, {\bf m}}) = \bigcup_{k>0} \{ (k, v_{\bf i} (\sigma/\tau_{{\bf i}, k{\bf m}})) \mid \sigma \in H^0(Z_{\bf i}, \mathcal{L}_{{\bf i}, k{\bf m}}) \setminus \{0\}\}.\]
\end{rem}\vspace{2mm}

For $\sigma \in H^0 (Z_{\bf i}, \mathcal{L}_{{\bf i}, {\bf m}})$, the value $v_{\bf i} (\sigma/\tau_{{\bf i}, {\bf m}})$ can be described in terms of the Chevalley generators. In the rest of this section, we review this description, following \cite[Proposition 2.2]{Kav}. Let us first prove some lemmas.

\vspace{2mm}\begin{lem}\label{prop, lowest}
The section $\tau_{{\bf i}, {\bf m}}$ does not vanish on $U_{i_1} ^- \times \cdots \times U_{i_r} ^-\ (\hookrightarrow Z_{\bf i})$. In particular, the restriction of $\tau_{{\bf i}, {\bf m}} ^{-1}$ to $U_{i_1} ^- \times \cdots \times U_{i_r} ^-$ is an element of $H^0 (U_{i_1} ^- \times \cdots \times U_{i_r} ^-, \mathcal{L}_{{\bf i}, {\bf m}} ^{-1})$, and hence $\sigma/\tau_{{\bf i}, {\bf m}} \in \c[t_1, \ldots, t_r]$ $(= \c[U_{i_1} ^- \times \cdots \times U_{i_r} ^-])$ for all $\sigma \in H^0 (Z_{\bf i}, \mathcal{L}_{{\bf i}, {\bf m}})$.
\end{lem}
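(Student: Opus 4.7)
My plan is to translate the non-vanishing of $\tau_{{\bf i}, {\bf m}}$ into a linear-algebraic statement via the morphism $\Psi_{{\bf i}, {\bf m}}$, then verify it by a direct weight computation. Set $V := V(m_1\varpi_{i_1})\otimes\cdots\otimes V(m_r\varpi_{i_r})$. By Proposition 2.5, the isomorphism $V_{{\bf i}, {\bf m}}^\ast \xrightarrow{\sim} H^0(Z_{\bf i},\mathcal{L}_{{\bf i}, {\bf m}})$ is induced by $\Psi_{{\bf i}, {\bf m}}^\ast$, so $\tau_{{\bf i}, {\bf m}}$ coincides with the pullback of $\tilde{\tau}_{{\bf i}, {\bf m}} \in V^\ast = H^0(\p(V),\mathcal{O}(1))$ along $\Psi_{{\bf i}, {\bf m}}$. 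Such a pullback vanishes at $(p_1,\ldots,p_r)\bmod B^r$ precisely when $\tilde{\tau}_{{\bf i}, {\bf m}}$ annihilates the representative $p_1 v_{m_1\varpi_{i_1}}\otimes p_1 p_2 v_{m_2\varpi_{i_2}}\otimes\cdots\otimes p_1\cdots p_r v_{m_r\varpi_{i_r}}$ of $\Psi_{{\bf i}, {\bf m}}((p_1,\ldots,p_r)\bmod B^r)$. So it suffices to show this value is nonzero for every $(u_1,\ldots,u_r) \in U_{i_1}^-\times\cdots\times U_{i_r}^-$.

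For the weight calculation, each $u_j = \exp(t_j F_{i_j})$ acts on any weight vector as the identity plus terms of strictly lower weight, since $F_{i_j}$ lowers weights by $\alpha_{i_j}$. Iterating, $u_1\cdots u_k v_{m_k\varpi_{i_k}} = v_{m_k\varpi_{i_k}} + (\text{strictly lower weight terms})$ for each $k$. Tensoring the factors then yields $v_{\mathrm{hw}} := v_{m_1\varpi_{i_1}}\otimes\cdots\otimes v_{m_r\varpi_{i_r}}$ plus a sum of weight vectors of weight strictly less than $\sum_k m_k\varpi_{i_k}$. Since $\tilde{\tau}_{{\bf i}, {\bf m}}$ is a weight vector in $V^\ast$ of weight $-\sum_k m_k\varpi_{i_k}$, it annihilates every weight space of $V$ other than $\c v_{\mathrm{hw}}$, and the normalization $\tilde{\tau}_{{\bf i}, {\bf m}}(v_{\mathrm{hw}}) = 1$ then forces the evaluation above to equal $1$, proving non-vanishing of $\tau_{{\bf i}, {\bf m}}$ throughout $U_{i_1}^-\times\cdots\times U_{i_r}^-$.

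The ``in particular'' assertion is then formal: a nowhere-vanishing section trivializes its line bundle on the open set in question, so $\tau_{{\bf i}, {\bf m}}^{-1}$ is a regular section of $\mathcal{L}_{{\bf i}, {\bf m}}^{-1}$ over $U_{i_1}^-\times\cdots\times U_{i_r}^- \cong \c^r$, and for any $\sigma \in H^0(Z_{\bf i},\mathcal{L}_{{\bf i}, {\bf m}})$ the quotient $\sigma/\tau_{{\bf i}, {\bf m}} = \sigma \cdot \tau_{{\bf i}, {\bf m}}^{-1}$ is regular there, hence a polynomial in $t_1,\ldots,t_r$. The only step requiring real care is the identification of $\tilde{\tau}_{{\bf i}, {\bf m}}$ as (up to scalar) the coefficient-of-$v_{\mathrm{hw}}$ functional on $V$, which follows from its prescribed weight together with the fact that the $\sum_k m_k\varpi_{i_k}$-weight space of $V$ is one-dimensional; with that in hand, the rest is just bookkeeping on the weight filtration.
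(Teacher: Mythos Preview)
Your argument is correct and is essentially the same as the paper's: both identify $\tau_{{\bf i}, {\bf m}}$ with the pullback of $\tilde{\tau}_{{\bf i}, {\bf m}}$ along $\Psi_{{\bf i}, {\bf m}}$, expand $u_1\cdots u_k v_{m_k\varpi_{i_k}}$ as the highest weight vector plus lower-weight terms, and use that $\tilde{\tau}_{{\bf i}, {\bf m}}$ kills everything outside the top weight space to conclude the value is $1$. Your treatment is slightly more explicit in justifying why $\tilde{\tau}_{{\bf i}, {\bf m}}$ annihilates the lower-weight terms and in spelling out the ``in particular'' clause, but the substance is identical.
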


\begin{proof}
Note that the dual map $\Phi_{{\bf i}, {\bf m}} ^\ast: (V(m_1 \varpi_{i_1}) \otimes \cdots \otimes V(m_r \varpi_{i_r}))^\ast \twoheadrightarrow H^0 (Z_{\bf i}, \mathcal{L}_{{\bf i}, {\bf m}})$ is identical to the surjection \[\Psi_{{\bf i}, {\bf m}} ^\ast: H^0(\p(V(m_1 \varpi_{i_1}) \otimes \cdots \otimes V(m_r \varpi_{i_r})), \mathcal{O}(1)) \twoheadrightarrow H^0(Z_{\bf i}, \mathcal{L}_{{\bf i}, {\bf m}})\] defined in \S\S 2.2. For $u_1 \in U_{i_1} ^-, \ldots, u_r \in U_{i_r} ^-$, we see that 
\begin{align*}
&\Psi_{{\bf i}, {\bf m}}((u_1, \ldots, u_r) \bmod B^r) = \c(u_1 v_{m_1 \varpi_{i_1}} \otimes u_1 u_2 v_{m_2 \varpi_{i_2}} \otimes \cdots \otimes u_1 u_2 \cdots u_r v_{m_r \varpi_{i_r}}),\ {\rm and}\\ 
&u_1 v_{m_1 \varpi_{i_1}} \otimes u_1 u_2 v_{m_2 \varpi_{i_2}} \otimes \cdots \otimes u_1 u_2 \cdots u_r v_{m_r \varpi_{i_r}} = v_{m_1 \varpi_{i_1}} \otimes v_{m_2 \varpi_{i_2}} \otimes \cdots \otimes v_{m_r \varpi_{i_r}} + ({\rm other\ terms}), 
\end{align*}
where by ``other terms'', we mean a linear combination of weight vectors whose weight is not equal to $m_1 \varpi_{i_1} + \cdots + m_r \varpi_{i_r}$. Therefore, it follows from the definition of $\tilde{\tau}_{{\bf i}, {\bf m}}$ that \[\tilde{\tau}_{{\bf i}, {\bf m}} (u_1 v_{m_1 \varpi_{i_1}} \otimes u_1 u_2 v_{m_2 \varpi_{i_2}} \otimes \cdots \otimes u_1 u_2 \cdots u_r v_{m_r \varpi_{i_r}}) = 1,\] which implies the assertion of the lemma since $\tau_{{\bf i}, {\bf m}} = \Phi_{{\bf i}, {\bf m}} ^\ast (\tilde{\tau}_{{\bf i}, {\bf m}})$.
\end{proof}

We write ${\bf i}_{\ge s} :=(i_s, i_{s+1}, \ldots, i_r)$ and ${\bf m}_{\ge s} := (m_s, m_{s+1}, \ldots, m_r)$ for $s = 1,\ldots, r$. Then, the generalized Demazure module $V_{{\bf i}_{\ge s+1}, {\bf m}_{\ge s+1}}$ can be regarded as a $\c$-subspace of $V_{{\bf i}_{\ge s}, {\bf m}_{\ge s}}$ by: \[\iota_{s, s+1}: V_{{\bf i}_{\ge s+1}, {\bf m}_{\ge s+1}} \hookrightarrow V_{{\bf i}_{\ge s}, {\bf m}_{\ge s}},\ v \mapsto v_{m_s \varpi_{i_s}} \otimes v.\] Let $\iota_{s, s+1} ^\ast: H^0(Z_{{\bf i}_{\ge s}}, \mathcal{L}_{{\bf i}_{\ge s}, {\bf m}_{\ge s}}) \twoheadrightarrow H^0(Z_{{\bf i}_{\ge s+1}}, \mathcal{L}_{{\bf i}_{\ge s+1}, {\bf m}_{\ge s+1}})$ denote the dual map. Also, we obtain a sequence of subvarieties \[P_{i_r}/B = Z_{{\bf i}_{\ge r}} \subset Z_{{\bf i}_{\ge r-1}} \subset \cdots \subset Z_{{\bf i}_{\ge 1}} = Z_{\bf i},\] where the Bott-Samelson variety $Z_{{\bf i}_{\ge s+1}}$ is thought of as a closed subvariety of $Z_{{\bf i}_{\ge s}}$ by: \[Z_{{\bf i}_{\ge s+1}} \hookrightarrow Z_{{\bf i}_{\ge s}},\ (p_{s+1}, \ldots, p_r) \bmod B^{r-s} \mapsto (e, p_{s+1}, \ldots, p_r) \bmod B^{r-s+1}.\] Note that the open immersion $U_{i_1} ^- \times \cdots \times U_{i_r} ^- \hookrightarrow Z_{\bf i}$ induces an open immersion $U_{i_s} ^- \times \cdots \times U_{i_r} ^- \hookrightarrow Z_{{\bf i}_{\ge s}}$, and that the function field $\c(Z_{{\bf i}_{\ge s}})$ is identified with $\c(t_s, \ldots, t_r)$.

\vspace{2mm}\begin{lem}\label{restriction}
For $1 \le s \le r-1$ and $\sigma \in H^0(Z_{{\bf i}_{\ge s}}, \mathcal{L}_{{\bf i}_{\ge s}, {\bf m}_{\ge s}})$, \[(\sigma/\tau_{{\bf i}_{\ge s}, {\bf m}_{\ge s}})|_{t_s = 0} = \iota_{s, s+1} ^\ast(\sigma)/\tau_{{\bf i}_{\ge s+1}, {\bf m}_{\ge s+1}}\] in $\c[t_{s+1}, \ldots, t_r]\ (= \c[U_{i_{s+1}} ^- \times \cdots \times U_{i_r} ^-]);$ here, both sides of this equality are elements of $\c[t_{s+1}, \ldots, t_r]$ by Lemma \ref{prop, lowest}.
\end{lem}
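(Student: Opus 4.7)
The plan is to reduce both sides of the claimed equality to explicit polynomial expressions on the affine chart $U_{i_{s+1}}^- \times \cdots \times U_{i_r}^-$ via the Pl\"ucker-type morphism $\Psi_{{\bf i}_{\ge s}, {\bf m}_{\ge s}}$, and then compare them directly. First I would establish the key compatibility between the two lowest-weight sections,
\[\iota_{s,s+1}^\ast\bigl(\tau_{{\bf i}_{\ge s}, {\bf m}_{\ge s}}\bigr) = \tau_{{\bf i}_{\ge s+1}, {\bf m}_{\ge s+1}}.\]
For any $v \in V_{{\bf i}_{\ge s+1}, {\bf m}_{\ge s+1}}$, the left-hand side applied to $v$ equals $\tilde{\tau}_{{\bf i}_{\ge s}, {\bf m}_{\ge s}}(v_{m_s \varpi_{i_s}} \otimes v)$, which by the definition of $\tilde{\tau}$ extracts the coefficient of $v_{m_s \varpi_{i_s}} \otimes v_{m_{s+1} \varpi_{i_{s+1}}} \otimes \cdots \otimes v_{m_r \varpi_{i_r}}$ in $v_{m_s \varpi_{i_s}} \otimes v$; since the first tensor factor is already the highest weight vector of $V(m_s \varpi_{i_s})$, this coincides with the coefficient of $v_{m_{s+1} \varpi_{i_{s+1}}} \otimes \cdots \otimes v_{m_r \varpi_{i_r}}$ in $v$, which is exactly $\tau_{{\bf i}_{\ge s+1}, {\bf m}_{\ge s+1}}(v)$.

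Next, mimicking the argument in the proof of Lemma \ref{prop, lowest}, I would derive the explicit formula
\[(\sigma/\tau_{{\bf i}_{\ge s}, {\bf m}_{\ge s}})(t_s, \ldots, t_r) = \tilde{\sigma}\bigl(u_s v_{m_s \varpi_{i_s}} \otimes u_s u_{s+1} v_{m_{s+1} \varpi_{i_{s+1}}} \otimes \cdots \otimes u_s u_{s+1} \cdots u_r v_{m_r \varpi_{i_r}}\bigr),\]
valid on $U_{i_s}^- \times \cdots \times U_{i_r}^-$, where $u_j := \exp(t_j F_{i_j})$ and $\tilde{\sigma} \in (V(m_s \varpi_{i_s}) \otimes \cdots \otimes V(m_r \varpi_{i_r}))^\ast$ is any lift of $\sigma$ under $\Phi_{{\bf i}_{\ge s}, {\bf m}_{\ge s}}^\ast$. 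The justification is that $\sigma$ and $\tau_{{\bf i}_{\ge s}, {\bf m}_{\ge s}}$ are pullbacks of linear forms under $\Psi_{{\bf i}_{\ge s}, {\bf m}_{\ge s}}$, so the value of their ratio at $(u_s, \ldots, u_r)$ is the ratio of $\tilde{\sigma}$ and $\tilde{\tau}_{{\bf i}_{\ge s}, {\bf m}_{\ge s}}$ evaluated on the representative vector $u_s v_{m_s \varpi_{i_s}} \otimes \cdots \otimes u_s \cdots u_r v_{m_r \varpi_{i_r}}$; the denominator equals $1$ precisely by the computation in the proof of Lemma \ref{prop, lowest}.

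Setting $t_s = 0$ forces $u_s$ to be the identity, so
\[(\sigma/\tau_{{\bf i}_{\ge s}, {\bf m}_{\ge s}})|_{t_s=0}(t_{s+1}, \ldots, t_r) = \tilde{\sigma}\bigl(v_{m_s \varpi_{i_s}} \otimes u_{s+1} v_{m_{s+1} \varpi_{i_{s+1}}} \otimes \cdots \otimes u_{s+1} \cdots u_r v_{m_r \varpi_{i_r}}\bigr).\]
On the other hand, the functional $w \mapsto \tilde{\sigma}(v_{m_s \varpi_{i_s}} \otimes w)$ on $V(m_{s+1} \varpi_{i_{s+1}}) \otimes \cdots \otimes V(m_r \varpi_{i_r})$ restricts to $\iota_{s,s+1}^\ast(\sigma)$ on $V_{{\bf i}_{\ge s+1}, {\bf m}_{\ge s+1}}$, i.e., it is a lift of $\iota_{s,s+1}^\ast(\sigma)$. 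Applying the analogous formula one step shorter and using the compatibility proved in the first step, the right-hand side $\iota_{s,s+1}^\ast(\sigma)/\tau_{{\bf i}_{\ge s+1}, {\bf m}_{\ge s+1}}$ acquires exactly the same expression, establishing the claim.

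The main technical hurdle is the explicit formula in the second step: it encodes the compatibility between the geometric realization of sections (via $\Psi$) and their algebraic realization as functionals on the generalized Demazure module (via $\Phi$), and all subsequent manipulations are formal consequences of it. Once this formula is secured, the substitution $t_s = 0$ and the identification of lifts produce the asserted identity with no further input.
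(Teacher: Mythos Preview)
Your proposal is correct and follows essentially the same approach as the paper's proof: both lift $\sigma$ to a functional $\tilde{\sigma}$ on the full tensor product, express $(\sigma/\tau)$ on the affine chart via the Pl\"ucker-type morphism $\Psi$ as $\tilde{\sigma}$ evaluated on the representative vector (with denominator equal to $1$ by Lemma~\ref{prop, lowest}), set $t_s=0$ so that $u_s=e$, and then identify the result with the right-hand side by recognizing $w\mapsto\tilde{\sigma}(v_{m_s\varpi_{i_s}}\otimes w)$ as a lift of $\iota_{s,s+1}^\ast(\sigma)$. The only organizational difference is that you isolate the compatibility $\iota_{s,s+1}^\ast(\tau_{{\bf i}_{\ge s},{\bf m}_{\ge s}})=\tau_{{\bf i}_{\ge s+1},{\bf m}_{\ge s+1}}$ as a preliminary step, whereas the paper simply recomputes the denominator as $1$ on each side; this is a matter of presentation, not substance.
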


\begin{proof}
We may assume that $s = 1$. Define an injection \[\tilde{\iota}_{1, 2}: V(m_2 \varpi_{i_2}) \otimes \cdots \otimes V(m_r \varpi_{i_r}) \hookrightarrow V(m_1 \varpi_{i_1}) \otimes (V(m_2 \varpi_{i_2}) \otimes \cdots \otimes V(m_r \varpi_{i_r}))\] by $\tilde{\iota}_{1, 2}(v) := v_{m_1 \varpi_{i_1}} \otimes v$ for $v \in V(m_2 \varpi_{i_2}) \otimes \cdots \otimes V(m_r \varpi_{i_r})$; note that the injection $\iota_{1, 2}$ is the restriction of $\tilde{\iota}_{1, 2}$. If we take $\tilde{\sigma} \in (V(m_1 \varpi_{i_1}) \otimes \cdots \otimes V(m_r \varpi_{i_r}))^\ast$ such that $\Phi_{{\bf i}, {\bf m}} ^\ast (\tilde{\sigma}) = \sigma$, then we deduce that 
\begin{align*}
((\sigma/\tau_{{\bf i}, {\bf m}})|_{t_1 = 0})(u_2, \ldots, u_r) &= (\sigma/\tau_{{\bf i}, {\bf m}})(1, u_2, \ldots, u_r)\quad({\rm since}\ \exp(0 \cdot F_{i_1}) = 1)\\
&= (\Phi_{{\bf i}, {\bf m}} ^\ast (\tilde{\sigma})/\Phi_{{\bf i}, {\bf m}} ^\ast (\tilde{\tau}_{{\bf i}, {\bf m}}))(1, u_2, \ldots, u_r)\\
&= (\Psi_{{\bf i}, {\bf m}} ^\ast (\tilde{\sigma})/\Psi_{{\bf i}, {\bf m}} ^\ast (\tilde{\tau}_{{\bf i}, {\bf m}}))(1, u_2, \ldots, u_r)\\
&= \tilde{\sigma}(v_{m_1 \varpi_{i_1}} \otimes v_{u_2, \ldots, u_r})/\tilde{\tau}_{{\bf i}, {\bf m}}(v_{m_1 \varpi_{i_1}} \otimes v_{u_2, \ldots, u_r})\\
&({\rm since}\ \Psi_{{\bf i}, {\bf m}} ((1, u_2, \ldots, u_r) \bmod B^r) = \c (v_{m_1 \varpi_{i_1}} \otimes v_{u_2, \ldots, u_r}))\\
&= \tilde{\sigma}(v_{m_1 \varpi_{i_1}} \otimes v_{u_2, \ldots, u_r})\\ 
&({\rm since}\ \tilde{\tau}_{{\bf i}, {\bf m}}(v_{m_1 \varpi_{i_1}} \otimes v_{u_2, \ldots, u_r}) = 1\ {\rm by\ the\ proof\ of\ Lemma}\ \ref{prop, lowest})
\end{align*}
for $u_2 \in U_{i_2} ^-, \ldots, u_r \in U_{i_r} ^-$, where we set \[v_{u_2, \ldots, u_r} := u_2 v_{m_2 \varpi_{i_2}} \otimes u_2 u_3 v_{m_3 \varpi_{i_3}} \otimes \cdots \otimes u_2 u_3 \cdots u_r v_{m_r \varpi_{i_r}}.\] Also, the equality $\Phi_{{\bf i}, {\bf m}} \circ \iota_{1, 2} = \tilde{\iota}_{1, 2} \circ \Phi_{{\bf i}_{\ge 2}, {\bf m}_{\ge 2}}$ implies that \[\iota_{1, 2} ^\ast (\sigma) = \iota_{1, 2} ^\ast \circ \Phi_{{\bf i}, {\bf m}} ^\ast (\tilde{\sigma}) = (\Phi_{{\bf i}, {\bf m}} \circ \iota_{1, 2})^\ast (\tilde{\sigma}) = (\tilde{\iota}_{1, 2} \circ \Phi_{{\bf i}_{\ge 2}, {\bf m}_{\ge 2}})^\ast (\tilde{\sigma}) = \Phi_{{\bf i}_{\ge 2}, {\bf m}_{\ge 2}} ^\ast \circ \tilde{\iota}_{1, 2} ^\ast (\tilde{\sigma}).\] Therefore, it follows that
\begin{align*}
(\iota_{1, 2} ^\ast(\sigma)/\tau_{{\bf i}_{\ge 2}, {\bf m}_{\ge 2}})(u_2, \ldots, u_r) &=  (\Phi_{{\bf i}_{\ge 2}, {\bf m}_{\ge 2}} ^\ast \circ \tilde{\iota}_{1, 2} ^\ast (\tilde{\sigma})/\Phi_{{\bf i}_{\ge 2}, {\bf m}_{\ge 2}} ^\ast (\tilde{\tau}_{{\bf i}_{\ge 2}, {\bf m}_{\ge 2}}))(u_2, \ldots, u_r)\\
&=  (\Psi_{{\bf i}_{\ge 2}, {\bf m}_{\ge 2}} ^\ast \circ \tilde{\iota}_{1, 2} ^\ast (\tilde{\sigma})/\Psi_{{\bf i}_{\ge 2}, {\bf m}_{\ge 2}} ^\ast (\tilde{\tau}_{{\bf i}_{\ge 2}, {\bf m}_{\ge 2}}))(u_2, \ldots, u_r)\\
&= \tilde{\iota}_{1, 2} ^\ast (\tilde{\sigma})(v_{u_2, \ldots, u_r})/\tilde{\tau}_{{\bf i}_{\ge 2}, {\bf m}_{\ge 2}}(v_{u_2, \ldots, u_r})\\ 
&({\rm since}\ \Psi_{{\bf i}_{\ge 2}, {\bf m}_{\ge 2}} ((u_2, \ldots, u_r) \bmod B^{r-1}) = \c v_{u_2, \ldots, u_r})\\
&= \tilde{\sigma}(v_{m_1 \varpi_{i_1}} \otimes v_{u_2, \ldots, u_r})\\
&({\rm since}\ \tilde{\iota}_{1, 2}(v_{u_2, \ldots, u_r}) = v_{m_1 \varpi_{i_1}} \otimes v_{u_2, \ldots, u_r}\ {\rm and}\ \tilde{\tau}_{{\bf i}_{\ge 2}, {\bf m}_{\ge 2}}(v_{u_2, \ldots, u_r}) = 1).
\end{align*}
From these, the assertion of the lemma follows immediately.
\end{proof}

\begin{prop}\label{val,Chevalley}
For $\sigma \in H^0(Z_{\bf i}, \mathcal{L}_{{\bf i}, {\bf m}})$, write $v_{\bf i} (\sigma/\tau_{{\bf i}, {\bf m}}) = -(a_1, \ldots, a_r)$. Then,
\begin{align*}
&a_1 = \max\{a \in \z_{\ge 0} \mid F_{i_1} ^a \sigma \neq 0\},\\
&a_2 = \max\{a \in \z_{\ge 0} \mid F_{i_2} ^a (\iota_{1, 2} ^\ast(F_{i_1} ^{a_1} \sigma)) \neq 0\},\\
&\ \vdots\\
&a_r = \max\{a \in \z_{\ge 0} \mid F_{i_r} ^a (\iota_{r-1, r} ^\ast(F_{i_{r-1}} ^{a_{r-1}}(\cdots(\iota_{2, 3} ^\ast(F_{i_2} ^{a_2}(\iota_{1, 2} ^\ast(F_{i_1} ^{a_1} \sigma))))\cdots))) \neq 0\}.
\end{align*}
\end{prop}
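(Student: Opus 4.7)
The plan is to proceed by induction on $r$, with the whole argument hinging on the identity
\[
(F_{i_1}^a \sigma)/\tau_{{\bf i}, {\bf m}} \;=\; (-1)^a\, \partial_{t_1}^a(\sigma/\tau_{{\bf i}, {\bf m}}) \quad \text{in}\ \c[t_1, \ldots, t_r] \qquad (\star)
\]
for every $a \ge 0$; only the nonvanishing of the scalar really matters for us. To derive $(\star)$, I would lift $\sigma$ to some $\tilde{\sigma} \in (V(m_1 \varpi_{i_1})\otimes\cdots\otimes V(m_r \varpi_{i_r}))^\ast$ with $\Phi_{{\bf i}, {\bf m}}^\ast(\tilde{\sigma}) = \sigma$ and use the tensor-product formula already extracted in the proof of Lemma \ref{restriction}:
\[
(\sigma/\tau_{{\bf i}, {\bf m}})(t_1, \ldots, t_r) \;=\; \tilde{\sigma}\bigl(\exp(t_1 F_{i_1})\cdot v(t_2, \ldots, t_r)\bigr),
\]
where $v(t_2, \ldots, t_r)$ is a vector with no $t_1$-dependence (arising because $F_{i_1}$ is primitive, so $\exp(t_1 F_{i_1})$ acts diagonally across the tensor product). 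Expanding the exponential as a power series and invoking the dual action convention $(F_{i_1}\tilde{\sigma})(w) = -\tilde{\sigma}(F_{i_1}w)$, termwise differentiation in $t_1$ yields $(\star)$, using that $F_{i_1}^a \sigma = \Phi_{{\bf i}, {\bf m}}^\ast(F_{i_1}^a \tilde{\sigma})$ by the $P_{i_1}$-equivariance of $\Phi_{{\bf i}, {\bf m}}^\ast$.

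Granting $(\star)$, the proposition drops out quickly. Write $g := \sigma/\tau_{{\bf i}, {\bf m}} \in \c[t_1, \ldots, t_r]$; by the definition of the lexicographic highest-term valuation, $a_1 = \deg_{t_1}(g)$, and $(\star)$ immediately gives $a_1 = \max\{a \ge 0 \mid F_{i_1}^a \sigma \ne 0\}$, handling both the base case $r = 1$ and the first coordinate in general. For the remaining coordinates, I would take $a = a_1$ in $(\star)$ and restrict to $t_1 = 0$: Lemma \ref{restriction} gives
\[
\bigl((F_{i_1}^{a_1}\sigma)/\tau_{{\bf i}, {\bf m}}\bigr)\big|_{t_1 = 0} \;=\; \iota_{1,2}^\ast(F_{i_1}^{a_1}\sigma)/\tau_{{\bf i}_{\ge 2}, {\bf m}_{\ge 2}},
\]
while on the polynomial side $\partial_{t_1}^{a_1}g\big|_{t_1 = 0} = a_1!\,[t_1^{a_1}]g$, the nonzero coefficient polynomial in $\c[t_2, \ldots, t_r]$ whose lex top monomial is exactly $t_2^{a_2}\cdots t_r^{a_r}$. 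Hence $\iota_{1,2}^\ast(F_{i_1}^{a_1}\sigma)/\tau_{{\bf i}_{\ge 2}, {\bf m}_{\ge 2}}$ is a nonzero scalar multiple of $[t_1^{a_1}]g$, and the inductive hypothesis applied to $Z_{{\bf i}_{\ge 2}}$, the section $\iota_{1,2}^\ast(F_{i_1}^{a_1}\sigma) \in H^0(Z_{{\bf i}_{\ge 2}}, \mathcal{L}_{{\bf i}_{\ge 2}, {\bf m}_{\ge 2}})$, and the valuation $v_{{\bf i}_{\ge 2}}$ delivers the prescribed formulas for $a_2, \ldots, a_r$.

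The main obstacle is establishing $(\star)$ cleanly. Once the tensor-product formula for $g$ is in hand, termwise differentiation is harmless because $g$ is a polynomial by Lemma \ref{prop, lowest} and the sum defining it becomes finite on each fixed weight component; but one must carefully confirm that the $P_{i_1}$-module structure on $H^0(Z_{\bf i}, \mathcal{L}_{{\bf i}, {\bf m}})$ is transported by $\Phi_{{\bf i}, {\bf m}}^\ast$ from the dual of $V(m_1 \varpi_{i_1})\otimes\cdots\otimes V(m_r \varpi_{i_r})$ in precisely the way that makes the left-hand side of $(\star)$ equal to $\Phi_{{\bf i}, {\bf m}}^\ast(F_{i_1}^a \tilde{\sigma})/\tau_{{\bf i}, {\bf m}}$. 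Once $(\star)$ is secured, only bookkeeping with the lexicographic order and Lemma \ref{restriction} remains.
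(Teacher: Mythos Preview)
Your proposal is correct and follows essentially the same route as the paper: establish $(\star)$, read off $a_1$ as the $t_1$-degree, then use Lemma~\ref{restriction} to recurse. The only difference is that the paper derives $(\star)$ more directly by observing that the left $U_{i_1}^-$-action on the chart makes $F_{i_1}$ act as $-\partial/\partial t_1$ on $\c[t_1,\ldots,t_r]$, and then applying the Leibniz rule together with $F_{i_1}\tau_{{\bf i},{\bf m}}=0$, which sidesteps the dual-representation bookkeeping you flagged as the main obstacle.
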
 
 
\begin{proof}
Consider the left action of the opposite root subgroup $U_{i_k} ^-$ on $U_{i_k} ^- \times \cdots \times U_{i_r} ^-$ given by \[u \cdot (u_k, \ldots, u_r) := (u u_k, u_{k+1}, \ldots, u_r)\] for $u, u_k \in U_{i_k} ^-$, $u_{k+1} \in U_{i_{k+1}} ^-, \ldots, u_r \in U_{i_r} ^-$; this induces left actions of $U_{i_k} ^-$ and ${\rm Lie}(U_{i_k} ^-) = \c F_{i_k}$ on $\c[t_k, \ldots, t_r]$ $(= \c[U_{i_k} ^- \times \cdots \times U_{i_r} ^-])$, which are given by: 
\begin{align*}
&\exp(t F_{i_k}) \cdot f(t_k, \ldots, t_r) = f(t_k - t, \ldots, t_r),\ {\rm and\ hence}\\
&F_{i_k} \cdot f(t_k, \ldots, t_r) = -\frac{\partial}{\partial t_k} f(t_k, \ldots, t_r)
\end{align*} 
for $t \in \c$ and $f(t_k, \ldots, t_r) \in \c[t_k, \ldots, t_r]$. Also, it follows from the definition of $v_{\bf i}$ that $a_1$ is equal to the degree of $\sigma/\tau_{{\bf i}, {\bf m}}$ with respect to the variable $t_1$; here, Lemma \ref{prop, lowest} implies that $\sigma/\tau_{{\bf i}, {\bf m}} \in \c[t_1, \ldots, t_r]$. Therefore, we deduce that 
\begin{align*}
a_1 &= \max\{a \in \z_{\ge 0} \mid \frac{\partial^a}{\partial t_1 ^a} (\sigma/\tau_{{\bf i}, {\bf m}}) \neq 0\}\\
&= \max\{a \in \z_{\ge 0} \mid F_{i_1} ^a (\sigma/\tau_{{\bf i}, {\bf m}}) \neq 0\}.
\end{align*}
Moreover, because the section $\sigma$ is identical to $(\sigma/\tau_{{\bf i}, {\bf m}}) \cdot \tau_{{\bf i}, {\bf m}}$ in $H^0 (Z_{\bf i}, \mathcal{L}_{{\bf i}, {\bf m}})$, we have
\begin{equation}\label{Leibniz}
\begin{aligned}
F_{i_1} ^a \sigma &= F_{i_1} ^a ((\sigma/\tau_{{\bf i}, {\bf m}}) \cdot \tau_{{\bf i}, {\bf m}})\\
&= (F_{i_1} ^a (\sigma/\tau_{{\bf i}, {\bf m}})) \cdot \tau_{{\bf i}, {\bf m}}\quad({\rm since}\ F_{i_1}\tau_{{\bf i}, {\bf m}} = 0\ {\rm by\ the\ definition\ of}\ \tau_{{\bf i}, {\bf m}}).
\end{aligned}
\end{equation} 
From these, we deduce that \[a_1 = \max\{a \in \z_{\ge 0} \mid F_{i_1} ^a \sigma \neq 0\}.\] Also, because the polynomial $F_{i_1} ^{a_1} (\sigma/\tau_{{\bf i}, {\bf m}}) \in \c[t_1, \ldots, t_r]$ does not contain the variable $t_1$, the restriction $(F_{i_1} ^{a_1} (\sigma/\tau_{{\bf i}, {\bf m}}))|_{t_1 = 0}$ is identical to $F_{i_1} ^{a_1} (\sigma/\tau_{{\bf i}, {\bf m}}) \in \c[t_2, \ldots t_r]$ as a polynomial in the variables $t_2, \ldots, t_r$. Therefore, if $v_{{\bf i}_{\ge 2}}$ denotes the valuation on $\c(Z_{{\bf i}_{\ge 2}})$ defined to be the highest term valuation on $\c(t_2, \ldots, t_r)$ with respect to the lexicographic order on $\z^{r-1}$, then we see from the definition of $v_{\bf i}$ that \[v_{{\bf i}_{\ge 2}}((F_{i_1} ^{a_1} (\sigma/\tau_{{\bf i}, {\bf m}}))|_{t_1 = 0}) = -(a_2, \ldots, a_r).\] Moreover, we see that 
\begin{align*}
(F_{i_1} ^{a_1} (\sigma/\tau_{{\bf i}, {\bf m}}))|_{t_1 = 0} &= ((F_{i_1} ^{a_1}\sigma)/\tau_{{\bf i}, {\bf m}})|_{t_1 = 0}\quad({\rm by\ equation}\ (\ref{Leibniz}))\\
&= \iota_{1, 2} ^\ast(F_{i_1} ^{a_1}\sigma)/\tau_{{\bf i}_{\ge 2}, {\bf m}_{\ge 2}}\quad({\rm by\ Lemma}\ \ref{restriction}).
\end{align*} 
From these, we deduce that $v_{{\bf i}_{\ge 2}} (\iota_{1, 2} ^\ast(F_{i_1} ^{a_1}\sigma)/\tau_{{\bf i}_{\ge 2}, {\bf m}_{\ge 2}}) = -(a_2, \ldots, a_r)$. Repeating this argument, with $\sigma$ replaced by $\iota_{1, 2} ^\ast(F_{i_1} ^{a_1}\sigma)$, we conclude the assertion of the proposition.
\end{proof}

Remark that if we set $F_i ^{(a)} := F_i ^a/a!$ for $i \in I$ and $a \in \z_{\ge 0}$, then \[a_k = \max\{a \in \z_{\ge 0} \mid F_{i_k} ^{(a)} (\iota_{k-1, k} ^\ast(F_{i_{k-1}} ^{(a_{k-1})}(\cdots(\iota_{2, 3} ^\ast(F_{i_2} ^{(a_2)}(\iota_{1, 2} ^\ast(F_{i_1} ^{(a_1)} \sigma))))\cdots))) \neq 0\}\] for all $k = 1, \ldots, r$.

\section{Upper crystal bases and upper global bases}

In this section, we recall some basic facts about upper crystal bases and upper global bases, following \cite{Kas1}, \cite{Kas2}, and \cite{Kas3}. We denote by $\langle \cdot, \cdot \rangle: \mathfrak{t}^\ast \times \mathfrak{t} \rightarrow \c$ the canonical pairing, and define a symmetric bilinear form $(\cdot, \cdot)$ on $\mathfrak{t}^\ast$ by $2(\alpha_j, \alpha_i)/(\alpha_i, \alpha_i) = \langle \alpha_j, h_i \rangle$ for all $i, j \in I$, and by $(\alpha_i, \alpha_i) = 2$ for all short simple roots $\alpha_i$. We set
\begin{align*}
&q_i := q^{(\a_i, \a_i)/2}\ {\rm for}\ i \in I,\\
&[s]_i := \frac{q_i ^s - q_i ^{-s}}{q_i - q_i ^{-1}}\ {\rm for}\ i \in I,\ s \in \z,\\
&[0]_i ! := 1,\ {\rm and}\ [s]_i ! := [s]_i [s-1]_i \cdots [1]_i\ {\rm for}\ i \in I,\ s \in \z_{> 0},\\
&\genfrac{[}{]}{0pt}{}{s}{0}_i := 1\ {\rm for}\ s \in \z_{\ge 0},\ {\rm and}\ \genfrac{[}{]}{0pt}{}{s}{k}_i := \frac{[s]_i [s-1]_i \cdots [s - k +1]_i}{[k]_i [k-1]_i \cdots [1]_i}\ {\rm for}\ s, k \in \z_{> 0}\ {\rm with}\ k \le s.
\end{align*}

\vspace{2mm}\begin{defi}\normalfont
For a finite-dimensional semisimple Lie algebra $\mathfrak{g}$, the {\it quantized enveloping algebra} $U_q(\mathfrak{g})$ is the unital associative $\q(q)$-algebra with generators $e_i, f_i, t_i, t_i ^{-1}$, $i \in I$, and relations: for $i, j \in I$,
\begin{enumerate}
\item[(i)] $t_i t_i ^{-1}=1$ and $t_i t_j = t_j t_i$,
\item[(ii)] $t_i e_j t_i ^{-1} = q_i ^{c_{i, j}} e_j$ and $t_i f_j t_i ^{-1} = q_i ^{-c_{i, j}} f_j$,
\item[(iii)] $e_i f_i - f_i e_i = (t_i - t_i ^{-1})/(q_i - q_i ^{-1})$ and $e_i f_j - f_j e_i = 0$ if $i \neq j$,
\item[(iv)] $\sum_{s = 0} ^{1 - c_{i, j}}(-1)^s e_i^{(s)} e_j e_i ^{(1 - c_{i, j} - s)} = \sum_{s = 0} ^{1 - c_{i, j}}(-1)^s f_i^{(s)} f_j f_i ^{(1 - c_{i, j} - s)} = 0$ if $i \neq j$.
\end{enumerate}
Here, $(c_{i, j})_{i, j \in I} := (\langle \alpha_j, h_i \rangle)_{i, j \in I}$ is the Cartan matrix of $\mathfrak{g}$, and $e_i ^{(s)} := e_i ^s/[s]_i !$, $f_i ^{(s)} := f_i ^s/[s]_i !$ for $i \in I$, $s \in \z_{\ge 0}$. 
\end{defi}\vspace{2mm}

The algebra $U_q(\mathfrak{g})$ has the Hopf algebra structure given by the following coproduct $\Delta$, counit $\varepsilon$, and antipode $S$:
\begin{align*}
&\Delta (e_i) = e_i \otimes 1 + t_i \otimes e_i,\ \Delta (f_i) = f_i \otimes t_i ^{-1} + 1 \otimes f_i,\ \Delta (t_i) = t_i \otimes t_i,\\
&\varepsilon (e_i) = 0,\ \varepsilon (f_i) = 0,\ \varepsilon (t_i) = 1,\\
&S (e_i) = -t_i ^{-1} e_i,\ S (f_i) = -f_i t_i,\ S (t_i) = t_i^{-1}
\end{align*}
for $i \in I$. The coproduct $\Delta$ is identical to $\Delta_+$ in \cite{Kas2}, and to $\Delta$ in \cite{Lus}. In this paper, we always assume that $U_q(\mathfrak{g})$-modules are defined over $\q(q)$. For $\lambda \in P$ and a finite-dimensional $U_q(\mathfrak{g})$-module $V$, let $V_\lambda$ denote the corresponding weight space, i.e., \[V_\lambda:= \{v \in V \mid t_i v = q_i ^{\langle \lambda, h_i\rangle} v\ {\rm for\ all}\ i \in I\}.\] Also, we define operators $\tilde{e}_i$, $\tilde{f}_i$, $i \in I$, on $V$ as follows (see \cite[\S\S 3.1]{Kas3}): for $v \in\ {\rm Ker}\ e_i \cap V_\lambda$ and $0 \le k \le \langle \lambda, h_i \rangle$, 
\[\tilde{e}_i (f_i ^{(k)}v) := \frac{[\langle \lambda, h_i \rangle - k + 1]_i}{[k]_i} f_i ^{(k-1)}v,\ \tilde{f}_i (f_i ^{(k)}v) := \frac{[k + 1]_i}{[\langle \lambda, h_i \rangle - k]_i} f_i ^{(k+1)}v;\] here, we set $f_i ^{(-1)}v := 0$. These operators $\tilde{e}_i$, $\tilde{f}_i$, $i \in I$, are called the {\it upper Kashiwara operators}.

\vspace{2mm}\begin{defi}\normalfont
Denote by $A \subset \q(q)$ the $\q$-subalgebra of $\q(q)$ consisting of rational functions regular at $q = 0$. For a finite-dimensional $U_q(\mathfrak{g})$-module $V$, an {\it upper crystal basis} $(L, \mathcal{B})$ of $V$ is a pair of a free $A$-submodule $L \subset V$ and a $\q$ $(= A/qA)$ -basis $\mathcal{B}$ of $L/qL$ satisfying the following conditions:
\begin{enumerate}
\item[{\rm (i)}] $V \simeq \q(q) \otimes_A L$,
\item[{\rm (ii)}] $\tilde{e}_i L \subset L$ and $\tilde{f}_i L \subset L$ for $i \in I$ (in particular, $\tilde{e}_i$ and $\tilde{f}_i$ act on $L/qL$),
\item[{\rm (iii)}] $\tilde{e}_i \mathcal{B} \subset \mathcal{B} \cup \{0\}$ and $\tilde{f}_i \mathcal{B} \subset \mathcal{B} \cup \{0\}$ for $i \in I$,
\item[{\rm (iv)}] $L = \bigoplus_{\lambda \in P} L_\lambda$ and $\mathcal{B} = \coprod_{\lambda \in P} \mathcal{B}_\lambda$, where $L_\lambda := L \cap V_\lambda$ and $\mathcal{B}_\lambda := \mathcal{B} \cap (L_\lambda/q L_\lambda)$,
\item[{\rm (v)}] $b^\prime = \tilde{f}_i b$ if and only if $b = \tilde{e}_i b^\prime$ for $i \in I$ and $b, b^\prime \in \mathcal{B}$.
\end{enumerate}
\end{defi}\vspace{2mm}

The following is a fundamental property of an upper crystal basis.

\vspace{2mm}\begin{lem}[{see, for instance, \cite[Lemma 2.4.1 and equation (2.4.2)]{Kas2}}]\label{length of string}
Let $V$ be a finite-dimensional $U_q(\mathfrak{g})$-module, $(L, \mathcal{B})$ its upper crystal basis, and set \[\varepsilon_i(b) := \max\{a \in \z_{\ge 0} \mid \tilde{e}_i ^a b \neq 0\},\ \varphi_i (b) := \max\{a \in \z_{\ge 0} \mid \tilde{f}_i ^a b \neq 0\}\] for $i \in I$ and $b \in \mathcal{B}$. Then, it holds that \[\langle {\rm wt}(b), h_i \rangle = \varphi_i(b) - \varepsilon_i(b).\]
\end{lem}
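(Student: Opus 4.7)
The plan is to reduce the assertion to the $U_q(\mathfrak{sl}_2)$ case by restricting the action of $U_q(\mathfrak{g})$ to the subalgebra $U_q(\mathfrak{sl}_2)_i \subset U_q(\mathfrak{g})$ generated by $e_i, f_i, t_i^{\pm 1}$. Both the upper Kashiwara operators $\tilde{e}_i, \tilde{f}_i$ and the pairing $\langle \cdot, h_i \rangle$ are determined entirely by this subalgebra, and any finite-dimensional $U_q(\mathfrak{sl}_2)_i$-module decomposes into a direct sum of irreducibles $V_q(m \varpi_i)$, so the problem becomes a concrete component-wise computation.

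First, let $b_0 := \tilde{e}_i^{\varepsilon_i(b)} b$ denote the top of the $i$-string containing $b$ and set $n := \varepsilon_i(b) + \varphi_i(b)$. Since $\tilde{f}_i$ shifts weight by $-\alpha_i$ (a direct consequence of condition (iv) in the definition together with the explicit formula defining $\tilde{f}_i$), we have ${\rm wt}(b) = {\rm wt}(b_0) - \varepsilon_i(b)\,\alpha_i$, hence $\langle {\rm wt}(b), h_i\rangle = \langle {\rm wt}(b_0), h_i\rangle - 2\varepsilon_i(b)$. Since $\varphi_i(b) - \varepsilon_i(b) = n - 2\varepsilon_i(b)$, the assertion reduces to showing $\langle {\rm wt}(b_0), h_i \rangle = n$.

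To establish this, lift $b_0$ to a vector $v_0 \in L_{{\rm wt}(b_0)}$ and decompose $v_0 = \sum_j v_0^{(j)}$ according to the $U_q(\mathfrak{sl}_2)_i$-decomposition $V = \bigoplus_j V_q(m_j \varpi_i)$. Since the relevant weight space of each summand is one-dimensional, one may write $v_0^{(j)} = c_j f_i^{(k_j)} u_j$ for a highest weight vector $u_j$ and an integer $k_j$ satisfying $m_j - 2k_j = \langle {\rm wt}(b_0), h_i\rangle$. The hypothesis $\tilde{e}_i b_0 = 0$ translates to $\tilde{e}_i v_0 \in qL$, and applying the explicit formula for $\tilde{e}_i$ component-wise forces $k_j = 0$ whenever $c_j \not\equiv 0 \pmod{q}$. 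Thus every surviving component of $v_0$ in $L/qL$ is a highest weight vector of its summand, giving $m_j = \langle {\rm wt}(b_0), h_i \rangle$ for every such $j$; applying $\tilde{f}_i^a$ modulo $qL$ then yields $\varphi_i(b_0) = m_j$, which combined with $\varphi_i(b_0) = n$ proves the reduced claim.

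The main obstacle is the third step: rigorously justifying that $\tilde{e}_i v_0 \in qL$ forces each component $v_0^{(j)}$ with $c_j \not\equiv 0 \pmod{q}$ to be a highest weight vector. This hinges on the fact that the coefficient $[m_j - k_j + 1]_i / [k_j]_i$ appearing in the formula for $\tilde{e}_i$ is a unit in $A$ whenever $0 < k_j \le m_j$, so that no $q$-denominator can absorb $c_j$ into $qA$. Once this technical point is handled, the remainder of the argument is bookkeeping with the $i$-string structure.
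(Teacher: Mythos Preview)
The paper itself offers no proof of this lemma; it is quoted directly from \cite[Lemma 2.4.1 and (2.4.2)]{Kas2}. Your outline follows the natural strategy behind Kashiwara's argument: reduce to the top $b_0$ of the $i$-string and analyze a lift $v_0 \in L$ via the $U_q(\mathfrak{sl}_2)_i$-decomposition of $V$. The first two paragraphs are fine.

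The gap is exactly where you flag it, but your proposed fix does not close it. First, the specific claim is false: for $0 < k \le m$ one has
\[
\frac{[m-k+1]_i}{[k]_i} \;=\; q_i^{\,2k-m-1}\cdot(\text{unit in }A),
\]
so this ratio is a unit in $A$ only when $2k = m+1$; for $k < (m+1)/2$ it has a pole at $q=0$, and for $k > (m+1)/2$ it lies in $qA$. Second, and more fundamentally, even a correct statement about this coefficient would not suffice. Your argument silently assumes that the $A$-lattice $L$ is compatible with the chosen $U_q(\mathfrak{sl}_2)_i$-decomposition $V = \bigoplus_j V_q(m_j\varpi_i)$, in the sense that $L = \bigoplus_j (L \cap V_q(m_j\varpi_i))$ and that the intersection on each summand is (up to a unit) the standard upper crystal lattice. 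Without this, the coefficients $c_j$ are merely elements of $\q(q)$, the phrase ``$c_j \not\equiv 0 \pmod{q}$'' has no meaning, and the condition $\tilde e_i v_0 \in qL$ cannot be read off component by component.

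This compatibility of $L$ with the $i$-string decomposition is precisely the content of Kashiwara's lemma in \cite{Kas2}; it is a genuine theorem (established in the lower setting and then dualized), not a formality. Once one has it, the computation you sketch goes through in the correct normalization $v_k := \tilde f_i^{\,k} u_j = \binom{m_j}{k}_i^{-1} f_i^{(k)} u_j$, where $\tilde e_i v_k = v_{k-1}$ with coefficient $1$; all surviving components of $b_0$ then have $k_j = 0$ and hence the same $m_j = \langle \mathrm{wt}(b_0), h_i\rangle$, giving $n = \varphi_i(b_0) = m_j$ as desired. So the missing ingredient is not a coefficient estimate but the lattice compatibility itself.
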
\vspace{2mm}

For a dominant integral weight $\lambda$, let $V_q(\lambda)$ denote the irreducible highest weight $U_q(\mathfrak{g})$-module with highest weight $\lambda$, and $v_{q, \lambda} \in V_q(\lambda)$ the highest weight vector. We define an $A$-submodule $L^{\rm up}(\lambda) \subset V_q(\lambda)$ and a subset $\mathcal{B}(\lambda) \subset L^{\rm up}(\lambda)/q L^{\rm up}(\lambda)$ by
\begin{align*}
L^{\rm up}(\lambda) &:= \sum_{\substack{l \in \z_{\ge 0},\\i_1, \ldots, i_l \in I}} A \tilde{f}_{i_1} \cdots \tilde{f}_{i_l} v_{q, \lambda},\\
\mathcal{B}(\lambda) &:= \{\tilde{f}_{i_1} \cdots \tilde{f}_{i_l} v_{q, \lambda} \bmod q L^{\rm up}(\lambda) \mid l \in \z_{\ge 0},\ i_1, \ldots, i_l \in I\} \setminus \{0\}.
\end{align*}
Then, it follows from \cite[Theorem 2]{Kas2} and \cite[Proposition 3.2.2]{Kas3} that $(L^{\rm up}(\lambda), \mathcal{B}(\lambda))$ is an upper crystal basis of $V_q(\lambda)$.

\vspace{2mm}\begin{defi}\normalfont
The {\it bar involution} $\overline{\vphantom{(}\cdot\vphantom{)}}: U_q(\mathfrak{g}) \rightarrow U_q(\mathfrak{g})$ is the $\q$-involution given by: \[\overline{e_i} = e_i,\ \overline{f_i} = f_i,\ \overline{t_i} = t_i ^{-1},\ \overline{q} = q^{-1}.\] Also, for a finite-dimensional $U_q(\mathfrak{g})$-module $V$, a $\q$-involution $\overline{\vphantom{(}\cdot\vphantom{)}}: V \rightarrow V$ is called a {\it bar involution} on $V$ if $\overline{u v} = \overline{u} \cdot \overline{v}$ for all $u \in U_q (\mathfrak{g})$ and $v \in V$.
\end{defi}\vspace{2mm}

\noindent Note that there exists a unique bar involution $\overline{\vphantom{(}\cdot\vphantom{)}}$ on $V_q(\lambda)$ such that $\overline{v_{q, \lambda}} = v_{q, \lambda}$. We now recall the definition of upper global bases. Let $V$ be a vector space over $\q(q)$, and $V^\q \subset V$ its $\q[q, q^{-1}]$-submodule. Then, $V^\q$ is called a {\it$\q[q, q^{-1}]$-form} of $V$ if $V \simeq V^\q \otimes_{\q[q, q^{-1}]} \q(q)$. Denote by $U_q ^\q(\mathfrak{g}) \subset U_q(\mathfrak{g})$ the $\q[q, q^{-1}]$-subalgebra of $U_q(\mathfrak{g})$ generated by $e_i ^{(k)}, f_i ^{(k)}, t_i, t_i ^{-1}$, $i \in I$, $k \in \z_{\ge 0}$, and by \[\genfrac{\{}{\}}{0pt}{}{t}{l}_i := \prod_{k=1} ^l \frac{q_i ^{1-k} t - q_i ^{k-1} t^{-1}}{q_i ^k - q_i ^{-k}},\ i \in I,\ l \in \z_{>0},\] for Laurent monomials $t$ in the variables $t_j$, $j \in I$. Also, let $V_{q, \q} ^{\rm up} (\lambda) \subset V_q(\lambda)$ denote the unique $\q[q, q^{-1}]$-submodule of $V_q (\lambda)$ satisfying the following conditions (see \cite[\S\S 4.2]{Kas3}):  
\begin{align*}
&V_{q, \q} ^{\rm up} (\lambda) \cap V_q (\lambda)_\lambda = \q[q, q^{-1}] v_{q, \lambda};\\ 
&\{v \in V_q (\lambda) \mid e_i ^{(k)} v \in V_{q, \q} ^{\rm up} (\lambda)\ {\rm for\ all}\ i \in I,\ k \ge 1\} = V_{q, \q} ^{\rm up} (\lambda) + \q(q) v_{q, \lambda}.
\end{align*}
Note that $U_q ^\q(\mathfrak{g})$ (resp., $V_{q, \q} ^{\rm up} (\lambda)$) is a $\q[q, q^{-1}]$-form of $U_q(\mathfrak{g})$ (resp., of $V_q(\lambda)$), and that $V_{q, \q} ^{\rm up} (\lambda)$ is a $U_q ^\q(\mathfrak{g})$-submodule. We regard $\c$ as a $\q[q, q^{-1}]$-module by the natural $\q$-algebra homomorphism $\q[q, q^{-1}] \rightarrow \c$, $q \mapsto 1$. Let $V$ be a finite-dimensional $U_q(\mathfrak{g})$-module, and $V^\q$ a $\q[q, q^{-1}]$-form of $V$ that is invariant under the action of $U_q ^\q(\mathfrak{g})$. Then, the $\c$-vector space $V^\q \otimes_{\q[q, q^{-1}]} \c$ has a $\mathfrak{g}$-module structure given by \[E_i(v \otimes c) := (e_i v) \otimes c,\ F_i(v \otimes c) := (f_i v) \otimes c,\ h_i(v \otimes c) := \left(\frac{t_i - t_i ^{-1}}{q_i - q_i ^{-1}}v\right) \otimes c\] for $i \in I$, $v \in V^\q$, and $c \in \c$. Note that $V_{q, \q} ^{\rm up} (\lambda) \otimes_{\q[q, q^{-1}]} \c$ is isomorphic to $V(\lambda)$ as a $\mathfrak{g}$-module (see, for instance, the proof of \cite[Lemma 5.14]{J2}).

\vspace{2mm}\begin{rem}\normalfont
The $\q[q, q^{-1}]$-form of $V_q(\lambda)$ used in \cite[Lemma 5.14]{J2} is not identical to our $\q[q, q^{-1}]$-form $V_{q, \q} ^{\rm up} (\lambda)$; note that these are dual to each other. However, the proof of \cite[Lemma 5.14]{J2} can also be applied to our $\q[q, q^{-1}]$-form $V_{q, \q} ^{\rm up} (\lambda)$.
\end{rem}

\vspace{2mm}\begin{defi}\normalfont
Let $V$ be a finite-dimensional $U_q(\mathfrak{g})$-module, $(L, \mathcal{B})$ its upper crystal basis, $\overline{\vphantom{(}\cdot\vphantom{)}}$ a bar involution on $V$, and $V^\q$ a $\q[q, q^{-1}]$-form of $V$ that is invariant under the action of $U_q ^\q(\mathfrak{g})$. Then, $(V^\q, L, \overline{L})$ is called a {\it balanced triple} if the natural $\q$-linear map $V^\q \cap L \cap \overline{L} \rightarrow L/q L$ is an isomorphism. If $G_q ^{\rm up}:L/q L \rightarrow V^\q \cap L \cap \overline{L}$ denotes the inverse map of this isomorphism, then $\{G_q ^{\rm up}(b) \mid b \in \mathcal{B}\}$ forms a $\q[q, q^{-1}]$-basis of $V^\q$; this is called the {\it upper global basis} of $V$ with respect to the balanced triple $(V^\q, L, \overline{L})$.
\end{defi}\vspace{2mm}

If we set $G^{\rm up}(b):= G_q ^{\rm up}(b) \otimes 1 \in V^\q \otimes_{\q[q, q^{-1}]} \c$ for $b \in \mathcal{B}$, the specialization of $G_q ^{\rm up}(b)$ at $q = 1$, then the set $\{G^{\rm up}(b) \mid b \in \mathcal{B}\}$ forms a $\c$-basis of the $\mathfrak{g}$-module $V^\q \otimes_{\q[q, q^{-1}]} \c$. The following is a fundamental property of an upper global basis.

\vspace{2mm}\begin{prop}[{see \cite[Proposition 5.3.1 and the remark following it]{Kas3}}]\label{property of upper global basis}
Let $V$ be a finite-dimensional $U_q(\mathfrak{g})$-module, $\{G_q ^{\rm up}(b) \mid b \in \mathcal{B}\}$ its upper global basis, and $\varepsilon_i$, $\varphi_i$ the maps defined in Lemma \ref{length of string}.
\begin{enumerate}
\item[{\rm (1)}] For all $i \in I$, $k \in \z_{\ge 0}$, and $b \in \mathcal{B}$, it holds that
\begin{align*}
&e_i ^{(k)} G^{\rm up} _q (b) \in \genfrac{[}{]}{0pt}{}{\varepsilon_i (b)}{k}_i G^{\rm up} _q (\tilde{e}_i ^k b) + \sum_{\substack{b^\prime \in \mathcal{B};\ {\rm wt}(b^\prime) = {\rm wt}(\tilde{e}_i ^k b),\\ \varepsilon_i (b^\prime) < \varepsilon_i (\tilde{e}_i ^k b)}} \z[q, q^{-1}] G^{\rm up} _q (b^\prime), \\
&f_i ^{(k)} G^{\rm up} _q (b) \in \genfrac{[}{]}{0pt}{}{\varphi_i (b)}{k}_i G^{\rm up} _q (\tilde{f}_i ^k b) + \sum_{\substack{b^\prime \in \mathcal{B};\ {\rm wt}(b^\prime) = {\rm wt}(\tilde{f}_i ^k b),\\ \varphi_i (b^\prime) < \varphi_i (\tilde{f}_i ^k b)}} \z[q, q^{-1}] G^{\rm up} _q (b^\prime).
\end{align*} 
\item[{\rm (2)}] Set $E_i ^{(k)} := E_i ^k/k!$ and $F_i ^{(k)} := F_i ^k/k!$ for $k \in \z_{\ge0}$. Then,
\begin{align*}
&E_i ^{(k)} G^{\rm up}(b) \in \genfrac{(}{)}{0pt}{}{\varepsilon_i (b)}{k} G^{\rm up}(\tilde{e}_i ^k b) + \sum_{\substack{b^\prime \in \mathcal{B};\ {\rm wt}(b^\prime) = {\rm wt}(\tilde{e}_i ^k b),\\ \varepsilon_i (b^\prime) < \varepsilon_i (\tilde{e}_i ^k b)}} \z G^{\rm up}(b^\prime), \\
&F_i ^{(k)} G^{\rm up}(b) \in \genfrac{(}{)}{0pt}{}{\varphi_i (b)}{k} G^{\rm up}(\tilde{f}_i ^k b) + \sum_{\substack{b^\prime \in \mathcal{B};\ {\rm wt}(b^\prime) = {\rm wt}(\tilde{f}_i ^k b),\\ \varphi_i (b^\prime) < \varphi_i (\tilde{f}_i ^k b)}} \z G^{\rm up}(b^\prime)
\end{align*}
for all $i \in I$, $k \in \z_{\ge 0}$, and $b \in \mathcal{B}$. Here, $\genfrac{(}{)}{0pt}{}{\varepsilon_i (b)}{k}, \genfrac{(}{)}{0pt}{}{\varphi_i (b)}{k}$ are the usual binomial coefficients. In particular, it holds that 
\begin{align*}
&E_i ^{(\varepsilon_i (b))} G^{\rm up} (b) = G^{\rm up} (\tilde{e}_i ^{\varepsilon_i (b)} b),\ \varepsilon_i (b) = \max\{k \in \z_{\ge 0} \mid E_i ^{(k)} G^{\rm up} (b) \neq 0\},\ {\it and}\\ 
&F_i ^{(\varphi_i (b))} G^{\rm up} (b) = G^{\rm up} (\tilde{f}_i ^{\varphi_i (b)} b),\ \varphi_i (b) = \max\{k \in \z_{\ge 0} \mid F_i ^{(k)} G^{\rm up} (b) \neq 0\}.
\end{align*}
\end{enumerate}
\end{prop}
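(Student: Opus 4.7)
My plan is to prove (1) first and then derive (2) by specialization at $q=1$. For (1), I would reduce the problem to the $U_q(\mathfrak{sl}_2)$-subalgebra $U_i \subset U_q(\mathfrak{g})$ generated by $e_i, f_i, t_i^{\pm 1}$, and analyze how $V$ decomposes under $U_i$. The key input is that the upper crystal basis $(L,\mathcal{B})$ restricts compatibly to the $i$-string decomposition: $\mathcal{B} = \coprod_{b_0 : \varepsilon_i(b_0)=0} \{b_0, \tilde{f}_i b_0, \ldots, \tilde{f}_i^{\varphi_i(b_0)} b_0\}$, and each $i$-string spans an irreducible $U_i$-summand. Within each string, the definition of the upper Kashiwara operator immediately gives the mod-$qL$ congruence
\[
f_i^{(k)} G_q^{\rm up}(b) \equiv \genfrac{[}{]}{0pt}{}{\varphi_i(b)}{k}_i \tilde{f}_i^k b \pmod{qL},
\]
where the Gaussian binomial arises as the ratio of the normalizing factors in the definition of $\tilde{f}_i$.

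Next I would lift this congruence to an identity in $V^\q$ using the defining property of the upper global basis: an element of $V^\q \cap L \cap \overline{L}$ is determined by its class in $L/qL$. Since $f_i^{(k)}$ lies in $U_q^\q(\mathfrak{g})$ and is bar-invariant, the element $f_i^{(k)} G_q^{\rm up}(b)$ lies in $V^\q \cap L \cap \overline{L}$, so it expands as a $\z[q,q^{-1}]$-linear combination of global basis vectors. The leading coefficient is the Gaussian binomial above; to control the error terms, I would filter the weight space $V_{{\rm wt}(\tilde{f}_i^k b)}$ by $\varphi_i$ and argue inductively on decreasing $\varphi_i$: the $i$-string decomposition ensures that $f_i^{(k)} G_q^{\rm up}(b)$ can only land inside strings whose ``top'' is above that of $\tilde{f}_i^k b$, which is exactly the condition $\varphi_i(b') < \varphi_i(\tilde{f}_i^k b)$ (Lemma \ref{length of string} ties this to the weight). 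The case of $e_i^{(k)}$ is dual and handled by the same argument with the roles of $\varepsilon_i$ and $\varphi_i$ interchanged.

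For (2), I would specialize the identity of (1) via the base change $\q[q,q^{-1}] \to \c$, $q \mapsto 1$. Under this specialization $[n]_i! \to n!$, so $\genfrac{[}{]}{0pt}{}{n}{k}_i \to \genfrac{(}{)}{0pt}{}{n}{k}$, while $f_i^{(k)}$ descends to $F_i^{(k)}$ and $G_q^{\rm up}(b)$ to $G^{\rm up}(b)$, yielding the displayed formulas. The sharp equalities at the extreme values follow by taking $k = \varphi_i(b)$: then $\tilde{f}_i^{\varphi_i(b)} b$ has $\varphi_i = 0$, so the error sum (indexed by $\varphi_i(b') < 0$) is empty, giving $F_i^{(\varphi_i(b))} G^{\rm up}(b) = G^{\rm up}(\tilde{f}_i^{\varphi_i(b)} b)$; and for $k > \varphi_i(b)$ both the leading binomial and every term allowed in the triangular expansion vanish by induction on $k$. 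The claim for $E_i^{(\varepsilon_i(b))}$ is symmetric.

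The main obstacle is the triangularity refinement in (1): that the error terms involve only $b'$ with $\varphi_i(b') < \varphi_i(\tilde{f}_i^k b)$ (and not merely the same weight). This requires combining two inputs that are not symmetric on the surface: the mod-$qL$ congruence controlling the leading term, and a filtration argument on $i$-strings combined with bar-invariance to rule out ``higher'' string contributions. The specialization step and the extremal statements, by contrast, are formal consequences once the triangular formula is in place.
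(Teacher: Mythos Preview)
The paper does not prove this proposition; it is cited from \cite[Proposition~5.3.1]{Kas3}, so there is no in-paper argument to compare against. Your derivation of (2) from (1), and of the extremal-value equalities from (2), is correct and standard.

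There is, however, a genuine gap in your argument for (1). You claim that $f_i^{(k)} G_q^{\rm up}(b) \in V^{\q} \cap L \cap \overline{L}$ and then invoke the balanced-triple property. Bar-invariance and membership in $V^{\q}$ are clear, but $f_i^{(k)}$ does \emph{not} preserve the upper crystal lattice $L$. On a single $i$-string with top $v \in \ker e_i$ and $\varphi_i$-value $n$, the definition of the upper Kashiwara operator gives $\tilde{f}_i^{\,j} v = \genfrac{[}{]}{0pt}{}{n}{j}_i^{-1} f_i^{(j)} v$, and hence
\[
f_i^{(k)}(\tilde{f}_i^{\,j} v) = \genfrac{[}{]}{0pt}{}{n-j}{k}_i\, \tilde{f}_i^{\,j+k} v.
\]
The Gaussian binomial $\genfrac{[}{]}{0pt}{}{n-j}{k}_i$ lies in $\z[q,q^{-1}]$ but generally not in $A$ (already $[2]_i = q_i + q_i^{-1}$ has a pole at $q=0$), so $f_i^{(k)} G_q^{\rm up}(b) \notin L$ in general and your congruence modulo $qL$ is ill-posed. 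Even if membership in $L$ held, the balanced-triple isomorphism $V^{\q} \cap L \cap \overline{L} \xrightarrow{\sim} L/qL$ would force the expansion coefficients to lie in $\q$, which is inconsistent with the Gaussian-binomial leading term you want.

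The argument in \cite{Kas3} goes instead through the \emph{lower} global basis, for which $f_i^{(k)}$ does preserve the crystal lattice $L^{\rm low}$ and the mod-$qL^{\rm low}$ strategy you sketch works cleanly; the upper statement is then obtained by duality. The key consequence one needs is that, in each weight space, $\{G_q^{\rm up}(b') : \varphi_i(b') \le l\}$ is a basis of $\ker f_i^{\,l+1}$ (and dually for $e_i$, $\varepsilon_i$). Once this filtration compatibility is in hand, the triangularity in (1) is immediate, and the leading coefficient is read off from the single-string computation above.
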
\vspace{2mm}

It follows from \cite[Lemma 4.2.1]{Kas3} that $(V_{q, \q} ^{\rm up} (\lambda), L^{\rm up}(\lambda), \overline{L^{\rm up}(\lambda)})$ is a balanced triple; hence we obtain an upper global basis $\{G_q ^{\rm up} (b) \mid b \in \mathcal{B}(\lambda)\} \subset V_q(\lambda)$.  

\section{String polytopes for Demazure modules}

Here we recall the main result of \cite{Kav}. Let us assume that ${\bf i} = (i_1, \ldots, i_r) \in I^r$ is a reduced word for $w \in W$. 

\vspace{2mm}\begin{defi}\normalfont
Let us denote by $X(w)$ for $w \in W$ the Zariski closure of $B \widetilde{w} B/B$ in $G/B$, where $\widetilde{w} \in G$ denotes a lift for $w$; note that the closed subvariety $X(w)$ is independent of the choice of a lift $\widetilde{w}$. The $X(w)$ is called the {\it Schubert variety} corresponding to $w \in W$.
\end{defi}\vspace{2mm}

It is well-known that the Schubert variety $X(w)$ is a normal projective variety of complex dimension $r$. For a dominant integral weight $\lambda$, we define a variety $\mathcal{L}_\lambda$ by \[\mathcal{L}_\lambda:= (G \times \c)/B,\] where $B$ acts on $G \times \c$ on the right as follows: $(g, c) \cdot b:= (g b, \lambda(b)c)$ for $g \in G$, $c \in \c$, and $b \in B$. Then, the variety $\mathcal{L}_\lambda$ induces a line bundle (denoted by the same symbol $\mathcal{L}_\lambda$) on $G/B$ by the canonical projection \[\mathcal{L}_\lambda \twoheadrightarrow G/B,\ (g, c) \bmod B \mapsto g \bmod B.\] Let us define left actions of $G$ on $G/B$ and on $\mathcal{L}_\lambda$ by
\begin{align*}
g \cdot (g^\prime \bmod B) &:= g g^\prime \bmod B,\\
g \cdot ((g^\prime, c) \bmod B) &:= (g g^\prime, c) \bmod B
\end{align*}
for $g, g^\prime \in G$ and $c \in \c$. Since the projection $\mathcal{L}_\lambda \twoheadrightarrow G/B$ is compatible with these actions, we know that the space $H^0(G/B, \mathcal{L}_\lambda)$ of global sections has a natural $G$-module structure. Note that $\mathcal{L}_\lambda$ induces a line bundle on $X(w)$, which we denote by the same symbol $\mathcal{L}_\lambda$. Since the Schubert variety $X(w)$ is a $P_{i_1}$-stable subvariety of $G/B$, the space $H^0(X(w), \mathcal{L}_\lambda)$ of global sections has a natural $P_{i_1}$-module structure. 

\vspace{2mm}\begin{defi}\normalfont
For $w \in W$ and a dominant integral weight $\lambda$, let $v_{w\lambda} \in V(\lambda)$ denote the extremal weight vector of weight $w\lambda$. Define a $P_{i_1}$-submodule $V_w(\lambda) \subset V(\lambda)$ by \[V_w(\lambda) := \sum_{b \in B} \c b v_{w\lambda}\ (= \sum_{p \in P_{i_1}} \c p v_{w\lambda});\] this is called the {\it Demazure module} corresponding to $w \in W$.
\end{defi}\vspace{2mm}

From the Borel-Weil theorem, we know that $H^0(G/B, \mathcal{L}_\lambda)$ is isomorphic to $V(\lambda)^\ast$ as a $G$-module, and that $H^0(X(w), \mathcal{L}_\lambda)$ is isomorphic to $V_w(\lambda)^\ast$ as a $P_{i_1}$-module.

\vspace{2mm}\begin{prop}[{see \cite[Chapters 13, 14]{J1}}]\label{resolution}
Let ${\bf i} = (i_1, \ldots, i_r) \in I^r$ be a reduced word for $w \in W$.
\begin{enumerate}
\item[{\rm (1)}] The product map \[\mu_{\bf i}:  Z_{\bf i} \rightarrow G/B,\ (p_1, \ldots, p_r) \bmod B^r \mapsto p_1 \cdots p_r \bmod B,\] induces a birational morphism onto the Schubert variety $X(w)$. 
\item[{\rm (2)}] For a dominant integral weight $\lambda = \sum_{i \in I} \lambda_i \varpi_i$, set $\lambda^\prime:= \sum_{i \in I \setminus \{i_1, \ldots, i_r\}} \lambda_i \varpi_i$, and define ${\bf m} = (m_1, \ldots, m_r) \in \z^r _{\ge 0}$ by \[m_k := 
\begin{cases}
\lambda_{i_k} &{\it if}\ i_{k^\prime} \neq i_k\ {\it for\ all}\ k < k^\prime \le r,\\
0 &{\it otherwise}
\end{cases}\] for $1 \le k \le r$. Then, the birational morphism $\mu_{\bf i}$ induces an isomorphism of $P_{i_1}$-modules$:$ \[H^0(X(w), \mathcal{L}_\lambda) \xrightarrow{\sim} H^0(Z_{\bf i}, \mu_{\bf i} ^\ast (\mathcal{L}_\lambda)) \simeq \c_{\lambda^\prime} ^\ast \otimes H^0(Z_{\bf i}, \mathcal{L}_{{\bf i}, {\bf m}}),\] where $\c_{\lambda^\prime}$ denotes the one-dimensional $B$-module induced by the weight $\lambda^\prime$, which is regarded as a $P_{i_1}$-module by the trivial action of $\exp(\c F_{i_1})$ $(\subset P_{i_1})$.
\end{enumerate}
\end{prop}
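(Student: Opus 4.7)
My plan for part (1) is to show surjectivity onto $X(w)$ followed by birationality. The image of $\mu_{\bf i}$ is $P_{i_1} P_{i_2} \cdots P_{i_r}/B \subset G/B$; induction on $r$, combined with the Bruhat decomposition $P_{i_k} = B \sqcup B s_{i_k} B$ and the reducedness of ${\bf i}$, identifies this with $\overline{B \tilde{w} B/B} = X(w)$ (all intermediate products $B\tilde{s}_{i_1}B \cdots B\tilde{s}_{i_j}B$ collapse to a single Bruhat cell $B\tilde{s}_{i_1}\cdots \tilde{s}_{i_j}B$ precisely because the expression is reduced). Since $\dim Z_{\bf i} = r = \ell(w) = \dim X(w)$, birationality follows by exhibiting a dense open subvariety of $Z_{\bf i}$---the image of $B\tilde{s}_{i_1}B \times \cdots \times B\tilde{s}_{i_r}B$ under the quotient---on which $\mu_{\bf i}$ restricts to an isomorphism onto the open Schubert cell $B \tilde{w} B/B$; reducedness makes this a direct consequence of the standard affine parameterization of the cell by successive root subgroups.

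For part (2), I first establish $H^0(X(w), \mathcal{L}_\lambda) \xrightarrow{\sim} H^0(Z_{\bf i}, \mu_{\bf i}^\ast(\mathcal{L}_\lambda))$ by a rational-resolution argument. Since $\mu_{\bf i}$ is a proper birational morphism onto the normal variety $X(w)$, we have $(\mu_{\bf i})_\ast \mathcal{O}_{Z_{\bf i}} = \mathcal{O}_{X(w)}$, and the projection formula then gives $(\mu_{\bf i})_\ast \mu_{\bf i}^\ast \mathcal{L}_\lambda = \mathcal{L}_\lambda$; taking global sections yields the first isomorphism, and its $P_{i_1}$-equivariance is automatic from that of $\mu_{\bf i}$. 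To identify $\mu_{\bf i}^\ast \mathcal{L}_\lambda$ with a twist of $\mathcal{L}_{{\bf i}, {\bf m}}$, I compute its associated $B^r$-equivariant structure directly. Writing $\mathcal{L}_\lambda = G \times^B \c_\lambda$, one checks that the pulled-back fiber is acted on only by the last factor $b_r$ with character $\lambda$, so $\mu_{\bf i}^\ast \mathcal{L}_\lambda$ corresponds to the $B^r$-character $(0, \ldots, 0, \lambda)$. I then propagate $\lambda$ right-to-left through the factors: at step $k$, I split the current rightmost pending weight $\mu$ as $\langle \mu, h_{i_k}\rangle \varpi_{i_k} + (\mu - \langle \mu, h_{i_k}\rangle \varpi_{i_k})$ and absorb the $h_{i_k}$-orthogonal summand---which extends to a character of $P_{i_k}$---into the factor on its left, via the equivariant triviality $P_{i_k} \times^B \c_\nu \simeq (P_{i_k}/B) \times \c_\nu$ valid whenever $\nu$ extends to $P_{i_k}$.

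Tracking the weights inductively, the coefficient deposited at slot $k$ equals $\lambda_{i_k}$ when $k$ is the last occurrence of $i_k$ in ${\bf i}$ and $0$ otherwise---exactly the ${\bf m}$ defined in the proposition---while the residual character reaching the leftmost factor is $\lambda' = \lambda - \sum_k m_k \varpi_{i_k}$, which satisfies $\langle \lambda', h_{i_1}\rangle = 0$ and hence extends uniquely to a character of $P_{i_1}$. Taking $H^0$, the residual trivial $P_{i_1}$-equivariant line bundle of character $\lambda'$ contributes a factor of $\c_{\lambda'}^\ast$ (the dualization reflecting the $V \mapsto V^\ast$ convention for sections of $\mathcal{O}(1)$ recalled in \S\S 2.2), giving the claimed $\c_{\lambda'}^\ast \otimes H^0(Z_{\bf i}, \mathcal{L}_{{\bf i}, {\bf m}})$. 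The main obstacle will be the bookkeeping in this final step: carefully propagating the $P_{i_1}$-equivariant structure through the character-shift identifications so that the residual twist ends up as precisely $\c_{\lambda'}^\ast$ and the remainder as $\mathcal{L}_{{\bf i}, {\bf m}}$ with ${\bf m}$ matching the last-occurrence recipe.
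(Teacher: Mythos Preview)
The paper does not prove this proposition: it is stated with a citation to \cite[Chapters 13, 14]{J1} and no argument is given in the text. So there is no ``paper's own proof'' to compare against.

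That said, your sketch is the standard argument and is correct in outline. For part (1), the identification of the image with $X(w)$ via the Bruhat decomposition and the reducedness of ${\bf i}$, together with the isomorphism on the big cell, is exactly how Jantzen proceeds. For part (2), your use of normality of $X(w)$ plus properness of $\mu_{\bf i}$ to get $(\mu_{\bf i})_\ast \mathcal{O}_{Z_{\bf i}} = \mathcal{O}_{X(w)}$ and then the projection formula is the right mechanism for the first isomorphism; note that you do not actually need the full rational-resolution package (vanishing of higher direct images) here, only the $H^0$ statement, which follows from $(\mu_{\bf i})_\ast \mathcal{O}_{Z_{\bf i}} = \mathcal{O}_{X(w)}$ alone. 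Your right-to-left propagation of the character through the factors is also the standard computation: the key inductive observation---that once the $\varpi_{i_{k'}}$-component has been stripped at the last occurrence $k'$ of an index, the pending weight has zero pairing with $h_{i_k}$ at every earlier occurrence $k$ of the same index---is exactly what forces $m_k = 0$ at non-last occurrences and recovers the stated ${\bf m}$ and residual $\lambda'$.
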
\vspace{2mm}

Since Proposition \ref{resolution} (1) implies that $\c(X(w)) \simeq \c(Z_{\bf i})$, the valuation $v_{\bf i}$ can be regarded as a valuation on $\c(X(w))$. For a dominant integral weight $\lambda$, let $\tau_\lambda \in H^0(G/B, \mathcal{L}_\lambda) = V(\lambda)^\ast$ denote the lowest weight vector such that $\tau_\lambda(v_\lambda) = 1$. By restricting this section, we obtain a section in $H^0(X(w), \mathcal{L}_\lambda)$, which we denote by the same symbol $\tau_\lambda$. In addition, we take ${\bf m} \in \z_{\ge 0} ^r$ as in Proposition \ref{resolution} (2). Then, the Newton-Okounkov body $\Delta(X(w), \mathcal{L}_\lambda, v_{\bf i}, \tau_\lambda)$ is identical to the Newton-Okounkov body $\Delta(Z_{\bf i}, \mathcal{L}_{{\bf i}, {\bf m}}, v_{\bf i}, \tau_{{\bf i}, {\bf m}})$. The main result of \cite{Kav} states that this Newton-Okounkov body is also identical to the string polytope associated to the Demazure module $V_w(\lambda)$ and the reduced word ${\bf i}$. We now recall the definition of string polytopes for Demazure modules.

\vspace{2mm}\begin{defi}\normalfont
Let ${\bf i} = (i_1, \ldots, i_r) \in I^r$ be a reduced word for $w \in W$, and $\lambda$ a dominant integral weight. For the highest weight element $b_\lambda := v_{q, \lambda} \bmod q L^{\rm up}(\lambda) \in \mathcal{B}(\lambda)$, a subset \[\mathcal{B}_w(\lambda) := \{\tilde{f}_{i_1} ^{a_1} \cdots \tilde{f}_{i_r} ^{a_r} b_\lambda \mid a_1, \ldots, a_r \in \z_{\ge 0}\} \setminus \{0\} \subset \mathcal{B}(\lambda)\] is independent of the choice of a reduced word ${\bf i}$ (see \cite[Proposition 3.2.3]{Kas4}); this subset is called a {\it Demazure crystal}.
\end{defi}\vspace{2mm}

Note that the character of $V_w(\lambda)$ is equal to $\sum_{b \in \mathcal{B}_w(\lambda)} e^{{\rm wt}(b)}$, and that $\tilde{e}_i (\mathcal{B}_w(\lambda)) \subset \mathcal{B}_w(\lambda) \cup \{0\}$ for all $i \in I$ (see \cite[Proposition 3.2.3 (ii)]{Kas4}). 

\vspace{2mm}\begin{defi}\normalfont
Let ${\bf i} = (i_1, \ldots, i_r) \in I^r$ be a reduced word for $w \in W$, and $\lambda$ a dominant integral weight. 
\begin{enumerate}
\item[{\rm (1)}] For $b \in \mathcal{B}_w(\lambda)$, define $\Omega_{\bf i} (b) = (a_1, \ldots, a_r) \in \z_{\ge 0} ^r$ by
\begin{align*}
&a_1 := \max\{a \in \z_{\ge 0} \mid \tilde{e}_{i_1} ^a b \neq 0\},\\
&a_2 := \max\{a \in \z_{\ge 0} \mid \tilde{e}_{i_2} ^a \tilde{e}_{i_1} ^{a_1} b \neq 0\},\\
&\ \vdots\\
&a_r := \max\{a \in \z_{\ge 0} \mid \tilde{e}_{i_r} ^a \tilde{e}_{i_{r-1}} ^{a_{r-1}} \cdots \tilde{e}_{i_1} ^{a_1} b \neq 0\}.
\end{align*}
The $\Omega_{\bf i}(b)$ is called the {\it string parameterization} of $b$ with respect to the reduced word ${\bf i}$. The map $\Omega_{\bf i}: \mathcal{B}_w(\lambda) \rightarrow \z_{\ge 0} ^r$ is indeed an injection.
\item[{\rm (2)}] Define a subset $\mathcal{S}_{\bf i} ^{(\lambda, w)} \subset \z_{>0} \times \z^r$ by \[\mathcal{S}_{\bf i} ^{(\lambda, w)} := \bigcup_{k>0} \{(k, \Omega_{\bf i}(b)) \mid b \in \mathcal{B}_w (k\lambda)\},\] and denote by $\mathcal{C}_{\bf i} ^{(\lambda, w)} \subset \r_{\ge 0} \times \r^r$ the smallest real closed cone containing $\mathcal{S}_{\bf i} ^{(\lambda, w)}$. Also, let us define a subset $\Delta_{\bf i} ^{(\lambda, w)} \subset \r^r$ by \[\Delta_{\bf i} ^{(\lambda, w)} := \{{\bf a} \in \r^r \mid (1, {\bf a}) \in \mathcal{C}_{\bf i} ^{(\lambda, w)}\}.\] This subset $\Delta_{\bf i} ^{(\lambda, w)}$ is called the {\it string polytope} associated to $\mathcal{B}_w(\lambda)$ and ${\bf i}$ (see \cite[Section 1]{Lit} and \cite[Definition 3.5]{Kav}).
\end{enumerate}
\end{defi}\vspace{2mm}

\begin{lem}[{see \cite[Section 1]{Lit}}]
For $(a_1, \ldots, a_r) \in \Delta_{\bf i} ^{(\lambda, w)}$, it holds that \[0 \le a_r \le \langle \lambda, h_{i_r} \rangle, 0 \le a_{r-1} \le \langle \lambda -a_r \alpha_{i_r}, h_{i_{r-1}}\rangle, \ldots, 0 \le a_1 \le \langle\lambda -a_r \alpha_{i_r} - \cdots - a_2 \alpha_{i_2}, h_{i_1}\rangle.\] In particular, the string polytope $\Delta_{\bf i} ^{(\lambda, w)}$ is bounded, and hence compact.
\end{lem}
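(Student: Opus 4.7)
The plan is to prove the inequalities first on the integer points $\Omega_{\bf i}(b)$ for $b \in \mathcal{B}_w(\mu)$ with $\mu$ a dominant weight, then extend to arbitrary points of $\Delta_{\bf i}^{(\lambda,w)}$ by the standard scaling-limit argument, and deduce compactness by a backward induction on coordinates.

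Fix $b \in \mathcal{B}_w(\mu)$, set $(a_1,\ldots,a_r) := \Omega_{\bf i}(b)$, and define $b_0 := b$, $b_k := \tilde{e}_{i_k}^{\,a_k} b_{k-1}$. The maximality in the definition of $\Omega_{\bf i}$ forces $\varepsilon_{i_k}(b_k) = 0$, so Lemma \ref{length of string} gives $\varphi_{i_k}(b_k) = \langle {\rm wt}(b_k), h_{i_k}\rangle$. Since $b_{k-1} = \tilde{f}_{i_k}^{\,a_k} b_k$ is nonzero, $\varphi_{i_k}(b_k) \ge a_k$, and combining yields
\[
a_k \;\le\; \langle {\rm wt}(b_k), h_{i_k}\rangle.
\]
The heart of the proof is the identity $b_r = b_\mu$. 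I would establish it by induction on $r$ using the inductive description of Demazure crystals from \cite[Proposition 3.2.3]{Kas4}: when ${\bf i}$ is a reduced word for $w = s_{i_1} w'$, the element $b_1 = \tilde{e}_{i_1}^{\,a_1} b$ lies in $\mathcal{B}_{w'}(\mu)$ with $\varepsilon_{i_1}(b_1) = 0$, and $(a_2,\ldots,a_r)$ is exactly its string parameterization with respect to the reduced word $(i_2,\ldots,i_r)$ of $w'$, so the inductive hypothesis applies. With $b_r = b_\mu$ in hand, ${\rm wt}(b_k) = \mu - \sum_{j>k} a_j \alpha_{i_j}$, and substituting $\mu = \lambda$ produces the stated upper bounds; the lower bounds $a_k \ge 0$ are immediate.

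For a general point of $\Delta_{\bf i}^{(\lambda,w)}$, write $(1,(a_1,\ldots,a_r))$ as a limit $\lim_n c_n (k_n, \Omega_{\bf i}(b^{(n)}))$ with $c_n > 0$, $k_n \in \z_{>0}$, $b^{(n)} \in \mathcal{B}_w(k_n\lambda)$, and $c_n k_n \to 1$. Scaling the integer inequalities for $b^{(n)}$ (applied with $\mu = k_n\lambda$) by $c_n$ and letting $n \to \infty$ preserves them. Finally, $0 \le a_r \le \langle \lambda, h_{i_r}\rangle$ is an absolute bound, and the estimate $|\langle \alpha_{i_j}, h_{i_k}\rangle| \le 2$ lets a backward induction bound each $|a_k|$ in terms of $\lambda$ alone; since $\Delta_{\bf i}^{(\lambda,w)}$ is closed in $\r^r$, it is compact.

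The main potential obstacle is the identity $b_r = b_\mu$: without it only ${\rm wt}(b_r) \le \mu$ in the dominance order is available, which is too weak since pairing with $h_{i_k}$ is not monotone on the dominance cone. This is precisely where the hypothesis that ${\bf i}$ is a \emph{reduced} word is used, via the Demazure-crystal structure.
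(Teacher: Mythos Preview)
Your argument is correct. The paper does not supply its own proof of this lemma but defers to \cite[Section 1]{Lit}; your write-up is essentially the standard argument one finds there, and it goes through as stated. The key step $b_r = b_\mu$ is indeed where reducedness of ${\bf i}$ enters, and your inductive justification via the string property of Demazure crystals (closure under $\tilde{e}_i$ together with $\mathcal{B}_w(\mu) = \bigcup_{a \ge 0} \tilde{f}_{i_1}^{\,a} \mathcal{B}_{w'}(\mu) \setminus \{0\}$) is the right mechanism.

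One small correction: the bound $|\langle \alpha_{i_j}, h_{i_k}\rangle| \le 2$ fails in type $G_2$, where off-diagonal Cartan integers reach $-3$. This does not affect your conclusion---any finite bound on the Cartan entries suffices for the backward induction---but you should replace ``$\le 2$'' by ``$\le 3$'' or simply ``bounded by a constant depending only on $\mathfrak{g}$''.
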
\vspace{2mm}

\begin{prop}[{see \cite[\S\S 3.2 and Theorem 3.10]{BZ}}]
The real closed cone $\mathcal{C}_{\bf i} ^{(\lambda, w)}$ is a rational convex polyhedral cone, that is, there exists a finite number of rational points ${\bf a}_1, \ldots, {\bf a}_l \in \q_{\ge 0} \times \q^r$ such that $\mathcal{C}_{\bf i} ^{(\lambda, w)} = \r_{\ge 0}{\bf a}_1 + \cdots + \r_{\ge 0} {\bf a}_l$. Moreover, the equality $\mathcal{S}_{\bf i} ^{(\lambda, w)} = \mathcal{C}_{\bf i} ^{(\lambda, w)} \cap (\z_{>0} \times \z^r)$ holds. In particular, $\Delta_{\bf i} ^{(\lambda, w)}$ is a rational convex polytope, and the equality $\Omega_{\bf i} (\mathcal{B}_w(\lambda)) = \Delta_{\bf i} ^{(\lambda, w)} \cap \z^r$ holds.
\end{prop}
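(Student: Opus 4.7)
My plan is to deduce the proposition from Berenstein-Zelevinsky's result \cite[\S\S 3.2, Theorem 3.10]{BZ} on the string cone for the full crystal, by reducing the Demazure case to the case $w = w_0$. To this end, extend the given reduced word ${\bf i} = (i_1, \ldots, i_r)$ for $w$ to a reduced word ${\bf i}_0 = (i_1, \ldots, i_r, i_{r+1}, \ldots, i_N) \in I^N$ for the longest element $w_0 \in W$. Since $\mathcal{B}(k\lambda) = \mathcal{B}_{w_0}(k\lambda)$, the theorem of Berenstein-Zelevinsky applied to ${\bf i}_0$ asserts that $\mathcal{C}_{{\bf i}_0}^{(\lambda, w_0)} \subset \r_{\ge 0} \times \r^N$ is a rational convex polyhedral cone (cut out by explicit piecewise-linear inequalities), and that the saturation identity $\mathcal{S}_{{\bf i}_0}^{(\lambda, w_0)} = \mathcal{C}_{{\bf i}_0}^{(\lambda, w_0)} \cap (\z_{>0} \times \z^N)$ holds.

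The crucial step is the following comparison lemma: for every $k \ge 1$ and every $b \in \mathcal{B}(k\lambda)$, one has $b \in \mathcal{B}_w(k\lambda)$ if and only if $\Omega_{{\bf i}_0}(b)$ has the form $(a_1, \ldots, a_r, 0, \ldots, 0)$, in which case $\Omega_{\bf i}(b) = (a_1, \ldots, a_r)$. For the ``only if'' direction, I would argue by induction on $r$ using the recursive description $\mathcal{B}_w(k\lambda) = \{\tilde{f}_{i_1}^a c \mid a \ge 0,\ c \in \mathcal{B}_{s_{i_1} w}(k\lambda)\} \setminus \{0\}$ (which follows from \cite[Proposition 3.2.3]{Kas4}): the first string coordinate $a_1 = \varepsilon_{i_1}(b)$ coincides for ${\bf i}$ and ${\bf i}_0$, and $\tilde{e}_{i_1}^{a_1} b$ lies in $\mathcal{B}_{s_{i_1} w}(k\lambda)$ with reduced word $(i_2, \ldots, i_r)$; after $r$ such steps $\tilde{e}_{i_r}^{a_r} \cdots \tilde{e}_{i_1}^{a_1} b$ is the highest weight element $b_{k\lambda}$, which is annihilated by every $\tilde{e}_j$, so the remaining coordinates of $\Omega_{{\bf i}_0}(b)$ vanish. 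For the ``if'' direction, since the string parameterization with respect to any reduced word for $w_0$ necessarily terminates at the highest weight element $b_{k\lambda}$, the vanishing of the final $N - r$ coordinates forces $\tilde{e}_{i_r}^{a_r} \cdots \tilde{e}_{i_1}^{a_1} b = b_{k\lambda}$, and hence $b = \tilde{f}_{i_1}^{a_1} \cdots \tilde{f}_{i_r}^{a_r} b_{k\lambda}$ lies in $\mathcal{B}_w(k\lambda)$ by definition.

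With the comparison lemma in hand, the projection onto the first $r + 1$ coordinates identifies $\mathcal{S}_{\bf i}^{(\lambda, w)}$ with $\mathcal{S}_{{\bf i}_0}^{(\lambda, w_0)} \cap (\z_{>0} \times \z^r \times \{0\}^{N-r})$, and hence identifies $\mathcal{C}_{\bf i}^{(\lambda, w)}$ with $\mathcal{C}_{{\bf i}_0}^{(\lambda, w_0)} \cap (\r_{\ge 0} \times \r^r \times \{0\}^{N-r})$. The intersection of a rational convex polyhedral cone with a rational coordinate subspace is again a rational convex polyhedral cone, so $\mathcal{C}_{\bf i}^{(\lambda, w)}$ is a rational convex polyhedral cone. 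The saturation identity $\mathcal{S}_{\bf i}^{(\lambda, w)} = \mathcal{C}_{\bf i}^{(\lambda, w)} \cap (\z_{>0} \times \z^r)$ then follows from the corresponding identity for ${\bf i}_0$ by restricting to the coordinate subspace. The polytope $\Delta_{\bf i}^{(\lambda, w)}$ is the height-one slice of this cone, and is bounded by the preceding lemma, so it is a rational convex polytope; the equality $\Omega_{\bf i}(\mathcal{B}_w(\lambda)) = \Delta_{\bf i}^{(\lambda, w)} \cap \z^r$ is the $k = 1$ case of the saturation identity.

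The principal obstacle is the verification of the comparison lemma: although the underlying idea is natural, one must carefully match the inductive Demazure structure with the string parameterization, and in particular verify that the string parameterization with respect to any reduced word for $w_0$ always terminates at the highest weight element.
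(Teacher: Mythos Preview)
The paper does not supply its own proof of this proposition: it is quoted from \cite[\S\S 3.2 and Theorem 3.10]{BZ} as a known input, and the paper immediately proceeds to the next topic. So there is nothing to compare your argument against beyond the bare citation.

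That said, your reduction is correct and is in fact the standard way one deduces the Demazure case from the full Berenstein--Zelevinsky result. A few remarks. First, your ``principal obstacle'' is not an obstacle at all: the claim that the string parameterization for a reduced word of $w_0$ always terminates at $b_{k\lambda}$ is exactly the ``only if'' direction of your comparison lemma applied to $w = w_0$ itself, so it comes for free once you have run your inductive argument (which only uses the recursive description of Demazure crystals together with \cite[Proposition 3.2.3 (ii)]{Kas4}). Second, in the step where you pass from the identification of semigroups $\mathcal{S}_{\bf i}^{(\lambda, w)} \cong \mathcal{S}_{{\bf i}_0}^{(\lambda, w_0)} \cap (\z_{>0} \times \z^r \times \{0\}^{N-r})$ to the identification of cones, you should observe that the rational polyhedral cone $C' := \mathcal{C}_{{\bf i}_0}^{(\lambda, w_0)} \cap (\r_{\ge 0} \times \r^r \times \{0\}^{N-r})$ is the closure of the cone generated by its lattice points with positive first coordinate; this is immediate once you note $(1, 0, \ldots, 0) \in C'$ and that every rational generator on the boundary $\{0\} \times \r^r$ is a limit of rays through lattice points in the open half-space. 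With these two points made explicit, your sketch is complete.
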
\vspace{2mm}

In order to state the main result of \cite{Kav}, we recall the definition of dual crystals.

\vspace{2mm}\begin{defi}\normalfont
Let $V$ be a finite-dimensional $U_q(\mathfrak{g})$-module, and $(L, \mathcal{B})$ its upper crystal basis.
\begin{enumerate}
\item[{\rm (1)}] The {\it crystal graph} of $\mathcal{B}$ is the $I$-colored, directed graph with vertex set $\mathcal{B}$ whose directed edges are given by: $b \xrightarrow{i} b^\prime$ if and only if $b^\prime = \tilde{f}_i b$.
\item[{\rm (2)}] The {\it dual crystal} $\mathcal{B}^\ast$ is the $I$-colored, directed graph that is obtained from the crystal graph of $\mathcal{B}$ by reversing all directed edges. For $b \in \mathcal{B}$, denote by $b^\ast$ the vertex of $\mathcal{B}^\ast$ corresponding to $b$.
\end{enumerate}
\end{defi}\vspace{2mm}
\noindent
In this paper, we identify $\mathcal{B}$ with its crystal graph. If we have the irreducible decomposition $V = V_q(\lambda_1) \oplus \cdots \oplus V_q(\lambda_l)$, then the crystal graph of $\mathcal{B}$ is the disjoint union of the crystal graphs $\mathcal{B}(\lambda_1), \ldots, \mathcal{B}(\lambda_l)$. Also, if we write $\lambda^\ast := -w_0 \lambda \in P$ for a dominant integral weight $\lambda$, then the dual crystal $\mathcal{B}(\lambda)^\ast$ is identical to the crystal graph of $\mathcal{B}(\lambda^\ast)$. Now, for $w \in W$, let $\Phi_{\lambda, w}: V_w(\lambda) \hookrightarrow V(\lambda)$ denote the inclusion map, and $\Phi_{\lambda, w} ^\ast: V(\lambda)^\ast \twoheadrightarrow V_w(\lambda)^\ast = H^0(X(w), \mathcal{L}_\lambda)$ the dual map. If we set $G^{\rm up} _{\lambda, w} (b) := \Phi_{\lambda, w} ^\ast(G^{\rm up} (b^\ast))$ for $b \in \mathcal{B}_w (\lambda)$, then the set $\{G^{\rm up} _{\lambda, w} (b) \mid b \in \mathcal{B}_w (\lambda)\}$ forms a $\c$-basis of $H^0(X(w), \mathcal{L}_\lambda)$. The following is the main result of \cite{Kav}.

\vspace{2mm}\begin{prop}[{see \cite[Theorem 4.1, Corollary 4.2, and Remark 4.6]{Kav}}]\label{Kaveh}
Let ${\bf i} \in I^r$ be a reduced word for $w \in W$, and $\lambda$ a dominant integral weight. 
\begin{enumerate}
\item[{\rm (1)}] $\Omega_{\bf i} (b) = - v_{\bf i} (G^{\rm up} _{\lambda, w} (b)/\tau_\lambda)$ for all $b \in \mathcal{B}_w (\lambda)$.
\item[{\rm (2)}] Define the linear automorphism $\omega: \r \times \r^r \xrightarrow{\sim} \r \times \r^r$ by $\omega(k, {\bf a}) = (k, -{\bf a})$. Then, $\mathcal{S}_{\bf i} ^{(\lambda, w)} = \omega(S(X(w), \mathcal{L}_\lambda, v_{\bf i}, \tau_\lambda))$, $\mathcal{C}_{\bf i} ^{(\lambda, w)} = \omega(C(X(w), \mathcal{L}_\lambda, v_{\bf i}, \tau_\lambda))$, and $\Delta_{\bf i} ^{(\lambda, w)} = -\Delta(X(w), \mathcal{L}_\lambda, v_{\bf i}, \tau_\lambda)$.
\end{enumerate}
\end{prop}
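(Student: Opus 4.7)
The plan is to prove part (1) first, from which part (2) follows essentially formally. For part (1), I would reduce the statement to a computation on the Bott-Samelson variety $Z_{\bf i}$ and then combine the valuation formula of Proposition \ref{val,Chevalley} with the characterization of divided-power actions on upper global bases in Proposition \ref{property of upper global basis}(2). Using Proposition \ref{resolution}, the birational morphism $\mu_{\bf i}:Z_{\bf i}\to X(w)$ gives $\c(X(w))\simeq\c(Z_{\bf i})$, so $v_{\bf i}$ can be computed on $Z_{\bf i}$. Under the induced isomorphism, $\tau_\lambda$ pulls back (up to a nonzero scalar coming from the weight $\lambda^\prime$) to $\tau_{{\bf i},{\bf m}}$, and $\Phi_{\lambda,w}^\ast(G^{\rm up}(b^\ast))$ transfers to the analogous section on $Z_{\bf i}$. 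By Remark \ref{independence}, the scalar factor does not matter.

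Next, apply Proposition \ref{val,Chevalley}: writing $v_{\bf i}(G^{\rm up}_{\lambda,w}(b)/\tau_\lambda)=-(a_1,\ldots,a_r)$, the $a_k$ are determined by maximal nonvanishing of iterated $F_{i_k}^{(a_k)}$ interleaved with the restriction maps $\iota^*_{k,k+1}$. Now invoke Proposition \ref{property of upper global basis}(2) on the dual module: on $\mathcal{B}(\lambda)^\ast$, the function $\varphi_i$ coincides with $\varepsilon_i$ on $\mathcal{B}(\lambda)$, and the divided power $F_i^{(\varepsilon_i(b))}$ sends $G^{\rm up}(b^\ast)$ to $G^{\rm up}((\tilde{e}_i^{\varepsilon_i(b)}b)^\ast)$ while higher powers vanish. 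Since $\mathcal{B}_w(\lambda)$ is closed under the $\tilde{e}_i$'s, the image $\Phi_{\lambda,w}^\ast(G^{\rm up}((\tilde{e}_{i_1}^{a_1}b)^\ast)) = G^{\rm up}_{\lambda,w}(\tilde{e}_{i_1}^{a_1}b)$ is still nonzero, and by $P_{i_1}$-equivariance of $\Phi_{\lambda,w}^\ast$ the same divided-power formula transfers down. Hence $a_1=\varepsilon_{i_1}(b)$, matching the first coordinate of $\Omega_{\bf i}(b)$.

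The inductive step is where the real work lies. One shows that after applying $F_{i_1}^{(a_1)}$ and the restriction $\iota^*_{1,2}$, the resulting section is again an upper global basis section $G^{\rm up}_{\lambda^{\prime\prime},w^{\prime\prime}}(\tilde{e}_{i_1}^{a_1}b)$ for the Demazure datum attached to the shorter reduced word ${\bf i}_{\geq 2}$ (for $w^{\prime\prime}=s_{i_1}w$) and the appropriately shifted weight. The same argument then recursively produces $a_2=\varepsilon_{i_2}(\tilde{e}_{i_1}^{a_1}b)$, and so on, yielding exactly $\Omega_{\bf i}(b)$. The main obstacle is establishing this compatibility of upper global basis elements under the restriction map $\iota^*_{1,2}$: it requires that the tensor-factor-inclusion $v\mapsto v_{m_1\varpi_{i_1}}\otimes v$ of Demazure modules is dual to a map intertwining upper global bases, which reflects Kashiwara's construction of globalization for tensor products and is the crystal-theoretic analog of the geometric fact that $Z_{{\bf i}_{\geq 2}}$ sits as the fiber of $Z_{\bf i}$ over a point in the projection to the first factor $P_{i_1}/B$.

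Once part (1) is established, part (2) follows in a standard way. For a nonzero section $\sigma\in R(\mathcal{L}_\lambda)_k\simeq V_w(k\lambda)^\ast$, expand $\sigma=\sum_{b\in\mathcal{B}_w(k\lambda)}c_b G^{\rm up}_{k\lambda,w}(b)$. Since the string parameterization is injective, the values $v_{\bf i}(G^{\rm up}_{k\lambda,w}(b)/\tau_\lambda^k)=-\Omega_{\bf i}(b)$ are pairwise distinct, so Proposition \ref{prop1,val}(2) forces $v_{\bf i}(\sigma/\tau_\lambda^k)=-\Omega_{\bf i}(b_0)$ for some $b_0$ with $c_{b_0}\neq 0$. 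This yields the equality $\omega(S(X(w),\mathcal{L}_\lambda,v_{\bf i},\tau_\lambda))=\mathcal{S}_{\bf i}^{(\lambda,w)}$; the cone and polytope equalities then follow by taking closures in $\r_{\ge 0}\times\r^r$ and slicing at $\{1\}\times\r^r$.
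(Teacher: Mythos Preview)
The paper does not prove this proposition; it is quoted as a result of Kaveh. However, the paper's proof of Theorem~\ref{main result}, which generalizes this statement to arbitrary words and generalized Demazure modules, follows precisely the strategy you outline: combine the Chevalley-generator description of $v_{\bf i}$ in Proposition~\ref{val,Chevalley} with the divided-power formulas for upper global bases in Proposition~\ref{property of upper global basis}(2), and induct along the chain of restriction maps $\iota_{k,k+1}^\ast$.

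Your identification of the ``main obstacle'' is exactly right. In the paper's proof of Theorem~\ref{main result}, the compatibility of upper global basis elements with $\tilde\iota_{1,2}^\ast$ is handled via Proposition~\ref{global basis of tensor product}(2): the upper global basis of a tensor product is unitriangular over the tensor product of upper global bases, with all off-diagonal terms involving $b_1^\prime\neq b_{m_1\varpi_{i_1}}$ in the first factor; since $G^{\rm up}(b_1^{\prime\ast})(v_{m_1\varpi_{i_1}})=0$ for such $b_1^\prime$, the map $\tilde\iota_{1,2}^\ast$ annihilates the off-diagonal part and returns $G^{\rm up}(b(2)^\ast)$ on the nose. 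This is somewhat cleaner than the route you sketch through a shorter-word Demazure datum $(\lambda'',w'')$, since it avoids having to match the restricted section with an upper global basis element for a \emph{different} irreducible module; instead one stays inside the tensor product picture throughout. Your derivation of part~(2) from part~(1) via Proposition~\ref{prop1,val}(2) and injectivity of $\Omega_{\bf i}$ is correct and is exactly how the paper deduces the corresponding Corollary~7.2 from Theorem~\ref{main result}.
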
\vspace{2mm}

In the rest of this paper, we extend this result to $\Delta(Z_{\bf i}, \mathcal{L}_{{\bf i}, {\bf m}}, v_{\bf i}, \tau_{{\bf i}, {\bf m}})$ for an arbitrary ${\bf i} \in I^r$ and ${\bf m} \in \z_{\ge 0} ^r$.
\section{String polytopes for generalized Demazure modules}
\subsection{Basic definitions}
In this subsection, we introduce a generalization of string polytope. First, we recall some basic facts about the tensor product of upper crystal bases.

\vspace{2mm}\begin{prop}\label{tensor product of crystals}
Let $V_1, V_2$ be finite-dimensional $U_q(\mathfrak{g})$-modules, and $(L_1, \mathcal{B}_1), (L_2, \mathcal{B}_2)$ upper crystal bases of $V_1, V_2$, respectively.
\begin{enumerate}
\item[{\rm (1)}] For a $\q$-basis \[\mathcal{B}_1 \otimes \mathcal{B}_2 := \{b_1 \otimes b_2 \mid b_1 \in \mathcal{B}_1,\ b_2 \in \mathcal{B}_2\}\] of $L_1 /q L_1 \otimes L_2 /q L_2 \simeq (L_1 \otimes L_2)/q (L_1 \otimes L_2)$, the pair $(L_1 \otimes L_2, \mathcal{B}_1 \otimes \mathcal{B}_2)$ is an upper crystal basis of $V_1 \otimes V_2$.
\item[{\rm (2)}] For $i \in I$, $b_1 \in \mathcal{B}_1$, and $b_2 \in \mathcal{B}_2$,
\begin{align*}
&\tilde{e}_i(b_1 \otimes b_2) =
\begin{cases}
\tilde{e}_i b_1 \otimes b_2\ &{\it if}\ \varphi_i (b_1) \ge \varepsilon_i (b_2),\\
b_1 \otimes \tilde{e}_i b_2\ &{\it if}\ \varphi_i (b_1) < \varepsilon_i (b_2),
\end{cases}\\
&\tilde{f}_i(b_1 \otimes b_2) =
\begin{cases}
\tilde{f}_i b_1 \otimes b_2\ &{\it if}\ \varphi_i (b_1) > \varepsilon_i (b_2),\\
b_1 \otimes \tilde{f}_i b_2\ &{\it if}\ \varphi_i (b_1) \le \varepsilon_i (b_2),
\end{cases}\\
&\varepsilon_i(b_1 \otimes b_2) = \max\{\varepsilon_i(b_1),\ \varepsilon_i(b_2) - \langle{\rm wt}(b_1), h_i\rangle\},\\
&\varphi_i(b_1 \otimes b_2) = \max\{\varphi_i(b_2),\ \varphi_i(b_1) + \langle{\rm wt}(b_2), h_i\rangle\}.
\end{align*}
Here, $\varepsilon_i$ and $\varphi_i$ are the maps defined in Lemma \ref{length of string}.
\end{enumerate} 
\end{prop}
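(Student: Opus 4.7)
The plan is to prove the proposition by reducing to the $U_q(\mathfrak{sl}_2)$ case and verifying the formulas of part (2) by an explicit Clebsch--Gordan--type calculation; once part (2) is established, part (1) will follow quickly. Conditions (i) and (iv) for an upper crystal basis are immediate: the first because tensor product commutes with base change from $A$ to $\q(q)$, the second because weights are additive under tensor products. Conditions (ii), (iii), and (v) will fall out of the explicit formulas in part (2), so the whole argument really hinges on part (2).

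For part (2), I would fix $i \in I$ and regard $V_1, V_2$ as modules for the $U_{q_i}(\mathfrak{sl}_2)$-subalgebra of $U_q(\mathfrak{g})$ generated by $e_i, f_i, t_i^{\pm 1}$; the upper Kashiwara operators $\tilde{e}_i, \tilde{f}_i$ depend only on this rank-one action. By complete reducibility over $U_{q_i}(\mathfrak{sl}_2)$ and $\q(q)$-linearity, it suffices to assume both $V_1$ and $V_2$ are irreducible, say of dimensions $m_1 + 1$ and $m_2 + 1$. Parametrize a basis of the $(m+1)$-dimensional irreducible $U_{q_i}(\mathfrak{sl}_2)$-module by $u_0, u_1, \ldots, u_m$ with $u_k := f_i^{(k)} u_0$, so that $\tilde{e}_i u_k = u_{k-1}$, $\tilde{f}_i u_k = u_{k+1}$, $\varepsilon_i(u_k) = k$, and $\varphi_i(u_k) = m - k$. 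I would then apply the coproduct formulas $\Delta(e_i) = e_i \otimes 1 + t_i \otimes e_i$ and $\Delta(f_i) = f_i \otimes t_i^{-1} + 1 \otimes f_i$ iteratively to $u_0 \otimes u_0$, producing a Clebsch--Gordan decomposition of $V_1 \otimes V_2$ in which each basis vector $u_{k_1} \otimes u_{k_2}$ is expressed as a $\q(q)$-linear combination of vectors of the form $f_i^{(j)} v$ for highest-weight vectors $v \in \ker e_i$, with $q$-binomial coefficients.

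The heart of the argument, and the main obstacle, is tracking the $q \to 0$ limits of these $q$-binomial coefficients to determine where $u_{k_1} \otimes u_{k_2}$ sits in $(L_1 \otimes L_2)/q(L_1 \otimes L_2)$. Concretely, the leading term of $\tilde{e}_i(u_{k_1} \otimes u_{k_2})$ modulo $q$ coincides with $u_{k_1 - 1} \otimes u_{k_2}$ or $u_{k_1} \otimes u_{k_2 - 1}$ according to the comparison $\varphi_i(u_{k_1}) = m_1 - k_1$ versus $\varepsilon_i(u_{k_2}) = k_2$, and similarly for $\tilde{f}_i$ with the strict inequality arising from the weight twists $t_i^{\pm 1}$ in the coproduct. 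The expressions for $\varepsilon_i(b_1 \otimes b_2)$ and $\varphi_i(b_1 \otimes b_2)$ then follow from the resulting description of the $i$-string through $b_1 \otimes b_2$, combined with the identity $\varphi_i(b) - \varepsilon_i(b) = \langle {\rm wt}(b), h_i \rangle$ of Lemma~\ref{length of string}. With the formulas of part (2) established, conditions (ii), (iii), and (v) are immediate: the formulas show that $\tilde{e}_i, \tilde{f}_i$ preserve $L_1 \otimes L_2$, descend to maps sending $\mathcal{B}_1 \otimes \mathcal{B}_2$ into $(\mathcal{B}_1 \otimes \mathcal{B}_2) \cup \{0\}$, and are mutually inverse wherever both sides are nonzero, completing part (1).
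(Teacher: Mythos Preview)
The paper does not actually prove this proposition: it is stated at the beginning of \S 5.1 as one of the ``basic facts about the tensor product of upper crystal bases'' that the author recalls, without proof, from Kashiwara's work \cite{Kas1}, \cite{Kas2}, \cite{Kas3}. So there is no proof in the paper to compare against; your sketch is supplying what the paper deliberately omits.

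Your approach---reduce to rank one, decompose the tensor product of two irreducible $U_{q_i}(\mathfrak{sl}_2)$-modules explicitly, and read off the leading terms modulo $q$---is exactly the standard argument going back to Kashiwara's original papers, and it is correct in outline. One point to tighten: you write $u_k := f_i^{(k)} u_0$ and then assert $\tilde e_i u_k = u_{k-1}$, $\tilde f_i u_k = u_{k+1}$. That holds for the \emph{lower} Kashiwara operators, but the paper works with the \emph{upper} ones, for which $\tilde e_i(f_i^{(k)} u_0) = \frac{[m-k+1]_i}{[k]_i} f_i^{(k-1)} u_0$ (see the definitions in Section~3). The vectors $f_i^{(k)} u_0$ span the lower crystal lattice, not $L^{\rm up}$; the correct $A$-basis for the upper lattice is $u_k := \tilde f_i^{\,k} u_0$ taken with the upper operators, which differs from $f_i^{(k)} u_0$ by a nonunit in $A$. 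With that adjustment (or, alternatively, by invoking the duality between upper and lower crystal bases to transport the known lower tensor-product rule), the rest of your argument goes through, and the tensor product formulas are indeed identical in form to the familiar lower-crystal ones.
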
\vspace{2mm}

We call $(L_1 \otimes L_2, \mathcal{B}_1 \otimes \mathcal{B}_2)$ the {\it tensor product} of $(L_1, \mathcal{B}_1)$ and $(L_2, \mathcal{B}_2)$. The following is easily seen from the tensor product rule for crystals (Proposition \ref{tensor product of crystals} (2)).

\vspace{2mm}\begin{cor}\label{tensor product corollary}
Let $V_1, V_2$ be finite-dimensional $U_q(\mathfrak{g})$-modules, and $(L_1, \mathcal{B}_1), (L_2, \mathcal{B}_2)$ upper crystal bases of $V_1, V_2$, respectively. For $i \in I$, $a \in \z_{\ge 0}$, $b_1 \in \mathcal{B}_1$, and $b_2 \in \mathcal{B}_2$,
\begin{enumerate}
\item[{\rm (1)}] it holds that \[\tilde{f}_i ^a(b_1 \otimes b_2) =
\begin{cases}
\tilde{f}_i ^a b_1 \otimes b_2\ &{\it if}\ a \le \varphi_i (b_1) - \varepsilon_i (b_2),\\
\tilde{f}_i ^{\varphi_i (b_1) - \varepsilon_i (b_2)}b_1 \otimes \tilde{f}_i ^{a - (\varphi_i (b_1) - \varepsilon_i (b_2))}b_2\ &{\it otherwise};
\end{cases}\]
\item[{\rm (2)}] if $\tilde{f}_i ^a b_1 \neq 0$ and $\tilde{e}_i b_2 = 0$, then \[\tilde{f}_i ^a(b_1 \otimes b_2) = \tilde{f}_i ^a b_1 \otimes b_2;\]
\item[{\rm (3)}] if $\tilde{f}_i b_1 = 0$, then \[\tilde{f}_i ^a(b_1 \otimes b_2) = b_1 \otimes \tilde{f}_i ^a b_2.\]
\end{enumerate}
\end{cor}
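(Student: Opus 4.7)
The plan is to prove part (1) by induction on $a$ using the tensor product rule from Proposition \ref{tensor product of crystals} (2), and then deduce parts (2) and (3) as immediate corollaries. Throughout, set $p := \varphi_i(b_1)$ and $\varepsilon := \varepsilon_i(b_2)$, and recall the elementary facts that whenever $\tilde f_i b \neq 0$ one has $\varphi_i(\tilde f_i b) = \varphi_i(b) - 1$ and $\varepsilon_i(\tilde f_i b) = \varepsilon_i(b) + 1$ (the latter being a consequence of $\tilde e_i \tilde f_i b = b$ together with Lemma \ref{length of string}).

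For (1), the base case $a=0$ is trivial. For the inductive step, write $\tilde f_i^{a+1}(b_1 \otimes b_2) = \tilde f_i(\tilde f_i^a(b_1 \otimes b_2))$ and split into the two regimes corresponding to the formula. In the first regime ($a+1 \le p - \varepsilon$), the inductive hypothesis gives $\tilde f_i^a(b_1 \otimes b_2) = \tilde f_i^a b_1 \otimes b_2$ with $\varphi_i(\tilde f_i^a b_1) = p - a > \varepsilon = \varepsilon_i(b_2)$, so the tensor product rule applies $\tilde f_i$ to the left factor, yielding $\tilde f_i^{a+1} b_1 \otimes b_2$. In the transition case $a = p - \varepsilon$, the two factors now satisfy $\varphi_i(\tilde f_i^{p-\varepsilon}b_1) = \varepsilon = \varepsilon_i(b_2)$, so the tensor product rule sends the next $\tilde f_i$ to the right factor. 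In the second regime ($a > p - \varepsilon$), the inductive hypothesis gives the mixed expression, and since applying $\tilde f_i$ to $b_2$ only increases $\varepsilon_i$ while $\varphi_i$ of the left factor is unchanged, the inequality $\varphi_i \le \varepsilon_i$ persists, so $\tilde f_i$ continues to act on the right factor. This closes the induction.

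For (2), the hypothesis $\tilde f_i^a b_1 \neq 0$ forces $a \le \varphi_i(b_1)$, while $\tilde e_i b_2 = 0$ forces $\varepsilon_i(b_2) = 0$; thus $a \le \varphi_i(b_1) - \varepsilon_i(b_2)$ and the first case of (1) gives the result. For (3), the hypothesis $\tilde f_i b_1 = 0$ forces $\varphi_i(b_1) = 0 \le \varepsilon_i(b_2)$, so the tensor product rule immediately puts us in the case where $\tilde f_i$ acts on the right factor; a direct induction on $a$ (or an appeal to the argument of (1) restricted to this degenerate situation) shows $\tilde f_i^a(b_1 \otimes b_2) = b_1 \otimes \tilde f_i^a b_2$.

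The argument is essentially routine case analysis; the only point requiring care is the bookkeeping of $\varphi_i$ and $\varepsilon_i$ along the iterates, which is what the \emph{main obstacle} amounts to: verifying that once the tensor product rule switches to acting on the right factor it never switches back. This is where the identities $\varphi_i(\tilde f_i b) = \varphi_i(b) - 1$ and $\varepsilon_i(\tilde f_i b) = \varepsilon_i(b) + 1$ do all the work.
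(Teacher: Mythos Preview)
Your proof is correct and is exactly the routine verification the paper has in mind: the paper does not supply a proof but simply remarks that the corollary ``is easily seen from the tensor product rule for crystals (Proposition~\ref{tensor product of crystals} (2)),'' and your induction on $a$ together with the bookkeeping identities $\varphi_i(\tilde f_i b)=\varphi_i(b)-1$, $\varepsilon_i(\tilde f_i b)=\varepsilon_i(b)+1$ is precisely how one spells this out.
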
\vspace{2mm}

\begin{prop}[{see \cite[Theorem 2, Theorem 5, Lemma 8, and Corollary 10]{LLM}}]\label{generalized Demazure crystal}
For an arbitrary word ${\bf i} = (i_1, \ldots, i_r) \in I^r$ and ${\bf m} = (m_1, \ldots, m_r) \in \z_{\ge 0} ^r$, denote by $\mathcal{B}_{{\bf i}, {\bf m}} \subset \mathcal{B}(m_1 \varpi_{i_1}) \otimes \cdots \otimes \mathcal{B}(m_r \varpi_{i_r})$ the subset
\[\{\tilde{f}_{i_1} ^{a_1} (b_{m_1 \varpi_{i_1}} \otimes \tilde{f}_{i_2} ^{a_2} (b_{m_2 \varpi_{i_2}} \otimes \cdots \otimes \tilde{f}_{i_{r-1}} ^{a_{r-1}} (b_{m_{r-1} \varpi_{i_{r-1}}} \otimes \tilde{f}_{i_r} ^{a_r} (b_{m_r \varpi_{i_r}}))\cdots)) \mid a_1, \ldots, a_r \in \z_{\ge 0}\} \setminus \{0\}.\]
\begin{enumerate}
\item[{\rm (1)}] $\tilde{e}_i (\mathcal{B}_{{\bf i}, {\bf m}}) \subset \mathcal{B}_{{\bf i}, {\bf m}} \cup \{0\}$ for all $i \in I$.
\item[{\rm (2)}] Let $\coprod_{1 \le k \le l} \mathcal{B}(\lambda_k)$ be the decomposition of the crystal graph of $\mathcal{B}(m_1 \varpi_{i_1}) \otimes \cdots \otimes \mathcal{B}(m_r \varpi_{i_r})$ into its connected components. If $\mathcal{B}(\lambda_k) \cap \mathcal{B}_{{\bf i}, {\bf m}} \neq \emptyset$ for $1 \le k \le l$, then there exists $w_k \in W$ such that $\mathcal{B}(\lambda_k) \cap \mathcal{B}_{{\bf i}, {\bf m}} = \mathcal{B}_{w_k} (\lambda_k)$.
\item[{\rm (3)}] The character of $V_{{\bf i}, {\bf m}}$ is equal to $\sum_{b \in \mathcal{B}_{{\bf i}, {\bf m}}} e^{{\rm wt}(b)}$.
\end{enumerate}
\end{prop}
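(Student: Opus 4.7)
The approach is induction on $r$, exploiting the recursive description
$$\mathcal{B}_{{\bf i}, {\bf m}} = \{\tilde{f}_{i_1}^{a_1}(b_{m_1 \varpi_{i_1}} \otimes b) \mid a_1 \in \z_{\ge 0},\ b \in \mathcal{B}_{{\bf i}_{\ge 2}, {\bf m}_{\ge 2}}\} \setminus \{0\}.$$
The base case $r = 1$ gives $\mathcal{B}_{{\bf i}, {\bf m}} = \mathcal{B}_{s_{i_1}}(m_1 \varpi_{i_1})$, an ordinary Demazure crystal, for which (1)--(3) follow from Kashiwara's stability theorem and the Demazure character formula.

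For (1), fix $b = \tilde{f}_{i_1}^{a_1}(b_{m_1 \varpi_{i_1}} \otimes b')$ with $b' \in \mathcal{B}_{{\bf i}_{\ge 2}, {\bf m}_{\ge 2}}$ and pick $i \in I$. When $i = i_1$, the claim is immediate: $\tilde{e}_{i_1} b$ equals $\tilde{f}_{i_1}^{a_1 - 1}(b_{m_1 \varpi_{i_1}} \otimes b')$ if $a_1 \ge 1$, and is $0$ otherwise. When $i \neq i_1$, I would first expand $b$ explicitly via Corollary \ref{tensor product corollary}(1), obtaining either $\tilde{f}_{i_1}^{a_1} b_{m_1 \varpi_{i_1}} \otimes b'$ (if $a_1 \le m_1 - \varepsilon_{i_1}(b')$) or $\tilde{f}_{i_1}^{m_1 - \varepsilon_{i_1}(b')} b_{m_1 \varpi_{i_1}} \otimes \tilde{f}_{i_1}^{a_1 - m_1 + \varepsilon_{i_1}(b')} b'$. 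In either case, since $\varphi_j(b_{m_1 \varpi_{i_1}}) = 0$ for $j \neq i_1$, the tensor product rule (Proposition \ref{tensor product of crystals}(2)) forces $\tilde{e}_i$ to act on the second tensor factor. Reassembling through a second application of Corollary \ref{tensor product corollary}, one concludes that $\tilde{e}_i b$ is either $0$ or of the form $\tilde{f}_{i_1}^{a_1''}(b_{m_1 \varpi_{i_1}} \otimes b'')$ with $b'' = \tilde{e}_i b' \in \mathcal{B}_{{\bf i}_{\ge 2}, {\bf m}_{\ge 2}} \cup \{0\}$ by the inductive hypothesis. The main obstacle here is the bookkeeping of the subcases and the verification that the reassembled element indeed lies in $\mathcal{B}_{{\bf i}, {\bf m}} \cup \{0\}$.

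For (2), once (1) is established, $\mathcal{B}(\lambda_k) \cap \mathcal{B}_{{\bf i}, {\bf m}}$ is closed under each $\tilde{e}_i$, and hence contains the highest weight element $b_{\lambda_k}$ whenever non-empty. I would then read off the Weyl group element $w_k$ recursively from the construction of $\mathcal{B}_{{\bf i}, {\bf m}}$ and invoke Kashiwara's characterization of Demazure crystals (\cite[Proposition 3.2.3]{Kas4}) as the $\tilde{e}$-stable subsets built from the highest weight vector by consecutive $\tilde{f}_i^{a}$-strings, which pins down the intersection as $\mathcal{B}_{w_k}(\lambda_k)$. For (3), I proceed by induction: assuming the character of $V_{{\bf i}_{\ge 2}, {\bf m}_{\ge 2}}$ equals $\sum_{b \in \mathcal{B}_{{\bf i}_{\ge 2}, {\bf m}_{\ge 2}}} e^{{\rm wt}(b)}$, the identity $V_{{\bf i}, {\bf m}} = \sum_{a_1 \ge 0} \c F_{i_1}^{a_1}(v_{m_1 \varpi_{i_1}} \otimes V_{{\bf i}_{\ge 2}, {\bf m}_{\ge 2}})$ lets the Demazure operator $D_{i_1}$ applied to $e^{m_1 \varpi_{i_1}} \cdot {\rm ch}\, V_{{\bf i}_{\ge 2}, {\bf m}_{\ge 2}}$ recover ${\rm ch}\, V_{{\bf i}, {\bf m}}$; on the crystal side, a direct count using the tensor product rule for $\tilde{f}_{i_1}^{a_1}$-strings on $b_{m_1 \varpi_{i_1}} \otimes b$ matches the same expression with $\sum_{b \in \mathcal{B}_{{\bf i}, {\bf m}}} e^{{\rm wt}(b)}$.
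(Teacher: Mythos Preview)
The paper does not supply its own proof of this proposition; it is quoted from \cite{LLM} (which in turn rests on \cite{Kas4}), so there is no in-paper argument to compare against directly.

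Your inductive framework matches the one in \cite{LLM}, and your outlines for (2) and (3) are in line with that reference. However, your argument for (1) in the case $i \neq i_1$ has a real gap. After expanding $b = c_1 \otimes c_2$, the first factor is $c_1 = \tilde{f}_{i_1}^{\,k} b_{m_1\varpi_{i_1}}$ rather than $b_{m_1\varpi_{i_1}}$ itself, so the justification ``$\varphi_j(b_{m_1\varpi_{i_1}}) = 0$ for $j \neq i_1$'' is not the relevant one; what actually holds is $\varepsilon_i(c_1) = 0$, so that if $\tilde{e}_i$ hits the first factor the result is zero, and otherwise $\tilde{e}_i b = c_1 \otimes \tilde{e}_i c_2$. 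The genuine problem is the spillover subcase $c_2 = \tilde{f}_{i_1}^{\,l} b'$ with $l>0$: reassembling $c_1 \otimes \tilde{e}_i c_2$ as $\tilde{f}_{i_1}^{\,a''}(b_{m_1\varpi_{i_1}} \otimes b'')$ yields $b'' = \tilde{e}_{i_1}^{\,t}\,\tilde{e}_i\,\tilde{f}_{i_1}^{\,l} b'$ for some $t \ge 0$, not $\tilde{e}_i b'$. Since $\tilde{f}_{i_1}^{\,l} b'$ need not lie in $\mathcal{B}_{{\bf i}_{\ge 2}, {\bf m}_{\ge 2}}$, the inductive $\tilde{e}$-closure of $\mathcal{B}_{{\bf i}_{\ge 2}, {\bf m}_{\ge 2}}$ does not by itself force $b'' \in \mathcal{B}_{{\bf i}_{\ge 2}, {\bf m}_{\ge 2}}$. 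Filling this gap is exactly the non-trivial content: one needs Kashiwara's string property (the core of \cite[Proposition~3.2.3]{Kas4}, proved via a rank-two crystal reduction) that the $j$-string saturation $\bigcup_{a \ge 0} \tilde{f}_j^{\,a}(\mathcal{D})$ of an $\tilde{e}$-closed subset $\mathcal{D}$ is again $\tilde{e}$-closed. In \cite{LLM} this is organized by establishing (2) first, via the lemmas that $b_\lambda \otimes \mathcal{D}$ and the $j$-string saturation of $\mathcal{D}$ each remain unions of Demazure crystals; (1) is then immediate.
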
\vspace{2mm}

We call $\mathcal{B}_{{\bf i}, {\bf m}}$ a {\it generalized Demazure crystal}. In Section 7, we construct an explicit basis of $H^0(Z_{\bf i}, \mathcal{L}_{{\bf i}, {\bf m}})$ parameterized by $\mathcal{B}_{{\bf i}, {\bf m}}$, which can be regarded as a perfect basis (see \cite[Definition 5.30]{BK} and \cite[Definition 2.5]{KOP} for the definition). 

\vspace{2mm}\begin{rem}\normalfont\label{Schubert variety case}
Let ${\bf i}$ be a reduced word for $w \in W$, and $\lambda$ a dominant integral weight. If we take a dominant integral weight $\lambda^\prime$ and ${\bf m} \in \z_{\ge 0} ^r$ as in Proposition \ref{resolution} (2), then the crystal graph of $\mathcal{B}_w (\lambda)$ is identical to that of $b_{\lambda^\prime} \otimes \mathcal{B}_{{\bf i}, {\bf m}}$. Hence the notion of generalized Demazure crystal indeed generalizes that of Demazure crystal.
\end{rem}\vspace{2mm}

We extend the notion of string parameterizations for Demazure crystals to generalized Demazure crystals. 

\vspace{2mm}\begin{defi}\normalfont\label{generalized string parameterization}
Let ${\bf i} = (i_1, \ldots, i_r) \in I^r$ be an arbitrary word, and ${\bf m} = (m_1, \ldots, m_r) \in \z_{\ge 0} ^r$. Recall that ${\bf i}_{\ge s} = (i_s, \ldots, i_r)$ and ${\bf m}_{\ge s} = (m_s, \ldots, m_r)$ for $1 \le s \le r$. For $b \in \mathcal{B}_{{\bf i}, {\bf m}}$, define $\Omega_{\bf i} (b) = (a_1, \ldots, a_r) \in \z_{\ge 0} ^r$ and $b(s) \in \mathcal{B}_{{\bf i}_{\ge s}, {\bf m}_{\ge s}}$, $1 \le s \le r$, as follows. First, set $b(1):= b$ and $a_1:= \max\{a \in \z_{\ge 0} \mid \tilde{e}_{i_1} ^a b(1) \neq 0\}$. By the definition of $\mathcal{B}_{{\bf i}, {\bf m}}$, there exists $b(2) \in \mathcal{B}(m_2 \varpi_{i_2}) \otimes \cdots \otimes \mathcal{B}(m_r \varpi_{i_r})$ such that $\tilde{e}_{i_1} ^{a_1} b(1) = b_{m_1 \varpi_{i_1}} \otimes b(2)$. Then, it follows from Proposition \ref{generalized Demazure crystal} (1) that $b(2) \in \mathcal{B}_{{\bf i}_{\ge 2}, {\bf m}_{\ge 2}}$. Inductively, define $a_s \in \z_{\ge 0}$ and $b(s+1) \in \mathcal{B}_{{\bf i}_{\ge s+1}, {\bf m}_{\ge s+1}}$, $2 \le s \le r-1$, by $a_s:= \max\{a \in \z_{\ge 0} \mid \tilde{e}_{i_s} ^a b(s) \neq 0\}$, and by $\tilde{e}_{i_s} ^{a_s} b(s) = b_{m_s \varpi_{i_s}} \otimes b(s+1)$. Finally, set $a_r:= \max\{a \in \z_{\ge 0} \mid \tilde{e}_{i_r} ^a b(r) \neq 0\}$. The $\Omega_{\bf i} (b) = (a_1, \ldots, a_r)$ is called the {\it generalized string parameterization} of $b$ with respect to ${\bf i}$.
\end{defi}\vspace{2mm}

\begin{rem}\normalfont
In the situation of Remark \ref{Schubert variety case}, the generalized string parameterization $\Omega_{\bf i}$ is just the usual string parameterization with respect to the reduced word ${\bf i}$. 
\end{rem}\vspace{2mm}

\begin{prop}\label{generalized parameterization}
The following hold.
\begin{enumerate}
\item[{\rm (1)}] If $\Omega_{\bf i} (b) = (a_1, \ldots, a_r)$ for $b \in \mathcal{B}_{{\bf i}, {\bf m}}$, then \[b = \tilde{f}_{i_1} ^{a_1} (b_{m_1 \varpi_{i_1}} \otimes \tilde{f}_{i_2} ^{a_2} (b_{m_2 \varpi_{i_2}} \otimes \cdots \otimes \tilde{f}_{i_{r-1}} ^{a_{r-1}} (b_{m_{r-1} \varpi_{i_{r-1}}} \otimes \tilde{f}_{i_r} ^{a_r} b_{m_r \varpi_{i_r}})\cdots)).\]
\item[{\rm (2)}] If $b, b^\prime \in \mathcal{B}_{{\bf i}, {\bf m}}$ are such that $b \neq b^\prime$, then $\Omega_{\bf i} (b) \neq \Omega_{\bf i} (b ^\prime)$.
\end{enumerate}
\end{prop}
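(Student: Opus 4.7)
The plan is to prove part (1) by reverse induction on $s$, unraveling the definition of $\Omega_{\bf i}(b)$ step by step using property (v) of an upper crystal basis (namely that $\tilde{e}_i$ and $\tilde{f}_i$ are mutually inverse on $\mathcal{B} \cup \{0\}$ whenever they do not produce $0$). Part (2) will then be an immediate consequence of the explicit reconstruction formula in (1).

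For the key step, recall from Definition 5.4 that $b(1) := b$, and for each $1 \le s \le r-1$ we have
\[
\tilde{e}_{i_s}^{a_s} b(s) = b_{m_s \varpi_{i_s}} \otimes b(s+1),
\]
with $b(s+1) \in \mathcal{B}_{{\bf i}_{\ge s+1}, {\bf m}_{\ge s+1}}$ and $a_s = \varepsilon_{i_s}(b(s))$; also $\tilde{e}_{i_r}^{a_r} b(r) = b_{m_r \varpi_{i_r}}$. Since $\tilde{e}_{i_s}^{a_s} b(s) \ne 0$, iterating property (v) of the upper crystal basis $a_s$ times yields
\[
b(s) = \tilde{f}_{i_s}^{a_s}\bigl(b_{m_s \varpi_{i_s}} \otimes b(s+1)\bigr), \qquad 1 \le s \le r-1,
\]
and analogously $b(r) = \tilde{f}_{i_r}^{a_r} b_{m_r \varpi_{i_r}}$. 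Each of these expressions is nonzero because $b(s) \in \mathcal{B}_{{\bf i}_{\ge s}, {\bf m}_{\ge s}}$.

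Now I would perform a downward substitution: starting from the identity for $b(r)$ and plugging it into the identity for $b(r-1)$, then into that for $b(r-2)$, and so on, one obtains
\[
b = b(1) = \tilde{f}_{i_1}^{a_1}\bigl(b_{m_1 \varpi_{i_1}} \otimes \tilde{f}_{i_2}^{a_2}\bigl(b_{m_2 \varpi_{i_2}} \otimes \cdots \otimes \tilde{f}_{i_{r-1}}^{a_{r-1}}\bigl(b_{m_{r-1} \varpi_{i_{r-1}}} \otimes \tilde{f}_{i_r}^{a_r} b_{m_r \varpi_{i_r}}\bigr)\cdots\bigr)\bigr),
\]
which is the assertion of (1). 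For (2), if $b, b^\prime \in \mathcal{B}_{{\bf i}, {\bf m}}$ satisfy $\Omega_{\bf i}(b) = \Omega_{\bf i}(b^\prime) = (a_1, \ldots, a_r)$, then by (1) both $b$ and $b^\prime$ are equal to the same nested expression on the right-hand side, hence $b = b^\prime$.

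There is no serious obstacle here; the whole argument rests on the basic invertibility of $\tilde{e}_i$ and $\tilde{f}_i$ on the crystal together with the iterative nature of Definition 5.4. The only point that requires some care is the bookkeeping that each intermediate expression $\tilde{f}_{i_s}^{a_s}(b_{m_s \varpi_{i_s}} \otimes b(s+1))$ is a well-defined nonzero crystal element (ensured by $b(s) \in \mathcal{B}_{{\bf i}_{\ge s}, {\bf m}_{\ge s}}$), so that the nested substitution is legitimate.
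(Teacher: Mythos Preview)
Your proof is correct and follows essentially the same approach as the paper's own proof: both invert the defining relations $\tilde{e}_{i_s}^{a_s} b(s) = b_{m_s \varpi_{i_s}} \otimes b(s+1)$ via property (v) to obtain $b(s) = \tilde{f}_{i_s}^{a_s}(b_{m_s \varpi_{i_s}} \otimes b(s+1))$, and then substitute iteratively to recover $b = b(1)$ as the nested expression, with part (2) following immediately from this reconstruction.
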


\begin{proof} 
If we write $\Omega_{\bf i} (b) = (a_1, \ldots, a_r)$ for $b \in \mathcal{B}_{{\bf i}, {\bf m}}$, then it follows from the definition of $\Omega_{\bf i}$ that 
\begin{align*}
b = b(1) &= \tilde{f}_{i_1} ^{a_1} (b_{m_1 \varpi_{i_1}} \otimes b(2))\\
&= \tilde{f}_{i_1} ^{a_1} (b_{m_1 \varpi_{i_1}} \otimes \tilde{f}_{i_2} ^{a_2} (b_{m_2 \varpi_{i_2}} \otimes b(3)))\\
&= \cdots = \tilde{f}_{i_1} ^{a_1} (b_{m_1 \varpi_{i_1}} \otimes \tilde{f}_{i_2} ^{a_2} (b_{m_2 \varpi_{i_2}} \otimes \cdots \otimes \tilde{f}_{i_{r-1}} ^{a_{r-1}} (b_{m_{r-1} \varpi_{i_{r-1}}} \otimes \tilde{f}_{i_r} ^{a_r} b_{m_r \varpi_{i_r}})\cdots)),
\end{align*} 
which implies part (1). From this, we see that $b \in \mathcal{B}_{{\bf i}, {\bf m}}$ can be reconstructed from the generalized string parameterization $\Omega_{\bf i} (b)$, which implies part (2). This proves the proposition.
\end{proof}

\begin{ex}\normalfont
Let $G = SL_3(\c)$, and write 
\begin{align*}
&\mathcal{B}(\varpi_1):\quad
\begin{ytableau}
1
\end{ytableau}
\xrightarrow{\tilde{f}_1}
\begin{ytableau}
2
\end{ytableau}
\xrightarrow{\tilde{f}_2}
\begin{ytableau}
3
\end{ytableau},\\
&\mathcal{B}(\varpi_2):\quad
\begin{ytableau}
1\\
2
\end{ytableau}
\xrightarrow{\tilde{f}_2}
\begin{ytableau}
1\\
3
\end{ytableau}
\xrightarrow{\tilde{f}_1}
\begin{ytableau}
2\\
3
\end{ytableau}
\end{align*}
by using tableaux. In what follows, we denote the tensor product $b \otimes b^\prime$ by the tableau made by placing $b^\prime$ directly on the right of $b$; for instance, we write as \[
\begin{ytableau}
1
\end{ytableau} \otimes
\begin{ytableau}
1\\
2
\end{ytableau} \otimes
\begin{ytableau}
2
\end{ytableau} =
\begin{ytableau}
1 & 1 & 2 \\
\none & 2
\end{ytableau}.\] For ${\bf i} = (1, 2, 1)$ and ${\bf m} = (1, 1, 1)$, we have $b =
\begin{ytableau}
2 & 1 & 2 \\
\none & 3
\end{ytableau} \in \mathcal{B}_{{\bf i}, {\bf m}}$. If we write $\Omega_{\bf i} (b) = (a_1, a_2, a_3)$, then it follows that
\begin{align*}
&a_1 = 1,\ \tilde{e}_{1} ^{a_1} b(1) = \tilde{e}_1 ^1 b(1) =
\begin{ytableau}
1 & 1 & 2 \\
\none & 3
\end{ytableau},\\
&a_2 = 1,\ \tilde{e}_{2} ^{a_2} b(2) = \tilde{e}_2 ^1 b(2) = 
\begin{ytableau}
1 & 2  \\
2
\end{ytableau},\\
&a_3 = 1,\ \tilde{e}_{1} ^{a_3} b(3) = \tilde{e}_{1} ^1 b(3) = 
\begin{ytableau}
1
\end{ytableau},
\end{align*}
and hence that $\Omega_{\bf i} (b) = (1, 1, 1)$. Moreover, all the elements of $\mathcal{B}_{{\bf i}, {\bf m}}$ are 
\begin{align*}
&\begin{ytableau}
1 & 1 & 1 \\
\none & 2
\end{ytableau},
\begin{ytableau}
2 & 1 & 1 \\
\none & 2
\end{ytableau},
\begin{ytableau}
2 & 1 & 2 \\
\none & 2
\end{ytableau},
\begin{ytableau}
1 & 1 & 1 \\
\none & 3
\end{ytableau},
\begin{ytableau}
2 & 1 & 1 \\
\none & 3
\end{ytableau},\\
&\begin{ytableau}
2 & 2 & 1 \\
\none & 3
\end{ytableau},
\begin{ytableau}
2 & 2 & 2 \\
\none & 3
\end{ytableau},
\begin{ytableau}
1 & 1 & 2 \\
\none & 2
\end{ytableau},
\begin{ytableau}
1 & 1 & 2 \\
\none & 3
\end{ytableau},
\begin{ytableau}
2 & 1 & 2 \\
\none & 3
\end{ytableau},\\
&\begin{ytableau}
1 & 1 & 3 \\
\none & 3
\end{ytableau},
\begin{ytableau}
2 & 1 & 3 \\
\none & 3
\end{ytableau},
\begin{ytableau}
2 & 2 & 3 \\
\none & 3
\end{ytableau},
\end{align*}
and the corresponding generalized string parameterizations are
\begin{align*}
&(0, 0, 0), (1, 0, 0), (2, 0, 0), (0, 1, 0), (1, 1, 0),\\
&(2, 1, 0), (3, 1, 0), (0, 0, 1), (0, 1, 1), (1, 1, 1),\\
&(0, 2, 1), (1, 2, 1), (2, 2, 1).
\end{align*}
\end{ex}\vspace{2mm}

\begin{ex}\normalfont
Let $G = Sp_4(\c)$, and write 
\begin{align*}
&\mathcal{B}(\varpi_1):\quad
\begin{ytableau}
1
\end{ytableau}
\xrightarrow{\tilde{f}_1}
\begin{ytableau}
2
\end{ytableau}
\xrightarrow{\tilde{f}_2}
\begin{ytableau}
\overline{2}
\end{ytableau}
\xrightarrow{\tilde{f}_1}
\begin{ytableau}
\overline{1}
\end{ytableau},\\
&\mathcal{B}(\varpi_2):\quad
\begin{ytableau}
1\\
2
\end{ytableau}
\xrightarrow{\tilde{f}_2}
\begin{ytableau}
1\\
\overline{2}
\end{ytableau}
\xrightarrow{\tilde{f}_1}
\begin{ytableau}
2\\
\overline{2}
\end{ytableau}
\xrightarrow{\tilde{f}_1}
\begin{ytableau}
2\\
\overline{1}
\end{ytableau}
\xrightarrow{\tilde{f}_2}
\begin{ytableau}
\overline{2}\\
\overline{1}
\end{ytableau}
\end{align*}
by using marked tableaux. For ${\bf i} = (1, 2, 1, 2)$ and ${\bf m} = (1, 1, 1, 1)$, all the elements of $\mathcal{B}_{{\bf i}, {\bf m}}$ are 
\begin{align*}
&\begin{ytableau}
1 & 1 & 1 & 1\\
\none & 2 &\none & 2
\end{ytableau},
\begin{ytableau}
2 & 1 & 1 & 1\\
\none & 2 &\none & 2
\end{ytableau},
\begin{ytableau}
2 & 1 & 2 & 1\\
\none & 2 &\none & 2
\end{ytableau},
\begin{ytableau}
1 & 1 & 1 & 1\\
\none & \overline{2} &\none & 2
\end{ytableau},
\begin{ytableau}
2 & 1 & 1 & 1\\
\none & \overline{2} &\none & 2
\end{ytableau},
\begin{ytableau}
2 & 2 & 1 & 1\\
\none & \overline{2} &\none & 2
\end{ytableau},
\begin{ytableau}
2 & 2 & 1 & 1\\
\none & \overline{1} &\none & 2
\end{ytableau},
\begin{ytableau}
2 & 2 & 2 & 1\\
\none & \overline{1} &\none & 2
\end{ytableau},\\
&\begin{ytableau}
1 & 1 & 1 & 1\\
\none & \overline{2} &\none & \overline{2}
\end{ytableau},
\begin{ytableau}
2 & 1 & 1 & 1\\
\none & \overline{2} &\none & \overline{2}
\end{ytableau},
\begin{ytableau}
2 & 2 & 1 & 1\\
\none & \overline{2} &\none & \overline{2}
\end{ytableau},
\begin{ytableau}
2 & 2 & 1 & 1\\
\none & \overline{1} &\none & \overline{2}
\end{ytableau},
\begin{ytableau}
2 & 2 & 2 & 1\\
\none & \overline{1} &\none & \overline{2}
\end{ytableau},
\begin{ytableau}
2 & 2 & 2 & 2\\
\none & \overline{1} &\none & \overline{2}
\end{ytableau},
\begin{ytableau}
2 & 2 & 2 & 2\\
\none & \overline{1} &\none & \overline{1}
\end{ytableau},
\begin{ytableau}
1 & 1 & 2 & 1\\
\none & 2 &\none & 2
\end{ytableau},\\
&\begin{ytableau}
1 & 1 & 2 & 1\\
\none & \overline{2} &\none & 2
\end{ytableau},
\begin{ytableau}
2 & 1 & 2 & 1\\
\none & \overline{2} &\none & 2
\end{ytableau},
\begin{ytableau}
2 & 2 & 2 & 1\\
\none & \overline{2} &\none & 2
\end{ytableau},
\begin{ytableau}
1 & 1 & \overline{2} & 1\\
\none & \overline{2} &\none & 2
\end{ytableau},
\begin{ytableau}
2 & 1 & \overline{2} & 1\\
\none & \overline{2} &\none & 2
\end{ytableau},
\begin{ytableau}
2 & 2 & \overline{2} & 1\\
\none & \overline{2} &\none & 2
\end{ytableau},
\begin{ytableau}
2 & 2 & \overline{2} & 1\\
\none & \overline{1} &\none & 2
\end{ytableau},
\begin{ytableau}
2 & 2 & \overline{1} & 1\\
\none & \overline{1} &\none & 2
\end{ytableau},\\
&\begin{ytableau}
1 & 1 & \overline{2} & 1\\
\none & \overline{2} &\none & \overline{2}
\end{ytableau},
\begin{ytableau}
2 & 1 & \overline{2} & 1\\
\none & \overline{2} &\none & \overline{2}
\end{ytableau},
\begin{ytableau}
2 & 2 & \overline{2} & 1\\
\none & \overline{2} &\none & \overline{2}
\end{ytableau},
\begin{ytableau}
2 & 2 & \overline{2} & 1\\
\none & \overline{1} &\none & \overline{2}
\end{ytableau},
\begin{ytableau}
2 & 2 & \overline{1} & 1\\
\none & \overline{1} &\none & \overline{2}
\end{ytableau},
\begin{ytableau}
2 & 2 & \overline{1} & 2\\
\none & \overline{1} &\none & \overline{2}
\end{ytableau},
\begin{ytableau}
2 & 2 & \overline{1} & 2\\
\none & \overline{1} &\none & \overline{1}
\end{ytableau},
\begin{ytableau}
1 & 1 & 1 & 1\\
\none & 2 &\none & \overline{2}
\end{ytableau},\\
&\begin{ytableau}
2 & 1 & 1 & 1\\
\none & 2 &\none & \overline{2}
\end{ytableau},
\begin{ytableau}
2 & 1 & 2 & 1\\
\none & 2 &\none & \overline{2}
\end{ytableau},
\begin{ytableau}
2 & 1 & 2 & 2\\
\none & 2 &\none & \overline{2}
\end{ytableau},
\begin{ytableau}
2 & 1 & 2 & 2\\
\none & 2 &\none & \overline{1}
\end{ytableau},
\begin{ytableau}
1 & 1 & 2 & 1\\
\none & 2 &\none & \overline{2}
\end{ytableau},
\begin{ytableau}
1 & 1 & 2 & 2\\
\none & 2 &\none & \overline{2}
\end{ytableau},
\begin{ytableau}
1 & 1 & 2 & 2\\
\none & 2 &\none & \overline{1}
\end{ytableau},
\begin{ytableau}
1 & 1 & 2 & 1\\
\none & \overline{2} &\none & \overline{2}
\end{ytableau},\\
&\begin{ytableau}
2 & 1 & 2 & 1\\
\none & \overline{2} &\none & \overline{2}
\end{ytableau},
\begin{ytableau}
2 & 2 & 2 & 1\\
\none & \overline{2} &\none & \overline{2}
\end{ytableau},
\begin{ytableau}
2 & 2 & 2 & 2\\
\none & \overline{2} &\none & \overline{2}
\end{ytableau},
\begin{ytableau}
2 & 2 & 2 & 2\\
\none & \overline{2} &\none & \overline{1}
\end{ytableau},
\begin{ytableau}
1 & 1 & 2 & 2\\
\none & \overline{2} &\none & \overline{2}
\end{ytableau},
\begin{ytableau}
2 & 1 & 2 & 2\\
\none & \overline{2} &\none & \overline{2}
\end{ytableau},
\begin{ytableau}
2 & 1 & 2 & 2\\
\none & \overline{2} &\none & \overline{1}
\end{ytableau},
\begin{ytableau}
1 & 1 & \overline{2} & 2\\
\none & \overline{2} &\none & \overline{2}
\end{ytableau},\\
&\begin{ytableau}
2 & 1 & \overline{2} & 2\\
\none & \overline{2} &\none & \overline{2}
\end{ytableau},
\begin{ytableau}
2 & 2 & \overline{2} & 2\\
\none & \overline{2} &\none & \overline{2}
\end{ytableau},
\begin{ytableau}
2 & 2 & \overline{2} & 2\\
\none & \overline{1} &\none & \overline{2}
\end{ytableau},
\begin{ytableau}
2 & 2 & \overline{2} & 2\\
\none & \overline{1} &\none & \overline{1}
\end{ytableau},
\begin{ytableau}
1 & 1 & 2 & 2\\
\none & \overline{2} &\none & \overline{1}
\end{ytableau},
\begin{ytableau}
1 & 1 & \overline{2} & 2\\
\none & \overline{2} &\none & \overline{1}
\end{ytableau},
\begin{ytableau}
2 & 1 & \overline{2} & 2\\
\none & \overline{2} &\none & \overline{1}
\end{ytableau},
\begin{ytableau}
2 & 2 & \overline{2} & 2\\
\none & \overline{2} &\none & \overline{1}
\end{ytableau},\\
&\begin{ytableau}
1 & 1 & \overline{2} & \overline{2}\\
\none & \overline{2} &\none & \overline{1}
\end{ytableau},
\begin{ytableau}
2 & 1 & \overline{2} & \overline{2}\\
\none & \overline{2} &\none & \overline{1}
\end{ytableau},
\begin{ytableau}
2 & 2 & \overline{2} & \overline{2}\\
\none & \overline{2} &\none & \overline{1}
\end{ytableau},
\begin{ytableau}
2 & 2 & \overline{2} & \overline{2}\\
\none & \overline{1} &\none & \overline{1}
\end{ytableau},
\begin{ytableau}
2 & 2 & \overline{1} & \overline{2}\\
\none & \overline{1} &\none & \overline{1}
\end{ytableau},
\end{align*}
and the corresponding generalized string parameterizations are
\begin{align*}
&(0, 0, 0, 0), (1, 0, 0, 0), (2, 0, 0, 0), (0, 1, 0, 0), (1, 1, 0, 0), (2, 1, 0, 0), (3, 1, 0, 0), (4, 1, 0, 0)\\
&(0, 2, 0, 0), (1, 2, 0, 0), (2, 2, 0, 0), (3, 2, 0, 0), (4, 2, 0, 0), (5, 2, 0, 0), (6, 2, 0, 0), (0, 0, 1, 0)\\
&(0, 1, 1, 0), (1, 1, 1, 0), (2, 1, 1, 0), (0, 2, 1, 0), (1, 2, 1, 0), (2, 2, 1, 0), (3, 2, 1, 0), (4, 2, 1, 0)\\
&(0, 3, 1, 0), (1, 3, 1, 0), (2, 3, 1, 0), (3, 3, 1, 0), (4, 3, 1, 0), (5, 3, 1, 0), (6, 3, 1, 0), (0, 0, 0, 1)\\
&(1, 0, 0, 1), (2, 0, 0, 1), (3, 0, 0, 1), (4, 0, 0, 1), (0, 0, 1, 1), (1, 0, 1, 1), (2, 0, 1, 1), (0, 1, 1, 1)\\
&(1, 1, 1, 1), (2, 1, 1, 1), (3, 1, 1, 1), (4, 1, 1, 1), (0, 1, 2, 1), (1, 1, 2, 1), (2, 1, 2, 1), (0, 2, 2, 1)\\
&(1, 2, 2, 1), (2, 2, 2, 1), (3, 2, 2, 1), (4, 2, 2, 1), (0, 1, 3, 1), (0, 2, 3, 1), (1, 2, 3, 1), (2, 2, 3, 1)\\
&(0, 3, 3, 1), (1, 3, 3, 1), (2, 3, 3, 1), (3, 3, 3, 1), (4, 3, 3, 1).
\end{align*}
\end{ex}\vspace{2mm}

\begin{defi}\normalfont\label{generalized string polytope}
For an arbitrary word ${\bf i} \in I^r$ and ${\bf m} \in \z_{\ge 0} ^r$, define a subset $\mathcal{S}_{{\bf i}, {\bf m}} \subset \z_{>0} \times \z^r$ by \[\mathcal{S}_{{\bf i}, {\bf m}} := \bigcup_{k>0} \{(k, \Omega_{\bf i}(b)) \mid b \in \mathcal{B}_{{\bf i}, k{\bf m}}\},\] and denote by $\mathcal{C}_{{\bf i}, {\bf m}} \subset \r_{\ge 0} \times \r^r$ the smallest real closed cone containing $\mathcal{S}_{{\bf i}, {\bf m}}$. Also, let us define a subset $\Delta_{{\bf i}, {\bf m}} \subset \r^r$ by \[\Delta_{{\bf i}, {\bf m}} := \{{\bf a} \in \r^r \mid (1, {\bf a}) \in \mathcal{C}_{{\bf i}, {\bf m}}\};\] this is called the {\it generalized string polytope} associated to ${\bf i}$ and ${\bf m}$.
\end{defi}\vspace{2mm}

\begin{rem}\normalfont
In the situation of Remark \ref{Schubert variety case}, the generalized string polytope $\Delta_{{\bf i}, {\bf m}}$ is just the usual string polytope $\Delta_{\bf i} ^{(\lambda, w)}$.
\end{rem}\vspace{2mm}

In Section 7, we prove that the generalized string polytope $\Delta_{{\bf i}, {\bf m}}$ is indeed a rational convex polytope. Also, in Appendix B, we give examples of generalized string polytopes.

\subsection{Some properties}

Here, we prove some properties of generalized string parameterizations and generalized string polytopes. For $\widetilde{\bf m} = (m_1, \ldots, m_{r-1}) \in \z^{r-1} _{\ge 0}$, ${\bf a} = (a_1, \ldots, a_r) \in \z^r _{\ge 0}$, and a dominant integral weight $\lambda$, consider the element $T_\lambda (\widetilde{\bf m}, {\bf a}) \in \mathcal{B}(m_1 \varpi_{i_1}) \otimes \cdots \otimes \mathcal{B}(m_{r-1} \varpi_{i_{r-1}}) \otimes \mathcal{B}(\lambda) \cup \{0\}$ given by \[T_\lambda (\widetilde{\bf m}, {\bf a}):= \tilde{f}_{i_1} ^{a_1} (b_{m_1 \varpi_{i_1}} \otimes \tilde{f}_{i_2} ^{a_2} (b_{m_2 \varpi_{i_2}} \otimes \cdots \otimes \tilde{f}_{i_{r-1}} ^{a_{r-1}}(b_{m_{r-1} \varpi_{i_{r-1}}} \otimes \tilde{f}_{i_r} ^{a_r} b_\lambda)\cdots)).\] For $\widetilde{\bf m}$, ${\bf a}$, and $\lambda$ such that $T_\lambda (\widetilde{\bf m}, {\bf a}) \neq 0$, we define a parameterization $\Omega_{\bf i} (T_\lambda (\widetilde{\bf m}, {\bf a}))$ in the same way as in Definition \ref{generalized string parameterization}, and also call it a generalized string parameterization. Note that it follows from an argument similar to that for Proposition \ref{generalized parameterization} (1) that $T_\lambda (\widetilde{\bf m}, {\bf a}) = T_\lambda (\widetilde{\bf m}, \Omega_{\bf i} (T_\lambda (\widetilde{\bf m}, {\bf a})))$. First we show that the generalized string parameterization $\Omega_{\bf i} (T_\lambda (\widetilde{\bf m}, {\bf a}))$ does not essentially depend on $\lambda$ (cf.~the discussion in \cite[Section 1]{Lit} for usual string parameterizations).

\vspace{2mm}\begin{lem}\label{lemma,retake}
If $T_\lambda (\widetilde{\bf m}, {\bf a}) = T_\lambda (\widetilde{\bf m}, {\bf b}) \neq 0$, then $T_{\lambda^\prime} (\widetilde{\bf m}, {\bf a}) = T_{\lambda^\prime} (\widetilde{\bf m}, {\bf b})$ for every dominant integral weight $\lambda^\prime$.
\end{lem}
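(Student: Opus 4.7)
I would proceed by induction on $r$. The base case $r=1$ is immediate: the hypothesis reads $\tilde{f}_{i_1}^{a_1} b_\lambda = \tilde{f}_{i_1}^{b_1} b_\lambda \neq 0$, whence $a_1 = b_1$ by the injectivity of $\tilde{f}_{i_1}$ on nonzero elements (axiom (v) of upper crystal bases), and the conclusion follows trivially.

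For the inductive step $r \geq 2$, set $u_\lambda({\bf c}) := T_\lambda(\widetilde{\bf m}_{\geq 2}, {\bf c}_{\geq 2})$, so that $T_\lambda(\widetilde{\bf m}, {\bf c}) = \tilde{f}_{i_1}^{c_1}(b_{m_1\varpi_{i_1}} \otimes u_\lambda({\bf c}))$. Since $b_{m_1\varpi_{i_1}}$ is the highest weight vector of $\mathcal{B}(m_1\varpi_{i_1})$, Lemma \ref{length of string} gives $\varphi_{i_1}(b_{m_1\varpi_{i_1}}) = m_1$. Expanding both sides of the hypothesized equality $T_\lambda(\widetilde{\bf m}, {\bf a}) = T_\lambda(\widetilde{\bf m}, {\bf b})$ by the tensor product rule (Corollary \ref{tensor product corollary}(1)) and invoking uniqueness of the tensor product decomposition (Proposition \ref{tensor product of crystals}(1)), I split this single equation into a first-factor equation in $\mathcal{B}(m_1\varpi_{i_1})$ and a tail equation in $\mathcal{B}(m_2\varpi_{i_2}) \otimes \cdots \otimes \mathcal{B}(\lambda)$.

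In the principal case, where all of the outer $\tilde{f}_{i_1}$-action stays on $b_{m_1\varpi_{i_1}}$ for both ${\bf a}$ and ${\bf b}$ (i.e., $a_1 \leq m_1 - \varepsilon_{i_1}(u_\lambda({\bf a}))$ and similarly for ${\bf b}$), these two equalities collapse to $a_1 = b_1$ together with $u_\lambda({\bf a}) = u_\lambda({\bf b})$. The inductive hypothesis applied to the tail yields $u_{\lambda'}({\bf a}) = u_{\lambda'}({\bf b})$ for every dominant integral weight $\lambda'$, and reapplying the tensor product rule reassembles this into $T_{\lambda'}(\widetilde{\bf m}, {\bf a}) = T_{\lambda'}(\widetilde{\bf m}, {\bf b})$.

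The hard part will be the remaining case, in which some outer $\tilde{f}_{i_1}$'s spill onto the tail; the split point $\max(0, m_1 - \varepsilon_{i_1}(u_\lambda({\bf c})))$ then depends a priori on $\lambda$. My plan for this case is to establish, as a parallel secondary claim run alongside the main induction, that whenever $u_\lambda({\bf c}) \neq 0$ the value $\varepsilon_{i_1}(u_\lambda({\bf c}))$ is determined by $\widetilde{\bf m}_{\geq 2}$ and ${\bf c}_{\geq 2}$ alone, independently of $\lambda$. Granted this $\lambda$-independence, the split points for ${\bf a}$ and ${\bf b}$ transfer unchanged from $\lambda$ to $\lambda'$, and the spill case reduces to combining the main inductive hypothesis on the tail with one further application of the tensor product rule to reassemble $T_{\lambda'}$ on both sides. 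Thus the central technical obstacle is the $\lambda$-independence of $\varepsilon_{i_1}(u_\lambda({\bf c}))$, which is the heart of the argument.
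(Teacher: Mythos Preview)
Your inductive framework is plausible but leaves the decisive step unproved: you flag the $\lambda$-independence of $\varepsilon_{i_1}(u_\lambda({\bf c}))$ as ``the heart of the argument'' and propose to carry it as a secondary induction, yet you give no mechanism for establishing it. Without this, the spill case does not close, and the proposal is incomplete as written.

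The paper's proof avoids induction on $r$ entirely by a short stabilization trick. Identifying $b_{\lambda+\mu}$ with $b_\lambda\otimes b_\mu$ and applying Corollary~\ref{tensor product corollary}(2), one has $T_{\lambda+\mu}(\widetilde{\bf m},{\bf a})=T_\lambda(\widetilde{\bf m},{\bf a})\otimes b_\mu$ whenever $T_\lambda(\widetilde{\bf m},{\bf a})\neq 0$. Now choose dominant $\mu,\mu'$ with $\lambda+\mu=\lambda'+\mu'$; passing through the common weight $\lambda+\mu=\lambda'+\mu'$ one gets
\[
T_{\lambda'+\mu'}(\widetilde{\bf m},{\bf a})=T_{\lambda+\mu}(\widetilde{\bf m},{\bf a})=T_\lambda(\widetilde{\bf m},{\bf a})\otimes b_\mu=T_\lambda(\widetilde{\bf m},{\bf b})\otimes b_\mu=T_{\lambda'+\mu'}(\widetilde{\bf m},{\bf b}),
\]
and if $T_{\lambda'}(\widetilde{\bf m},{\bf b})\neq 0$ the right-hand side is $T_{\lambda'}(\widetilde{\bf m},{\bf b})\otimes b_{\mu'}$, forcing $T_{\lambda'}(\widetilde{\bf m},{\bf a})=T_{\lambda'}(\widetilde{\bf m},{\bf b})$. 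This is a few lines, with no case analysis and no induction.

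It is worth noting that the very same stabilization identity, applied to the tail, immediately gives your missing ingredient: from $u_{\lambda+\mu}({\bf c})=u_\lambda({\bf c})\otimes b_\mu$ and the tensor-product formula for $\varepsilon_i$ one checks $\varepsilon_i(u_{\lambda+\mu}({\bf c}))=\varepsilon_i(u_\lambda({\bf c}))$, hence $\varepsilon_i(u_\lambda({\bf c}))$ is indeed $\lambda$-independent. So the key idea you are missing is precisely the one that, once found, makes your inductive scaffolding unnecessary.
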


\begin{proof}
If $T_{\lambda^\prime} (\widetilde{\bf m}, {\bf a})$ and $T_{\lambda^\prime} (\widetilde{\bf m}, {\bf b})$ are both $0$, then the assertion is obvious. Hence we may assume that $T_{\lambda^\prime}(\widetilde{\bf m}, {\bf b}) \neq 0$. Take dominant integral weights $\mu, \mu^\prime$ such that $\lambda + \mu = \lambda^\prime + \mu^\prime$. Since $T_{\lambda^\prime}(\widetilde{\bf m}, {\bf b}) \neq 0$, we see that $T_{\lambda^\prime + \mu^\prime}(\widetilde{\bf m}, {\bf b}) = T_{\lambda^\prime}(\widetilde{\bf m}, {\bf b}) \otimes b_{\mu^\prime}$ by Corollary \ref{tensor product corollary} (2). Moreover, we deduce that
\begin{align*}
T_{\lambda^\prime + \mu^\prime}(\widetilde{\bf m}, {\bf a}) &= T_{\lambda + \mu}(\widetilde{\bf m}, {\bf a})\\ 
&= T_\lambda(\widetilde{\bf m}, {\bf a}) \otimes b_\mu\quad({\rm by\ Corollary}\ \ref{tensor product corollary}\ (2)\ {\rm since}\ T_\lambda (\widetilde{\bf m}, {\bf a}) \neq 0)\\
&= T_\lambda(\widetilde{\bf m}, {\bf b}) \otimes b_\mu\quad({\rm by\ the\ assumption})\\ 
&= T_{\lambda + \mu}(\widetilde{\bf m}, {\bf b})\quad({\rm by\ Corollary}\ \ref{tensor product corollary}\ (2)\ {\rm since}\ T_\lambda (\widetilde{\bf m}, {\bf b}) \neq 0)\\
&= T_{\lambda^\prime + \mu^\prime}(\widetilde{\bf m}, {\bf b}).
\end{align*}
From these, we obtain $T_{\lambda^\prime + \mu^\prime}(\widetilde{\bf m}, {\bf a}) = T_{\lambda^\prime}(\widetilde{\bf m}, {\bf b}) \otimes b_{\mu^\prime}$. This implies that the element $T_{\lambda^\prime+\mu^\prime}(\widetilde{\bf m}, {\bf a})$ must be of the form $T_{\lambda^\prime}(\widetilde{\bf m}, {\bf a}) \otimes b_{\mu^\prime}$; hence it follows that $T_{\lambda^\prime}(\widetilde{\bf m}, {\bf a}) = T_{\lambda^\prime}(\widetilde{\bf m}, {\bf b})$. This proves the lemma. 
\end{proof}

\begin{prop}\label{invariance of parameterizations}
Let $\lambda, \lambda^\prime$ be arbitrary dominant integral weights. If $T_\lambda (\widetilde{\bf m}, {\bf a}) \neq 0$ and $T_{\lambda^\prime} (\widetilde{\bf m}, {\bf a}) \neq 0$, then $\Omega_{\bf i} (T_\lambda (\widetilde{\bf m}, {\bf a})) = \Omega_{\bf i} (T_{\lambda^\prime} (\widetilde{\bf m}, {\bf a}))$.
\end{prop}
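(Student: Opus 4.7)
The plan is to reduce the proposition to the special case $\lambda^\prime = \lambda + \mu$, and then to show that the generalized string parameterization is invariant under tensoring with a highest-weight element on the right. First, set $\mu := \lambda^\prime$ and $\mu^\prime := \lambda$, so that $\lambda + \mu = \lambda^\prime + \mu^\prime$. It then suffices to prove the following Key Claim: for every dominant integral weight $\mu$ and every $\widetilde{\bf m}, {\bf a}$ with $T_\lambda(\widetilde{\bf m}, {\bf a}) \neq 0$, $\Omega_{\bf i}(T_\lambda(\widetilde{\bf m}, {\bf a})) = \Omega_{\bf i}(T_{\lambda + \mu}(\widetilde{\bf m}, {\bf a}))$. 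Applying this twice then gives
\[\Omega_{\bf i}(T_\lambda(\widetilde{\bf m}, {\bf a})) = \Omega_{\bf i}(T_{\lambda + \mu}(\widetilde{\bf m}, {\bf a})) = \Omega_{\bf i}(T_{\lambda^\prime + \mu^\prime}(\widetilde{\bf m}, {\bf a})) = \Omega_{\bf i}(T_{\lambda^\prime}(\widetilde{\bf m}, {\bf a})),\]
as desired.

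Next, I would use the crystal embedding $\mathcal{B}(\lambda+\mu) \hookrightarrow \mathcal{B}(\lambda) \otimes \mathcal{B}(\mu)$ sending $b_{\lambda+\mu}$ to $b_\lambda \otimes b_\mu$, and identify $T_{\lambda+\mu}(\widetilde{\bf m}, {\bf a})$ with $T_\lambda(\widetilde{\bf m}, {\bf a}) \otimes b_\mu$ inside $\mathcal{B}(m_1 \varpi_{i_1}) \otimes \cdots \otimes \mathcal{B}(m_{r-1} \varpi_{i_{r-1}}) \otimes \mathcal{B}(\lambda) \otimes \mathcal{B}(\mu)$. This follows by iteratively applying Corollary \ref{tensor product corollary} (2) from the innermost application of $\tilde{f}_{i_r}^{a_r}$ outwards: at each step $s$, $b_\mu$ sits at the far right, $\tilde{e}_{i_s} b_\mu = 0$ because $b_\mu$ is highest-weight, and the required nonvanishing of $\tilde{f}_{i_s}^{a_s}$ applied to the relevant intermediate element is guaranteed by $T_\lambda(\widetilde{\bf m}, {\bf a}) \neq 0$; hence Corollary \ref{tensor product corollary} (2) propagates $b_\mu$ unchanged to the right at every step.

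Third, writing $x := T_\lambda(\widetilde{\bf m}, {\bf a})$, I would prove $\Omega_{\bf i}(x \otimes b_\mu) = \Omega_{\bf i}(x)$ by induction on $r$. The inductive step relies on the tensor product rule (Proposition \ref{tensor product of crystals} (2)): since $\varphi_{i_1}(y) \ge 0 = \varepsilon_{i_1}(b_\mu)$ for every $y$, we have $\tilde{e}_{i_1}(y \otimes b_\mu) = \tilde{e}_{i_1} y \otimes b_\mu$, and hence $\tilde{e}_{i_1}^{c}(x \otimes b_\mu) = \tilde{e}_{i_1}^{c} x \otimes b_\mu$ for every $c \ge 0$. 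In particular, $\varepsilon_{i_1}(x \otimes b_\mu) = \varepsilon_{i_1}(x) =: c_1$, so the first coordinates of $\Omega_{\bf i}(x \otimes b_\mu)$ and $\Omega_{\bf i}(x)$ agree, and $\tilde{e}_{i_1}^{c_1}(x \otimes b_\mu) = b_{m_1 \varpi_{i_1}} \otimes x(2) \otimes b_\mu$, where $x(2) = T_\lambda((m_2, \ldots, m_{r-1}), (a_2, \ldots, a_r))$ is the second-level element for the $x$ computation. Applying the inductive hypothesis to the length-$(r-1)$ data $({\bf i}_{\ge 2}, (m_2, \ldots, m_{r-1}), (a_2, \ldots, a_r))$ then yields $\Omega_{{\bf i}_{\ge 2}}(x(2) \otimes b_\mu) = \Omega_{{\bf i}_{\ge 2}}(x(2))$, closing the induction.

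The main potential obstacle is the bookkeeping in the second step: making sure that after the identification $T_{\lambda+\mu}(\widetilde{\bf m}, {\bf a}) = T_\lambda(\widetilde{\bf m}, {\bf a}) \otimes b_\mu$, the computation of $\Omega_{\bf i}$ can be carried out consistently inside the larger crystal $\mathcal{B}(\lambda) \otimes \mathcal{B}(\mu)$ rather than inside $\mathcal{B}(\lambda+\mu)$. This is legitimate because $\mathcal{B}(\lambda+\mu)$ embeds as a subcrystal, so all relevant operators $(\tilde{e}_i, \tilde{f}_i, \varepsilon_i, \varphi_i)$ agree on the embedded subset. Once this compatibility is in place, the substantive content is concentrated entirely in the identification $T_{\lambda+\mu}(\widetilde{\bf m}, {\bf a}) = T_\lambda(\widetilde{\bf m}, {\bf a}) \otimes b_\mu$ and the two direct applications of the tensor product rule.
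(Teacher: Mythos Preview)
Your argument is correct and takes a somewhat different route from the paper's. The paper first establishes Lemma~\ref{lemma,retake} (if $T_\lambda(\widetilde{\bf m},{\bf a}) = T_\lambda(\widetilde{\bf m},{\bf b}) \neq 0$ then $T_{\lambda'}(\widetilde{\bf m},{\bf a}) = T_{\lambda'}(\widetilde{\bf m},{\bf b})$ for every $\lambda'$), and then compares the two string parameterizations ${\bf b} = \Omega_{\bf i}(T_\lambda(\widetilde{\bf m},{\bf a}))$ and ${\bf c} = \Omega_{\bf i}(T_{\lambda'}(\widetilde{\bf m},{\bf a}))$ coordinate by coordinate using that lemma. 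You instead absorb the content of Lemma~\ref{lemma,retake} directly into the identification $T_{\lambda+\mu}(\widetilde{\bf m},{\bf a}) = T_\lambda(\widetilde{\bf m},{\bf a}) \otimes b_\mu$ and then run a clean induction showing $\Omega_{\bf i}(x \otimes b_\mu) = \Omega_{\bf i}(x)$ via $\tilde{e}_i(y \otimes b_\mu) = \tilde{e}_i y \otimes b_\mu$. Both proofs ultimately rest on Corollary~\ref{tensor product corollary}~(2) and the tensor product rule; yours is more streamlined and bypasses the auxiliary lemma, while the paper's version isolates Lemma~\ref{lemma,retake} as a reusable statement.

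One small correction: in your inductive step you write ``$x(2) = T_\lambda((m_2,\ldots,m_{r-1}),(a_2,\ldots,a_r))$'', but this need not hold --- in general $x(2)$ equals $T_\lambda(\widetilde{\bf m}_{\ge 2}, {\bf a}')$ for the tail ${\bf a}' = (\Omega_{\bf i}(x))_{\ge 2}$, not the original $(a_2,\ldots,a_r)$. This does not damage the argument: all you need is that $x(2)$ lies in the generalized Demazure crystal for ${\bf i}_{\ge 2}$ (which follows from Proposition~\ref{generalized Demazure crystal}~(1), as noted in Definition~\ref{generalized string parameterization}), so that the induction hypothesis applies to it. Just phrase the induction over all $x$ in the relevant crystal rather than tying it to the specific tuple ${\bf a}$.
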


\begin{proof}
If we write $\Omega_{\bf i}(T_\lambda(\widetilde{\bf m}, {\bf a})) = {\bf b} = (b_1, \ldots, b_r)$ and $\Omega_{\bf i}(T_{\lambda^\prime}(\widetilde{\bf m}, {\bf a})) = {\bf c} = (c_1, \ldots, c_r)$, then
\begin{align*}
T_\lambda(\widetilde{\bf m}, {\bf b}) &= T_\lambda(\widetilde{\bf m}, {\bf a}) \neq 0,\ {\rm and}\\
T_{\lambda^\prime}(\widetilde{\bf m}, {\bf c}) &= T_{\lambda^\prime}(\widetilde{\bf m}, {\bf a}) \neq 0.
\end{align*}
Therefore, it follows from Lemma \ref{lemma,retake} that 
\begin{align}
T_{\lambda^\prime}(\widetilde{\bf m}, {\bf b}) &= T_{\lambda^\prime}(\widetilde{\bf m}, {\bf a}),\ {\rm and}\label{equivalent1}\\
T_\lambda(\widetilde{\bf m}, {\bf c}) &= T_\lambda(\widetilde{\bf m}, {\bf a}).\label{equivalent2}
\end{align}
By the definition of $\Omega_{\bf i}$, we deduce that 
\begin{align*}
c_1 &= \max\{a \in \z_{\ge 0} \mid \tilde{e}_{i_1} ^a T_{\lambda^\prime}(\widetilde{\bf m}, {\bf a}) \neq 0\}\\
&= \max\{a \in \z_{\ge 0} \mid \tilde{e}_{i_1} ^a T_{\lambda^\prime}(\widetilde{\bf m}, {\bf b}) \neq 0\}\quad({\rm by\ equation}\ (\ref{equivalent1}))\\
&\ge b_1\quad({\rm by\ the\ definition\ of}\ T_{\lambda^\prime}(\widetilde{\bf m}, {\bf b})). 
\end{align*}
Similarly, equation (\ref{equivalent2}) implies that $c_1 \le b_1$. Hence we obtain $b_1 = c_1$. Also, if we set ${\bf b}_{\ge 2} := (b_2, \ldots, b_r)$, ${\bf c}_{\ge 2} := (c_2, \ldots, c_r)$, and $\widetilde{\bf m}_{\ge 2} := (m_2, \ldots, m_{r-1})$, then
\begin{align*}
b_{m_1 \varpi_{i_1}} \otimes T_\lambda(\widetilde{\bf m}_{\ge 2}, {\bf b}_{\ge 2}) &= \tilde{e}_{i_1} ^{b_1} T_\lambda(\widetilde{\bf m}, {\bf b})\quad({\rm by\ the\ definition\ of}\ T_\lambda(\widetilde{\bf m}, {\bf b}))\\
&= \tilde{e}_{i_1} ^{c_1} T_\lambda(\widetilde{\bf m}, {\bf b})\quad({\rm since}\ b_1 = c_1)\\
&= \tilde{e}_{i_1} ^{c_1} T_\lambda(\widetilde{\bf m}, {\bf c})\quad({\rm by\ equation}\ (\ref{equivalent2})\ {\rm since}\ T_\lambda(\widetilde{\bf m}, {\bf b}) = T_\lambda(\widetilde{\bf m}, {\bf a}))\\
&= b_{m_1 \varpi_{i_1}} \otimes T_\lambda(\widetilde{\bf m}_{\ge 2}, {\bf c}_{\ge 2})\quad({\rm by\ the\ definition\ of}\ T_\lambda(\widetilde{\bf m}, {\bf c})).
\end{align*}
From this, we deduce that $T_\lambda(\widetilde{\bf m}_{\ge 2}, {\bf b}_{\ge 2}) = T_\lambda(\widetilde{\bf m}_{\ge 2}, {\bf c}_{\ge 2})$. Similarly, it follows that $T_{\lambda^\prime}(\widetilde{\bf m}_{\ge 2}, {\bf b}_{\ge 2}) = T_{\lambda^\prime}(\widetilde{\bf m}_{\ge 2}, {\bf c}_{\ge 2})$. Note that $\Omega_{{\bf i}_{\ge 2}}(T_\lambda(\widetilde{\bf m}_{\ge 2}, {\bf b}_{\ge 2})) = {\bf b}_{\ge 2}$ and $\Omega_{{\bf i}_{\ge 2}}(T_{\lambda^\prime}(\widetilde{\bf m}_{\ge 2}, {\bf c}_{\ge 2})) = {\bf c}_{\ge 2}$ by the definition of $\Omega_{\bf i}$, where ${\bf i}_{\ge 2} := (i_2, \ldots, i_r)$; hence the equality $b_2 = c_2$ follows from the same argument as in the proof of $b_1 = c_1$. Repeating this argument, we conclude that ${\bf b} = {\bf c}$. This proves the proposition.
\end{proof}

We define a subset $\mathcal{S}_{\bf i} \subset \z_{\ge 0} ^{r-1} \times \z_{\ge 0} ^r$ by \[\mathcal{S}_{\bf i}:= \{(\widetilde{\bf m}, {\bf a}) \in \z_{\ge 0} ^{r-1} \times \z_{\ge 0} ^r \mid \Omega_{\bf i} (T_\lambda (\widetilde{\bf m}, {\bf a})) = {\bf a}\ {\rm for\ some}\ \lambda\ {\rm such\ that}\ T_\lambda (\widetilde{\bf m}, {\bf a}) \neq 0\},\] and denote by $\mathcal{C}_{\bf i} \subset \r_{\ge 0} ^{r-1} \times \r_{\ge 0} ^r$ the smallest real closed cone containing $\mathcal{S}_{\bf i}$. Note that by Proposition \ref{invariance of parameterizations}, we have 
\begin{align}\label{independent of lambda}
\mathcal{S}_{\bf i} = \{(\widetilde{\bf m}, {\bf a}) \in \z_{\ge 0} ^{r-1} \times \z_{\ge 0} ^r \mid \Omega_{\bf i} (T_\lambda (\widetilde{\bf m}, {\bf a})) = {\bf a}\ {\rm for\ all}\ \lambda\ {\rm such\ that}\ T_\lambda (\widetilde{\bf m}, {\bf a}) \neq 0\}.
\end{align}

\vspace{2mm}\begin{prop}\label{cutting}
For ${\bf m} = (m_1, \ldots, m_r) \in \z_{\ge 0} ^r$, set $\widetilde{\bf m} := (m_1, \ldots, m_{r-1})$. Then, $\mathcal{S}_{{\bf i}, {\bf m}}$ is identical to the set of $(k, {\bf a}) = (k, a_1, \ldots, a_r) \in \z_{>0} \times \z_{\ge 0} ^r$ satisfying the following conditions$:$
\begin{enumerate}
\item[{\rm (i)}] $(k \widetilde{\bf m}, {\bf a}) \in \mathcal{S}_{\bf i};$
\item[{\rm (ii)}] $a_j \le k m_j + \sum_{j < s \le r} \delta_{i_j, i_s} k m_{s} - \sum_{j < s \le r} c_{i_j, i_s} a_s$ for all $1 \le j \le r$.
\end{enumerate}
Here, $\delta_{i_j, i_s}$ $($resp., $(c_{i, j})_{i, j \in I})$ denotes the Kronecker delta $($resp., the Cartan matrix of $\mathfrak{g})$.
\end{prop}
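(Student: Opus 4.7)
The plan is to reduce both inclusions to non-vanishing and parameterization statements about the iterated construction $T_{km_r \varpi_{i_r}}(k\widetilde{\bf m}, {\bf a})$, using the tensor product rule (Proposition \ref{tensor product of crystals} (2)) and the $\lambda$-independence of the generalized string parameterization (Proposition \ref{invariance of parameterizations} together with equation (\ref{independent of lambda})). As preliminary setup, set $b^{(r)} := \tilde{f}_{i_r}^{a_r} b_{km_r \varpi_{i_r}}$, and for $1 \le j < r$ set $b^{(j)} := \tilde{f}_{i_j}^{a_j}(b_{km_j \varpi_{i_j}} \otimes b^{(j+1)})$; these are precisely the intermediate elements produced by the recursion in Definition \ref{generalized string parameterization}, and $b^{(1)} = T_{km_r \varpi_{i_r}}(k\widetilde{\bf m}, {\bf a})$. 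A routine induction gives ${\rm wt}(b^{(j+1)}) = \sum_{s>j}(km_s \varpi_{i_s} - a_s \alpha_{i_s})$, so $\langle {\rm wt}(b^{(j+1)}), h_{i_j}\rangle = \sum_{s>j}(km_s \delta_{i_j, i_s} - a_s c_{i_j, i_s})$; thus condition (ii) at $j$ reads $a_j \le km_j + \langle {\rm wt}(b^{(j+1)}), h_{i_j}\rangle$ for $1 \le j < r$, and $a_r \le km_r$ at $j = r$.

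For the backward direction, assume (i) and (ii). A downward induction on $j$ from $r$ to $1$ shows $b^{(j)} \neq 0$: the case $j=r$ is $a_r \le km_r$, and for the step, Proposition \ref{tensor product of crystals} (2) gives $\varphi_{i_j}(b_{km_j \varpi_{i_j}} \otimes b^{(j+1)}) \ge km_j + \langle {\rm wt}(b^{(j+1)}), h_{i_j}\rangle \ge a_j$ (the last inequality is (ii)), so $b^{(j)} = \tilde{f}_{i_j}^{a_j}(b_{km_j \varpi_{i_j}} \otimes b^{(j+1)}) \neq 0$. Hence $b^{(1)}$ lies in $\mathcal{B}_{{\bf i}, k{\bf m}}$ by the very definition in Proposition \ref{generalized Demazure crystal}. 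To conclude $\Omega_{\bf i}(b^{(1)}) = {\bf a}$, and therefore $(k, {\bf a}) \in \mathcal{S}_{{\bf i}, {\bf m}}$, I invoke condition (i): by the reformulation (\ref{independent of lambda}), (i) says $\Omega_{\bf i}(T_\lambda(k\widetilde{\bf m}, {\bf a})) = {\bf a}$ for every $\lambda$ with $T_\lambda(k\widetilde{\bf m}, {\bf a}) \neq 0$, and the choice $\lambda = km_r \varpi_{i_r}$ is now valid because $b^{(1)} \neq 0$.

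For the forward direction, start with $b \in \mathcal{B}_{{\bf i}, k{\bf m}}$ satisfying $\Omega_{\bf i}(b) = {\bf a}$. Proposition \ref{generalized parameterization} (1) identifies $b$ with $b^{(1)}$, so (i) is immediate with $\lambda = km_r \varpi_{i_r}$. For (ii), the recursive step in Definition \ref{generalized string parameterization} reproduces the sequence $b^{(j)}$ with $\varepsilon_{i_j}(b^{(j)}) = a_j$; combined with the crystal identity $\varepsilon_{i_j}(\tilde{f}_{i_j}^{a_j} x) = a_j + \varepsilon_{i_j}(x)$ this forces $\varepsilon_{i_j}(b_{km_j \varpi_{i_j}} \otimes b^{(j+1)}) = 0$, which by Proposition \ref{tensor product of crystals} (2) is equivalent to $\varepsilon_{i_j}(b^{(j+1)}) \le km_j$ and by Lemma \ref{length of string} to $\varphi_{i_j}(b^{(j+1)}) \le km_j + \langle {\rm wt}(b^{(j+1)}), h_{i_j}\rangle$. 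Consequently the maximum defining $\varphi_{i_j}(b_{km_j \varpi_{i_j}} \otimes b^{(j+1)})$ in Proposition \ref{tensor product of crystals} (2) is attained at its right-hand entry, and the non-vanishing of $b^{(j)}$ then gives $a_j \le km_j + \langle {\rm wt}(b^{(j+1)}), h_{i_j}\rangle$, which is (ii).

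The one delicate point, and the main obstacle I anticipate, is disentangling the separate roles of (i) and (ii) in the backward direction: condition (ii) alone is exactly what makes $b^{(1)}$ non-zero and hence a member of $\mathcal{B}_{{\bf i}, k{\bf m}}$, but a priori the string parameterization of $b^{(1)}$ could be lexicographically smaller than ${\bf a}$, and it is precisely the independence statement Proposition \ref{invariance of parameterizations}, combined with (i), that forces the two parameterizations to coincide.
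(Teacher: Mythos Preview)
Your argument is correct and follows essentially the same route as the paper's proof. The paper argues the ``only if'' direction first---deducing (i) directly from the definition of $\mathcal{S}_{{\bf i},{\bf m}}$, and (ii) from $\tilde{e}_{i_j}(b_{km_j\varpi_{i_j}}\otimes b^{(j+1)})=0$ together with Lemma~\ref{length of string}, which gives $\varphi_{i_j}(b_{km_j\varpi_{i_j}}\otimes b^{(j+1)})=\langle\mathrm{wt}(b_{km_j\varpi_{i_j}}\otimes b^{(j+1)}),h_{i_j}\rangle$ in one step---and then simply says the ``if'' part follows by reversing the argument; you spell out that reversal explicitly and take a slightly longer (but equivalent) path to (ii) via the tensor product formula for $\varphi_i$.
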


\begin{proof}
We take $(k, {\bf a}) = (k, a_1, \ldots, a_r) \in \z_{>0} \times \z_{\ge 0} ^r$, and show that $(k, {\bf a}) \in \mathcal{S}_{{\bf i}, {\bf m}}$ if and only if (i), (ii) hold. We prove the ``only if'' part. By the definition of $\mathcal{S}_{{\bf i}, {\bf m}}$, if $(k, {\bf a}) \in \mathcal{S}_{{\bf i}, {\bf m}}$, then it follows that 
\begin{equation}\label{nonzero}
\begin{aligned}
&T_{km_r \varpi_{i_r}}(k \widetilde{\bf m}, {\bf a}) \neq 0,\ {\rm and}\\
&\Omega_{\bf i} (T_{km_r \varpi_{i_r}}(k \widetilde{\bf m}, {\bf a})) = {\bf a}, 
\end{aligned}
\end{equation}
which implies (i) (we take $km_r \varpi_{i_r}$ as a weight $\lambda$ in the definition of $\mathcal{S}_{\bf i}$). If we set $\widetilde{\bf m}_{\ge 2} := (m_2, \ldots, m_{r-1})$ and ${\bf a}_{\ge 2} := (a_2, \ldots, a_r)$, then we see from the definitions that \[T_{km_r \varpi_{i_r}}(k \widetilde{\bf m}, {\bf a}) = \tilde{f}_{i_1} ^{a_1}(b_{km_1 \varpi_{i_1}} \otimes T_{km_r \varpi_{i_r}}(k \widetilde{\bf m}_{\ge 2}, {\bf a}_{\ge 2})).\] Also, the equality $\Omega_{\bf i} (T_{km_r \varpi_{i_r}}(k \widetilde{\bf m}, {\bf a})) = {\bf a}$ implies that \[\tilde{e}_{i_1}(b_{km_1 \varpi_{i_1}} \otimes T_{km_r \varpi_{i_r}}(k \widetilde{\bf m}_{\ge 2}, {\bf a}_{\ge 2})) = 0.\] Therefore, we deduce that
\begin{align*}
a_1 &\le \varphi_{i_1}(b_{km_1 \varpi_{i_1}} \otimes T_{km_r \varpi_{i_r}}(k \widetilde{\bf m}_{\ge 2}, {\bf a}_{\ge 2}))\quad({\rm since}\ T_{km_r \varpi_{i_r}}(k \widetilde{\bf m}, {\bf a}) \neq 0\ {\rm by}\ (\ref{nonzero}))\\ 
&= \langle {\rm wt}(b_{km_1 \varpi_{i_1}} \otimes T_{km_r \varpi_{i_r}}(k \widetilde{\bf m}_{\ge 2}, {\bf a}_{\ge 2})), h_{i_1} \rangle\quad({\rm by\ Lemma}\ \ref{length of string})\\
&= \langle km_1 \varpi_{i_1} + \cdots + km_r \varpi_{i_r} -a_2 \a_{i_2} - \cdots - a_r \a_{i_r}, h_{i_1}\rangle\quad({\rm by\ the\ definition\ of}\ T_{km_r \varpi_{i_r}}(k \widetilde{\bf m}_{\ge 2}, {\bf a}_{\ge 2}))\\
&= km_1 + \sum_{1 < s \le r} \delta_{i_1, i_s}km_{s} - \sum_{1 < s \le r} c_{i_1, i_s}a_s.
\end{align*}
Repeating this argument, with $T_{km_r \varpi_{i_r}}(k \widetilde{\bf m}, {\bf a})$ replaced by $T_{km_r \varpi_{i_r}}(k \widetilde{\bf m}_{\ge 2}, {\bf a}_{\ge 2})$, we conclude (ii). Thus we have proved the ``only if'' part. Then, by reversing the arguments above, we deduce the ``if'' part. This completes the proof of the proposition.
\end{proof}

Next we give a system of piecewise-linear inequalities defining $\mathcal{S}_{\bf i}$. We set ${\bf a} ^{(r)} = (a_1 ^{(r)}, \ldots, a_r ^{(r)}) := (a_1, \ldots, a_r)$, and then define ${\bf a}^{(j-1)} = (a_1 ^{(j-1)}, \ldots, a_{j-1} ^{(j-1)})$ for $1 < j \le r$ and $\Psi ^{j, k} _{\bf i}(\widetilde{\bf m}, {\bf a})$, $1 \le k < j$, for $1 < j \le r$ inductively by 
\begin{align*}
&\Psi ^{j, k} _{\bf i}(\widetilde{\bf m}, {\bf a}) := \begin{cases}
\max\{a_l ^{(j)} - \sum_{k < s \le l} c_{i_j, i_s} a_s ^{(j)} + \sum_{k \le s<l} \delta_{i_j, i_s} m_s \mid k < l \le j,\ i_l = i_j\ (= i_k)\} &{\rm if}\ i_k = i_j,\\
a_k ^{(j)} &{\rm otherwise},
\end{cases}\\
&a_k ^{(j-1)} := \min\{a_k ^{(j)}, \Psi ^{j, k} _{\bf i}(\widetilde{\bf m}, {\bf a})\}.
\end{align*}
We can regard $\Psi ^{j, k} _{\bf i}(\widetilde{\bf m}, {\bf a})$ as a piecewise-linear function of $m_1, \ldots, m_{r-1}$ and $a_1, \ldots, a_r$.

\vspace{2mm}\begin{prop}\label{inequality}
For an arbitrary word ${\bf i} \in I^r$, \[\mathcal{S} _{\bf i} = \{(\widetilde{\bf m}, {\bf a}) \in \z ^{r-1} _{\ge 0} \times \z ^r _{\ge 0} \mid \Psi ^{j, k} _{\bf i}(\widetilde{\bf m}, {\bf a}) \ge 0\ {\it for\ all}\ 1 \le k < j \le r\}.\]
\end{prop}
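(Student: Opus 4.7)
The plan is to proceed by induction on $r$. The base case $r=1$ is vacuous: there are no pairs $(j,k)$ with $1 \le k < j \le 1$, and for any $a_1 \ge 0$ one may take $\lambda$ dominant enough that $\tilde f_{i_1}^{a_1} b_\lambda \ne 0$, whereupon $\Omega_{(i_1)}(\tilde f_{i_1}^{a_1} b_\lambda) = (a_1)$, giving $\mathcal{S}_{(i_1)} = \z_{\ge 0}$.

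For the inductive step, fix $\lambda$ dominant enough that $T_\lambda(\widetilde{\bf m}, {\bf a}) \ne 0$ and write $T^{(\ge 2)} := T_\lambda(\widetilde{\bf m}_{\ge 2}, {\bf a}_{\ge 2})$. Then $T_\lambda(\widetilde{\bf m}, {\bf a}) = \tilde f_{i_1}^{a_1}\bigl(b_{m_1 \varpi_{i_1}} \otimes T^{(\ge 2)}\bigr)$, and the tensor product rule for $\varepsilon_i$ (Proposition 5.1(2)) gives $\varepsilon_{i_1}(T_\lambda(\widetilde{\bf m}, {\bf a})) = a_1 + \max\{0,\, \varepsilon_{i_1}(T^{(\ge 2)}) - m_1\}$. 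Combined with the recursive definition of $\Omega_{\bf i}$ this yields
\[
(\widetilde{\bf m}, {\bf a}) \in \mathcal{S}_{\bf i} \iff \varepsilon_{i_1}(T^{(\ge 2)}) \le m_1 \;\text{ and }\; (\widetilde{\bf m}_{\ge 2}, {\bf a}_{\ge 2}) \in \mathcal{S}_{{\bf i}_{\ge 2}},
\]
both conditions independent of $\lambda$ by Proposition 5.10. For every $k \ge 2$, a direct unwinding of the recursion $a_k^{(j-1)} = \min\{a_k^{(j)},\Psi^{j,k}_{\bf i}\}$ shows that $\Psi^{j,k}_{\bf i}(\widetilde{\bf m}, {\bf a})$ depends only on $m_s, a_s$ with $s \ge 2$ and equals $\Psi^{j-1,k-1}_{{\bf i}_{\ge 2}}(\widetilde{\bf m}_{\ge 2}, {\bf a}_{\ge 2})$; by the induction hypothesis, the second condition on the right is then equivalent to $\Psi^{j,k}_{\bf i}(\widetilde{\bf m},{\bf a}) \ge 0$ for $2 \le k < j \le r$.

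It remains to show that, assuming these inequalities for $k \ge 2$, the condition $\varepsilon_{i_1}(T^{(\ge 2)}) \le m_1$ is equivalent to $\Psi^{j,1}_{\bf i}(\widetilde{\bf m}, {\bf a}) \ge 0$ for every $2 \le j \le r$. The strategy is to decompose $T^{(\ge 2)}$ as its canonical tensor factorization $b^{(2)} \otimes \cdots \otimes b^{(r)}$ and expand via the iterated tensor product rule
\[
\varepsilon_{i_1}(T^{(\ge 2)}) = \max_{2 \le l \le r}\Bigl\{\varepsilon_{i_1}(b^{(l)}) - \sum_{2 \le s < l}\bigl\langle {\rm wt}(b^{(s)}),\, h_{i_1}\bigr\rangle\Bigr\}.
\]
Since the right-hand side is a maximum, the inequality ``$\le m_1$'' separates into one inequality per $l$; the aim is to match each one with $\Psi^{l,1}_{\bf i} \ge 0$ when $i_l = i_1$ (the only cases contributing non-trivially, the others being automatic from the $k \ge 2$ inequalities).

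The main obstacle is identifying the canonical factors $b^{(l)}$ explicitly, in order to compute $\varepsilon_{i_1}(b^{(l)})$ and $\langle {\rm wt}(b^{(l)}), h_{i_1}\rangle$. These factors arise by iterating Corollary 4.2 to distribute the operators $\tilde f_{i_2}^{a_2}, \dots, \tilde f_{i_{r-1}}^{a_{r-1}}$ across their tensor products, and the residual power of $\tilde f_{i_l}$ that actually lands on $b_{m_l \varpi_{i_l}}$ at each stage is governed by how much ``slack'' remains after absorbing the constraints from positions $l+1,\dots,r$. This combinatorial bookkeeping is precisely what the nested minima $a_k^{(j-1)} = \min\{a_k^{(j)}, \Psi^{j,k}_{\bf i}\}$ record; the plan is to verify, by downward induction on $l$ and using the $k \ge 2$ inequalities, that each factor $b^{(l)}$ together with the partial weight sums is exactly what is required for the matching with $\Psi^{j,1}_{\bf i}$ to hold, whereupon the desired equivalence follows by unwinding the definitions.
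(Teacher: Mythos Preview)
Your induction goes in the opposite direction from the paper's: you peel off the first index $i_1$ and reduce to ${\bf i}_{\ge 2}$, whereas the paper (Appendix~A) peels off the \emph{last} index $i_r$ and reduces to ${\bf i}_{\le r-1}$. Your reduction to the pair of conditions $\varepsilon_{i_1}(T^{(\ge 2)}) \le m_1$ and $(\widetilde{\bf m}_{\ge 2}, {\bf a}_{\ge 2}) \in \mathcal{S}_{{\bf i}_{\ge 2}}$ is correct, and so is the shift identity $\Psi^{j,k}_{\bf i} = \Psi^{j-1,k-1}_{{\bf i}_{\ge 2}}$ for $k \ge 2$.

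The genuine gap is the final step, which you leave as a ``plan'': matching $\varepsilon_{i_1}(T^{(\ge 2)}) \le m_1$ with the conditions $\Psi^{j,1}_{\bf i} \ge 0$ via the tensor factors $b^{(l)}$. The difficulty is that each $b^{(l)} \in \mathcal{B}(m_l\varpi_{i_l})$ is generally \emph{not} an element of a single string: it receives overflow from several $\tilde f_{i_k}$ with $k \le l$, and $\varepsilon_{i_1}(b^{(l)})$ depends on how these operators interact inside the full crystal $\mathcal{B}(m_l\varpi_{i_l})$, not merely on how many $\tilde f_{i_1}$'s were applied. For instance, in type $A_2$ with $i_1 = 1$ one has $\varepsilon_1(\tilde f_2^{\,d}\tilde f_1^{\,c} b_{m\varpi_1}) = c - d$, not $c$. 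Tracking this is exactly the combinatorics you defer to ``unwinding the definitions,'' and there is no evident shortcut; indeed, the formula you need for $\varepsilon_{i_1}$ in terms of the $\Psi$'s is essentially Corollary~5.19, which in the paper is \emph{derived from} the proposition you are trying to prove.

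The paper avoids this by a structural trick adapted to its direction of induction: it writes $b_\lambda \cong b_{\lambda - m\varpi_{i_r}} \otimes b_{m\varpi_{i_r}}$ with $m = \langle\lambda, h_{i_r}\rangle$, so that the separated last factor remains inside a single $i_r$-string throughout (Lemmas~A.1 and~A.2), making its $\varepsilon$- and $\varphi$-values trivial to read off. The recursion $a_k^{(j)} \to a_k^{(j-1)}$ is tailored precisely to this ``strip from the right'' reduction: the passage from level $r$ to level $r-1$ encodes exactly the conditions $\Psi^{r,k}_{\bf i} \ge 0$. Your ``strip from the left'' route is not wrong in principle, but to complete it you would have to reprove, by a direct crystal computation, the content of Corollary~5.19 --- work of at least the same depth as the paper's argument.
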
\vspace{2mm}

We will give a proof of this proposition in Appendix A; the proof is almost parallel to that of \cite[Lemma 1.6]{Lit}. From this explicit description of $\mathcal{S}_{\bf i}$, we obtain the following corollaries. Since $\Psi ^{j, k} _{\bf i}(\widetilde{\bf m}, {\bf a})$, $1 \le k < j \le r$, are piecewise-linear functions, it follows that the real closed cone $\mathcal{C}_{\bf i}$ is also given by $\Psi ^{j, k} _{\bf i}(\widetilde{\bf m}, {\bf a}) \ge 0$, $1 \le k < j \le r$, for $(\widetilde{\bf m}, {\bf a}) \in \r^{r-1} _{\ge 0} \times \r^r _{\ge 0}$. Thus, we obtain the following.

\vspace{2mm}\begin{cor}
The real closed cone $\mathcal{C} _{\bf i}$ is a finite union of rational convex polyhedral cones, and the equality $\mathcal{S} _{\bf i} = \mathcal{C} _{\bf i} \cap (\z^{r-1} _{\ge 0} \times \z^r _{\ge 0})$ holds.
\end{cor}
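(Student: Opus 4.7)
The plan is to derive both assertions as essentially formal consequences of Proposition~\ref{inequality}, using only that each function $\Psi^{j,k}_{\bf i}(\widetilde{\bf m}, {\bf a})$ is a piecewise-linear form that is homogeneous of degree $1$ in the combined variable $(\widetilde{\bf m}, {\bf a})$, with integer coefficients throughout (this is visible from the inductive definition, since each recursion step takes minima/maxima of $\z$-linear combinations of the previous layer's $a_l^{(j)}$'s and $m_s$'s). First I would define
\[ T := \bigl\{(\widetilde{\bf m}, {\bf a}) \in \r^{r-1}_{\ge 0} \times \r^r_{\ge 0} \bigm| \Psi^{j,k}_{\bf i}(\widetilde{\bf m}, {\bf a}) \ge 0 \text{ for all } 1 \le k < j \le r \bigr\}. \]
By homogeneity $T$ is a cone, and by continuity of the $\Psi^{j,k}_{\bf i}$ it is closed. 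To present $T$ as a finite union of rational convex polyhedral cones, I would perform a case analysis: for each choice of an index attaining the maximum in every ``$\max$'' occurring (and likewise a choice for every nested ``$\min$'' inside the inductive definition of $a_k^{(j-1)}$), the expressions $\Psi^{j,k}_{\bf i}$ on the corresponding ``linearity chamber'' reduce to a finite collection of $\z$-linear inequalities, cutting out a rational convex polyhedral cone inside $\r^{r-1}_{\ge 0} \times \r^r_{\ge 0}$. Since there are only finitely many such choices, $T$ is a finite union of rational convex polyhedral cones.

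It then remains to identify $T$ with $\mathcal{C}_{\bf i}$. The inclusion $\mathcal{C}_{\bf i} \subset T$ is immediate: Proposition~\ref{inequality} gives $\mathcal{S}_{\bf i} \subset T$, and $T$ is a closed cone, so it contains the smallest such cone containing $\mathcal{S}_{\bf i}$. For the reverse inclusion, write $T = K_1 \cup \cdots \cup K_N$ with each $K_i$ a rational convex polyhedral cone contained in $\r^{r-1}_{\ge 0} \times \r^r_{\ge 0}$. By the standard structure theorem for rational polyhedral cones, each $K_i$ is spanned over $\r_{\ge 0}$ by finitely many integer vectors lying in $K_i \subset T$, hence in $T \cap (\z^{r-1}_{\ge 0} \times \z^r_{\ge 0}) = \mathcal{S}_{\bf i}$ by Proposition~\ref{inequality}; therefore $K_i \subset \mathcal{C}_{\bf i}$ for all $i$, whence $T \subset \mathcal{C}_{\bf i}$. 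Combining $\mathcal{C}_{\bf i} = T$ with Proposition~\ref{inequality} yields $\mathcal{S}_{\bf i} = T \cap (\z^{r-1}_{\ge 0} \times \z^r_{\ge 0}) = \mathcal{C}_{\bf i} \cap (\z^{r-1}_{\ge 0} \times \z^r_{\ge 0})$. The only delicate point I anticipate is the bookkeeping in the case analysis for $T$: one must unfold the inductive definition $a_k^{(j-1)} = \min\{a_k^{(j)}, \Psi^{j,k}_{\bf i}\}$ carefully so that the substitutions inside subsequent $\Psi^{j',k'}$ remain $\z$-linear in the original variables $m_s, a_s$ on each chamber. This is purely combinatorial and presents no conceptual obstacle once one records, on each chamber, which branch of each nested $\min/\max$ is active.
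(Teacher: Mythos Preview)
Your approach is the same as the paper's: both identify $\mathcal{C}_{\bf i}$ with the real solution set $T$ of the inequalities $\Psi^{j,k}_{\bf i}\ge 0$, using that these functions are homogeneous, integer-coefficient, piecewise-linear. The paper simply asserts this identification in one sentence before the Corollary; you have spelled out the two inclusions, which is fine.

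One step deserves tightening. In proving $T\subset\mathcal{C}_{\bf i}$ you say that each rational polyhedral piece $K_i$ is generated over $\r_{\ge 0}$ by finitely many integer vectors in $\mathcal{S}_{\bf i}$, ``therefore $K_i\subset\mathcal{C}_{\bf i}$.'' That inference would require $\mathcal{C}_{\bf i}$ to be \emph{convex}, which is not part of its definition (it is only the smallest closed cone, i.e.\ the closure of $\r_{>0}\cdot\mathcal{S}_{\bf i}$). The correct argument is by density: rational points are dense in the rational cone $K_i$, and any rational point of $K_i$ becomes, after clearing denominators, an integer point of $K_i\subset T$, hence a point of $\mathcal{S}_{\bf i}$ by Proposition~\ref{inequality}; so every rational point of $K_i$ lies in $\r_{>0}\cdot\mathcal{S}_{\bf i}$, and taking closure gives $K_i\subset\mathcal{C}_{\bf i}$. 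With this adjustment your argument is complete and matches the paper's.
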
\vspace{2mm}

Also, we know the following from Proposition \ref{cutting}.

\vspace{2mm}\begin{cor}\label{piecewise-linear inequalities 3}
For ${\bf m} = (m_1, \ldots, m_r) \in \z_{\ge 0} ^r$, set $\widetilde{\bf m} := (m_1, \ldots, m_{r-1})$. Then, $\mathcal{S}_{{\bf i}, {\bf m}}$ is identical to the set of $(k, {\bf a}) = (k, a_1, \ldots, a_r) \in \z_{>0} \times \z_{\ge 0} ^r$ satisfying the following conditions$:$
\begin{enumerate}
\item[{\rm (i)}] $\Psi ^{j, l} _{\bf i}(k \widetilde{\bf m}, {\bf a}) \ge 0$ for all $1 \le l < j \le r;$
\item[{\rm (ii)}] $a_j \le k m_j + \sum_{j < s \le r} \delta_{i_j, i_s} k m_{s} - \sum_{j < s \le r} c_{i_j, i_s} a_s$ for all $1 \le j \le r$.
\end{enumerate}
In particular, the real closed cone $\mathcal{C}_{{\bf i}, {\bf m}}$ is a finite union of rational convex polyhedral cones, and the equality $\mathcal{S}_{{\bf i}, {\bf m}} = \mathcal{C}_{{\bf i}, {\bf m}} \cap (\z_{>0} \times \z^r _{\ge 0})$ holds.
\end{cor}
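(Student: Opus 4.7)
The plan is to simply combine the two results stated immediately before the corollary. Proposition \ref{cutting} reduces the description of $\mathcal{S}_{{\bf i}, {\bf m}}$ to the description of $\mathcal{S}_{\bf i}$ plus the linear inequalities (ii), and Proposition \ref{inequality} gives the piecewise-linear inequalities cutting out $\mathcal{S}_{\bf i}$. Thus, substituting the inequalities $\Psi^{j,l}_{\bf i}(k\widetilde{\bf m}, {\bf a}) \ge 0$ (with the first argument $k\widetilde{\bf m}$, since Proposition \ref{cutting} feeds in $k\widetilde{\bf m}$ rather than $\widetilde{\bf m}$) into condition (i) of Proposition \ref{cutting} immediately yields the first assertion of the corollary.

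For the ``in particular'' part, I would argue as follows. Fix ${\bf m}$; then each function $\Psi^{j, l}_{\bf i}(k \widetilde{\bf m}, {\bf a})$ is, by construction, a maximum of finitely many $\z$-linear functions in $(k, {\bf a})$ with rational (in fact integer) coefficients, because the recursive definition involves only taking minima (which encode intersections of half-spaces) and maxima (which encode unions). Consequently, each inequality $\Psi^{j, l}_{\bf i}(k\widetilde{\bf m}, {\bf a}) \ge 0$ is equivalent to a finite union of systems of linear inequalities with rational coefficients; the linear inequalities in (ii) are themselves already in this form. Let $\mathcal{D} \subset \r_{\ge 0} \times \r^r_{\ge 0}$ denote the subset defined by all these inequalities on real points. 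Then $\mathcal{D}$ is a finite union of rational convex polyhedral cones.

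It remains to identify $\mathcal{D}$ with $\mathcal{C}_{{\bf i}, {\bf m}}$. Since $\mathcal{D}$ is closed and, by the first assertion, contains $\mathcal{S}_{{\bf i}, {\bf m}}$, the minimality of $\mathcal{C}_{{\bf i}, {\bf m}}$ gives $\mathcal{C}_{{\bf i}, {\bf m}} \subset \mathcal{D}$. Conversely, each rational convex polyhedral cone appearing as a piece of $\mathcal{D}$ is spanned (over $\r_{\ge 0}$) by finitely many rational points in $\z_{>0} \times \z^r_{\ge 0}$ (or scalar multiples thereof); any such integer point $(k, {\bf a})$, as well as all of its positive integer multiples, lies in $\mathcal{S}_{{\bf i}, {\bf m}}$ by the first assertion, since the defining inequalities are homogeneous in $(k, {\bf a})$ once ${\bf m}$ is fixed on the left-hand side and replaced by $k{\bf m}$. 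Hence each piece of $\mathcal{D}$ lies in $\mathcal{C}_{{\bf i}, {\bf m}}$, giving $\mathcal{D} \subset \mathcal{C}_{{\bf i}, {\bf m}}$. Combining these, $\mathcal{C}_{{\bf i}, {\bf m}} = \mathcal{D}$ is a finite union of rational convex polyhedral cones, and the equality $\mathcal{S}_{{\bf i}, {\bf m}} = \mathcal{C}_{{\bf i}, {\bf m}} \cap (\z_{>0} \times \z^r_{\ge 0})$ drops out from the fact that $\mathcal{S}_{{\bf i}, {\bf m}}$ and $\mathcal{C}_{{\bf i}, {\bf m}}$ are defined by the same system of (piecewise-linear, rational) inequalities.

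The only mildly delicate point is that the inequalities defining $\mathcal{S}_{\bf i}$ in Proposition \ref{inequality} involve $\widetilde{\bf m}$ as well as ${\bf a}$, whereas in the corollary we want cones in $(k, {\bf a})$-space with ${\bf m}$ fixed. Plugging in $\widetilde{\bf m} \mapsto k \widetilde{\bf m}$ preserves homogeneity in $(k, {\bf a})$, so no scaling pathology occurs; this is the observation that makes the argument of the previous paragraph go through and is the main thing to check carefully. Everything else is formal bookkeeping from the two preceding propositions.
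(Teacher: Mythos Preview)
Your proposal is correct and takes essentially the same approach as the paper, which treats the corollary as an immediate consequence of Proposition~\ref{cutting} combined with Proposition~\ref{inequality} (the paper gives no proof beyond the phrase ``Also, we know the following from Proposition~\ref{cutting}''). Your write-up simply spells out the bookkeeping that the paper leaves implicit; the only point worth noting is that your observation about homogeneity in $(k,{\bf a})$ tacitly uses that $\mathcal{D} \cap (\{0\}\times\r^r) = \{0\}$, which follows by backward induction from the inequalities~(ii) at $k=0$.
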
\vspace{2mm}

\begin{cor}\label{computation of generalized string polytopes}
For ${\bf m} = (m_1, \ldots, m_r) \in \z_{\ge 0} ^r$, set $\widetilde{\bf m} := (m_1, \ldots, m_{r-1})$. Then, the generalized string polytope $\Delta_{{\bf i}, {\bf m}}$ is identical to the set of ${\bf a} = (a_1, \ldots, a_r) \in \r_{\ge 0} ^r$ satisfying the following conditions$:$
\begin{enumerate}
\item[{\rm (i)}] $\Psi ^{j, l} _{\bf i}(\widetilde{\bf m}, {\bf a}) \ge 0$ for all $1 \le l < j \le r;$
\item[{\rm (ii)}] $a_j \le m_j + \sum_{j < s \le r} \delta_{i_j, i_s} m_{s} - \sum_{j < s \le r} c_{i_j, i_s} a_s$ for all $1 \le j \le r$.
\end{enumerate}
\end{cor}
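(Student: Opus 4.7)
The plan is to deduce the corollary directly from Corollary~\ref{piecewise-linear inequalities 3} by slicing $\mathcal{C}_{{\bf i}, {\bf m}}$ at $k = 1$. By Definition~\ref{generalized string polytope}, $\Delta_{{\bf i}, {\bf m}} = \{{\bf a} \in \r^r \mid (1, {\bf a}) \in \mathcal{C}_{{\bf i}, {\bf m}}\}$, so the work is to translate the integer-point description of $\mathcal{S}_{{\bf i}, {\bf m}}$ into a real description of the closed cone it generates, and then set $k = 1$.

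First I would verify that the piecewise-linear functions $\Psi^{j, l}_{\bf i}(k\widetilde{\bf m}, {\bf a})$ and the affine functionals appearing in condition (ii) of Corollary~\ref{piecewise-linear inequalities 3} are all positively homogeneous of degree one in $(k, {\bf a})$ (with ${\bf m}$ fixed). This is immediate from the recursive definition of $\Psi^{j, l}_{\bf i}$, since each formula for $\Psi^{j, k}_{\bf i}$ and for $a_k^{(j-1)}$ is built by $\min$ and $\max$ from linear expressions in $(\widetilde{\bf m}, {\bf a})$ with no constant term. Therefore the subset $\widetilde{K} \subset \r_{\ge 0} \times \r^r_{\ge 0}$ cut out by those inequalities is a closed real cone, and in fact a finite union of rational convex polyhedral cones since all the coefficients involved are rational.

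Next I would argue the two inclusions between $\mathcal{C}_{{\bf i}, {\bf m}}$ and $\widetilde{K}$. One direction is easy: Corollary~\ref{piecewise-linear inequalities 3} gives $\mathcal{S}_{{\bf i}, {\bf m}} \subseteq \widetilde{K}$, and since $\widetilde{K}$ is a closed real cone, the minimality of $\mathcal{C}_{{\bf i}, {\bf m}}$ yields $\mathcal{C}_{{\bf i}, {\bf m}} \subseteq \widetilde{K}$. For the reverse, pick $(1, {\bf a}) \in \widetilde{K}$; because $\widetilde{K}$ is a finite union of rational polyhedral cones, the rational points of $\widetilde{K}$ with $k > 0$ are dense in a neighborhood of $(1, {\bf a})$ inside $\widetilde{K}$, so I can find a sequence of rational $(k_n, {\bf a}_n) \in \widetilde{K}$ with $k_n > 0$ converging to $(1, {\bf a})$. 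Clearing denominators by a positive integer $N_n$ gives $(N_n k_n, N_n {\bf a}_n) \in \widetilde{K} \cap (\z_{>0} \times \z^r) = \mathcal{S}_{{\bf i}, {\bf m}}$ by Corollary~\ref{piecewise-linear inequalities 3}, whence $(k_n, {\bf a}_n) \in \r_{>0} \cdot \mathcal{S}_{{\bf i}, {\bf m}} \subseteq \mathcal{C}_{{\bf i}, {\bf m}}$; taking the limit places $(1, {\bf a})$ in $\mathcal{C}_{{\bf i}, {\bf m}}$.

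The main thing to check carefully is the homogeneity of the $\Psi^{j, l}_{\bf i}$, which I expect to be a straightforward induction on $j$; the density step is routine from standard facts about rational polyhedra. Once the equality $\mathcal{C}_{{\bf i}, {\bf m}} = \widetilde{K}$ is in hand, slicing at $k = 1$ substitutes $k\widetilde{\bf m} \mapsto \widetilde{\bf m}$ in condition (i) and $km_j \mapsto m_j$ in condition (ii), producing precisely the stated system of inequalities for $\Delta_{{\bf i}, {\bf m}}$.
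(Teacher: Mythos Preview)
Your proposal is correct and is exactly the intended argument: the paper states this corollary without proof, treating it as an immediate consequence of Corollary~\ref{piecewise-linear inequalities 3}, and you have simply spelled out the routine passage from the integer-point description of $\mathcal{S}_{{\bf i},{\bf m}}$ to the real cone $\mathcal{C}_{{\bf i},{\bf m}}$ via homogeneity and rational density, followed by slicing at $k=1$. There is nothing to add.
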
\vspace{2mm}

\begin{cor}\label{finite union of polytopes}
The generalized string polytope $\Delta_{{\bf i}, {\bf m}}$ is a finite union of rational convex polytopes, and the equality $\Omega_{\bf i}(\mathcal{B}_{{\bf i}, {\bf m}}) = \Delta_{{\bf i}, {\bf m}} \cap \z^r$ holds.
\end{cor}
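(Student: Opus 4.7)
\smallskip
My plan is to deduce both assertions directly from Corollaries 5.9 and 5.10, which have already done the heavy lifting. The statement is essentially a bookkeeping consequence: Corollary 5.9 identifies $\mathcal{C}_{{\bf i},{\bf m}}$ as a finite union of rational convex polyhedral cones and $\mathcal{S}_{{\bf i},{\bf m}}$ as its lattice points in $\z_{>0}\times\z^r_{\ge 0}$, while Corollary 5.10 gives explicit piecewise-linear inequalities cutting out $\Delta_{{\bf i},{\bf m}}$.

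For the first assertion, I would argue as follows. By definition, $\Delta_{{\bf i}, {\bf m}} = \{ {\bf a}\in\r^r \mid (1,{\bf a})\in \mathcal{C}_{{\bf i},{\bf m}}\}$, i.e., $\Delta_{{\bf i},{\bf m}}$ is the slice of the cone $\mathcal{C}_{{\bf i},{\bf m}}$ by the rational affine hyperplane $\{1\}\times\r^r$. Since a rational affine slice of a rational convex polyhedral cone is a rational convex polyhedron, and since finite unions commute with this operation, Corollary 5.9 gives that $\Delta_{{\bf i},{\bf m}}$ is a finite union of rational convex polyhedra. It remains to verify boundedness. For this I would use the system of inequalities in Corollary 5.10 (ii): starting from $j = r$, where the inequality reads $a_r \le m_r$, and then descending inductively in $j$, each inequality bounds $a_j$ in terms of $m_1,\ldots,m_r$ and the already-bounded $a_{j+1},\ldots,a_r$. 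Hence every coordinate of any point in $\Delta_{{\bf i},{\bf m}}$ is bounded, so each rational convex polyhedron in the finite union is in fact a rational convex polytope.

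For the second assertion, I would unwind the definitions: by Definition 5.7,
\[
\Omega_{\bf i}(\mathcal{B}_{{\bf i},{\bf m}}) \;=\; \{{\bf a}\in\z^r \mid (1,{\bf a})\in \mathcal{S}_{{\bf i},{\bf m}}\}.
\]
Combining this with the identity $\mathcal{S}_{{\bf i},{\bf m}} = \mathcal{C}_{{\bf i},{\bf m}} \cap (\z_{>0}\times\z^r_{\ge 0})$ from Corollary 5.9, and with the fact that $\Delta_{{\bf i},{\bf m}}\subset \r^r_{\ge 0}$ (visible from the inequalities in Corollary 5.10 (ii) by induction from $j=r$ down), we obtain
\[
\Omega_{\bf i}(\mathcal{B}_{{\bf i},{\bf m}}) \;=\; \{{\bf a}\in\z^r_{\ge 0} \mid (1,{\bf a})\in \mathcal{C}_{{\bf i},{\bf m}}\} \;=\; \Delta_{{\bf i},{\bf m}} \cap \z^r,
\]
as claimed.

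I do not expect any genuine obstacle here: both assertions are formal consequences of Corollaries 5.9 and 5.10, which themselves follow from Proposition 5.8 (proved in Appendix A). The only mildly non-trivial verification is the boundedness of $\Delta_{{\bf i},{\bf m}}$, and that is a routine descending induction on the inequalities of Corollary 5.10 (ii); nothing further beyond the previously stated results is needed.
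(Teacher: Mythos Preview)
Your proposal is correct and follows essentially the same approach as the paper: both arguments derive boundedness from condition (ii) of Corollary~\ref{computation of generalized string polytopes} and derive the lattice-point equality from the identity $\mathcal{S}_{{\bf i},{\bf m}} = \mathcal{C}_{{\bf i},{\bf m}} \cap (\z_{>0}\times\z^r_{\ge 0})$ in Corollary~\ref{piecewise-linear inequalities 3}. One small slip: your parenthetical justification that $\Delta_{{\bf i},{\bf m}}\subset\r^r_{\ge 0}$ is ``visible from the inequalities in Corollary~5.10~(ii)'' is not right, since those inequalities are upper bounds; the nonnegativity is simply part of the statement of Corollary~\ref{computation of generalized string polytopes} (equivalently, it comes from the definition of $\Omega_{\bf i}$), so just cite that directly.
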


\begin{proof}
From condition (ii) in Corollary \ref{computation of generalized string polytopes}, we deduce that the generalized string polytope $\Delta_{{\bf i}, {\bf m}}$ is bounded, and hence compact. Also, by Corollary \ref{computation of generalized string polytopes}, it follows that the generalized string polytope $\Delta_{{\bf i}, {\bf m}}$ is given by a finite number of piecewise-linear inequalities, which implies the first assertion of the corollary. Now the second assertion is an immediate consequence of Corollary \ref{piecewise-linear inequalities 3}. This proves the corollary.
\end{proof}

\begin{cor}
For $i \in I$, $b \in \mathcal{B}_{{\bf i}, {\bf m}}$, and $1 \le j \le r$ such that $i_j = i$, set \[\widetilde{\Psi}_i ^{(j)} (b) := \max\{a_l ^{(j)} - \sum_{1 \le s \le l} c_{i_j, i_s} a_s ^{(j)} + \sum_{1 \le s < l} \delta_{i_j, i_s} m_s \mid 1 \le l \le j,\ i_l = i_j\ (= i)\},\] where ${\bf a} = (a_1, \ldots, a_r) := \Omega_{\bf i}(b)$, and ${\bf a}^{(j)} = (a_1 ^{(j)}, \ldots, a_j ^{(j)})$ is defined as above. Then, \[\varepsilon_i (b) = 
\begin{cases}
\max\{\widetilde{\Psi}_i ^{(j)} (b) \mid 1 \le j \le r,\ i_j = i\} &{\it if}\ \widetilde{\Psi}_i ^{(j)} (b) \ge 0\ {\it for\ some}\ 1 \le j \le r,\ i_j = i,\\
0 &{\it otherwise}.
\end{cases}\]
\end{cor}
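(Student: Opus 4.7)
The proof proceeds by induction on the length $r$ of the word ${\bf i}$, using Proposition \ref{generalized parameterization}~(1) to expose the recursive structure of $b$ and the tensor product rule for upper crystal bases (Proposition \ref{tensor product of crystals}~(2)) to compute $\varepsilon_i(b)$. The base case $r = 1$ is a direct check: $b = \tilde f_{i_1}^{a_1} b_{m_1\varpi_{i_1}}$ lies in the top $i_1$-string of $\mathcal{B}(m_1\varpi_{i_1})$, which gives $\varepsilon_i(b) = a_1$ if $i = i_1$ and $\varepsilon_i(b) = 0$ otherwise; the claimed formula reduces to this after evaluating $\widetilde{\Psi}_i^{(1)}(b)$ against the definition of ${\bf a}^{(1)} = {\bf a}$.

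For the induction step I would write $b = \tilde f_{i_1}^{a_1}(b_{m_1\varpi_{i_1}} \otimes b(2))$ with $b(2) \in \mathcal{B}_{{\bf i}_{\ge 2}, {\bf m}_{\ge 2}}$ and $\Omega_{{\bf i}_{\ge 2}}(b(2)) = (a_2, \ldots, a_r)$. By the maximality of $a_1$ in Definition \ref{generalized string parameterization}, $\tilde e_{i_1}$ kills $b_{m_1\varpi_{i_1}} \otimes b(2)$, so Proposition \ref{tensor product of crystals}~(2) forces $\varepsilon_{i_1}(b(2)) \le m_1$. Corollary \ref{tensor product corollary}~(1) then lets me decompose $b = c_1 \otimes b'(2)$ in $\mathcal{B}(m_1\varpi_{i_1}) \otimes (\mathcal{B}(m_2\varpi_{i_2}) \otimes \cdots \otimes \mathcal{B}(m_r\varpi_{i_r}))$, where $c_1 = \tilde f_{i_1}^{k_1} b_{m_1\varpi_{i_1}}$ and $b'(2) = \tilde f_{i_1}^{a_1 - k_1} b(2)$ for an explicit $k_1$ depending on $a_1$, $m_1$, $\varepsilon_{i_1}(b(2))$. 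Now applying the tensor rule again yields $\varepsilon_i(b) = \max\{\varepsilon_i(c_1),\, \varepsilon_i(b'(2)) - \langle {\rm wt}(c_1), h_i\rangle\}$. When $i = i_1$ this is immediately $a_1$, matching the $l = j = 1$ contribution. When $i \ne i_1$, the element $c_1$ lies in the top $i_1$-string of $\mathcal{B}(m_1\varpi_{i_1})$, on which every $\tilde e_j$ with $j \ne i_1$ acts by zero; thus $\varepsilon_i(c_1) = 0$, and using $\langle {\rm wt}(c_1), h_i\rangle = m_1\delta_{i_1, i} - k_1 c_{i, i_1}$ the computation reduces to $\varepsilon_i(b'(2)) + k_1 c_{i, i_1} - m_1\delta_{i_1, i}$.

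I would then invoke the induction hypothesis on $b'(2)$ (whose generalized string parameterization can be read off from that of $b(2)$ together with the extra $\tilde f_{i_1}^{a_1 - k_1}$), matching the resulting max over $l$ in the shorter word ${\bf i}_{\ge 2}$ with the contributions of indices $l \ge 2$ in $\widetilde{\Psi}_i^{(j)}(b)$; the ``new'' correction from $c_1$ contributes exactly the $l = 1$ summand. The crucial combinatorial input here is the identification of the parameter $k_1$ produced by Corollary \ref{tensor product corollary}~(1) with the piecewise-linear quantity appearing in the recursive definition $a_k^{(j-1)} = \min\{a_k^{(j)},\, \Psi_{\bf i}^{j, k}(\widetilde{\bf m}, {\bf a})\}$: the $\min$ reflects the branching between ``all $\tilde f_{i_1}$'s stay on the left factor'' and ``some $\tilde f_{i_1}$'s spill onto the right factor'', while $\Psi_{\bf i}^{j, k}$ records the overflow threshold. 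Taking the maximum over $j$ with $i_j = i$ on the output side collects all of the relevant contributions and recovers $\varepsilon_i(b)$.

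The main obstacle is precisely this alignment of the nested $\min$-recursion defining ${\bf a}^{(j)}$ with the branching dichotomy in Corollary \ref{tensor product corollary}~(1): one must carefully verify, across all pairs $(j, l)$ with $i_l = i_j = i$, that the tensor-decomposition parameter agrees with the piecewise-linear prescription, and that the bookkeeping of ``crossings'' of later letters $i_s = i_1$ behaves consistently under iteration. Once this correspondence is established the rest of the argument is routine, paralleling the techniques used in the proofs of Proposition \ref{inequality} and Corollary \ref{computation of generalized string polytopes}.
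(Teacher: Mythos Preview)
Your inductive route differs from the paper's, and as written it has a real gap. The element $b'(2) = \tilde{f}_{i_1}^{\,a_1 - k_1} b(2)$ need not lie in $\mathcal{B}_{{\bf i}_{\ge 2}, {\bf m}_{\ge 2}}$: that subset is closed under all $\tilde{e}_j$ (Proposition~\ref{generalized Demazure crystal}~(1)) but not under $\tilde{f}_{i_1}$, so there is no ``generalized string parameterization'' of $b'(2)$ with respect to ${\bf i}_{\ge 2}$ to which you can apply the induction hypothesis. You acknowledge that matching the left-to-right tensor peeling with the right-to-left recursion $a_k^{(j-1)} = \min\{a_k^{(j)}, \Psi_{\bf i}^{j,k}(\widetilde{\bf m}, {\bf a})\}$ is ``the main obstacle'', but in fact this mismatch is structural: the recursion defining ${\bf a}^{(j)}$ successively drops the \emph{last} letter, while your induction drops the \emph{first}. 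Repairing this would amount to redoing the proof of Proposition~\ref{inequality} rather than using it.

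The paper avoids all of this by a one-move reduction to Proposition~\ref{inequality} itself. For a fixed $i \in I$ and any $m \in \z_{\ge 0}$, prepend a letter to form $(i, {\bf i})$ and $(m, {\bf m})$, and observe that $\Omega_{(i, {\bf i})}(b_{m\varpi_i} \otimes b) = (0, {\bf a})$ if and only if $\varepsilon_i(b_{m\varpi_i} \otimes b) = 0$, which by the tensor rule (Proposition~\ref{tensor product of crystals}~(2)) is equivalent to $\varepsilon_i(b) \le m$. On the other hand, by equation~(\ref{independent of lambda}) the condition $\Omega_{(i, {\bf i})}(b_{m\varpi_i} \otimes b) = (0, {\bf a})$ is exactly $((m, m_1, \ldots, m_{r-1}), (0, {\bf a})) \in \mathcal{S}_{(i, {\bf i})}$, and Proposition~\ref{inequality} translates this into the system $\widetilde{\Psi}_i^{(j)}(b) \le m$ for all $j$ with $i_j = i$. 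Thus $\varepsilon_i(b) \le m$ if and only if $\max_j \widetilde{\Psi}_i^{(j)}(b) \le m$; letting $m$ vary over $\z_{\ge 0}$ gives the formula. The whole argument is three lines once Proposition~\ref{inequality} is in hand, with no new induction and no need to track ${\bf a}^{(j)}$ by hand.
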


\begin{proof}
For $i \in I$ and $m \in \z_{\ge 0}$, consider the generalized string parameterization $\Omega_{(i, {\bf i})}$ for $\mathcal{B}_{(i, {\bf i}), (m, {\bf m})}$, where $(i, {\bf i}) := (i, i_1, \ldots, i_r)$ and $(m, {\bf m}) := (m, m_1, \ldots, m_r)$. If we write $\Omega_{\bf i}(b) = {\bf a} = (a_1, \ldots, a_r)$ for $b \in \mathcal{B}_{{\bf i}, {\bf m}}$, then we see from the definition of $\Omega_{(i, {\bf i})}$ that $\Omega_{(i, {\bf i})} (b_{m \varpi_i} \otimes b) = (0, {\bf a})$ if and only if $\varepsilon_i (b_{m \varpi_i} \otimes b) = 0$, where $(0, {\bf a}) := (0, a_1, \ldots, a_r)$. Now equation (\ref{independent of lambda}) implies that $\Omega_{(i, {\bf i})}(b_{m \varpi_i} \otimes b) = (0, {\bf a})$ if and only if $((m, m_1, \ldots, m_{r-1}), (0, {\bf a})) \in \mathcal{S}_{(i, {\bf i})}$. Applying Proposition \ref{inequality} to $\mathcal{S}_{(i, {\bf i})}$, this is also equivalent to $\widetilde{\Psi}_i ^{(j)} (b) \le m$ for all $1 \le j \le r$ such that $i_j = i$. Also, it follows from the assertion in Proposition \ref{tensor product of crystals} (2) for $\varepsilon_i(b_1 \otimes b_2)$ that $\varepsilon_i (b_{m \varpi_i} \otimes b) = 0$ if and only if $\varepsilon_i(b) \le m$. From these, we conclude that $\max\{\widetilde{\Psi}_i ^{(j)} (b) \mid 1 \le j \le r,\ i_j = i\} \le m$ if and only if $\varepsilon_i(b) \le m$. This proves the corollary.
\end{proof}

\section{Upper global bases of tensor product modules}

In this section, we recall some basic facts about an upper global basis of a tensor product module, following \cite{Lus} and \cite{Kas5}. Define $\overline{\Delta}: U_q (\mathfrak{g}) \rightarrow U_q (\mathfrak{g}) \otimes U_q (\mathfrak{g})$ by $\overline{\Delta} (u) := \overline{\Delta (\overline{u})}$ for $u \in U_q (\mathfrak{g})$, where we denote by $\overline{\vphantom{(}\cdot\vphantom{)}}$ the $\q$-involution $\overline{\vphantom{(}\cdot\vphantom{)}}  \otimes \overline{\vphantom{(}\cdot\vphantom{)}}: U_q (\mathfrak{g}) \otimes U_q (\mathfrak{g}) \rightarrow U_q (\mathfrak{g}) \otimes U_q (\mathfrak{g})$. Also, let us write \[U_q ^\pm (\mathfrak{g})_\nu := \{u \in U_q ^\pm (\mathfrak{g}) \mid t_i u t_i ^{-1} = q_i ^{\langle\nu, h_i\rangle} u\ {\rm for\ all}\ i \in I\}\] for $\nu \in \sum_{i \in I} \z \alpha_i$.

\vspace{2mm}\begin{lem}[{see \cite[Theorem 4.1.2 (a)]{Lus}}]
Set $Q_{\ge 0} := \sum_{i \in I} \z_{\ge 0} \alpha_i$. There exists a unique family of elements $\{\Theta_\nu \in U_q ^-(\mathfrak{g})_{-\nu} \otimes U_q ^+(\mathfrak{g})_\nu \mid \nu \in Q_{\ge 0}\}$ satisfying the following conditions$:$
\begin{enumerate}
\item[{\rm (i)}] $\Theta_0 = 1\otimes 1;$
\item[{\rm (ii)}] for all $u \in U_q (\mathfrak{g})$ and finite-dimensional $U_q(\mathfrak{g})$-modules $V_1, V_2$, the equality $\Delta(u) \circ \Theta = \Theta \circ \overline{\Delta}(u)$ holds as endomorphisms of $V_1 \otimes V_2$, where $\Theta := \sum_{\nu \in Q_{\ge 0}} \Theta_\nu$.
\end{enumerate}
\end{lem}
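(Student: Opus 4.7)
My plan is to reduce the verification of condition (ii) to the Chevalley generators, extract from this reduction a recursion that pins down each $\Theta_\nu$ starting from $\Theta_0 = 1 \otimes 1$, and then exhibit $\Theta_\nu$ explicitly via Lusztig's non-degenerate bilinear form on $U_q^+(\mathfrak{g})$.

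First I would observe that since $\Delta$ and $\overline{\Delta}$ are both algebra homomorphisms, the intertwining $\Delta(u)\circ\Theta = \Theta\circ\overline{\Delta}(u)$ need only be checked on the generators $e_i, f_i, t_i^{\pm 1}$. A direct computation from the definitions gives $\overline{\Delta}(t_i) = t_i \otimes t_i = \Delta(t_i)$, $\overline{\Delta}(e_i) = e_i \otimes 1 + t_i^{-1} \otimes e_i$, and $\overline{\Delta}(f_i) = f_i \otimes t_i + 1 \otimes f_i$. The $t_i$-case is automatic from the imposed weight homogeneity $\Theta_\nu \in U_q^-(\mathfrak{g})_{-\nu} \otimes U_q^+(\mathfrak{g})_\nu$, so only the $e_i$- and $f_i$-intertwinings carry content. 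Projecting the $e_i$-identity onto the weight component $U_q^-(\mathfrak{g})_{-\nu+\alpha_i} \otimes U_q^+(\mathfrak{g})_\nu$ and using $t_i a = q_i^{-\langle\nu, h_i\rangle}a\,t_i$ for $a \in U_q^-(\mathfrak{g})_{-\nu}$, one extracts an identity of the shape $[e_i \otimes 1,\,\Theta_\nu] = (\text{an explicit expression in } \Theta_{\nu - \alpha_i})$, and similarly from $f_i$.

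Uniqueness then follows by induction on the height of $\nu$: given two solutions $\Theta, \Theta'$ with the same $\Theta_0$, their difference $\Psi$ satisfies $\Psi_0 = 0$ and the same intertwining; the inductive step reduces to $[e_i, a_\ell] = 0$ for the coefficients $a_\ell$ of $\Psi_\nu$, and since Lusztig's skew-derivations $r_i, r'_i$, $i \in I$, jointly have trivial intersection of kernels on $U_q^-(\mathfrak{g})_{-\nu}$ for $\nu \ne 0$, this forces $\Psi_\nu = 0$. For existence I would invoke Lusztig's non-degenerate symmetric bilinear form on $U_q^+(\mathfrak{g})$, whose restriction to each finite-dimensional weight piece $U_q^+(\mathfrak{g})_\nu$ is non-degenerate. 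Picking a basis $\{u_\ell\}$ of $U_q^+(\mathfrak{g})_\nu$ with dual basis $\{u_\ell^\vee\}$ under this form, and writing $\omega$ for the natural identification of $U_q^+(\mathfrak{g})_\nu$ with $U_q^-(\mathfrak{g})_{-\nu}$ sending $e_i$ to $f_i$, one sets $\Theta_\nu := \sum_\ell \omega(u_\ell^\vee) \otimes u_\ell$; the characteristic adjunction property between left multiplication by $e_i$ on $U_q^+(\mathfrak{g})$ and the Lusztig skew-derivation then converts the required $e_i$- and $f_i$-intertwinings into automatic identities.

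The main obstacle will be precisely this last existence step: verifying by hand that the recursion extracted in the uniqueness argument actually admits a solution requires the bilinear form together with its adjunction property, and the verification is a non-trivial computation. Since the construction and the full proof are carried out in \cite[Theorem 4.1.2]{Lus}, the present paper can reasonably limit itself to presenting the uniqueness recursion sketched above and appealing to \cite{Lus} (and, for the conventions used here, to \cite{Kas5}) for the existence of elements $\Theta_\nu$ enjoying the stated intertwining property on every pair of finite-dimensional $U_q(\mathfrak{g})$-modules $V_1, V_2$.
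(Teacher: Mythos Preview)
Your sketch is essentially the standard argument from \cite[Theorem 4.1.2]{Lus}: reduce to generators, extract a height-recursion from the $e_i$- and $f_i$-intertwinings to get uniqueness, and build $\Theta_\nu$ from dual bases with respect to Lusztig's bilinear form for existence. There is nothing to compare it against here, however, because the paper does not supply a proof of this lemma at all --- it is stated purely as a citation to \cite{Lus} and used as a black box in the construction of the bar involution on tensor products (Proposition~\ref{bar involution}). So your proposal is not wrong, but it goes well beyond what the paper itself does; the paper's ``proof'' is simply the reference.
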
\vspace{2mm}

The $\Theta$ above is called the {\it quasi-R-matrix}. By using $\Theta$, we can construct a bar involution on $V_1 \otimes V_2$.

\vspace{2mm}\begin{prop}[{see \cite[\S\S 4.1]{Lus}}]\label{bar involution}
For $i = 1, 2$, let $V_i$ be a finite-dimensional $U_q (\mathfrak{g})$-module, and $\overline{\vphantom{(}\cdot\vphantom{)}}$ its bar involution. Then, a $\q$-linear map $\overline{\vphantom{(}\cdot\vphantom{)}} := \Theta \circ (\overline{\vphantom{(}\cdot\vphantom{)}} \otimes \overline{\vphantom{(}\cdot\vphantom{)}})$ is a bar involution on $V_1 \otimes V_2$.
\end{prop}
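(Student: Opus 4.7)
The plan is to verify the two defining axioms of a bar involution: (a) that $\overline{\vphantom{(}\cdot\vphantom{)}} := \Theta \circ (\overline{\vphantom{(}\cdot\vphantom{)}} \otimes \overline{\vphantom{(}\cdot\vphantom{)}})$ is $\q$-linear and squares to the identity on $V_1 \otimes V_2$, and (b) that it intertwines with the $U_q(\mathfrak{g})$-action, i.e., $\overline{u \cdot x} = \overline{u} \cdot \overline{x}$ for all $u \in U_q(\mathfrak{g})$ and $x \in V_1 \otimes V_2$. The $\q$-linearity is clear from the construction (since $\overline{\vphantom{(}\cdot\vphantom{)}}$ on each $V_i$ is $\q$-linear and $\Theta$ acts $\q(q)$-linearly), so the substance lies in (a) and (b).

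For (b), I will first exploit the bar involution property on each factor to commute the termwise bar past $\Delta(u)$. If $\Delta(u) = \sum u^{(1)} \otimes u^{(2)}$, then applying $\overline{\vphantom{(}\cdot\vphantom{)}} \otimes \overline{\vphantom{(}\cdot\vphantom{)}}$ to a simple tensor $u^{(1)} v_1 \otimes u^{(2)} v_2$ and using $\overline{u^{(k)} v_k} = \overline{u^{(k)}} \cdot \overline{v_k}$ yields the operator identity
\[
(\overline{\vphantom{(}\cdot\vphantom{)}} \otimes \overline{\vphantom{(}\cdot\vphantom{)}}) \circ \Delta(u) = \overline{\Delta}(\overline{u}) \circ (\overline{\vphantom{(}\cdot\vphantom{)}} \otimes \overline{\vphantom{(}\cdot\vphantom{)}})
\]
on $V_1 \otimes V_2$, where I use $\overline{\Delta(u)} = \overline{\Delta}(\overline{u})$ by the very definition of $\overline{\Delta}$. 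Composing with $\Theta$ on the left and invoking the defining intertwining property of the quasi-R-matrix (applied to $\overline{u}$ in place of $u$), $\Delta(\overline{u}) \circ \Theta = \Theta \circ \overline{\Delta}(\overline{u})$, produces
\[
\Theta \circ (\overline{\vphantom{(}\cdot\vphantom{)}} \otimes \overline{\vphantom{(}\cdot\vphantom{)}}) \circ \Delta(u) = \Delta(\overline{u}) \circ \Theta \circ (\overline{\vphantom{(}\cdot\vphantom{)}} \otimes \overline{\vphantom{(}\cdot\vphantom{)}}),
\]
which is precisely (b).

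For (a), the key intermediate step is to commute the termwise bar past $\Theta$ itself. Expanding $\Theta = \sum_\nu \Theta_\nu$ with $\Theta_\nu = \sum_k a_k^{(\nu)} \otimes b_k^{(\nu)} \in U_q^-(\mathfrak{g})_{-\nu} \otimes U_q^+(\mathfrak{g})_\nu$ and again using the module bar-involution property termwise, one obtains
\[
(\overline{\vphantom{(}\cdot\vphantom{)}} \otimes \overline{\vphantom{(}\cdot\vphantom{)}}) \circ \Theta = \overline{\Theta} \circ (\overline{\vphantom{(}\cdot\vphantom{)}} \otimes \overline{\vphantom{(}\cdot\vphantom{)}}),
\]
where $\overline{\Theta} := \sum_\nu \sum_k \overline{a_k^{(\nu)}} \otimes \overline{b_k^{(\nu)}}$. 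Consequently, on $V_1 \otimes V_2$,
\[
\overline{\overline{x}} \;=\; \Theta \circ (\overline{\vphantom{(}\cdot\vphantom{)}} \otimes \overline{\vphantom{(}\cdot\vphantom{)}}) \circ \Theta \circ (\overline{\vphantom{(}\cdot\vphantom{)}} \otimes \overline{\vphantom{(}\cdot\vphantom{)}})(x) \;=\; (\Theta \cdot \overline{\Theta})(x),
\]
so (a) is equivalent to the operator identity $\Theta \cdot \overline{\Theta} = 1 \otimes 1$ on every finite-dimensional tensor product module.

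This last identity is the main obstacle and is the well-known \emph{unitarity} of the quasi-R-matrix. To establish it I would invoke the uniqueness clause of the preceding lemma: applying $\overline{\vphantom{(}\cdot\vphantom{)}} \otimes \overline{\vphantom{(}\cdot\vphantom{)}}$ to the intertwining relation $\Delta(u)\,\Theta = \Theta\,\overline{\Delta}(u)$ and using $\overline{\overline{\Delta}(u)} = \Delta(\overline{u})$ gives $\overline{\Delta}(u)\,\overline{\Theta} = \overline{\Theta}\,\Delta(u)$; combining with the defining intertwining property of $\Theta$ shows that $\Theta\cdot\overline{\Theta}$ commutes with every $\Delta(u)$, decomposes into pieces in $U_q^-(\mathfrak{g})_{-\nu} \otimes U_q^+(\mathfrak{g})_\nu$, and has constant term $1 \otimes 1$. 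The uniqueness characterization of such a family of operators (established by Lusztig in Chapter 4 of his book) then forces $\Theta \cdot \overline{\Theta} = 1 \otimes 1$, completing the proof of (a). This last verification is purely technical and does not involve any new ingredient beyond what is recorded in the cited reference.
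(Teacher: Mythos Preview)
Your argument is correct and is essentially the standard proof from Lusztig's book; the paper itself does not give a proof of this proposition but simply cites \cite[\S\S 4.1]{Lus}, so there is nothing to compare against beyond confirming that your argument matches the cited source.

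One small remark on your step (a): the uniqueness clause in the preceding lemma characterizes families $\{\Theta_\nu\}$ satisfying $\Delta(u)\circ\Theta=\Theta\circ\overline{\Delta}(u)$, whereas what you derive for $\Theta\cdot\overline{\Theta}$ is that it \emph{commutes} with every $\Delta(u)$. These are not literally the same hypothesis, so you cannot invoke the lemma verbatim. What actually happens in \cite[Corollary 4.1.3]{Lus} is a direct inductive argument (or equivalently a second uniqueness statement): an element of $\prod_\nu U_q^-(\mathfrak{g})_{-\nu}\otimes U_q^+(\mathfrak{g})_\nu$ with degree-zero part $1\otimes1$ that commutes with all $\Delta(e_i)$ and $\Delta(f_i)$ is forced, degree by degree, to equal $1\otimes1$. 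Your sketch captures the spirit of this, but if you want the argument to stand on its own you should either state this second uniqueness assertion explicitly or point directly to \cite[Corollary 4.1.3]{Lus} for the identity $\Theta\cdot\overline{\Theta}=1\otimes1$.
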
\vspace{2mm}

For finite-dimensional $U_q (\mathfrak{g})$-modules $V_1, V_2$, and $V_3$, we can construct bar involutions on $(V_1 \otimes V_2) \otimes V_3$ and $V_1 \otimes (V_2 \otimes V_3)$ by using Proposition \ref{bar involution} repeatedly. These bar involutions coincide through the natural isomorphism $(V_1 \otimes V_2) \otimes V_3 \simeq V_1 \otimes (V_2 \otimes V_3)$ (see \cite[\S\S 27.3]{Lus}). Hence, for finite-dimensional $U_q (\mathfrak{g})$-modules $V_1, \ldots, V_r$, we obtain a (unique) bar involution on $V_1 \otimes \cdots \otimes V_r$ by applying the proposition repeatedly.

\vspace{2mm}\begin{prop}[{see \cite[\S\S 24.2 and \S\S 27.3]{Lus} and \cite[Section 8]{Kas5}}]\label{global basis of tensor product}
For $i = 1, 2$, let $V_i$ be a finite-dimensional $U_q (\mathfrak{g})$-module, $(L_i, \mathcal{B}_i)$ its upper crystal basis, $\overline{\vphantom{(}\cdot\vphantom{)}}$ its bar involution, and $(V_i ^\q, L_i, \overline{L_i})$ a balanced triple. Assume that $\overline{v} = v$ for all $v \in V_i ^\q \cap L_i \cap \overline{L_i}$.
\begin{enumerate}
\item[{\rm (1)}] For the bar involution $\overline{\vphantom{(}\cdot\vphantom{)}}$ on $V_1 \otimes V_2$ defined in Proposition \ref{bar involution}, $(V_1 ^\q \otimes V_2 ^\q, L_1 \otimes L_2, \overline{L_1 \otimes L_2})$ is a balanced triple.
\item[{\rm (2)}] For the corresponding upper global basis $\{G^{\rm up} _q (b_1 \otimes b_2) \mid b_1 \in \mathcal{B}_1,\ b_2 \in \mathcal{B}_2\}$, it holds that $\overline{G^{\rm up} _q (b_1 \otimes b_2)} = G^{\rm up} _q (b_1 \otimes b_2)$, and that \[G^{\rm up} _q (b_1 \otimes b_2) \in G^{\rm up} _q (b_1) \otimes G^{\rm up} _q (b_2) + \sum_{\substack{{\rm wt}(b_1) - {\rm wt}(b_1 ^\prime) \in Q_{>0},\\{\rm wt}(b_2 ^\prime) - {\rm wt}(b_2) \in Q_{>0}}} q \z[q] G^{\rm up} _q (b_1 ^\prime) \otimes G^{\rm up} _q (b_2 ^\prime),\] where $Q_{>0} := Q_{\ge 0} \setminus \{0\}$.
\end{enumerate}
\end{prop}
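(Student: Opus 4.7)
The plan is to invoke Lusztig's standard argument (see, e.g., \cite[Lemma 24.2.1]{Lus}) to construct the upper global basis $\{G_q^{\rm up}(b_1 \otimes b_2)\}$ from the naive tensor product basis $\{G_q^{\rm up}(b_1) \otimes G_q^{\rm up}(b_2) \mid b_1 \in \mathcal{B}_1,\ b_2 \in \mathcal{B}_2\}$. The latter is obviously a $\q[q, q^{-1}]$-basis of $V_1^\q \otimes V_2^\q$ whose image in $(L_1 \otimes L_2)/q(L_1 \otimes L_2)$ is exactly the crystal basis $\mathcal{B}_1 \otimes \mathcal{B}_2$; what fails is bar-invariance, and correcting this failure will simultaneously produce both the balanced triple and the triangular expansion in part (2).

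First, since $\overline{G_q^{\rm up}(b_i)} = G_q^{\rm up}(b_i)$ by the assumption on the triples $(V_i^\q, L_i, \overline{L_i})$, the bar involution on $V_1 \otimes V_2$ of Proposition \ref{bar involution} gives
\begin{equation*}
\overline{G_q^{\rm up}(b_1) \otimes G_q^{\rm up}(b_2)} = \Theta \cdot (G_q^{\rm up}(b_1) \otimes G_q^{\rm up}(b_2)) = G_q^{\rm up}(b_1) \otimes G_q^{\rm up}(b_2) + \sum_{\nu \in Q_{>0}} \Theta_\nu \cdot (G_q^{\rm up}(b_1) \otimes G_q^{\rm up}(b_2)).
\end{equation*}
Because $\Theta_\nu \in U_q^-(\mathfrak{g})_{-\nu} \otimes U_q^+(\mathfrak{g})_\nu$, each summand is supported in weight spaces shifted by $(-\nu, +\nu)$, so it expands in the basis $\{G_q^{\rm up}(b_1^\prime) \otimes G_q^{\rm up}(b_2^\prime)\}$ only with contributions indexed by pairs satisfying ${\rm wt}(b_1) - {\rm wt}(b_1^\prime) = {\rm wt}(b_2^\prime) - {\rm wt}(b_2) = \nu$. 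I would then verify that these coefficients lie in $\z[q, q^{-1}]$, using (i) the existence of an integral form of $\Theta_\nu$ in $U_q^\q(\mathfrak{g})^- \otimes U_q^\q(\mathfrak{g})^+$, which comes from the recursive construction of the quasi-R-matrix via the intertwining property, and (ii) the integrality of the action of the divided powers $e_i^{(k)}$, $f_i^{(k)}$ on the upper global bases, i.e., Proposition \ref{property of upper global basis}(1). Combined, these yield
\begin{equation*}
\overline{G_q^{\rm up}(b_1) \otimes G_q^{\rm up}(b_2)} \in G_q^{\rm up}(b_1) \otimes G_q^{\rm up}(b_2) + \sum_{(b_1^\prime, b_2^\prime) \prec (b_1, b_2)} \z[q, q^{-1}] \cdot G_q^{\rm up}(b_1^\prime) \otimes G_q^{\rm up}(b_2^\prime),
\end{equation*}
with the partial order $(b_1^\prime, b_2^\prime) \prec (b_1, b_2)$ defined by ${\rm wt}(b_1) - {\rm wt}(b_1^\prime) \in Q_{>0}$ and ${\rm wt}(b_2^\prime) - {\rm wt}(b_2) \in Q_{>0}$; this order has finite initial segments since $V_1, V_2$ are finite-dimensional.

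At this point Lusztig's lemma applies directly and produces a unique $\q[q, q^{-1}]$-basis $\{G_q^{\rm up}(b_1 \otimes b_2)\}$ of $V_1^\q \otimes V_2^\q$ satisfying $\overline{G_q^{\rm up}(b_1 \otimes b_2)} = G_q^{\rm up}(b_1 \otimes b_2)$ together with
\begin{equation*}
G_q^{\rm up}(b_1 \otimes b_2) - G_q^{\rm up}(b_1) \otimes G_q^{\rm up}(b_2) \in \sum_{(b_1^\prime, b_2^\prime) \prec (b_1, b_2)} q\z[q] \cdot G_q^{\rm up}(b_1^\prime) \otimes G_q^{\rm up}(b_2^\prime),
\end{equation*}
which is exactly part (2). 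For part (1), each $G_q^{\rm up}(b_1 \otimes b_2)$ then lies in $V_1^\q \otimes V_2^\q$, is bar-invariant, and is congruent to $b_1 \otimes b_2$ modulo $q(L_1 \otimes L_2)$; hence these elements lie in $(V_1^\q \otimes V_2^\q) \cap (L_1 \otimes L_2) \cap \overline{L_1 \otimes L_2}$ and map bijectively to the basis $\mathcal{B}_1 \otimes \mathcal{B}_2$ of $(L_1 \otimes L_2)/q(L_1 \otimes L_2)$, which is precisely the balanced triple condition. The main obstacle is the integrality step: proving the expansion coefficients lie in $\z[q, q^{-1}]$ requires a careful bookkeeping of how each piece of $\Theta_\nu$, assembled from integral $q$-divided powers, acts on the upper global basis via Proposition \ref{property of upper global basis}; once this integrality is in hand, the rest of the argument is formal.
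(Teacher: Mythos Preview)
Your proposal is correct and follows essentially the same route as the paper: both compute $\overline{G_q^{\rm up}(b_1)\otimes G_q^{\rm up}(b_2)}$ via the quasi-$R$-matrix, extract the weight-triangular expansion with integral coefficients (the paper simply cites \cite[\S 24.1]{Lus} for integrality where you sketch it via the integral form of $\Theta_\nu$ and Proposition~\ref{property of upper global basis}), and then invoke Lusztig's lemma to produce the bar-invariant basis with $q\z[q]$ correction terms. The only point where the paper is slightly more explicit is in checking that the resulting elements actually form a $\q$-basis of the triple intersection (using that they are simultaneously an $A$-basis of $L_1\otimes L_2$ and a $\q[q,q^{-1}]$-basis of $V_1^\q\otimes V_2^\q$, hence by bar-invariance also an $\overline A$-basis of $\overline{L_1\otimes L_2}$); your final sentence asserts this bijectivity but does not spell out why the map is injective, so you may want to add that one line.
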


\begin{proof}
We see from the deinition $\overline{\vphantom{(}\cdot\vphantom{)}} = \Theta \circ (\overline{\vphantom{(}\cdot\vphantom{)}} \otimes \overline{\vphantom{(}\cdot\vphantom{)}})$ that \[\overline{G^{\rm up} _q (b_1) \otimes G^{\rm up} _q (b_2)} = \Theta(\overline{G^{\rm up} _q (b_1)} \otimes \overline{G^{\rm up} _q (b_2)}) = \Theta(G^{\rm up} _q (b_1) \otimes G^{\rm up} _q (b_2))\quad({\rm by\ the\ assumption}).\] Also, it follows from \cite[\S\S 24.1]{Lus} that \[\Theta(G^{\rm up} _q (b_1) \otimes G^{\rm up} _q (b_2)) \in G^{\rm up} _q (b_1) \otimes G^{\rm up} _q (b_2) + \sum_{\substack{{\rm wt}(b_1) - {\rm wt}(b_1 ^\prime) \in Q_{>0},\\{\rm wt}(b_2 ^\prime) - {\rm wt}(b_2) \in Q_{>0}}} \z[q, q^{-1}] G^{\rm up} _q (b_1 ^\prime) \otimes G^{\rm up} _q (b_2 ^\prime).\] Therefore, by arguments similar to those in \cite[\S\S 24.2 and \S\S 27.3]{Lus}, there exists a unique family of elements $\{\pi_{b_1 \otimes b_2, b_1 ^\prime \otimes b_2 ^\prime} \in q\z[q] \mid {\rm wt}(b_1) - {\rm wt}(b_1 ^\prime) \in Q_{>0},\ {\rm wt}(b_2 ^\prime) - {\rm wt}(b_2) \in Q_{>0}\}$ such that the element \[G^{\rm up} _q(b_1) \diamondsuit G^{\rm up} _q(b_2) := G^{\rm up} _q(b_1) \otimes G^{\rm up} _q(b_2) + \sum_{\substack{{\rm wt}(b_1) - {\rm wt}(b_1 ^\prime) \in Q_{>0},\\{\rm wt}(b_2 ^\prime) - {\rm wt}(b_2) \in Q_{>0}}} \pi_{b_1 \otimes b_2, b_1 ^\prime \otimes b_2 ^\prime} G^{\rm up} _q(b_1 ^\prime) \otimes G^{\rm up} _q(b_2 ^\prime)\] is invariant under the bar involution $\overline{\vphantom{(}\cdot\vphantom{)}}$. Since $\pi_{b_1 \otimes b_2, b_1 ^\prime \otimes b_2 ^\prime} \in q\z[q]$, the set $\{G^{\rm up} _q(b_1) \diamondsuit G^{\rm up} _q(b_2) \mid b_1 \in \mathcal{B}_1,\ b_2 \in \mathcal{B}_2\}$ forms an $A$-basis (resp., a $\q[q, q^{-1}]$-basis) of $L_1 \otimes L_2$ (resp., $V_1 ^\q \otimes V_2 ^\q$). Moreover, we obtain 
\begin{align*}
G^{\rm up} _q(b_1) \diamondsuit G^{\rm up} _q(b_2) &= \overline{G^{\rm up} _q(b_1) \diamondsuit G^{\rm up} _q(b_2)}\\ 
&\in \overline{L_1 \otimes L_2}\quad({\rm since}\ G^{\rm up} _q(b_1) \diamondsuit G^{\rm up} _q(b_2) \in L_1 \otimes L_2), 
\end{align*}
which implies that the set above also forms a $\q$-basis of $(V_1 ^\q \otimes V_2 ^\q) \cap (L_1 \otimes L_2) \cap (\overline{L_1 \otimes L_2})$. Also, by the natural $\q$-linear map $(V_1 ^\q \otimes V_2 ^\q) \cap (L_1 \otimes L_2) \cap (\overline{L_1 \otimes L_2}) \rightarrow (L_1 \otimes L_2)/ q(L_1 \otimes L_2)$, the element $G^{\rm up} _q(b_1) \diamondsuit G^{\rm up} _q(b_2)$ is sent to $b_1 \otimes b_2$. Since $\{b_1 \otimes b_2 \mid b_1 \in \mathcal{B}_1,\ b_2 \in \mathcal{B}_2\}$ forms a $\q$-basis of $(L_1 \otimes L_2)/ q(L_1 \otimes L_2)$, we deduce that this $\q$-linear map is an isomorphism; hence we obtain part (1). Now part (2) follows immediately from the equality $G^{\rm up} _q (b_1 \otimes b_2) = G^{\rm up} _q(b_1) \diamondsuit G^{\rm up} _q(b_2)$. This proves the proposition.
\end{proof}

For finite-dimensional $U_q (\mathfrak{g})$-modules $V_1, \ldots, V_r$, the comment following Proposition \ref{bar involution} implies that we obtain a (unique) upper global basis of $V_1 \otimes \cdots \otimes V_r$ by using Proposition \ref{global basis of tensor product} repeatedly. In this paper, we consider the upper global basis of $V_q(\lambda_1) \otimes \cdots \otimes V_q(\lambda_r)$ constructed in this way, where we take $(V_{q, \q} ^{\rm up} (\lambda_k), L^{\rm up}(\lambda_k), \overline{L^{\rm up}(\lambda_k)})$ as a balanced triple for each module $V_q(\lambda_k)$.

\section{Main result}

Now let us construct a basis of $H^0(Z_{\bf i}, \mathcal{L}_{{\bf i}, {\bf m}})$ that has a property similar to Proposition \ref{Kaveh} (1). For dominant integral weights $\lambda_1, \ldots, \lambda_r$, the dual $G$-module $(V(\lambda_1) \otimes \cdots \otimes V(\lambda_r))^\ast$ is isomorphic to $V(\lambda_r)^\ast \otimes \cdots \otimes V(\lambda_1)^\ast \simeq V(\lambda_r ^\ast) \otimes \cdots \otimes V(\lambda_1 ^\ast)$. Also, by identifying $(b_1 \otimes \cdots \otimes b_r)^\ast \in (\mathcal{B}(\lambda_1) \otimes \cdots \otimes \mathcal{B}(\lambda_r))^\ast$ with $b_r ^\ast \otimes \cdots \otimes b_1 ^\ast \in \mathcal{B}(\lambda_r ^\ast) \otimes \cdots \otimes \mathcal{B}(\lambda_1 ^\ast)$, we have an isomorphism of crystals $(\mathcal{B}(\lambda_1) \otimes \cdots \otimes \mathcal{B}(\lambda_r))^\ast \simeq \mathcal{B}(\lambda_r ^\ast) \otimes \cdots \otimes \mathcal{B}(\lambda_1 ^\ast)$. Through these identifications, the argument in Section 6 yields the specialization of an upper global basis $\{G^{\rm up}(b^\ast) \mid b \in \mathcal{B}(\lambda_1) \otimes \cdots \otimes \mathcal{B}(\lambda_r)\} \subset (V(\lambda_1) \otimes \cdots \otimes V(\lambda_r))^\ast$ at $q = 1$. Recalling that $\Phi_{{\bf i}, {\bf m}} ^\ast: (V(m_1 \varpi_{i_1}) \otimes \cdots \otimes V(m_r \varpi_{i_r}))^\ast \twoheadrightarrow V_{{\bf i}, {\bf m}} ^\ast = H^0(Z_{\bf i}, \mathcal{L}_{{\bf i}, {\bf m}})$ is the dual of the inclusion map $\Phi_{{\bf i}, {\bf m}}: V_{{\bf i}, {\bf m}} \hookrightarrow V(m_1 \varpi_{i_1}) \otimes \cdots \otimes V(m_r \varpi_{i_r})$, we set $G^{\rm up} _{{\bf i}, {\bf m}} (b) := \Phi_{{\bf i}, {\bf m}} ^\ast (G^{\rm up} (b^\ast)) \in H^0(Z_{\bf i}, \mathcal{L}_{{\bf i}, {\bf m}})$ for $b \in \mathcal{B}_{{\bf i}, {\bf m}}$. The following is the main result of this paper.

\vspace{2mm}\begin{thm}\label{main result}
Let ${\bf i} \in I^r$ be an arbitrary word, and ${\bf m} \in \z_{\ge 0} ^r$. Then, the set $\{G^{\rm up} _{{\bf i}, {\bf m}} (b) \mid b \in \mathcal{B}_{{\bf i}, {\bf m}}\}$ forms a $\c$-basis of $H^0(Z_{\bf i}, \mathcal{L}_{{\bf i}, {\bf m}})$, and it holds that $\Omega_{\bf i} (b) = - v_{\bf i} (G^{\rm up} _{{\bf i}, {\bf m}} (b)/\tau_{{\bf i}, {\bf m}})$ for all $b \in \mathcal{B}_{{\bf i}, {\bf m}}$.
\end{thm}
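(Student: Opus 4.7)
The plan is to prove assertions (1) and (2) jointly by induction on the length $r$ of the word ${\bf i}$. The base case $r = 1$ is covered by Kaveh's theorem (Proposition \ref{Kaveh}): the word $(i_1)$ is trivially reduced, $Z_{(i_1)} \simeq P_{i_1}/B = X(s_{i_1})$, and the generalized Demazure crystal $\mathcal{B}_{(i_1), (m_1)}$ coincides with the Demazure crystal $\mathcal{B}_{s_{i_1}}(m_1 \varpi_{i_1})$.

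For the inductive step, fix $b \in \mathcal{B}_{{\bf i}, {\bf m}}$, write $\Omega_{\bf i}(b) = (a_1, \ldots, a_r)$, and let $b(2) \in \mathcal{B}_{{\bf i}_{\ge 2}, {\bf m}_{\ge 2}}$ be the element characterized by $\tilde{e}_{i_1}^{a_1} b = b_{m_1 \varpi_{i_1}} \otimes b(2)$. The central claim I plan to establish is
\[
\iota_{1, 2}^{\ast}\bigl(F_{i_1}^{(a_1)} G^{\rm up}_{{\bf i}, {\bf m}}(b)\bigr) = c \cdot G^{\rm up}_{{\bf i}_{\ge 2}, {\bf m}_{\ge 2}}(b(2))
\]
for some nonzero $c \in \c$, together with $F_{i_1}^{(a_1 + 1)} G^{\rm up}_{{\bf i}, {\bf m}}(b) = 0$. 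Granted this, the inductive hypothesis applied to ${\bf i}_{\ge 2}$ gives $G^{\rm up}_{{\bf i}_{\ge 2}, {\bf m}_{\ge 2}}(b(2)) \neq 0$, so $G^{\rm up}_{{\bf i}, {\bf m}}(b) \neq 0$, and Proposition \ref{val,Chevalley} together with induction yields $v_{\bf i}\bigl(G^{\rm up}_{{\bf i}, {\bf m}}(b)/\tau_{{\bf i}, {\bf m}}\bigr) = -(a_1, a_2, \ldots, a_r) = -\Omega_{\bf i}(b)$ (the nonzero scalar $c$ is irrelevant to the valuation).

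To prove the central claim, I will first use that $\Phi_{{\bf i}, {\bf m}}^{\ast}$ is $P_{i_1}$-equivariant, so $F_{i_1}^{(a_1)} G^{\rm up}_{{\bf i}, {\bf m}}(b) = \Phi_{{\bf i}, {\bf m}}^{\ast}\bigl(F_{i_1}^{(a_1)} G^{\rm up}(b^\ast)\bigr)$. Since $\varphi_{i_1}(b^\ast) = \varepsilon_{i_1}(b) = a_1$, Proposition \ref{property of upper global basis} (2) gives the exact equality $F_{i_1}^{(a_1)} G^{\rm up}(b^\ast) = G^{\rm up}(\tilde{f}_{i_1}^{a_1} b^\ast) = G^{\rm up}(b(2)^\ast \otimes b_{m_1 \varpi_{i_1}}^{\ast})$ (the constraint $\varphi_{i_1}(b') < 0$ in the error sum is vacuous), and also $F_{i_1}^{(a_1 + 1)} G^{\rm up}(b^\ast) = 0$. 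Next, dualizing the factorization $\Phi_{{\bf i}, {\bf m}} \circ \iota_{1, 2} = \tilde{\iota}_{1, 2} \circ \Phi_{{\bf i}_{\ge 2}, {\bf m}_{\ge 2}}$ used in the proof of Lemma \ref{restriction} gives $\iota_{1, 2}^{\ast} \circ \Phi_{{\bf i}, {\bf m}}^{\ast} = \Phi_{{\bf i}_{\ge 2}, {\bf m}_{\ge 2}}^{\ast} \circ \tilde{\iota}_{1, 2}^{\ast}$, where $\tilde{\iota}_{1, 2}^{\ast}$ sends a pure tensor $g \otimes f \in (V(m_2 \varpi_{i_2}) \otimes \cdots \otimes V(m_r \varpi_{i_r}))^\ast \otimes V(m_1 \varpi_{i_1})^\ast$ to $f(v_{m_1 \varpi_{i_1}}) \cdot g$. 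Proposition \ref{global basis of tensor product} (2) (specialized at $q = 1$) expresses $G^{\rm up}(b(2)^\ast \otimes b_{m_1 \varpi_{i_1}}^{\ast})$ as $G^{\rm up}(b(2)^\ast) \otimes G^{\rm up}(b_{m_1 \varpi_{i_1}}^{\ast})$ plus a $\z$-linear combination of terms $G^{\rm up}(c_1) \otimes G^{\rm up}(c_2)$ with ${\rm wt}(c_2) - {\rm wt}(b_{m_1 \varpi_{i_1}}^{\ast}) \in Q_{>0}$, so ${\rm wt}(c_2) \neq -m_1 \varpi_{i_1}$. Since pairing against the highest weight vector $v_{m_1 \varpi_{i_1}}$ is nonzero only on the one-dimensional weight space $V(m_1 \varpi_{i_1})^{\ast}_{-m_1 \varpi_{i_1}}$, every correction term is annihilated by $\tilde{\iota}_{1, 2}^{\ast}$, and what remains is $c \cdot G^{\rm up}(b(2)^\ast)$ with $c = G^{\rm up}(b_{m_1 \varpi_{i_1}}^{\ast})(v_{m_1 \varpi_{i_1}}) \neq 0$.

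Finally, assertion (1) will follow immediately from (2): injectivity of $\Omega_{\bf i}$ on $\mathcal{B}_{{\bf i}, {\bf m}}$ (Proposition \ref{generalized parameterization} (2)) makes the values $v_{\bf i}\bigl(G^{\rm up}_{{\bf i}, {\bf m}}(b)/\tau_{{\bf i}, {\bf m}}\bigr)$ mutually distinct, so Proposition \ref{prop1,val} (1) forces linear independence of $\{G^{\rm up}_{{\bf i}, {\bf m}}(b) \mid b \in \mathcal{B}_{{\bf i}, {\bf m}}\}$, which matches the cardinality $|\mathcal{B}_{{\bf i}, {\bf m}}| = \dim V_{{\bf i}, {\bf m}} = \dim H^0(Z_{\bf i}, \mathcal{L}_{{\bf i}, {\bf m}})$ coming from Proposition \ref{generalized Demazure crystal} (3). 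The hardest step will be controlling the lower-order correction terms in Proposition \ref{global basis of tensor product} (2) after composing with $\tilde{\iota}_{1, 2}^{\ast}$; the crucial observation that saves the argument is that $b_{m_1 \varpi_{i_1}}^{\ast}$ lies at the bottom of $\mathcal{B}(m_1 \varpi_{i_1})^{\ast}$, so the weight condition ${\rm wt}(c_2) > -m_1 \varpi_{i_1}$ forced by the tensor product formula is precisely incompatible with the weight condition ${\rm wt}(c_2) = -m_1 \varpi_{i_1}$ needed for a nontrivial pairing against $v_{m_1 \varpi_{i_1}}$.
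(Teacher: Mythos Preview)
Your proposal is correct and follows essentially the same approach as the paper: both arguments hinge on the identity $F_{i_1}^{(a_1)} G^{\rm up}(b^\ast) = G^{\rm up}\bigl((\tilde{e}_{i_1}^{a_1} b)^\ast\bigr)$ from Proposition~\ref{property of upper global basis}~(2), the triangular expansion of Proposition~\ref{global basis of tensor product}~(2) whose correction terms die under $\tilde{\iota}_{1,2}^{\ast}$, and the intertwining $\iota_{1,2}^{\ast}\circ\Phi_{{\bf i},{\bf m}}^{\ast}=\Phi_{{\bf i}_{\ge 2},{\bf m}_{\ge 2}}^{\ast}\circ\tilde{\iota}_{1,2}^{\ast}$, then finish via Proposition~\ref{val,Chevalley} and the linear-independence count. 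The only cosmetic difference is that you package the argument as induction on $r$ while the paper introduces auxiliary integers $a_s''$ on the full tensor product and shows $(a_s)=(a_s'')=(a_s')$ directly; your nonzero scalar $c$ is in fact equal to $1$ by the normalization in Proposition~\ref{global basis of tensor product}~(2), but as you note this is irrelevant to the valuation.
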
\vspace{2mm}

Before proving Theorem \ref{main result}, we give some corollaries. The following is easily seen from the definitions.

\vspace{2mm}\begin{cor}
Let $\omega: \r \times \r^r \xrightarrow{\sim} \r \times \r^r$ denote the linear automorphism given by $\omega(k, {\bf a}) = (k, -{\bf a})$. Then, $\mathcal{S}_{{\bf i}, {\bf m}} = \omega(S(Z_{\bf i}, \mathcal{L}_{{\bf i}, {\bf m}}, v_{\bf i}, \tau_{{\bf i}, {\bf m}}))$, $\mathcal{C}_{{\bf i}, {\bf m}} = \omega(C(Z_{\bf i}, \mathcal{L}_{{\bf i}, {\bf m}}, v_{\bf i}, \tau_{{\bf i}, {\bf m}}))$, and $\Delta_{{\bf i}, {\bf m}} = -\Delta(Z_{\bf i}, \mathcal{L}_{{\bf i}, {\bf m}}, v_{\bf i}, \tau_{{\bf i}, {\bf m}})$.
\end{cor}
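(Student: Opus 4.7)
The plan is induction on the length $r$ of the word ${\bf i}$. For the base case $r=1$, we have $Z_{\bf i} = P_{i_1}/B$, $\mathcal{B}_{{\bf i}, {\bf m}} = \mathcal{B}(m_1 \varpi_{i_1})$, and $H^0(Z_{\bf i}, \mathcal{L}_{{\bf i}, {\bf m}}) \simeq V(m_1 \varpi_{i_1})^\ast$; the set $\{G^{\rm up}(b^\ast) \mid b \in \mathcal{B}(m_1 \varpi_{i_1})\}$ is then a $\c$-basis. By Proposition \ref{val,Chevalley} and Proposition \ref{property of upper global basis} (2),
\[v_{\bf i}(G^{\rm up}(b^\ast)/\tau_{{\bf i}, {\bf m}}) = -\max\{a \in \z_{\ge 0} \mid F_{i_1}^a G^{\rm up}(b^\ast) \neq 0\} = -\varphi_{i_1}(b^\ast) = -\varepsilon_{i_1}(b) = -\Omega_{\bf i}(b),\]
where we use that $\tilde{f}_i$ on the dual crystal $\mathcal{B}^\ast$ corresponds to $\tilde{e}_i$ on $\mathcal{B}$.

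For the inductive step, fix $b \in \mathcal{B}_{{\bf i}, {\bf m}}$ with $\Omega_{\bf i}(b) = (a_1, \ldots, a_r)$, and let $b(2) \in \mathcal{B}_{{\bf i}_{\ge 2}, {\bf m}_{\ge 2}}$ be the element satisfying $\tilde{e}_{i_1}^{a_1} b = b_{m_1 \varpi_{i_1}} \otimes b(2)$ from Definition \ref{generalized string parameterization}. Applying Proposition \ref{property of upper global basis} (2) to the specialization at $q=1$ of the upper global basis of $(V(m_1 \varpi_{i_1}) \otimes \cdots \otimes V(m_r \varpi_{i_r}))^\ast$, one finds that $a_1 = \varphi_{i_1}(b^\ast) = \varepsilon_{i_1}(b)$ is the largest integer with $F_{i_1}^{(a_1)} G^{\rm up}(b^\ast) \neq 0$, and $F_{i_1}^{(a_1)} G^{\rm up}(b^\ast) = G^{\rm up}((b_{m_1 \varpi_{i_1}} \otimes b(2))^\ast)$. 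Applying $\Phi_{{\bf i}, {\bf m}}^\ast$ yields
\[F_{i_1}^{a_1} G^{\rm up}_{{\bf i}, {\bf m}}(b) = a_1! \cdot G^{\rm up}_{{\bf i}, {\bf m}}(b_{m_1 \varpi_{i_1}} \otimes b(2)).\]

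The crux is to identify $\iota_{1,2}^\ast G^{\rm up}_{{\bf i}, {\bf m}}(b_{m_1 \varpi_{i_1}} \otimes b(2))$ with $G^{\rm up}_{{\bf i}_{\ge 2}, {\bf m}_{\ge 2}}(b(2))$. Setting $W := V(m_2 \varpi_{i_2}) \otimes \cdots \otimes V(m_r \varpi_{i_r})$ and using the identification $(V(m_1 \varpi_{i_1}) \otimes W)^\ast \simeq W^\ast \otimes V(m_1 \varpi_{i_1})^\ast$ sending $(b_{m_1 \varpi_{i_1}} \otimes b(2))^\ast$ to $b(2)^\ast \otimes b_{m_1 \varpi_{i_1}}^\ast$, Proposition \ref{global basis of tensor product} (2) specialized at $q = 1$ expresses $G^{\rm up}((b_{m_1 \varpi_{i_1}} \otimes b(2))^\ast)$ as $G^{\rm up}(b(2)^\ast) \otimes G^{\rm up}(b_{m_1 \varpi_{i_1}}^\ast)$ plus a $\z$-linear combination of terms $G^{\rm up}(c^\ast) \otimes G^{\rm up}(c_1^\ast)$ with ${\rm wt}(c_1^\ast) > {\rm wt}(b_{m_1 \varpi_{i_1}}^\ast) = -m_1 \varpi_{i_1}$. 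The map $\tilde{\iota}_{1,2}^\ast$ evaluates the second tensor factor at $v_{m_1 \varpi_{i_1}}$; since $v_{m_1 \varpi_{i_1}}$ has weight $m_1 \varpi_{i_1}$, the functional $G^{\rm up}(c_1^\ast)$ vanishes on $v_{m_1 \varpi_{i_1}}$ for all error terms, while $G^{\rm up}(b_{m_1 \varpi_{i_1}}^\ast)(v_{m_1 \varpi_{i_1}}) = 1$. Hence only the leading term survives, giving $\tilde{\iota}_{1,2}^\ast G^{\rm up}((b_{m_1 \varpi_{i_1}} \otimes b(2))^\ast) = G^{\rm up}(b(2)^\ast)$, and the commutativity $\iota_{1,2}^\ast \circ \Phi_{{\bf i}, {\bf m}}^\ast = \Phi_{{\bf i}_{\ge 2}, {\bf m}_{\ge 2}}^\ast \circ \tilde{\iota}_{1,2}^\ast$ (from the proof of Lemma \ref{restriction}) yields the claim.

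Combining these with the inductive hypothesis for $({\bf i}_{\ge 2}, {\bf m}_{\ge 2})$ and Proposition \ref{val,Chevalley}, we obtain $v_{\bf i}(G^{\rm up}_{{\bf i}, {\bf m}}(b)/\tau_{{\bf i}, {\bf m}}) = -\Omega_{\bf i}(b)$ and simultaneously $G^{\rm up}_{{\bf i}, {\bf m}}(b) \neq 0$. Since $\Omega_{\bf i}$ is injective on $\mathcal{B}_{{\bf i}, {\bf m}}$ by Proposition \ref{generalized parameterization} (2), these valuations are pairwise distinct, so Proposition \ref{prop1,val} gives $\c$-linear independence of $\{G^{\rm up}_{{\bf i}, {\bf m}}(b) \mid b \in \mathcal{B}_{{\bf i}, {\bf m}}\}$. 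As $|\mathcal{B}_{{\bf i}, {\bf m}}| = \dim V_{{\bf i}, {\bf m}} = \dim H^0(Z_{\bf i}, \mathcal{L}_{{\bf i}, {\bf m}})$ by Proposition \ref{generalized Demazure crystal} (3), these elements form a basis. The main technical obstacle is the identification of the upper global basis element under $\iota_{1,2}^\ast$ in the third paragraph, which hinges on the precise leading-plus-lower-weight form of the tensor product upper global basis from Proposition \ref{global basis of tensor product}.
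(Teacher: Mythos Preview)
What you have written is essentially a proof of Theorem~\ref{main result}, not of the stated corollary. The paper itself treats the corollary as an immediate consequence of Theorem~\ref{main result} (``easily seen from the definitions''), so the substantive content is indeed the theorem, and your argument for it is correct and follows the same route as the paper's: use Proposition~\ref{property of upper global basis}~(2) on the dual crystal to identify $a_1$ and to get $F_{i_1}^{(a_1)} G^{\rm up}(b^\ast) = G^{\rm up}((\tilde{e}_{i_1}^{a_1} b)^\ast)$; then use Proposition~\ref{global basis of tensor product}~(2) to kill the error terms under $\tilde{\iota}_{1,2}^\ast$, exactly as in the paper's equation~(\ref{equations2}); finally invoke Proposition~\ref{val,Chevalley} and Proposition~\ref{generalized parameterization}~(2). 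Your organization as induction on $r$ is equivalent to the paper's iteration from $s=1$ to $s=r$.

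However, you never actually deduce the corollary. The missing step is short but not vacuous: having a basis $\{G^{\rm up}_{{\bf i}, k{\bf m}}(b) \mid b \in \mathcal{B}_{{\bf i}, k{\bf m}}\}$ of $H^0(Z_{\bf i}, \mathcal{L}_{{\bf i}, k{\bf m}})$ with pairwise distinct valuations $-\Omega_{\bf i}(b)$, and knowing that $v_{\bf i}$ has one-dimensional leaves, Proposition~\ref{prop2,val} (or Proposition~\ref{prop1,val}~(2)) implies that $\{v_{\bf i}(\sigma/\tau_{{\bf i}, k{\bf m}}) \mid \sigma \in H^0(Z_{\bf i}, \mathcal{L}_{{\bf i}, k{\bf m}}) \setminus \{0\}\} = \{-\Omega_{\bf i}(b) \mid b \in \mathcal{B}_{{\bf i}, k{\bf m}}\}$. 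Taking the union over $k>0$ and comparing with Definition~\ref{Newton-Okounkov body} and Definition~\ref{generalized string polytope} gives $\mathcal{S}_{{\bf i}, {\bf m}} = \omega(S(Z_{\bf i}, \mathcal{L}_{{\bf i}, {\bf m}}, v_{\bf i}, \tau_{{\bf i}, {\bf m}}))$; the cone and polytope equalities then follow by passing to closed cones and to the slice at $k=1$. You should add this sentence or two at the end.
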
\vspace{2mm}

\begin{cor}\label{finitely generated semigroup}
The following hold.
\begin{enumerate}
\item[{\rm (1)}] The sets $\mathcal{S}_{{\bf i}, {\bf m}}$ and $S(Z_{\bf i}, \mathcal{L}_{{\bf i}, {\bf m}}, v_{\bf i}, \tau_{{\bf i}, {\bf m}})$ are both finitely generated semigroups.
\item[{\rm (2)}] The real closed cones $\mathcal{C}_{{\bf i}, {\bf m}}$ and $C(Z_{\bf i}, \mathcal{L}_{{\bf i}, {\bf m}}, v_{\bf i}, \tau_{{\bf i}, {\bf m}})$ are both rational convex polyhedral cones, and the equality $S(Z_{\bf i}, \mathcal{L}_{{\bf i}, {\bf m}}, v_{\bf i}, \tau_{{\bf i}, {\bf m}}) = C(Z_{\bf i}, \mathcal{L}_{{\bf i}, {\bf m}}, v_{\bf i}, \tau_{{\bf i}, {\bf m}}) \cap (\z_{>0} \times \z^r)$ holds.
\item[{\rm (3)}] The generalized string polytope $\Delta_{{\bf i}, {\bf m}}$ and the Newton-Okounkov body $\Delta(Z_{\bf i}, \mathcal{L}_{{\bf i}, {\bf m}}, v_{\bf i}, \tau_{{\bf i}, {\bf m}})$ are both rational convex polytopes, and the equality $\Omega_{\bf i}(\mathcal{B}_{{\bf i}, {\bf m}}) = - \Delta(Z_{\bf i}, \mathcal{L}_{{\bf i}, {\bf m}}, v_{\bf i}, \tau_{{\bf i}, {\bf m}}) \cap \z^r$ holds.
\end{enumerate}
\end{cor}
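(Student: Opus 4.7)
The plan is to combine the piecewise-linear description of $\Delta_{{\bf i},{\bf m}}$ from Corollary \ref{finite union of polytopes} with the convexity of the Newton-Okounkov body built into Definition \ref{Newton-Okounkov body}, using the identification furnished by the previous corollary, and then to deduce parts (1) and (2) from part (3) by a cone-over construction together with Gordan's lemma.

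For part (3), I would first apply the previous corollary to identify $\Delta_{{\bf i},{\bf m}} = -\Delta(Z_{\bf i}, \mathcal{L}_{{\bf i},{\bf m}}, v_{\bf i}, \tau_{{\bf i},{\bf m}})$. Since the Newton-Okounkov body on the right is a convex body, $\Delta_{{\bf i},{\bf m}}$ is itself convex. By Corollary \ref{finite union of polytopes} it is also a finite union of rational convex polytopes $P_1,\ldots,P_N$. Writing $V := \bigcup_k \mathrm{Vert}(P_k)$, a finite set of rational points contained in $\Delta_{{\bf i},{\bf m}}$, convexity of $\Delta_{{\bf i},{\bf m}}$ gives $\mathrm{conv}(V) \subseteq \Delta_{{\bf i},{\bf m}} = \bigcup_k \mathrm{conv}(\mathrm{Vert}(P_k)) \subseteq \mathrm{conv}(V)$, so $\Delta_{{\bf i},{\bf m}} = \mathrm{conv}(V)$ is a rational convex polytope; the corresponding statement for $\Delta(Z_{\bf i}, \mathcal{L}_{{\bf i},{\bf m}}, v_{\bf i}, \tau_{{\bf i},{\bf m}})$ is then immediate, and the equality $\Omega_{\bf i}(\mathcal{B}_{{\bf i},{\bf m}}) = -\Delta(Z_{\bf i}, \mathcal{L}_{{\bf i},{\bf m}}, v_{\bf i}, \tau_{{\bf i},{\bf m}}) \cap \z^r$ follows by combining the equality $\Omega_{\bf i}(\mathcal{B}_{{\bf i},{\bf m}}) = \Delta_{{\bf i},{\bf m}} \cap \z^r$ from Corollary \ref{finite union of polytopes} with the previous corollary.

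Part (2) then comes for free: since $\Delta_{{\bf i},{\bf m}}$ is a compact rational convex polytope, the closed cone $\mathcal{C}_{{\bf i},{\bf m}}$ equals $\{(k, k{\bf a}) \mid k \ge 0,\ {\bf a} \in \Delta_{{\bf i},{\bf m}}\}$ and is the rational polyhedral cone generated by $\{(1,{\bf v}) \mid {\bf v} \in \mathrm{Vert}(\Delta_{{\bf i},{\bf m}})\}$, with the analogous statement for $C(Z_{\bf i}, \mathcal{L}_{{\bf i},{\bf m}}, v_{\bf i}, \tau_{{\bf i},{\bf m}})$ transported by the previous corollary; the equality $\mathcal{S}_{{\bf i},{\bf m}} = \mathcal{C}_{{\bf i},{\bf m}} \cap (\z_{>0} \times \z^r)$ is already Corollary \ref{piecewise-linear inequalities 3}, which transports to $S(Z_{\bf i}, \mathcal{L}_{{\bf i},{\bf m}}, v_{\bf i}, \tau_{{\bf i},{\bf m}}) = C(Z_{\bf i}, \mathcal{L}_{{\bf i},{\bf m}}, v_{\bf i}, \tau_{{\bf i},{\bf m}}) \cap (\z_{>0} \times \z^r)$ in the same way. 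For part (1), Gordan's lemma shows that $\mathcal{C}_{{\bf i},{\bf m}} \cap \z^{r+1}$ is a finitely generated semigroup; since $\Delta_{{\bf i},{\bf m}}$ is bounded we have $\mathcal{C}_{{\bf i},{\bf m}} \cap (\{0\} \times \r^r) = \{(0,{\bf 0})\}$, so $\mathcal{S}_{{\bf i},{\bf m}}$ agrees with $(\mathcal{C}_{{\bf i},{\bf m}} \cap \z^{r+1}) \setminus \{(0,{\bf 0})\}$ and is therefore finitely generated, whence so is $S(Z_{\bf i}, \mathcal{L}_{{\bf i},{\bf m}}, v_{\bf i}, \tau_{{\bf i},{\bf m}})$ by the previous corollary.

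The only nontrivial step is the convexity upgrade in part (3): a finite union of rational convex polytopes that happens to be convex is automatically a single rational convex polytope. This is the one place where the Newton-Okounkov perspective, via Theorem \ref{main result} and the previous corollary, does substantive work; once part (3) is in hand, parts (1) and (2) are routine polyhedral geometry.
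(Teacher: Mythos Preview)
Your proposal is correct and follows essentially the same route as the paper: the convexity comes from the Newton--Okounkov side via the previous corollary, the rational piecewise-linear description comes from Corollaries~\ref{piecewise-linear inequalities 3} and~\ref{finite union of polytopes}, and Gordan's lemma finishes part~(1). The only organizational difference is that the paper treats parts~(2) and~(3) in parallel (each directly from its own Corollary~\ref{piecewise-linear inequalities 3} or~\ref{finite union of polytopes}), whereas you deduce part~(2) from part~(3) via the cone-over-polytope construction; both are equally valid and the underlying argument is the same.
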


\begin{proof}
Since $C(Z_{\bf i}, \mathcal{L}_{{\bf i}, {\bf m}}, v_{\bf i}, \tau_{{\bf i}, {\bf m}})$ is a convex cone (resp., $\Delta(Z_{\bf i}, \mathcal{L}_{{\bf i}, {\bf m}}, v_{\bf i}, \tau_{{\bf i}, {\bf m}})$ is a convex set), part (2) (resp., part (3)) is an immediate consequence of Corollary \ref{piecewise-linear inequalities 3} (resp., Corollary \ref{finite union of polytopes}). Also, part (1) follows from part (2) and Gordan's Lemma (see, for instance, \cite[Proposition 1.2.17]{CLS}). This proves the corollary.
\end{proof}

Similarly, we obtain the following.

\vspace{2mm}\begin{cor}
The set $\mathcal{S}_{\bf i}$ is a finitely generated semigroup, and the real closed cone $\mathcal{C}_{\bf i}$ is a rational convex polyhedral cone.
\end{cor}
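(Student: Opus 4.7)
The plan is to upgrade the earlier corollary (immediately following Proposition~\ref{inequality}), which already establishes that $\mathcal{C}_{\bf i}$ is a finite union of rational convex polyhedral cones and that $\mathcal{S}_{\bf i} = \mathcal{C}_{\bf i} \cap (\z_{\ge 0}^{r-1} \times \z_{\ge 0}^r)$. The one missing ingredient is convexity of $\mathcal{C}_{\bf i}$: a finite union of rational polyhedral cones that happens to be convex coincides with the Minkowski sum of its pieces, and so is already a single rational convex polyhedral cone; finite generation of $\mathcal{S}_{\bf i}$ then follows from Gordan's lemma exactly as in Corollary~\ref{finitely generated semigroup}. Because the closure of the positive-rational cone over any sub-semigroup of a lattice is automatically a convex cone, the problem reduces to showing that $\mathcal{S}_{\bf i}$ is closed under addition.

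To prove the semigroup property, take $(\widetilde{\bf m}_j, {\bf a}_j) \in \mathcal{S}_{\bf i}$ for $j = 1, 2$. First I would complete each $\widetilde{\bf m}_j$ to a vector ${\bf m}_j = (\widetilde{\bf m}_j, m_{r,j}) \in \z_{\ge 0}^r$ by choosing $m_{r,j}$ large enough that inequality~(ii) of Proposition~\ref{cutting} is satisfied for $(1, {\bf a}_j)$; this places $(1, {\bf a}_j)$ in $\mathcal{S}_{{\bf i}, {\bf m}_j}$, and Corollary~\ref{finite union of polytopes} then yields some $b_j \in \mathcal{B}_{{\bf i}, {\bf m}_j}$ with $\Omega_{\bf i}(b_j) = {\bf a}_j$. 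Theorem~\ref{main result}~(2) converts this to $v_{\bf i}(G^{\rm up}_{{\bf i}, {\bf m}_j}(b_j)/\tau_{{\bf i}, {\bf m}_j}) = -{\bf a}_j$. The heart of the argument is to form the product of sections $\sigma := G^{\rm up}_{{\bf i}, {\bf m}_1}(b_1) \cdot G^{\rm up}_{{\bf i}, {\bf m}_2}(b_2) \in H^0(Z_{\bf i}, \mathcal{L}_{{\bf i}, {\bf m}_1+{\bf m}_2})$ and to exploit multiplicativity of the valuation: granting the compatibility $\tau_{{\bf i}, {\bf m}_1} \cdot \tau_{{\bf i}, {\bf m}_2} = \tau_{{\bf i}, {\bf m}_1+{\bf m}_2}$, one obtains $v_{\bf i}(\sigma/\tau_{{\bf i}, {\bf m}_1+{\bf m}_2}) = -({\bf a}_1 + {\bf a}_2)$. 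Expanding $\sigma$ in the basis from Theorem~\ref{main result}~(1) and applying Proposition~\ref{prop1,val}~(2) --- whose distinctness hypothesis is supplied by the injectivity of $\Omega_{\bf i}$ (Proposition~\ref{generalized parameterization}~(2)) --- extracts some $b^\ast \in \mathcal{B}_{{\bf i}, {\bf m}_1+{\bf m}_2}$ with $\Omega_{\bf i}(b^\ast) = {\bf a}_1 + {\bf a}_2$. This shows $(1, {\bf a}_1 + {\bf a}_2) \in \mathcal{S}_{{\bf i}, {\bf m}_1+{\bf m}_2}$, and Proposition~\ref{cutting} then gives $(\widetilde{\bf m}_1 + \widetilde{\bf m}_2, {\bf a}_1 + {\bf a}_2) \in \mathcal{S}_{\bf i}$, as needed.

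The main technical obstacle is justifying the multiplicativity identity $\tau_{{\bf i}, {\bf m}_1} \cdot \tau_{{\bf i}, {\bf m}_2} = \tau_{{\bf i}, {\bf m}_1+{\bf m}_2}$ for the distinguished sections; I would derive it from the proof of Lemma~\ref{prop, lowest}, observing that both sides restrict to the constant function $1$ on the dense open subset $U_{i_1}^- \times \cdots \times U_{i_r}^- \hookrightarrow Z_{\bf i}$, and so agree globally. Once this is settled, the remainder of the proof is a clean combination of Theorem~\ref{main result}, the inequality-theoretic description of $\mathcal{S}_{{\bf i}, {\bf m}}$ from Proposition~\ref{cutting}, and the injectivity of the generalized string parameterization.
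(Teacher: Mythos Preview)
Your overall strategy is exactly the one the paper has in mind: show that $\mathcal{S}_{\bf i}$ is a semigroup (so that $\mathcal{C}_{\bf i}$ is convex), combine this with the earlier corollary that $\mathcal{C}_{\bf i}$ is a finite union of rational polyhedral cones, and finish with Gordan's lemma. The justification of $\tau_{{\bf i},{\bf m}_1}\cdot\tau_{{\bf i},{\bf m}_2}=\tau_{{\bf i},{\bf m}_1+{\bf m}_2}$ via Lemma~\ref{prop, lowest} is also fine.

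However, the reduction step at the start fails. You claim that, given $(\widetilde{\bf m},{\bf a})\in\mathcal{S}_{\bf i}$, one may choose $m_r$ large so that condition~(ii) of Proposition~\ref{cutting} holds for $(1,{\bf a})$ with ${\bf m}=(\widetilde{\bf m},m_r)$. But for an index $j$ with $i_j\neq i_r$, inequality~(ii) reads
\[
a_j \le m_j + \sum_{j<s<r,\ i_s=i_j} m_s - \sum_{j<s\le r} c_{i_j,i_s} a_s,
\]
which does not involve $m_r$ at all; enlarging $m_r$ cannot force it. Concretely, take $G=SL_3(\c)$ and ${\bf i}=(1,2)$. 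Proposition~\ref{inequality} gives $\mathcal{S}_{\bf i}=\z_{\ge 0}\times\z_{\ge 0}^2$, so $((0),(5,0))\in\mathcal{S}_{\bf i}$. Condition~(ii) at $j=1$ becomes $5\le m_1 + a_2 = 0$, which fails for every $m_2$. Hence $(1,(5,0))\notin\mathcal{S}_{{\bf i},(0,m_2)}$ for any $m_2$, and your passage to Theorem~\ref{main result} breaks down.

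The strategy can be repaired, but not by enlarging $m_r$ alone: one has to extend the \emph{word} ${\bf i}$. For example, append to $(i_1,\ldots,i_{r-1})$ a reduced expression for $w_0$ beginning with $i_r$; then by Remark~\ref{Schubert variety case} (applied to the tail), for any sufficiently dominant $\lambda$ the element $T_\lambda(\widetilde{\bf m},{\bf a})$ is identified with an element of $\mathcal{B}_{{\bf i}',{\bf m}'}$ for an appropriate longer pair $({\bf i}',{\bf m}')$, with generalized string parameterization $({\bf a},0,\ldots,0)$. Your multiplication-of-sections argument (now applied on $Z_{{\bf i}'}$) then produces $({\bf a}_1+{\bf a}_2,0,\ldots,0)\in\Omega_{{\bf i}'}(\mathcal{B}_{{\bf i}',{\bf m}_1'+{\bf m}_2'})$, and Proposition~\ref{cutting} for ${\bf i}'$ returns $(\widetilde{\bf m}_1+\widetilde{\bf m}_2,{\bf a}_1+{\bf a}_2)\in\mathcal{S}_{\bf i}$ after unwinding. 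The paper's ``similarly'' hides this extension step.
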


\begin{proof}[Proof of Theorem \ref{main result}.]
Define an injection \[\tilde{\iota}_{s, s+1}: V(m_{s+1} \varpi_{i_{s+1}}) \otimes \cdots \otimes V(m_r \varpi_{i_r}) \hookrightarrow V(m_s \varpi_{i_s}) \otimes (V(m_{s+1} \varpi_{i_{s+1}}) \otimes \cdots \otimes V(m_r \varpi_{i_r}))\] by $\tilde{\iota}_{s, s+1}(v) := v_{m_s \varpi_{i_s}} \otimes v$ for $s = 1, \ldots, r-1$, and denote by $\tilde{\iota}_{s, s+1} ^\ast: (V(m_s \varpi_{i_s}) \otimes \cdots \otimes V(m_r \varpi_{i_r}))^\ast \twoheadrightarrow (V(m_{s+1} \varpi_{i_{s+1}}) \otimes \cdots \otimes V(m_r \varpi_{i_r}))^\ast$ the dual map. Recall that $\iota_{s, s+1}: V_{{\bf i}_{\ge s+1}, {\bf m}_{\ge s+1}} \hookrightarrow V_{{\bf i}_{\ge s}, {\bf m}_{\ge s}}$ is the restriction of $\tilde{\iota}_{s, s+1}$, and that $\iota_{s, s+1} ^\ast:H^0(Z_{{\bf i}_{\ge s}}, \mathcal{L}_{{\bf i}_{\ge s}, {\bf m}_{\ge s}}) \twoheadrightarrow H^0(Z_{{\bf i}_{\ge s+1}}, \mathcal{L}_{{\bf i}_{\ge s+1}, {\bf m}_{\ge s+1}})$ denotes its dual. For $b \in \mathcal{B}_{{\bf i}, {\bf m}}$, we write $\Omega_{\bf i} (b) = (a_1, \ldots, a_r)$, $- v_{\bf i} (G^{\rm up} _{{\bf i}, {\bf m}} (b)/\tau_{{\bf i}, {\bf m}}) = (a_1 ^\prime, \ldots, a_r ^\prime)$, and set 
\begin{align*}
&a'' _1 := \max\{a \in \z_{\ge 0} \mid F_{i_1} ^{(a)} G^{\rm up} (b^\ast) \neq 0\},\\
&a'' _2 := \max\{a \in \z_{\ge 0} \mid F_{i_2} ^{(a)} (\tilde{\iota}_{1, 2} ^\ast(F_{i_1} ^{(a'' _1)} G^{\rm up} (b^\ast))) \neq 0\},\\
&\ \vdots\\
&a'' _r := \max\{a \in \z_{\ge 0} \mid F_{i_r} ^{(a)} (\tilde{\iota}_{r-1, r} ^\ast(F_{i_{r-1}} ^{(a'' _{r-1})}(\cdots(\tilde{\iota}_{2, 3} ^\ast(F_{i_2} ^{(a'' _2)}(\tilde{\iota}_{1, 2} ^\ast(F_{i_1} ^{(a'' _1)} G^{\rm up} (b^\ast)))))\cdots))) \neq 0\}.
\end{align*}
We will prove that $(a_1, \ldots, a_r) = (a'' _1, \ldots, a'' _r)$, and that $(a_1 ^\prime, \ldots, a_r ^\prime) = (a'' _1, \ldots, a'' _r)$. First, it follows that 
\begin{align*}
a'' _1 &= \max\{a \in \z_{\ge 0} \mid \tilde{f}_{i_1} ^a b^\ast \neq 0\}\quad({\rm by\ the\ assertion\ in\ Proposition}\ \ref{property of upper global basis} (2)\ {\rm for}\ \varphi_i)\\
&= \max\{a \in \z_{\ge 0} \mid \tilde{e}_{i_1} ^a b \neq 0\}\quad({\rm by\ the\ definition\ of\ dual\ crystals})\\
&= a_1\quad({\rm by\ the\ definition\ of}\ \Omega_{\bf i}).
\end{align*}
Also, we deduce that
\begin{equation}\label{equations1}
\begin{aligned}
F_{i_1} ^{(a'' _1)} G^{\rm up} (b^\ast) &= G^{\rm up} (\tilde{f}_{i_1} ^{a'' _1} b^\ast)\\ 
&({\rm by\ the\ assertions\ in\ Proposition}\ \ref{property of upper global basis} (2)\ {\rm for}\ \varphi_i\ {\rm and}\ F_i ^{(\varphi_i(b))})\\
&= G^{\rm up} ((\tilde{e}_{i_1} ^{a'' _1} b)^\ast)\quad({\rm by\ the\ definition\ of\ dual\ crystals})\\
&= G^{\rm up} ((\tilde{e}_{i_1} ^{a_1} b)^\ast)\quad({\rm since}\ a'' _1 = a_1).
\end{aligned}
\end{equation}
For $b(2) \in \mathcal{B}_{{\bf i}_{\ge 2}, {\bf m}_{\ge 2}}$ defined in Definition \ref{generalized string parameterization}, we have $\tilde{e}_{i_1} ^{a_1} b = b_{m_1 \varpi_{i_1}} \otimes b(2)$. Let us identify $(\mathcal{B}(m_1 \varpi_{i_1}) \otimes (\mathcal{B}(m_2 \varpi_{i_2}) \otimes \cdots \otimes \mathcal{B}(m_r \varpi_{i_r})))^\ast$ with $(\mathcal{B}(m_2 \varpi_{i_2}) \otimes \cdots \otimes \mathcal{B}(m_r \varpi_{i_r}))^\ast \otimes \mathcal{B}(m_1 \varpi_{i_1})^\ast$ by $(b_1 \otimes b_2)^\ast \mapsto b_2 ^\ast \otimes b_1 ^\ast$. Then, Proposition \ref{global basis of tensor product} (2) implies that $G^{\rm up}((b_{m_1 \varpi_{i_1}} \otimes b(2))^\ast) = G^{\rm up}(b(2)^\ast \otimes b_{m_1 \varpi_{i_1}} ^\ast)$ is an element of \[G^{\rm up}(b(2)^\ast) \otimes G^{\rm up}(b_{m_1 \varpi_{i_1}} ^\ast) + \sum_{\substack{b_1 \in \mathcal{B}(m_1 \varpi_{i_1}) \setminus \{b_{m_1 \varpi_{i_1}}\},\\ b_2 \in \mathcal{B}(m_2 \varpi_{i_2}) \otimes \cdots \otimes \mathcal{B}(m_r \varpi_{i_r})}} \z G^{\rm up}(b_2 ^\ast) \otimes G^{\rm up}(b_1 ^\ast).\] Here, since $G^{\rm up}(b_1 ^\ast) (v_{m_1 \varpi_{i_1}}) = 0$ for $b_1 \neq b_{m_1 \varpi_{i_1}}$, we deduce that $\tilde{\iota}_{1, 2} ^\ast (G^{\rm up} (b_2 ^\ast) \otimes G^{\rm up}(b_1 ^\ast)) = 0$, and hence that 
\begin{align}\label{equations2}
\tilde{\iota}_{1, 2} ^\ast (G^{\rm up} ((\tilde{e}_{i_1} ^{a_1} b)^\ast)) = \tilde{\iota}_{1, 2} ^\ast (G^{\rm up}((b_{m_1 \varpi_{i_1}} \otimes b(2))^\ast)) = G^{\rm up}(b(2)^\ast).
\end{align}
From equations (\ref{equations1}) and (\ref{equations2}), it follows that 
\begin{align}\label{equations3}
\tilde{\iota}_{1, 2} ^\ast (F_{i_1} ^{(a'' _1)} G^{\rm up} (b^\ast)) = G^{\rm up}(b(2)^\ast).
\end{align} 
Therefore, we conclude by induction that $(a_1, \ldots, a_r) = (a'' _1, \ldots, a'' _r)$, and that 
\begin{equation}\label{equations4}
\begin{aligned}
&F_{i_r} ^{(a'' _r)} (\tilde{\iota}_{r-1, r} ^\ast(F_{i_{r-1}} ^{(a'' _{r-1})}(\cdots(\tilde{\iota}_{2, 3} ^\ast(F_{i_2} ^{(a'' _2)}(\tilde{\iota}_{1, 2} ^\ast(F_{i_1} ^{(a'' _1)} G^{\rm up} (b^\ast)))))\cdots)))\\ 
=\ &F_{i_r} ^{(a'' _r)} (\tilde{\iota}_{r-1, r} ^\ast(F_{i_{r-1}} ^{(a'' _{r-1})}(\cdots(\tilde{\iota}_{2, 3} ^\ast(F_{i_2} ^{(a'' _2)}(G^{\rm up}(b(2)^\ast))))\cdots)))\\
=\ &\cdots = F_{i_r} ^{(a'' _r)} G^{\rm up}(b(r)^\ast) = G^{\rm up} (b_{m_r \varpi_{i_r}} ^\ast).
\end{aligned}
\end{equation} 

Now recall that $\Phi_{{\bf i}_{\ge s}, {\bf m}_{\ge s}} ^\ast: (V(m_s \varpi_{i_s}) \otimes \cdots \otimes V(m_r \varpi_{i_r}))^\ast \twoheadrightarrow H^0(Z_{{\bf i}_{\ge s}}, \mathcal{L}_{{\bf i}_{\ge s}, {\bf m}_{\ge s}})$ is the dual of the inclusion map $\Phi_{{\bf i}_{\ge s}, {\bf m}_{\ge s}}: V_{{\bf i}_{\ge s}, {\bf m}_{\ge s}} \hookrightarrow V(m_s \varpi_{i_s}) \otimes \cdots \otimes V(m_r \varpi_{i_r})$. Since $\Phi_{{\bf i}_{\ge s}, {\bf m}_{\ge s}} \circ \iota_{s, s+1} = \tilde{\iota}_{s, s+1} \circ \Phi_{{\bf i}_{\ge s+1}, {\bf m}_{\ge s+1}}$, we have $\iota_{s, s+1} ^\ast \circ \Phi_{{\bf i}_{\ge s}, {\bf m}_{\ge s}} ^\ast = (\Phi_{{\bf i}_{\ge s}, {\bf m}_{\ge s}} \circ \iota_{s, s+1})^\ast = (\tilde{\iota}_{s, s+1} \circ \Phi_{{\bf i}_{\ge s+1}, {\bf m}_{\ge s+1}})^\ast = \Phi_{{\bf i}_{\ge s+1}, {\bf m}_{\ge s+1}} ^\ast \circ \tilde{\iota}_{s, s+1} ^\ast$. Therefore, we deduce that 
\begin{align*}
&F_{i_r} ^{(a'' _r)} (\iota_{r-1, r} ^\ast(F_{i_{r-1}} ^{(a'' _{r-1})}(\cdots(\iota_{2, 3} ^\ast(F_{i_2} ^{(a'' _2)}(\iota_{1, 2} ^\ast(F_{i_1} ^{(a'' _1)} G^{\rm up} _{{\bf i}, {\bf m}}(b)))))\cdots)))\\
=\ &F_{i_r} ^{(a'' _r)} (\iota_{r-1, r} ^\ast(F_{i_{r-1}} ^{(a'' _{r-1})}(\cdots(\iota_{2, 3} ^\ast(F_{i_2} ^{(a'' _2)}(\iota_{1, 2} ^\ast \circ \Phi_{{\bf i}, {\bf m}} ^\ast (F_{i_1} ^{(a'' _1)}G^{\rm up}(b^\ast)))))\cdots)))\\
=\ &F_{i_r} ^{(a'' _r)} (\iota_{r-1, r} ^\ast(F_{i_{r-1}} ^{(a'' _{r-1})}(\cdots(\iota_{2, 3} ^\ast \circ \Phi_{{\bf i}_{\ge 2}, {\bf m}_{\ge 2}} ^\ast (F_{i_2} ^{(a'' _2)}(\tilde{\iota}_{1, 2} ^\ast(F_{i_1} ^{(a'' _1)} G^{\rm up}(b^\ast)))))\cdots)))\\
= &\cdots = \Phi_{{\bf i}_{\ge r}, {\bf m}_{\ge r}} ^\ast(F_{i_r} ^{(a'' _r)} (\tilde{\iota}_{r-1, r} ^\ast(F_{i_{r-1}} ^{(a'' _{r-1})}(\cdots(\tilde{\iota}_{2, 3} ^\ast(F_{i_2} ^{(a'' _2)}(\tilde{\iota}_{1, 2} ^\ast(F_{i_1} ^{(a'' _1)} G^{\rm up} (b^\ast)))))\cdots))))\\ 
=\ &\Phi_{{\bf i}_{\ge r}, {\bf m}_{\ge r}} ^\ast(G^{\rm up} (b_{m_r \varpi_{i_r}} ^\ast))\quad({\rm by\ equation}\ (\ref{equations4}))\\ 
\neq\ &0\quad({\rm since}\ \Phi_{{\bf i}_{\ge r}, {\bf m}_{\ge r}} ^\ast(G^{\rm up} (b_{m_r \varpi_{i_r}} ^\ast))(v_{m_r \varpi_{i_r}}) = G^{\rm up} (b_{m_r \varpi_{i_r}} ^\ast)(v_{m_r \varpi_{i_r}}) = 1),
\end{align*}
and hence that $F_{i_s} ^{(a'' _s)} (\iota_{s-1, s} ^\ast(F_{i_{s-1}} ^{(a'' _{s-1})}(\cdots(\iota_{2, 3} ^\ast(F_{i_2} ^{(a'' _2)}(\iota_{1, 2} ^\ast(F_{i_1} ^{(a'' _1)} G^{\rm up} _{{\bf i}, {\bf m}}(b)))))\cdots))) \neq 0$ for all $1 \le s \le r$. Similarly, we see that
\begin{align*}
&F_{i_s} ^{(a'' _s +1)} (\iota_{s-1, s} ^\ast(F_{i_{s-1}} ^{(a'' _{s-1})}(\cdots(\iota_{2, 3} ^\ast(F_{i_2} ^{(a'' _2)}(\iota_{1, 2} ^\ast(F_{i_1} ^{(a'' _1)} G^{\rm up} _{{\bf i}, {\bf m}}(b)))))\cdots)))\\
=\ &\Phi_{{\bf i}_{\ge s}, {\bf m}_{\ge s}} ^\ast(F_{i_s} ^{(a'' _s +1)} (\tilde{\iota}_{s-1, s} ^\ast(F_{i_{s-1}} ^{(a'' _{s-1})}(\cdots(\tilde{\iota}_{2, 3} ^\ast(F_{i_2} ^{(a'' _2)}(\tilde{\iota}_{1, 2} ^\ast(F_{i_1} ^{(a'' _1)} G^{\rm up} (b^\ast)))))\cdots))))\\ 
=\ &\Phi_{{\bf i}_{\ge s}, {\bf m}_{\ge s}} ^\ast(0)\quad({\rm by\ the\ definition\ of}\ a'' _s)\\ 
=\ &0.
\end{align*}
From these, we obtain \[a'' _s = \max\{a \in \z_{\ge 0} \mid F_{i_s} ^{(a)} (\iota_{s-1, s} ^\ast(F_{i_{s-1}} ^{(a'' _{s-1})}(\cdots(\iota_{2, 3} ^\ast(F_{i_2} ^{(a'' _2)}(\iota_{1, 2} ^\ast(F_{i_1} ^{(a'' _1)} G^{\rm up} _{{\bf i}, {\bf m}}(b)))))\cdots))) \neq 0\}\] for all $1 \le s \le r$. Also, Proposition \ref{val,Chevalley} and the remark following it imply that \[a' _s = \max\{a \in \z_{\ge 0} \mid F_{i_s} ^{(a)} (\iota_{s-1, s} ^\ast(F_{i_{s-1}} ^{(a' _{s-1})}(\cdots(\iota_{2, 3} ^\ast(F_{i_2} ^{(a' _2)}(\iota_{1, 2} ^\ast(F_{i_1} ^{(a' _1)} G^{\rm up} _{{\bf i}, {\bf m}}(b)))))\cdots))) \neq 0\}\] for all $1 \le s \le r$. By using these, we conclude by induction that $(a_1 ^\prime, \ldots, a_r ^\prime) = (a'' _1, \ldots, a'' _r)$. Thus, we have proved that $\Omega_{\bf i} (b) = - v_{\bf i} (G^{\rm up} _{{\bf i}, {\bf m}} (b)/\tau_{{\bf i}, {\bf m}})$ for all $b \in \mathcal{B}_{{\bf i}, {\bf m}}$. 

Finally, it follows from Proposition \ref{generalized parameterization} (2) that $v_{\bf i} (G^{\rm up} _{{\bf i}, {\bf m}} (b)/\tau_{{\bf i}, {\bf m}})$, $b \in \mathcal{B}_{{\bf i}, {\bf m}}$, are all distinct. Therefore, Proposition \ref{prop1,val} (1) implies that $G^{\rm up} _{{\bf i}, {\bf m}} (b)$, $b \in \mathcal{B}_{{\bf i}, {\bf m}}$, are linearly independent. From this, we conclude that $\{G^{\rm up} _{{\bf i}, {\bf m}} (b) \mid b \in \mathcal{B}_{{\bf i}, {\bf m}}\}$ forms a $\c$-basis of $H^0 (Z_{\bf i}, \mathcal{L}_{{\bf i}, {\bf m}})$, since the complex dimension of $H^0 (Z_{\bf i}, \mathcal{L}_{{\bf i}, {\bf m}})$ is equal to the cardinality of $\mathcal{B}_{{\bf i}, {\bf m}}$ by Proposition \ref{generalized Demazure crystal} (3). This completes the proof of the theorem.
\end{proof}

\section{Similar results for another valuation}

In this section, we treat a certain valuation on $\c(Z_{\bf i})$ and a certain parameterization for $\mathcal{B}_{{\bf i}, {\bf m}}$, which are different from the $v_{\bf i}$ and $\Omega_{\bf i}$, respectively. Assume that ${\bf i} = (i_1, \ldots, i_r)$ is a reduced word for $w \in W$, and write $s_i := \exp(E_i) \exp(-F_i) \exp(E_i) \in P_i$ for $i \in I$. Then, we can regard $s_{i_1}U_{i_1}^- \times \cdots \times s_{i_r}U_{i_r}^-$ as an affine open neighborhood of $(s_{i_1}, \ldots, s_{i_r}) \bmod B^r$ in $Z_{\bf i}$ by 
\begin{align*}
s_{i_1}U_{i_1}^- \times \cdots \times s_{i_r}U_{i_r}^- \hookrightarrow Z_{\bf i},\ (s_{i_1} u_1, \ldots, s_{i_r} u_r) \mapsto (s_{i_1} u_1, \ldots, s_{i_r} u_r) \bmod B^r.
\end{align*} 
Consider the isomorphism of varieties $\c^r \xrightarrow{\sim} s_{i_1}U_{i_1}^- \times \cdots \times s_{i_r}U_{i_r}^-$ given by \[(t_1 ^\prime, \ldots, t_r ^\prime) \mapsto (s_{i_1}\exp(t_1 ^\prime F_{i_1}), \ldots, s_{i_r}\exp(t_r ^\prime F_{i_r}));\] using this isomorphism, we regard $\c(Z_{\bf i}) = \c(s_{i_1}U_{i_1}^- \times \cdots \times s_{i_r}U_{i_r}^-)$ as the rational function field $\c(t_1 ^\prime, \ldots, t_r ^\prime)$. Let us define a valuation $v_{\bf i} ^\prime$ on $\c(Z_{\bf i})$ to be the highest term valuation on $\c(t_1 ^\prime, \ldots, t_r ^\prime)$ with respect to the lexicographic order $<$ on $\z^r$ (see Example \ref{highest term valuation}). For $1 \le k \le r$, denote by $\widetilde{V}(m_k \varpi_{i_k})$ the $\c$-subspace of $V(m_k \varpi_{i_k})$ spanned by weight vectors whose weight is not equal to that of $s_{i_1} s_{i_2} \cdots s_{i_k} v_{m_k \varpi_{i_k}}$. Since $s_{i_1} s_{i_2} \cdots s_{i_k} v_{m_k \varpi_{i_k}}$ is an extremal weight vector, we deduce that \[V(m_k \varpi_{i_k}) = \widetilde{V}(m_k \varpi_{i_k}) \oplus \c (s_{i_1} s_{i_2} \cdots s_{i_k} v_{m_k \varpi_{i_k}}).\] Define $\tilde{\tau}_{{\bf i}, {\bf m}} ^\prime \in (V(m_1 \varpi_{i_1}) \otimes \cdots \otimes V(m_r \varpi_{i_r}))^\ast$ by \[\tilde{\tau}_{{\bf i}, {\bf m}} ^\prime (s_{i_1} v_{m_1 \varpi_{i_1}} \otimes s_{i_1} s_{i_2} v_{m_2 \varpi_{i_2}} \otimes \cdots \otimes  s_{i_1} s_{i_2} \cdots s_{i_r} v_{m_r \varpi_{i_r}}) = 1,\] and by $\tilde{\tau}_{{\bf i}, {\bf m}} ^\prime (v_1 \otimes \cdots \otimes v_r) = 0$ if there exists $1 \le k \le r$ such that $v_k \in \widetilde{V}(m_k \varpi_{i_k})$. Let us set $\tau_{{\bf i}, {\bf m}} ^\prime := \Phi_{{\bf i}, {\bf m}} ^\ast (\tilde{\tau}_{{\bf i}, {\bf m}} ^\prime) \in H^0(Z_{\bf i}, \mathcal{L}_{{\bf i}, {\bf m}})$. In this section, we study the Newton-Okounkov body $\Delta(Z_{\bf i}, \mathcal{L}_{{\bf i}, {\bf m}}, v_{\bf i} ^\prime, \tau_{{\bf i}, {\bf m}} ^\prime)$. Let us prove a lemma similar to Lemma \ref{prop, lowest}.

\vspace{2mm}\begin{lem}
\begin{enumerate}
\item[{\rm (1)}] The section $\tau_{{\bf i}, {\bf m}} ^\prime$ does not vanish on $s_{i_1}U_{i_1}^- \times \cdots \times s_{i_r}U_{i_r}^-$ $(\hookrightarrow Z_{\bf i})$. In particular, $\sigma/\tau_{{\bf i}, {\bf m}} ^\prime \in \c[t_1 ^\prime, \ldots, t_r ^\prime]$ for all $\sigma \in H^0(Z_{\bf i}, \mathcal{L}_{{\bf i}, {\bf m}})$.
\item[{\rm (2)}] It holds that $E_{i_1} \tau_{{\bf i}, {\bf m}} ^\prime = 0$ in $H^0(Z_{\bf i}, \mathcal{L}_{{\bf i}, {\bf m}})$.
\end{enumerate}
\end{lem}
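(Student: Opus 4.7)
My plan is to evaluate $\tilde{\tau}_{{\bf i}, {\bf m}}^\prime$ explicitly on $\Psi_{{\bf i}, {\bf m}}((s_{i_1}u_1, \ldots, s_{i_r}u_r) \bmod B^r)$ at an arbitrary point of the open cell, and to show the value is $1$ uniformly in the $u_j$'s. Setting $v_k := s_{i_1}u_1 s_{i_2}u_2 \cdots s_{i_k}u_k v_{m_k\varpi_{i_k}}$, I will rewrite each $s_{i_j}u_j = (s_{i_j}u_j s_{i_j}^{-1}) s_{i_j}$ using $s_{i_j}U_{i_j}^- s_{i_j}^{-1} = U_{i_j}^+$ and then move every $s$ to the right. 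Because ${\bf i}$ is reduced, the root $s_{i_1}\cdots s_{i_{j-1}}\alpha_{i_j}$ is positive for each $j$, so $s_{i_1}\cdots s_{i_{j-1}}U_{i_j}^+ (s_{i_1}\cdots s_{i_{j-1}})^{-1} \subset U^+$, and we obtain a factorization
\[
s_{i_1}u_1 \cdots s_{i_k}u_k = U_+^{(k)} \cdot s_{i_1}s_{i_2} \cdots s_{i_k}
\]
with $U_+^{(k)} \in U^+$. Since $U^+$ acts on $V(m_k\varpi_{i_k})$ as the identity plus strictly weight-raising operators, and the extremal weight space $V(m_k\varpi_{i_k})_{s_{i_1}\cdots s_{i_k}(m_k\varpi_{i_k})}$ is one-dimensional and spanned by $s_{i_1}\cdots s_{i_k}v_{m_k\varpi_{i_k}}$, the projection of $v_k = U_+^{(k)}\cdot s_{i_1}\cdots s_{i_k}v_{m_k\varpi_{i_k}}$ onto this line has coefficient $1$, independently of the $u_j$'s. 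Plugging into the definition of $\tilde{\tau}_{{\bf i}, {\bf m}}^\prime$ yields $\tilde{\tau}_{{\bf i}, {\bf m}}^\prime(v_1 \otimes \cdots \otimes v_r) = 1$ throughout the open cell, which is exactly what is needed to conclude that $\tau_{{\bf i}, {\bf m}}^\prime$ is a nowhere-vanishing trivialization there and that $\sigma/\tau_{{\bf i}, {\bf m}}^\prime \in \c[t_1^\prime, \ldots, t_r^\prime]$ for all $\sigma$.

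\textbf{Part (2).} Using the $P_{i_1}$-equivariance of $\Phi_{{\bf i}, {\bf m}}^\ast$, I reduce the claim to $\tilde{\tau}_{{\bf i}, {\bf m}}^\prime(E_{i_1}v) = 0$ for every $v \in V_{{\bf i}, {\bf m}}$. Since $\tilde{\tau}_{{\bf i}, {\bf m}}^\prime$ is a weight vector in the dual of the tensor product of weight $-\lambda_{{\bf i}, {\bf m}}^\ast$ with $\lambda_{{\bf i}, {\bf m}}^\ast := \sum_{k=1}^r s_{i_1}\cdots s_{i_k}(m_k\varpi_{i_k})$, the identity is automatic unless $v$ has weight $\lambda_{{\bf i}, {\bf m}}^\ast - \alpha_{i_1}$; thus the task reduces to proving that $V_{{\bf i}, {\bf m}}$ has no vectors of this weight. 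By the definition of $\mathcal{B}_{{\bf i}, {\bf m}}$ and Proposition \ref{generalized Demazure crystal} (3), every weight of $V_{{\bf i}, {\bf m}}$ has the form $\sum_k m_k\varpi_{i_k} - \sum_j a_j\alpha_{i_j}$ with $a_j \in \z_{\ge 0}$, and the minimum in the root partial order is attained at the ``lowest'' element $b_{\rm low} \in \mathcal{B}_{{\bf i}, {\bf m}}$. I will check by induction on $r$ that the extremal tensor
\[
e_0 := s_{i_1}v_{m_1\varpi_{i_1}} \otimes s_{i_1}s_{i_2}v_{m_2\varpi_{i_2}} \otimes \cdots \otimes s_{i_1}\cdots s_{i_r}v_{m_r\varpi_{i_r}}
\]
lies in $V_{{\bf i}, {\bf m}}$; the induction uses the formula for $V_{{\bf i}, {\bf m}}$ with $a_1 = 0$ to place $v_{m_1\varpi_{i_1}} \otimes e_0^{(2)}$ inside $V_{{\bf i}, {\bf m}}$ (where $e_0^{(2)}$ is the inductively obtained extremal tensor in $V_{{\bf i}_{\ge 2}, {\bf m}_{\ge 2}}$), followed by applying the lift $s_{i_1} \in P_{i_1}$ diagonally. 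Since $e_0$ has weight $\lambda_{{\bf i}, {\bf m}}^\ast$ while ${\rm wt}(b_{\rm low})$ is the minimum weight of $V_{{\bf i}, {\bf m}}$, the two must coincide, and hence $\lambda_{{\bf i}, {\bf m}}^\ast - \alpha_{i_1}$ is strictly below the minimum and is not a weight of $V_{{\bf i}, {\bf m}}$, completing the argument.

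\textbf{Main obstacle.} The sole conceptually delicate point is matching $\lambda_{{\bf i}, {\bf m}}^\ast$, the weight distinguished by $\tilde{\tau}_{{\bf i}, {\bf m}}^\prime$ on the tensor product, with the minimum weight of $V_{{\bf i}, {\bf m}}$ (equivalently, with ${\rm wt}(b_{\rm low})$). The inclusion $e_0 \in V_{{\bf i}, {\bf m}}$ is the heart of this identification: once it is established, the character formula and weight considerations do the rest. The proof of $e_0 \in V_{{\bf i}, {\bf m}}$ itself is only mildly technical -- induction on $r$ combined with the diagonal Weyl action -- but it must be carried out explicitly, and it is the one place where the reducedness of ${\bf i}$ and the $P_{i_1}$-module structure of $V_{{\bf i}, {\bf m}}$ interact directly. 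In particular, no $E_{i_1}$-commutator computation on the Demazure generators $F_{i_1}^{a_1}(v_{m_1\varpi_{i_1}} \otimes v^{(2)})$, which would be considerably more cumbersome, is required.
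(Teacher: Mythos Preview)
Your Part (1) is essentially identical to the paper's proof: both factor $s_{i_1}u_1\cdots s_{i_k}u_k$ as an element of $U^+$ times $s_{i_1}\cdots s_{i_k}$ using reducedness of $\mathbf i$, and read off the coefficient $1$ on the extremal line.

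For Part (2), your route is genuinely different from the paper's, and it has a gap. The paper simply continues the local computation from (1): since each $v_k'=v'_{u_1,\ldots,u_k}$ is a combination of weight vectors with weights in $s_{i_1}\cdots s_{i_k}(m_k\varpi_{i_k}) + \sum_{j\le k} \z_{\ge 0}\beta_j$ and all $\beta_j$ are positive, applying $E_{i_1}$ adds $\alpha_{i_1}=\beta_1$ and hence lands in $\widetilde V(m_k\varpi_{i_k})$. The Leibniz expansion then gives $(E_{i_1}\tilde\tau'_{\mathbf i,\mathbf m})(v_1'\otimes\cdots\otimes v_r')=0$ at every point of the open cell, so $E_{i_1}\tau'_{\mathbf i,\mathbf m}=0$ by irreducibility of $Z_{\mathbf i}$. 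This takes two lines once (1) is done.

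Your approach instead tries to show globally that $\lambda^*_{\mathbf i,\mathbf m}$ is the minimum weight of $V_{\mathbf i,\mathbf m}$, so that $\lambda^*_{\mathbf i,\mathbf m}-\alpha_{i_1}$ cannot occur. The gap is the sentence ``Since $e_0$ has weight $\lambda^*$ while $\mathrm{wt}(b_{\rm low})$ is the minimum weight, the two must coincide.'' Your induction only shows $e_0\in V_{\mathbf i,\mathbf m}$, i.e.\ that $\lambda^*$ is \emph{some} weight; from this and the bare existence of a minimal weight you can only conclude $\mathrm{wt}(b_{\rm low})\le\lambda^*$, not equality. You have given no argument that $\lambda^*$ bounds every weight of $V_{\mathbf i,\mathbf m}$ from below (nor even that $\mathcal B_{\mathbf i,\mathbf m}$ has a unique minimal weight), and ``$b_{\rm low}$'' is never defined. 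Proving that $\lambda^*$ is the minimum weight of a generalized Demazure module for reduced $\mathbf i$ is a separate inductive statement requiring the $P_{i_1}$-stability in a nontrivial way; it is not a consequence of $e_0\in V_{\mathbf i,\mathbf m}$ plus the character formula. You could try to close this, but it is substantially more work than the paper's argument, which simply recycles the weight cone already established in Part (1).
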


\begin{proof}
Denote by $U_\beta$ the root subgroup corresponding to a root $\beta$. Let us set $\beta_1 := \a_{i_1}$, $\beta_2 := s_{i_1}(\a_{i_2}), \ldots, \beta_r := s_{i_1}s_{i_2} \cdots s_{i_{r-1}}(\a_{i_r})$. It is well-known that these roots $\beta_1, \ldots, \beta_r$ are positive (see \cite[\S\S 10.2]{Hum}). Recall that $\Phi_{{\bf i}, {\bf m}} ^\ast$ is identical to the surjection $\Psi_{{\bf i}, {\bf m}} ^\ast$ defined in \S\S 2.2, and that \[\Psi_{{\bf i}, {\bf m}}((s_{i_1}u_1, \ldots, s_{i_r}u_r) \bmod B^r) = \c(v' _{u_1} \otimes v' _{u_1, u_2} \otimes \cdots \otimes v' _{u_1, \ldots, u_r})\] for $u_1 \in U_{i_1} ^-, \ldots, u_r \in U_{i_r} ^-$, where we set $v' _{u_1, \ldots, u_k} := (s_{i_1} u_1)(s_{i_2} u_2) \cdots (s_{i_k} u_k) v_{m_k \varpi_{i_k}}$ for $1 \le k \le r$. Since $(s_{i_1} U_{i_1} ^-)(s_{i_2} U_{i_2} ^-) \cdots (s_{i_k} U_{i_k} ^-) = U_{\beta_1}U_{\beta_2} \cdots U_{\beta_k} s_{i_1}s_{i_2} \cdots s_{i_k}$, it follows that \[v' _{u_1, \ldots, u_k} \in s_{i_1} s_{i_2} \cdots s_{i_k} v_{m_k \varpi_{i_k}} + \widetilde{V}(m_k \varpi_{i_k}).\] Hence we deduce from the definition of $\tilde{\tau}_{{\bf i}, {\bf m}} ^\prime$ that $\tilde{\tau}_{{\bf i}, {\bf m}} ^\prime(v' _{u_1} \otimes v' _{u_1, u_2} \otimes \cdots \otimes v' _{u_1, \ldots, u_r}) = 1$, which implies part (1) since $\tau_{{\bf i}, {\bf m}} ^\prime = \Phi_{{\bf i}, {\bf m}} ^\ast (\tilde{\tau}_{{\bf i}, {\bf m}} ^\prime)$. Also, the element $v' _{u_1, \ldots, u_k}$ is a linear combination of weight vectors whose weight is an element of \[s_{i_1} s_{i_2} \cdots s_{i_k} (m_k \varpi_{i_k}) + \sum_{1 \le j \le k} \z_{\ge 0} \beta_j.\] Since $\beta_1, \ldots, \beta_k$ are positive roots, we conclude that $E_{i_1} v' _{u_1, \ldots, u_k} \in \widetilde{V}(m_k \varpi_{i_k})$. From this, it follows that 
\begin{align*}
&(E_{i_1} \tilde{\tau}_{{\bf i}, {\bf m}} ^\prime)(v' _{u_1} \otimes v' _{u_1, u_2} \otimes \cdots \otimes v' _{u_1, \ldots, u_r})\\ 
=\ &-\tilde{\tau}_{{\bf i}, {\bf m}} ^\prime(E_{i_1}(v' _{u_1} \otimes v' _{u_1, u_2} \otimes \cdots \otimes v' _{u_1, \ldots, u_r}))\\ 
=\ &-\sum_{1 \le k \le r} \tilde{\tau}_{{\bf i}, {\bf m}} ^\prime(v' _{u_1} \otimes \cdots \otimes E_{i_1}v' _{u_1, \ldots, u_k} \otimes \cdots \otimes v' _{u_1, \ldots, u_r}) = 0, 
\end{align*}
and hence that the section $E_{i_1} \tau_{{\bf i}, {\bf m}} ^\prime$ is identically zero on $s_{i_1}U_{i_1}^- \times \cdots \times s_{i_r}U_{i_r}^-$. Now, since $Z_{\bf i}$ is irreducible, we conclude part (2). This proves the lemma.
\end{proof}

For $b \in \mathcal{B}_{{\bf i}, {\bf m}}$, define $\Omega_{\bf i} ^\prime(b) = (a_1 ^\prime, \ldots, a_r ^\prime) \in \z_{\ge 0} ^r$ by \[a_k ^\prime := \max\{a \in \z_{\ge 0} \mid \tilde{f}_{i_k} ^a(b(k)) \neq 0\}\] for $k = 1, \ldots, r$, where $b(k) \in \mathcal{B}_{{\bf i}_{\ge k}, {\bf m}_{\ge k}}$ is the element defined in Definition \ref{generalized string parameterization}. Replacing $\Omega_{\bf i}$ by $\Omega_{\bf i} ^\prime$ in the definitions of $\mathcal{S}_{{\bf i}, {\bf m}}$, $\mathcal{C}_{{\bf i}, {\bf m}}$, and $\Delta_{{\bf i}, {\bf m}}$, we obtain $\mathcal{S}_{{\bf i}, {\bf m}} ^\prime \subset \z_{> 0} \times \z^r$, $\mathcal{C}_{{\bf i}, {\bf m}} ^\prime \subset \r_{\ge 0} \times \r^r$, and $\Delta_{{\bf i}, {\bf m}} ^\prime \subset \r^r$. The following is the main result of this section. 

\vspace{2mm}\begin{thm}\label{main result 2}
Let ${\bf i} \in I^r$ be an arbitrary reduced word.
\begin{enumerate}
\item[{\rm (1)}] For all ${\bf m} \in \z^r _{\ge 0}$ and $b \in \mathcal{B}_{{\bf i}, {\bf m}}$, it follows that $\Omega_{\bf i} ^\prime(b) = -v_{\bf i} ^\prime(G^{\rm up} _{{\bf i}, {\bf m}} (b)/\tau_{{\bf i}, {\bf m}} ^\prime)$. In particular, $\mathcal{S}_{{\bf i}, {\bf m}} ^\prime = \omega(S(Z_{\bf i}, \mathcal{L}_{{\bf i}, {\bf m}}, v_{\bf i} ^\prime, \tau_{{\bf i}, {\bf m}} ^\prime))$, $\mathcal{C}_{{\bf i}, {\bf m}} ^\prime = \omega(C(Z_{\bf i}, \mathcal{L}_{{\bf i}, {\bf m}}, v_{\bf i} ^\prime, \tau_{{\bf i}, {\bf m}} ^\prime))$, and $\Delta_{{\bf i}, {\bf m}} ^\prime = -\Delta(Z_{\bf i}, \mathcal{L}_{{\bf i}, {\bf m}}, v_{\bf i} ^\prime, \tau_{{\bf i}, {\bf m}} ^\prime)$, where $\omega: \r \times \r^r \xrightarrow{\sim} \r \times \r^r$ is the linear automorphism given by $\omega(k, {\bf a}) = (k, -{\bf a})$.
\item[{\rm (2)}] There exist explicit unimodular $r \times r$-matrices $A$ and $B$ such that $\mathcal{S}_{{\bf i}, {\bf m}} = \Xi_{A, B}(\mathcal{S}_{{\bf i}, {\bf m}} ^\prime)$, $\mathcal{C}_{{\bf i}, {\bf m}} = \Xi_{A, B}(\mathcal{C}_{{\bf i}, {\bf m}} ^\prime)$, and $\Delta_{{\bf i}, {\bf m}} = A \Delta_{{\bf i}, {\bf m}} ^\prime + B {\bf m}$ for all ${\bf m} \in \z^r _{\ge 0}$, where $\Xi_{A, B}: \r \times \r^r \xrightarrow{\sim} \r \times \r^r$ is the linear automorphism given by $\Xi_{A, B}(k, {\bf a}) = (k, A {\bf a} + k B {\bf m})$.
\end{enumerate}
\end{thm}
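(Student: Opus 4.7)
The plan is to first establish a combinatorial identity between the two parameterizations $\Omega_{\bf i}$ and $\Omega_{\bf i}^\prime$, then prove part (1) by adapting the argument for Theorem \ref{main result} to the new local chart, and finally deduce parts (2) and (3).

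First I would observe that by the definition of $\Omega_{\bf i}^\prime$ we have $a_k^\prime = \varphi_{i_k}(b(k))$, while by the definition of $\Omega_{\bf i}$ we have $a_k = \varepsilon_{i_k}(b(k))$, where $b(k) \in \mathcal{B}_{{\bf i}_{\ge k}, {\bf m}_{\ge k}}$ are the elements of Definition \ref{generalized string parameterization}. By Lemma \ref{length of string}, $a_k^\prime - a_k = \langle {\rm wt}(b(k)), h_{i_k} \rangle$, and a short induction on $k$ using $b(k) = \tilde{f}_{i_k}^{a_k}(b_{m_k \varpi_{i_k}} \otimes b(k+1))$ yields ${\rm wt}(b(k)) = \sum_{j \ge k} m_j \varpi_{i_j} - \sum_{j \ge k} a_j \alpha_{i_j}$. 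Substituting and simplifying the $j=k$ contribution gives
\[ a_k^\prime = -a_k + m_k + \sum_{j > k} \delta_{i_k, i_j} m_j - \sum_{j > k} c_{i_k, i_j} a_j. \]
Hence ${\bf a}^\prime = D{\bf a} + E{\bf m}$ for explicit upper-triangular integer matrices $D$ (with $-1$'s on the diagonal and entries $-c_{i_k, i_j}$ above) and $E$ (with $1$'s on the diagonal and entries $\delta_{i_k, i_j}$ above), both of which are unimodular. Setting $A := D^{-1}$ and $B := -D^{-1}E$ gives unimodular integer matrices with $\Omega_{\bf i}(b) = A\Omega_{\bf i}^\prime(b) + B{\bf m}$ for $b \in \mathcal{B}_{{\bf i}, {\bf m}}$, and analogously with $k{\bf m}$ in place of ${\bf m}$ for $b \in \mathcal{B}_{{\bf i}, k{\bf m}}$.

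Next, for part (1), I would develop analogues of Lemmas \ref{prop, lowest} and \ref{restriction} and of Proposition \ref{val,Chevalley} in the chart $s_{i_1}U_{i_1}^- \times \cdots \times s_{i_r}U_{i_r}^-$. The starting point is ${\rm Ad}(s_{i_1}^{-1})E_{i_1} = -F_{i_1}$, which yields $\exp(tE_{i_1}) \cdot s_{i_1}\exp(t_1^\prime F_{i_1}) = s_{i_1}\exp((t_1^\prime - t)F_{i_1})$ in $P_{i_1}$, so that the left action of $E_{i_1}$ on $\c[t_1^\prime, \ldots, t_r^\prime]$ equals $-\partial/\partial t_1^\prime$. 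Together with Lemma 8.2 (the nonvanishing of $\tau_{{\bf i}, {\bf m}}^\prime$ on the chart and $E_{i_1}\tau_{{\bf i}, {\bf m}}^\prime = 0$), this identifies the first coordinate of $-v_{\bf i}^\prime(\sigma/\tau_{{\bf i}, {\bf m}}^\prime)$ with $\max\{a \ge 0 \mid E_{i_1}^a \sigma \ne 0\}$. To iterate, I would introduce the closed immersion $\iota_{s, s+1}^\prime: Z_{{\bf i}_{\ge s+1}} \hookrightarrow Z_{{\bf i}_{\ge s}}$, $(p_{s+1}, \ldots, p_r) \mapsto (s_{i_s}, p_{s+1}, \ldots, p_r)$; check that the pullback of $\mathcal{L}_{{\bf i}_{\ge s}, {\bf m}_{\ge s}}$ along $\iota_{s, s+1}^\prime$ is $\mathcal{L}_{{\bf i}_{\ge s+1}, {\bf m}_{\ge s+1}}$ (using that only $b_1 = \cdots = b_s = e$ in the $B^{r-s+1}$-action stabilizes $(s_{i_s}, p_{s+1}, \ldots, p_r)$); and prove the restriction identity $(\sigma/\tau_{{\bf i}_{\ge s}, {\bf m}_{\ge s}}^\prime)|_{t_s^\prime = 0} = \iota_{s, s+1}^{\prime *}(\sigma)/\tau_{{\bf i}_{\ge s+1}, {\bf m}_{\ge s+1}}^\prime$ by the argument of Lemma \ref{restriction}. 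The resulting analogue of Proposition \ref{val,Chevalley} computes $-v_{\bf i}^\prime(\sigma/\tau_{{\bf i}, {\bf m}}^\prime)$ as successive maxima of exponents for $E_{i_k}$-action interlaced with the restrictions $\iota_{k-1, k}^{\prime *}$.

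Finally, I would mimic the proof of Theorem \ref{main result}, replacing $F_{i_k}$ by $E_{i_k}$ and $\iota_{k-1, k}^*$ by $\iota_{k-1, k}^{\prime *}$ throughout. Proposition \ref{property of upper global basis}(2), together with the identity $\varepsilon_i(b^*) = \varphi_i(b)$ on dual crystals, produces the first coordinate as $\varphi_{i_1}(b) = a_1^\prime$; iterating requires showing that $\iota_{1, 2}^{\prime *}(E_{i_1}^{(a_1^\prime)} G^{\rm up}(b^*))$ coincides with $G^{\rm up}((b(2))^*)$ up to a nonzero scalar, which is absorbed into the normalization of $\tau_{{\bf i}, {\bf m}}^\prime$. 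This latter identification, obtained by expanding $G^{\rm up}(b^*)$ in the tensor-product basis via Proposition \ref{global basis of tensor product}(2) and checking that only the leading contribution survives evaluation at the extremal vector $s_{i_1}v_{m_1 \varpi_{i_1}}$, is the main obstacle, since one must track how the Weyl-group translations $s_{i_s}$ interact with the tensor-product construction of Section 6. Once (1) is in hand, (2) follows by combining the first paragraph with Theorem \ref{main result} and (1), and (3) follows because $A$ is unimodular and $\Delta_{{\bf i}, {\bf m}}^\prime = A^{-1}(\Delta_{{\bf i}, {\bf m}} - B{\bf m})$, so it is a rational convex polytope by Corollary \ref{finitely generated semigroup}(3), as is $-\Delta(Z_{\bf i}, \mathcal{L}_{{\bf i}, {\bf m}}, v_{\bf i}^\prime, \tau_{{\bf i}, {\bf m}}^\prime) = \Delta_{{\bf i}, {\bf m}}^\prime$.
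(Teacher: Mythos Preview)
Your overall strategy matches the paper's: establish the affine relation $\Omega_{\bf i}(b) = A\,\Omega_{\bf i}^\prime(b) + B{\bf m}$ (the paper does this as Lemma \ref{another parameterization}, equivalent to your first paragraph), deduce part (2), and prove part (1) by an $E_{i_k}$-analogue of Proposition \ref{val,Chevalley} on the chart $s_{i_1}U_{i_1}^- \times \cdots \times s_{i_r}U_{i_r}^-$, then mimic the proof of Theorem \ref{main result}.

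The point where you diverge is precisely the step you flag as the main obstacle: computing $(\tilde\iota_{1,2}^\prime)^*\bigl(E_{i_1}^{(a_1^\prime)} G^{\rm up}(b^*)\bigr)$. Rather than expanding $G^{\rm up}(b^*)$ via Proposition \ref{global basis of tensor product} (2) and tracking which terms survive evaluation at $s_{i_1} v_{m_1\varpi_{i_1}}$, the paper factors $(\tilde\iota_{1,2}^\prime)^* = \tilde\iota_{1,2}^* \circ s_{i_1}$ and handles the $s_{i_1}$ step by elementary $\mathfrak{sl}_2(\c)$ representation theory. Since $a_1^\prime = \varepsilon_{i_1}(b^*)$, Proposition \ref{property of upper global basis} (2) gives $E_{i_1}^{(a_1^\prime)} G^{\rm up}(b^*) = G^{\rm up}(\tilde e_{i_1}^{a_1^\prime} b^*)$ exactly; this is an $i_1$-highest weight vector, so $s_{i_1}$ sends it to a nonzero scalar multiple of the lowest vector in its $i_1$-string, which by Proposition \ref{property of upper global basis} (2) again equals $c\cdot G^{\rm up}\bigl((\tilde e_{i_1}^{a_1} b)^*\bigr)$. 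Applying $\tilde\iota_{1,2}^*$ then reduces to equation (\ref{equations2}) already proved in Theorem \ref{main result}, yielding $c\cdot G^{\rm up}(b(2)^*)$. This factorization sidesteps the interaction of the Weyl-group elements with the tensor-product global basis that you are worried about, and makes the nonzero scalar manifest from the $\mathfrak{sl}_2$ computation rather than something to be recovered from the normalization of $\tau_{{\bf i},{\bf m}}^\prime$. Your direct approach could be made to work, but the paper's trick of reducing to the untwisted $\tilde\iota_{1,2}^*$ is cleaner and reuses more of the first main theorem.
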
\vspace{2mm}

The following is an immediate consequence of Corollary \ref{finitely generated semigroup}.

\vspace{2mm}\begin{cor}
The following hold.
\begin{enumerate}
\item[{\rm (1)}] The sets $\mathcal{S}_{{\bf i}, {\bf m}} ^\prime$ and $S(Z_{\bf i}, \mathcal{L}_{{\bf i}, {\bf m}}, v_{\bf i} ^\prime, \tau_{{\bf i}, {\bf m}} ^\prime)$ are both finitely generated semigroups.
\item[{\rm (2)}] The real closed cones $\mathcal{C}_{{\bf i}, {\bf m}} ^\prime$ and $C(Z_{\bf i}, \mathcal{L}_{{\bf i}, {\bf m}}, v_{\bf i} ^\prime, \tau_{{\bf i}, {\bf m}} ^\prime)$ are both rational convex polyhedral cones, and the equality $S(Z_{\bf i}, \mathcal{L}_{{\bf i}, {\bf m}}, v_{\bf i} ^\prime, \tau_{{\bf i}, {\bf m}} ^\prime) = C(Z_{\bf i}, \mathcal{L}_{{\bf i}, {\bf m}}, v_{\bf i} ^\prime, \tau_{{\bf i}, {\bf m}} ^\prime) \cap (\z_{>0} \times \z^r)$ holds.
\item[{\rm (3)}] The set $\Delta_{{\bf i}, {\bf m}} ^\prime$ and the Newton-Okounkov body $\Delta(Z_{\bf i}, \mathcal{L}_{{\bf i}, {\bf m}}, v_{\bf i} ^\prime, \tau_{{\bf i}, {\bf m}} ^\prime)$ are both rational convex polytopes, and the equality $\Omega_{\bf i} ^\prime(\mathcal{B}_{{\bf i}, {\bf m}}) = - \Delta(Z_{\bf i}, \mathcal{L}_{{\bf i}, {\bf m}}, v_{\bf i} ^\prime, \tau_{{\bf i}, {\bf m}} ^\prime) \cap \z^r$ holds.
\end{enumerate}
\end{cor}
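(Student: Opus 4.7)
The plan is to deduce each claim by transferring Corollary \ref{finitely generated semigroup} through the unimodular identifications provided by Theorem \ref{main result 2}; this is a purely bookkeeping exercise given what has already been established. Since $A$ and $B$ are unimodular integer matrices, the affine automorphism $\Xi_{A, B}: \r \times \r^r \xrightarrow{\sim} \r \times \r^r$, $(k, {\bf a}) \mapsto (k, A{\bf a} + kB{\bf m})$, preserves the lattice $\z \times \z^r$ and the half-space $\r_{\ge 0} \times \r^r$; likewise, $\omega(k, {\bf a}) = (k, -{\bf a})$ is a lattice-preserving linear automorphism.

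First I would unwind Theorem \ref{main result 2}(2) to write $\mathcal{S}_{{\bf i}, {\bf m}}^\prime = \Xi_{A, B}^{-1}(\mathcal{S}_{{\bf i}, {\bf m}})$, $\mathcal{C}_{{\bf i}, {\bf m}}^\prime = \Xi_{A, B}^{-1}(\mathcal{C}_{{\bf i}, {\bf m}})$, and $\Delta_{{\bf i}, {\bf m}}^\prime = A^{-1}(\Delta_{{\bf i}, {\bf m}} - B{\bf m})$. Applying $\Xi_{A, B}^{-1}$ to the three conclusions of Corollary \ref{finitely generated semigroup} then immediately yields the corresponding statements for the primed objects: $\mathcal{S}_{{\bf i}, {\bf m}}^\prime$ is a finitely generated subsemigroup of $\z_{>0} \times \z^r$; $\mathcal{C}_{{\bf i}, {\bf m}}^\prime$ is a rational convex polyhedral cone, with $\mathcal{S}_{{\bf i}, {\bf m}}^\prime = \mathcal{C}_{{\bf i}, {\bf m}}^\prime \cap (\z_{>0} \times \z^r)$; and $\Delta_{{\bf i}, {\bf m}}^\prime$ is a rational convex polytope.

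Next I would invoke Theorem \ref{main result 2}(1), which asserts $S(Z_{\bf i}, \mathcal{L}_{{\bf i}, {\bf m}}, v_{\bf i}^\prime, \tau_{{\bf i}, {\bf m}}^\prime) = \omega^{-1}(\mathcal{S}_{{\bf i}, {\bf m}}^\prime)$ and the analogous identities for the cone and Newton-Okounkov body. Since $\omega$ preserves $\z \times \z^r$, finite generation, rational polyhedrality, rational polytope-ness, and the identity $S = C \cap (\z_{>0} \times \z^r)$ all transfer to the Newton-Okounkov objects without modification. For the last equality, I would restrict $\mathcal{S}_{{\bf i}, {\bf m}}^\prime = \mathcal{C}_{{\bf i}, {\bf m}}^\prime \cap (\z_{>0} \times \z^r)$ to level $k = 1$ to obtain $\Omega_{\bf i}^\prime(\mathcal{B}_{{\bf i}, {\bf m}}) = \Delta_{{\bf i}, {\bf m}}^\prime \cap \z^r$; here the injectivity of $\Omega_{\bf i}^\prime$ on $\mathcal{B}_{{\bf i}, {\bf m}}$, needed to identify the $k=1$ slice of $\mathcal{S}_{{\bf i}, {\bf m}}^\prime$ with $\{1\} \times \Omega_{\bf i}^\prime(\mathcal{B}_{{\bf i}, {\bf m}})$, follows by combining Theorem \ref{main result 2}(1) with Theorem \ref{main result}(1) and Proposition \ref{prop2,val} (distinct basis elements must have distinct valuations). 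Translating by $\omega$ then yields $\Omega_{\bf i}^\prime(\mathcal{B}_{{\bf i}, {\bf m}}) = -\Delta(Z_{\bf i}, \mathcal{L}_{{\bf i}, {\bf m}}, v_{\bf i}^\prime, \tau_{{\bf i}, {\bf m}}^\prime) \cap \z^r$.

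There is essentially no obstacle: the entire argument rests on the observation that a unimodular affine transformation preserves every structural feature at issue (semigroup generation, rational polyhedrality, and lattice intersections). The only substantive input is Corollary \ref{finitely generated semigroup} itself, together with Theorem \ref{main result 2}; once these are in hand, the present corollary is just a translation between coordinate systems.
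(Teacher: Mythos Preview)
Your approach is correct and is essentially what the paper intends: it states only that the corollary is ``an immediate consequence of Corollary \ref{finitely generated semigroup},'' leaving implicit that one transports all conclusions through the unimodular automorphisms $\Xi_{A,B}$ and $\omega$ supplied by Theorem \ref{main result 2}. Your write-up simply makes this transport explicit.

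One small remark: the digression about injectivity of $\Omega_{\bf i}^\prime$ is unnecessary. By definition the level-$1$ slice of $\mathcal{S}_{{\bf i}, {\bf m}}^\prime$ is exactly $\{1\} \times \Omega_{\bf i}^\prime(\mathcal{B}_{{\bf i}, {\bf m}})$, so no injectivity is needed to make that identification. (If you do want injectivity elsewhere, the paper records it as a separate lemma just before the proof of Theorem \ref{main result 2}(1), deduced from Lemma \ref{another parameterization} and Proposition \ref{generalized parameterization}(2); the route you sketch via Proposition \ref{prop2,val} does not quite work, since that proposition only guarantees the \emph{existence} of a basis with distinct valuations, not that every basis has this property.)
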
\vspace{2mm}

In order to prove Theorem \ref{main result 2}, we need some lemmas.

\vspace{2mm}\begin{lem}\label{another parameterization}
If $\Omega_{\bf i}(b) = (a_1, \ldots, a_r)$ and $\Omega_{\bf i} ^\prime(b) = (a_1 ^\prime, \ldots, a_r ^\prime)$ for $b \in \mathcal{B}_{{\bf i}, {\bf m}}$, then $a_k = m_k - a_k ^\prime + \sum_{k < j \le r} (\delta_{i_k, i_j} m_j - c_{i_k, i_j} a_j)$ for all $1 \le k \le r;$ here, $(c_{i, j})_{i, j \in I}$ denotes the Cartan matrix of $\mathfrak{g}$.
\end{lem}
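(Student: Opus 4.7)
The plan is to observe that both parameterizations $\Omega_{\bf i}$ and $\Omega_{\bf i}^\prime$ can be read off crystal-theoretically from the intermediate element $b(k) \in \mathcal{B}_{{\bf i}_{\ge k}, {\bf m}_{\ge k}}$ defined in Definition \ref{generalized string parameterization}. By the very definitions, $a_k = \max\{a \ge 0 \mid \tilde{e}_{i_k}^a b(k) \neq 0\} = \varepsilon_{i_k}(b(k))$, while $a_k^\prime = \max\{a \ge 0 \mid \tilde{f}_{i_k}^a b(k) \neq 0\} = \varphi_{i_k}(b(k))$. Hence by Lemma \ref{length of string} we obtain the key identity
\[a_k^\prime - a_k = \varphi_{i_k}(b(k)) - \varepsilon_{i_k}(b(k)) = \langle {\rm wt}(b(k)), h_{i_k}\rangle.\]

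The remaining task is to compute the weight ${\rm wt}(b(k))$ explicitly. Applying Proposition \ref{generalized parameterization} (1) to the word ${\bf i}_{\ge k}$ and the parameter ${\bf m}_{\ge k}$, together with the fact that $\Omega_{{\bf i}_{\ge k}}(b(k)) = (a_k, a_{k+1}, \ldots, a_r)$ (which follows directly from the inductive construction in Definition \ref{generalized string parameterization}), we get
\[b(k) = \tilde{f}_{i_k}^{a_k}\bigl(b_{m_k \varpi_{i_k}} \otimes \tilde{f}_{i_{k+1}}^{a_{k+1}}\bigl(b_{m_{k+1}\varpi_{i_{k+1}}} \otimes \cdots \otimes \tilde{f}_{i_r}^{a_r} b_{m_r \varpi_{i_r}}\bigr)\cdots\bigr),\]
and therefore
\[{\rm wt}(b(k)) = \sum_{k \le j \le r} m_j \varpi_{i_j} - \sum_{k \le j \le r} a_j \alpha_{i_j}.\]

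Step three is a direct computation: pairing with $h_{i_k}$ using $\langle \varpi_{i_j}, h_{i_k}\rangle = \delta_{i_k, i_j}$ and $\langle \alpha_{i_j}, h_{i_k}\rangle = c_{i_k, i_j}$ (in particular $c_{i_k, i_k} = 2$), the identity above becomes
\[a_k^\prime - a_k = m_k + \sum_{k < j \le r}\delta_{i_k, i_j} m_j - 2 a_k - \sum_{k < j \le r} c_{i_k, i_j} a_j.\]
Solving for $a_k$ yields exactly the claimed formula $a_k = m_k - a_k^\prime + \sum_{k < j \le r}(\delta_{i_k, i_j} m_j - c_{i_k, i_j} a_j)$.

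There is no real obstacle here; the only point requiring care is verifying that the intermediate element $b(k)$ appearing in the definition of $\Omega_{\bf i}$ is the same one used in $\Omega_{\bf i}^\prime$ (it is, by construction), and that Proposition \ref{generalized parameterization} (1) applies to $b(k)$ with the generalized string parameterization $(a_k, \ldots, a_r)$ relative to the truncated word ${\bf i}_{\ge k}$ — this is immediate from the recursive nature of Definition \ref{generalized string parameterization}.
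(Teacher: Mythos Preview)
Your proof is correct and follows essentially the same approach as the paper's. The only cosmetic difference is that the paper applies Lemma~\ref{length of string} to the element $\tilde{e}_{i_k}^{a_k}b(k) = b_{m_k\varpi_{i_k}}\otimes b(k+1)$ (where $\varepsilon_{i_k}=0$, so $a_k+a_k'=\langle\mathrm{wt},h_{i_k}\rangle$) and then computes $\mathrm{wt}(b(k+1))$, whereas you apply it directly to $b(k)$ and compute $\mathrm{wt}(b(k))$; the two computations differ by the term $a_k\alpha_{i_k}$ and yield the same identity after trivial algebra.
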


\begin{proof}
Since $b_{m_k \varpi_{i_k}} \otimes b(k+1) = \tilde{e}_{i_k} ^{a_k}b(k)$, the number $a_k + a_k ^\prime$ $(= \varepsilon_{i_k} (b(k)) + \varphi_{i_k}(b(k)))$ is equal to 
\begin{align*}
\varepsilon_{i_k} (b_{m_k \varpi_{i_k}} \otimes b(k+1)) + \varphi_{i_k}(b_{m_k \varpi_{i_k}} \otimes b(k+1)) &= \langle {\rm wt}(b_{m_k \varpi_{i_k}} \otimes b(k+1)), h_{i_k}\rangle\\ 
&({\rm by\ Lemma}\ \ref{length of string}\ {\rm since}\ \tilde{e}_{i_k}(b_{m_k \varpi_{i_k}} \otimes b(k+1)) = 0).
\end{align*}
Now the proof of Proposition \ref{generalized parameterization} (1) implies that \[b(k+1) = \tilde{f}_{i_{k+1}} ^{a_{k+1}} (b_{m_{k+1} \varpi_{i_{k+1}}} \otimes \tilde{f}_{i_{k+2}} ^{a_{k+2}} (b_{m_{k+2} \varpi_{i_{k+2}}} \otimes \cdots \otimes \tilde{f}_{i_{r-1}} ^{a_{r-1}} (b_{m_{r-1} \varpi_{i_{r-1}}} \otimes \tilde{f}_{i_r} ^{a_r} (b_{m_r \varpi_{i_r}}))\cdots));\] hence it follows that \[\langle {\rm wt}(b_{m_k \varpi_{i_k}} \otimes b(k+1)), h_{i_k}\rangle = m_k + \sum_{k < j \le r} (\delta_{i_k, i_j} m_j - c_{i_k, i_j} a_j).\] From these, the assertion of the lemma follows immediately. 
\end{proof}

Let $\r^{2r} \rightarrow \r^r$, $(a_1 ^\prime, \ldots, a_r ^\prime, m_1, \ldots, m_r) \mapsto (a_1, \ldots, a_r)$, be the linear map given by 
\begin{align*}
&a_r := m_r - a_r ^\prime,\\ 
&a_{r-1} := m_{r-1} - a_{r-1} ^\prime + \delta_{i_{r-1}, i_r} m_r - c_{i_{r-1}, i_r} a_r,\\
&\ \vdots\\
&a_1 := m_1 - a_1 ^\prime + \sum_{1 < j \le r} \delta_{i_1, i_j} m_j - \sum_{1 < j \le r} c_{i_1, i_j} a_j,
\end{align*}
and let $A$ and $B$ be $r \times r$-matrices given by \[\begin{pmatrix}
a_1\\
\vdots\\
a_r
\end{pmatrix}
= A
\begin{pmatrix}
a_1 ^\prime\\
\vdots\\
a_r ^\prime
\end{pmatrix}
+ B
\begin{pmatrix}
m_1\\
\vdots\\
m_r
\end{pmatrix}.\] Then, $A$ (resp., $B$) is an upper triangular matrix with diagonal entries $-1$ (resp., $1$), and all the entries of $A$ and $B$ are polynomials in $c_{i, j}$, $i, j \in I$, with coefficients in $\z$. In particular, these matrices are unimodular. We see from Lemma \ref{another parameterization} that $\Omega_{\bf i} (b) = A \Omega_{\bf i} ^\prime (b) + B {\bf m}$ for all ${\bf m} \in \z_{\ge 0} ^r$ and $b \in \mathcal{B}_{{\bf i}, {\bf m}}$. This proves part (2) of Theorem \ref{main result 2}. Moreover, the generalized string parameterization $\Omega_{\bf i}(b)$ can be reconstructed from $\Omega_{\bf i} ^\prime(b)$. Hence we conclude the following from Proposition \ref{generalized parameterization} (2).

\vspace{2mm}\begin{lem}
If $b, b^\prime \in \mathcal{B}_{{\bf i}, {\bf m}}$ are such that $b \neq b^\prime$, then $\Omega_{\bf i} ^\prime(b) \neq \Omega_{\bf i} ^\prime(b^\prime)$.
\end{lem}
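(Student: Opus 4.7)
The plan is to deduce this injectivity directly from the preceding Lemma 8.5 (the identity $\Omega_{\bf i}(b) = A\,\Omega_{\bf i}'(b) + B{\bf m}$) together with the already-established injectivity of $\Omega_{\bf i}$ on $\mathcal{B}_{{\bf i},{\bf m}}$ from Proposition 5.10 (2). The key observation is that the matrix $A$ constructed just before the statement is upper triangular with diagonal entries $-1$, hence invertible over $\z$; so $\Omega_{\bf i}(b)$ is determined by $\Omega_{\bf i}'(b)$ once ${\bf m}$ is fixed.

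Concretely, I would argue as follows. Suppose $b,b' \in \mathcal{B}_{{\bf i},{\bf m}}$ satisfy $\Omega_{\bf i}'(b) = \Omega_{\bf i}'(b')$. By Lemma 8.5,
\[
\Omega_{\bf i}(b) = A\,\Omega_{\bf i}'(b) + B{\bf m} = A\,\Omega_{\bf i}'(b') + B{\bf m} = \Omega_{\bf i}(b').
\]
Since Proposition 5.10 (2) asserts that the map $\Omega_{\bf i}\colon \mathcal{B}_{{\bf i},{\bf m}} \to \z_{\ge 0}^r$ is injective, it follows that $b = b'$, which is the contrapositive of the claim.

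Equivalently, one can read off the formula in Lemma 8.5 componentwise (starting from $a_r = m_r - a_r'$ and proceeding backwards) to make explicit that each coordinate $a_k$ of $\Omega_{\bf i}(b)$ is recovered from $\Omega_{\bf i}'(b)$ and ${\bf m}$ by an $\z$-affine expression; this is just the statement that $A$ is unimodular. No genuine obstacle is expected: the content is packaged in Lemma 8.5 and Proposition 5.10 (2), and the present lemma is a formal consequence. The only thing to verify carefully is that the same vector ${\bf m}$ appears on both sides of the implication, which is built into the hypothesis $b,b' \in \mathcal{B}_{{\bf i},{\bf m}}$.
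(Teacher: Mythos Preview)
Your proof is correct and follows exactly the paper's approach: the paper states just before the lemma that $\Omega_{\bf i}(b)$ can be reconstructed from $\Omega_{\bf i}'(b)$ via the affine relation $\Omega_{\bf i}(b) = A\,\Omega_{\bf i}'(b) + B{\bf m}$, and then invokes the injectivity of $\Omega_{\bf i}$ (Proposition~\ref{generalized parameterization}~(2)). The only quibble is numbering: the relation you cite as ``Lemma~8.5'' is Lemma~\ref{another parameterization}, and the injectivity of $\Omega_{\bf i}$ is Proposition~\ref{generalized parameterization}~(2), not~5.10~(2).
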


\begin{proof}[Proof of Theorem \ref{main result 2} (1).]
For ${\bf m} \in \z_{\ge 0} ^r$ and $b \in \mathcal{B}_{{\bf i}, {\bf m}}$, we write $\Omega_{\bf i}(b) = (a_1, \ldots, a_r)$, $\Omega_{\bf i} ^\prime(b) = (a_1 ^\prime, \ldots, a_r ^\prime)$ and $-v_{\bf i} ^\prime (G^{\rm up} _{{\bf i}, {\bf m}} (b)/\tau_{{\bf i}, {\bf m}} ^\prime) = (a'' _1, \ldots, a'' _r)$. Define an injection $\iota_{k, k+1} ^\prime: V_{{\bf i}_{\ge k+1}, {\bf m}_{\ge k+1}} \hookrightarrow V_{{\bf i}_{\ge k}, {\bf m}_{\ge k}}$ by $v \mapsto s_{i_k} (v_{m_k \varpi_{i_k}} \otimes v)$, and let $(\iota_{k, k+1} ^\prime)^\ast: H^0(Z_{{\bf i}_{\ge k}}, \mathcal{L}_{{\bf i}_{\ge k}, {\bf m}_{\ge k}}) \twoheadrightarrow H^0(Z_{{\bf i}_{\ge k+1}}, \mathcal{L}_{{\bf i}_{\ge k+1}, {\bf m}_{\ge k+1}})$ denote the dual map; note that $\iota_{k, k+1} ^\prime = s_{i_k} \circ \iota_{k, k+1}$, and hence that $(\iota_{k, k+1} ^\prime)^\ast = (s_{i_k} \circ \iota_{k, k+1})^\ast = \iota_{k, k+1} ^\ast \circ s_{i_k}$. If we set $U_i := \exp(\c E_i)$ for $i \in I$, then it follows from the equality $s_{i_1} U_{i_1} ^- = U_{i_1} s_{i_1}$ that the root subgroup $U_{i_1}$ acts on $s_{i_1}U_{i_1}^- \times \cdots \times s_{i_r}U_{i_r}^-$ on the left by \[u \cdot (s_{i_1} u_1, \ldots, s_{i_r} u_r) := (u s_{i_1} u_1, s_{i_2} u_2, \ldots, s_{i_r} u_r)\] for $u \in U_{i_1}$, $u_1 \in U_{i_1} ^-, \ldots, u_r \in U_{i_r} ^-$. This induces left actions of $U_{i_1}$ and ${\rm Lie}(U_{i_1}) = \c E_{i_1}$ on $\c[t_1 ^\prime, \ldots, t_r ^\prime]$ ($= \c[s_{i_1}U_{i_1}^- \times \cdots \times s_{i_r}U_{i_r}^-]$); since we have $\exp(t E_{i_1}) s_{i_1} \exp(t_1 ^\prime F_{i_1}) = s_{i_1} \exp((t + t_1 ^\prime) F_{i_1})$ for $t, t_1 ^\prime \in \c$, these actions are given by \begin{align*}
&\exp(t E_{i_1}) \cdot f(t_1 ^\prime, \ldots, t_r ^\prime) = f(t_1 ^\prime - t, \ldots, t_r ^\prime),\ {\rm and\ hence}\\
&E_{i_1} \cdot f(t_1 ^\prime, \ldots, t_r ^\prime) = - \frac{\partial}{\partial t_1 ^\prime} f(t_1 ^\prime, \ldots, t_r ^\prime)
\end{align*}  
for $t \in \c$ and $f(t_1 ^\prime, \ldots, t_r ^\prime) \in \c[t_1 ^\prime, \ldots, t_r ^\prime]$. Hence, by the same arguments as in the proof of Proposition \ref{val,Chevalley} and in the remark following it, we obtain that
\begin{align*}
&a'' _1 = \max\{a \in \z_{\ge 0} \mid E_{i_1} ^{(a)} G^{\rm up} _{{\bf i}, {\bf m}} (b) \neq 0\},\\
&a'' _2 = \max\{a \in \z_{\ge 0} \mid E_{i_2} ^{(a)} ((\iota_{1, 2} ^\prime)^\ast(E_{i_1} ^{(a'' _1)} G^{\rm up} _{{\bf i}, {\bf m}} (b))) \neq 0\},\\
&\ \vdots\\
&a'' _r = \max\{a \in \z_{\ge 0} \mid E_{i_r} ^{(a)} ((\iota_{r-1, r} ^\prime)^\ast(E_{i_{r-1}} ^{(a'' _{r-1})}(\cdots((\iota_{2, 3} ^\prime)^\ast(E_{i_2} ^{(a'' _2)}((\iota_{1, 2} ^\prime)^\ast(E_{i_1} ^{(a'' _1)} G^{\rm up} _{{\bf i}, {\bf m}} (b)))))\cdots))) \neq 0\}.
\end{align*}

Now let us define an injection \[\tilde{\iota}'_{k, k+1}: V(m_{k+1} \varpi_{i_{k+1}}) \otimes \cdots \otimes V(m_r \varpi_{i_r}) \hookrightarrow V(m_k \varpi_{i_k}) \otimes (V(m_{k+1} \varpi_{i_{k+1}}) \otimes \cdots \otimes V(m_r \varpi_{i_r}))\] by $\tilde{\iota}'_{k, k+1}(v) := s_{i_k}(v_{m_k \varpi_{i_k}} \otimes v)$ for $1 \le k \le r-1$, and denote by $(\tilde{\iota}'_{k, k+1})^\ast: (V(m_k \varpi_{i_k}) \otimes \cdots \otimes V(m_r \varpi_{i_r}))^\ast \twoheadrightarrow (V(m_{k+1} \varpi_{i_{k+1}}) \otimes \cdots \otimes V(m_r \varpi_{i_r}))^\ast$ the dual map. Note that $\iota'_{k, k+1}: V_{{\bf i}_{\ge k+1}, {\bf m}_{\ge k+1}} \hookrightarrow V_{{\bf i}_{\ge k}, {\bf m}_{\ge k}}$ is the restriction of $\tilde{\iota}'_{k, k+1}$. Here, we deduce that 
\begin{align*}
a' _1 &= \varphi_{i_1}(b)\quad({\rm by\ the\ definition\ of}\ \Omega_{\bf i} ^\prime)\\ 
&= \varepsilon_{i_1}(b^\ast)\quad({\rm by\ the\ definition\ of\ dual\ crystals});
\end{align*}
hence the assertion of Proposition \ref{property of upper global basis} (2) for $E_i ^{(\varepsilon_i(b))}$ implies that $E_{i_1} ^{(a' _1)} G^{\rm up} (b^\ast) = G^{\rm up} (\tilde{e}_{i_1} ^{a' _1}b^\ast)$. In addition, the assertion of Proposition \ref{property of upper global basis} (2) for $\varepsilon_i(b)$ implies that $E_{i_1} G^{\rm up} (\tilde{e}_{i_1} ^{a' _1}b^\ast) = 0$. Therefore, by the standard representation theory of $\mathfrak{sl}_2(\c)$ (see \cite[Section 7 and \S\S 21.2 (6)]{Hum}), we see that \[s_{i_1} G^{\rm up} (\tilde{e}_{i_1} ^{a' _1}b^\ast) = c F_{i_1} ^{(\varphi_{i_1}(\tilde{e}_{i_1} ^{a' _1}b^\ast))} G^{\rm up} (\tilde{e}_{i_1} ^{a' _1}b^\ast)\] for some $c \in \c \setminus \{0\}$; here, note that $\varphi_{i_1}(\tilde{e}_{i_1} ^{a' _1}b^\ast) = \max\{a \in \z_{\ge 0} \mid F_{i_1} ^{(a)} G^{\rm up} (\tilde{e}_{i_1} ^{a' _1}b^\ast) \neq 0\}$ (see the assertion in Proposition \ref{property of upper global basis} (2) for $\varphi_i(b)$). Also, it holds that
\begin{align*}
F_{i_1} ^{(\varphi_{i_1}(\tilde{e}_{i_1} ^{a' _1}b^\ast))} G^{\rm up} (\tilde{e}_{i_1} ^{a' _1}b^\ast) &= G^{\rm up} (\tilde{f}_{i_1} ^{\varphi_{i_1}(b^\ast)} b^\ast)\quad({\rm by\ the\ assertion\ in\ Proposition}\ \ref{property of upper global basis} (2)\ {\rm for}\ F_i ^{(\varphi_i(b))})\\
&= G^{\rm up} ((\tilde{e}_{i_1} ^{\varepsilon_{i_1}(b)} b)^\ast)\quad({\rm by\ the\ definition\ of\ dual\ crystals})\\
&= G^{\rm up} ((\tilde{e}_{i_1} ^{a_1} b)^\ast)\quad({\rm by\ the\ definition\ of}\ \Omega_{\bf i}).
\end{align*}
From these, it follows that 
\begin{align*}
(\tilde{\iota}_{1, 2} ^\prime)^\ast(E_{i_1} ^{(a_1 ^\prime)} G^{\rm up} (b^\ast)) &= \tilde{\iota}_{1, 2} ^\ast (c G^{\rm up} ((\tilde{e}_{i_1} ^{a_1} b)^\ast))\\ 
&= c G^{\rm up} (b(2)^\ast)\quad({\rm by\ equation}\ (\ref{equations2})\ {\rm in\ the\ proof\ of\ Theorem}\ \ref{main result}).
\end{align*} 
Therefore, by the same argument as in the proof of Theorem \ref{main result}, we conclude part (1) of Theorem \ref{main result 2}.
\end{proof}

\appendix
\section{Proof of Proposition \ref{inequality}}

In this appendix, we give the (postponed) proof of Proposition \ref{inequality}. We proceed by induction on $r$. If $r = 1$, then it is obvious from the definition that $\mathcal{S}_{\bf i} = \z_{\ge 0}$; hence the assertion is obvious. Let $r \ge 2$, and take $(\widetilde{\bf m}, {\bf a}) \in \z_{\ge 0} ^{r-1} \times \z_{\ge 0} ^r$. For the induction step, it suffices to prove that $(\widetilde{\bf m}, {\bf a}) \in \mathcal{S}_{\bf i}$ if and only if 
\begin{equation}\label{RHS}
(\widetilde{\bf m}_{\le r-2}, {\bf a}^{(r-1)}) \in \mathcal{S} _{{\bf i}_{\le r-1}} \ {\rm and}\ \Psi ^{r, k} _{\bf i}(\widetilde{\bf m}, {\bf a}) \ge 0 \ {\rm for\ all}\ 1 \le k \le r-1,
\end{equation}
where we set $\widetilde{\bf m}_{\le r-2} := (m_1, \ldots, m_{r-2})$ and ${\bf i}_{\le r-1} := (i_1, \ldots, i_{r-1})$. We prove the ``if'' part. Let us fix $(\widetilde{\bf m}, {\bf a}) \in \z_{\ge 0} ^{r-1} \times \z_{\ge 0} ^r$ satisfying (\ref{RHS}), and take a dominant weight $\lambda$ such that $\langle \lambda, h_i \rangle \gg 0$ for all $i \in I$. Then, by the tensor product rule for crystals (the assertion in Proposition \ref{tensor product of crystals} (2) for $\tilde{f}_i$), we see that $T_\lambda(\widetilde{\bf m}, {\bf a}) \neq 0$. Set $m := \langle \lambda, h_{i_r} \rangle$ and $\lambda^\prime := m_{r-1} \varpi_{i_{r-1}} + (\lambda - m \varpi_{i_r})$. Since $\langle \lambda - m \varpi_{i_r}, h_{i_r} \rangle = 0$, Lemma \ref{length of string} implies that $\varphi_{i_r}(b_{\lambda - m \varpi_{i_r}}) = 0$. Therefore, if we identify $b_\lambda$ with $b_{\lambda - m \varpi_{i_r}} \otimes b_{m \varpi_{i_r}}$, then it follows from Corollary \ref{tensor product corollary} (3) that $\tilde{f}_{i_r} ^{a_r} b_\lambda = b_{\lambda - m \varpi_{i_r}} \otimes \tilde{f}_{i_r} ^{a_r} b_{m \varpi_{i_r}}$; below, we will identify $b_{m_{r-1} \varpi_{i_{r-1}}} \otimes \tilde{f}_{i_r} ^{a_r} b_\lambda$ with $b_{\lambda^\prime} \otimes \tilde{f}_{i_r} ^{a_r} b_{m \varpi_{i_r}}$. Now we define ${\bf a}^\prime = (a_1 ^\prime, \ldots, a_{r-1} ^\prime) \in \z_{\ge 0} ^{r-1}$ inductively by 
\begin{align*}
&\tilde{f}_{i_{r-1}} ^{a_{r-1}}(b_{\lambda^\prime} \otimes \tilde{f}_{i_r} ^{a_r} b_{m \varpi_{i_r}})\\ 
=\ &\tilde{f}_{i_{r-1}} ^{a_{r-1} ^\prime} b_{\lambda^\prime} \otimes \tilde{f}_{i_{r-1}} ^{a_{r-1} - a_{r-1} ^\prime} \tilde{f}_{i_r} ^{a_r} b_{m \varpi_{i_r}},\\
&\tilde{f}_{i_{r-2}} ^{a_{r-2}}(b_{m_{r-2} \varpi_{i_{r-2}}} \otimes \tilde{f}_{i_{r-1}} ^{a_{r-1} ^\prime} b_{\lambda^\prime} \otimes \tilde{f}_{i_{r-1}} ^{a_{r-1} - a_{r-1} ^\prime} \tilde{f}_{i_r} ^{a_r} b_{m \varpi_{i_r}})\\ 
=\ &\tilde{f}_{i_{r-2}} ^{a'_{r-2}}(b_{m_{r-2} \varpi_{i_{r-2}}} \otimes \tilde{f}_{i_{r-1}} ^{a_{r-1} ^\prime} b_{\lambda^\prime}) \otimes \tilde{f}_{i_{r-2}} ^{a_{r-2} - a_{r-2} ^\prime} \tilde{f}_{i_{r-1}} ^{a_{r-1} - a_{r-1} ^\prime} \tilde{f}_{i_r} ^{a_r} b_{m \varpi_{i_r}},\\
&\ \vdots
\end{align*}
namely, 
\begin{align*}
&\tilde{f}_{i_k} ^{a_k} \left(b_{m_k \varpi_{i_k}} \otimes T_{\lambda^\prime}(\widetilde{\bf m}_{[k+1, r-2]}, {\bf a}^\prime _{\ge k+1}) \otimes \tilde{f}_{i_{k+1}} ^{a_{k+1} - a_{k+1} ^\prime} \cdots \tilde{f}_{i_{r-1}} ^{a_{r-1} - a_{r-1} ^\prime} \tilde{f}_{i_r} ^{a_r} b_{m \varpi_{i_r}}\right)\\ 
=\ &\tilde{f}_{i_k} ^{a' _k} (b_{m_k \varpi_{i_k}} \otimes T_{\lambda^\prime}(\widetilde{\bf m}_{[k+1, r-2]}, {\bf a}^\prime _{\ge k+1})) \otimes \tilde{f}_{i_k} ^{a_k - a_k ^\prime} \tilde{f}_{i_{k+1}} ^{a_{k+1} - a_{k+1} ^\prime} \cdots \tilde{f}_{i_{r-1}} ^{a_{r-1} - a_{r-1} ^\prime} \tilde{f}_{i_r} ^{a_r} b_{m \varpi_{i_r}};
\end{align*}
here we set $\widetilde{\bf m}_{[k+1, r-2]} := (m_{k+1}, \ldots, m_{r-2})$ and ${\bf a}^\prime _{\ge k+1} := (a' _{k+1}, \ldots, a' _{r-1})$. 

\vspace{2mm}\begin{lem}\label{different case}
For $1 \le k \le r-1$ with $i_k \neq i_r$, the following hold. 
\begin{enumerate}
\item[{\rm (1)}] $\tilde{e}_{i_k}\tilde{f}_{i_r} ^{a} b_{m \varpi_{i_r}} = 0$ for all $a \in \z_{\ge 0}$.
\item[{\rm (2)}] $a_k ^\prime = a_k = a_k ^{(r-1)}$.
\end{enumerate}
\end{lem}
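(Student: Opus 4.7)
The plan is to prove (1) by embedding $\mathcal{B}(m \varpi_{i_r})$ into $\mathcal{B}(\varpi_{i_r})^{\otimes m}$, and then derive (2) from (1) by a downward induction on $l$ that controls the ``tail'' factor appearing in the iterative definition of ${\bf a}'$.

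For (1), I would identify $\mathcal{B}(m \varpi_{i_r})$ with the connected component of $b_{\varpi_{i_r}}^{\otimes m}$ in $\mathcal{B}(\varpi_{i_r})^{\otimes m}$. Since $\varphi_{i_r}(b_{\varpi_{i_r}}) = 1$ and $\varepsilon_{i_r}(b_{\varpi_{i_r}}) = 0$, iterating the tensor product rule in Proposition~\ref{tensor product of crystals}~(2) shows that $\tilde{f}_{i_r}^a b_{m \varpi_{i_r}}$ corresponds to $(\tilde{f}_{i_r} b_{\varpi_{i_r}})^{\otimes a} \otimes b_{\varpi_{i_r}}^{\otimes (m-a)}$. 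The highest-weight property gives $\varepsilon_{i_k}(b_{\varpi_{i_r}}) = 0$, and $\varepsilon_{i_k}(\tilde{f}_{i_r} b_{\varpi_{i_r}}) = 0$ as well: a nonzero $\tilde{e}_{i_k}$ would produce a vector of weight $\varpi_{i_r} + \alpha_{i_k} - \alpha_{i_r}$ in $\mathcal{B}(\varpi_{i_r})$, impossible because $\alpha_{i_r} - \alpha_{i_k}$ is not a non-negative integral combination of simple roots when $i_k \neq i_r$. Combining these through the tensor formula for $\varepsilon_{i_k}$, together with the fact that the weight pairings with $h_{i_k}$ encountered along the way are $0$ or $-c_{i_k, i_r} \ge 0$, yields $\varepsilon_{i_k}(\tilde{f}_{i_r}^a b_{m \varpi_{i_r}}) = 0$.

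For (2), I would run a downward induction on $l \in \{r-1, r-2, \ldots, 1\}$ with the inductive claim that the tail
\[
T_l := \tilde{f}_{i_{l+1}}^{a_{l+1} - a_{l+1}'} \cdots \tilde{f}_{i_{r-1}}^{a_{r-1} - a_{r-1}'} \tilde{f}_{i_r}^{a_r} b_{m \varpi_{i_r}}
\]
has the form $\tilde{f}_{i_r}^{c_l} b_{m \varpi_{i_r}}$ for some $c_l \ge 0$. The base case $l = r-1$ is immediate. For the inductive step (step $l$ applies $\tilde{f}_{i_l}^{a_l}$, producing $T_{l-1} = \tilde{f}_{i_l}^{a_l - a_l'} T_l$): if $i_l = i_r$, the tail simply absorbs more $\tilde{f}_{i_r}$ actions and retains the required form. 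If $i_l \neq i_r$, part (1) gives $\varepsilon_{i_l}(T_l) = 0$, so Corollary~\ref{tensor product corollary}~(1) yields $a_l' = \min\{a_l, \varphi_{i_l}(b_1)\}$, where $b_1 = b_{m_l \varpi_{i_l}} \otimes T_{\lambda'}(\widetilde{\bf m}_{[l+1, r-2]}, {\bf a}'_{\ge l+1})$; since $\lambda$ is chosen with $\langle \lambda, h_i \rangle \gg 0$, the pairing $\langle {\rm wt}(b_1), h_{i_l} \rangle$ is large, and Lemma~\ref{length of string} then forces $\varphi_{i_l}(b_1) \ge a_l$, so $a_l' = a_l$ and $T_{l-1} = T_l$. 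The remaining equality $a_k = a_k^{(r-1)}$ when $i_k \neq i_r$ is immediate from the definitions: $\Psi_{\bf i}^{r, k}(\widetilde{\bf m}, {\bf a}) = a_k^{(r)} = a_k$, so $a_k^{(r-1)} = \min\{a_k, a_k\} = a_k$.

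The main obstacle will be establishing the inequality $\varphi_{i_l}(b_1) \ge a_l$ needed to force $a_l' = a_l$ in the inductive step; this requires exploiting the freedom to enlarge $\lambda$ so that the pairing $\langle {\rm wt}(b_1), h_{i_l} \rangle$ exceeds $a_l$ uniformly across all iterations. Once this is settled, part (1) plugs into the induction cleanly and both assertions in (2) follow simultaneously.
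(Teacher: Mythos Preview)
Your proposal is correct and, for part (2), follows essentially the same descending induction as the paper (the paper phrases the inductive hypothesis as ``$a_j' = a_j$ for $k < j \le r-1$ with $i_j \neq i_r$,'' which is equivalent to your tail invariant $T_l = \tilde{f}_{i_r}^{c_l} b_{m\varpi_{i_r}}$, and invokes Corollary~\ref{tensor product corollary}~(2) rather than (1), which amounts to the same inequality $\varphi_{i_l}(b_1) \ge a_l$). For part (1), your detour through the embedding $\mathcal{B}(m\varpi_{i_r}) \hookrightarrow \mathcal{B}(\varpi_{i_r})^{\otimes m}$ is unnecessary: the paper applies the weight argument you use on a single factor directly to $\mathcal{B}(m\varpi_{i_r})$, observing that $\tilde{e}_{i_k}\tilde{f}_{i_r}^a b_{m\varpi_{i_r}}$ would have weight $m\varpi_{i_r} + \alpha_{i_k} - a\alpha_{i_r} \notin m\varpi_{i_r} - Q_{\ge 0}$ and hence must vanish.
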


\begin{proof}
For a dominant integral weight $\lambda$, we see from the definition that \[\mathcal{B}(\lambda) = \{\tilde{f}_{j_1} \cdots \tilde{f}_{j_l} b_\lambda \mid l \in \z_{\ge 0},\ j_1, \ldots, j_l \in I\} \setminus \{0\};\] hence it follows that ${\rm wt}(b) \in \lambda - Q_{\ge 0}$ for all $b \in \mathcal{B}(\lambda)$, where $Q_{\ge 0} := \sum_{i \in I} \z_{\ge 0} \alpha_i$. Now, since \[{\rm wt}(\tilde{e}_{i_k}\tilde{f}_{i_r} ^{a} b_{m \varpi_{i_r}}) = \alpha_{i_k} - a \alpha_{i_r} + m \varpi_{i_r} \notin m \varpi_{i_r} - Q_{\ge 0}\] for every $a \in \z_{\ge 0}$, we deduce that $\tilde{e}_{i_k}\tilde{f}_{i_r} ^a b_{m \varpi_{i_r}} = 0$. This proves part (1). In order to prove part (2), we proceed by descending induction on $k$. Take $1 \le k \le r-1$ with $i_k \neq i_r$, and assume that part (2) holds for all $k < j \le r-1$ with $i_j \neq i_r$; in particular, we have $a'_j = a_j$ for all $k < j \le r-1$ with $i_j \neq i_r$. By the definitions of $a'_{k+1}, \ldots, a'_{r-1}$, this implies that the element $T_\lambda(\widetilde{\bf m}_{\ge k+1}, {\bf a}_{\ge k+1})$ must be of the form \[T_{\lambda^\prime}(\widetilde{\bf m}_{[k+1, r-2]}, {\bf a}^\prime _{\ge k+1}) \otimes \tilde{f}_{i_r} ^{a_r + l_{k+1}} b_{m \varpi_{i_r}},\] where $\widetilde{\bf m}_{\ge k+1} := (m_{k+1}, \ldots, m_{r-1})$, ${\bf a}_{\ge k+1} := (a_{k+1}, \ldots, a_r)$, and $l_{k+1} := \sum_{k+1 \le j \le r-1;\ i_j = i_r} (a_j - a' _j)$. It follows from the definition of $\lambda^\prime$ that $\langle \lambda^\prime, h_{i_k} \rangle = m_{r-1} \langle \varpi_{i_{r-1}}, h_{i_k}\rangle + \langle \lambda, h_{i_k} \rangle \gg 0$, which implies that $\tilde{f}_{i_k} ^{a_k} (b_{m_k \varpi_{i_k}} \otimes T_{\lambda^\prime}(\widetilde{\bf m}_{[k+1, r-2]}, {\bf a}^\prime _{\ge k+1})) \neq 0$ by the tensor product rule for crystals (the assertion in Proposition \ref{tensor product of crystals} (2) for $\tilde{f}_i$). Since $\tilde{e}_{i_k}\tilde{f}_{i_r} ^{a_r + l_{k+1}} b_{m \varpi_{i_r}} = 0$ by part (1), Corollary \ref{tensor product corollary} (2) implies that $a' _k = a_k$. Also, it follows from the definition that $a_k ^{(r-1)} = a_k ^{(r)} = a_k$. From these, we conclude part (2). This proves the lemma. 
\end{proof}

\begin{lem}\label{description of l}
For all $1 \le k \le r-1$, \[T_\lambda(\widetilde{\bf m}_{\ge k}, {\bf a}_{\ge k}) = T_{\lambda^\prime}(\widetilde{\bf m}_{[k, r-2]}, {\bf a}^\prime _{\ge k}) \otimes \tilde{f}_{i_r} ^{a_r + l_k} b_{m \varpi_{i_r}},\] where $l_k := \sum_{k \le j \le r-1;\ i_j = i_r} (a_j - a' _j)$.
\end{lem}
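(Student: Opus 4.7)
The plan is to prove the lemma by descending induction on $k$, running from $k = r-1$ down to $k = 1$, using the definition of ${\bf a}'$ displayed just before the lemma and Lemma \ref{different case} to reduce all the intermediate operators to a single power of $\tilde{f}_{i_r}$.

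For the base case $k = r-1$, I would start from
\[T_\lambda(\widetilde{\bf m}_{\ge r-1}, {\bf a}_{\ge r-1}) = \tilde{f}_{i_{r-1}}^{a_{r-1}}(b_{m_{r-1}\varpi_{i_{r-1}}} \otimes \tilde{f}_{i_r}^{a_r} b_\lambda),\]
rewrite $b_{m_{r-1}\varpi_{i_{r-1}}} \otimes \tilde{f}_{i_r}^{a_r} b_\lambda$ as $b_{\lambda'} \otimes \tilde{f}_{i_r}^{a_r} b_{m\varpi_{i_r}}$ via the identification described before the lemma, and then apply the $k=r-1$ instance of the display equation defining $a_{r-1}'$. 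This yields $\tilde{f}_{i_{r-1}}^{a_{r-1}'} b_{\lambda'} \otimes \tilde{f}_{i_{r-1}}^{a_{r-1}-a_{r-1}'}\tilde{f}_{i_r}^{a_r}b_{m\varpi_{i_r}}$. I would then split into cases: if $i_{r-1} = i_r$, the two powers combine to $\tilde{f}_{i_r}^{a_r + l_{r-1}}$ by the definition of $l_{r-1}$; if $i_{r-1} \neq i_r$, Lemma \ref{different case}(2) gives $a_{r-1}' = a_{r-1}$, so the first factor disappears, and $l_{r-1} = 0$ since no index contributes to the sum.

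For the inductive step, assume the identity holds at level $k+1$. Applying the recursion
\[T_\lambda(\widetilde{\bf m}_{\ge k}, {\bf a}_{\ge k}) = \tilde{f}_{i_k}^{a_k}\bigl(b_{m_k\varpi_{i_k}} \otimes T_\lambda(\widetilde{\bf m}_{\ge k+1}, {\bf a}_{\ge k+1})\bigr)\]
and the inductive hypothesis, the inner tensor factor becomes $T_{\lambda'}(\widetilde{\bf m}_{[k+1, r-2]}, {\bf a}'_{\ge k+1}) \otimes \tilde{f}_{i_r}^{a_r + l_{k+1}} b_{m\varpi_{i_r}}$. The key observation is that this matches exactly the left-hand side of the display equation defining $a_k'$, because the product $\tilde{f}_{i_{k+1}}^{a_{k+1}-a_{k+1}'} \cdots \tilde{f}_{i_{r-1}}^{a_{r-1}-a_{r-1}'}\tilde{f}_{i_r}^{a_r}$ collapses to $\tilde{f}_{i_r}^{a_r + l_{k+1}}$: Lemma \ref{different case}(2) kills each factor with $i_j \neq i_r$ (since then $a_j' = a_j$), and the surviving factors, each of the form $\tilde{f}_{i_r}^{a_j - a_j'}$, combine into a single power whose exponent is precisely $l_{k+1}$ (plus $a_r$).

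Applying the display equation then lets me pull $\tilde{f}_{i_k}^{a_k}$ through the tensor product as $\tilde{f}_{i_k}^{a_k'}\bigl(b_{m_k\varpi_{i_k}} \otimes T_{\lambda'}(\widetilde{\bf m}_{[k+1,r-2]}, {\bf a}'_{\ge k+1})\bigr) \otimes \tilde{f}_{i_k}^{a_k - a_k'}\tilde{f}_{i_r}^{a_r + l_{k+1}} b_{m\varpi_{i_r}}$, and the first tensor factor is by definition $T_{\lambda'}(\widetilde{\bf m}_{[k, r-2]}, {\bf a}'_{\ge k})$. It remains to check that $\tilde{f}_{i_k}^{a_k - a_k'}\tilde{f}_{i_r}^{a_r + l_{k+1}} b_{m\varpi_{i_r}} = \tilde{f}_{i_r}^{a_r + l_k} b_{m\varpi_{i_r}}$: if $i_k = i_r$, the exponents add and $l_k = (a_k - a_k') + l_{k+1}$ by definition; if $i_k \neq i_r$, Lemma \ref{different case}(2) gives $a_k' = a_k$, so the first factor is trivial, and $l_k = l_{k+1}$ since $k$ contributes nothing to the sum. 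There is no real obstacle here beyond careful bookkeeping of the two cases $i_k = i_r$ versus $i_k \neq i_r$ at each step; the content of the lemma is essentially that all the side factors produced by splitting tensor products can be aggregated into a single power of $\tilde{f}_{i_r}$ on the rightmost tensorand.
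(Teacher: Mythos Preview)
Your proof is correct and follows essentially the same approach as the paper's. The paper compresses the argument to two sentences—observing that Lemma~\ref{different case}(2) gives $a'_j = a_j$ for all $k \le j \le r-1$ with $i_j \neq i_r$, so that the product $\tilde{f}_{i_{k}}^{a_{k}-a'_{k}} \cdots \tilde{f}_{i_{r-1}}^{a_{r-1}-a'_{r-1}}\tilde{f}_{i_r}^{a_r}$ collapses to $\tilde{f}_{i_r}^{a_r + l_k}$ directly from the defining equations—whereas you unpack the same mechanism via explicit descending induction.
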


\begin{proof}
We see from Lemma \ref{different case} (2) that $a'_j = a_j$ for all $k \le j \le r-1$ with $i_j \neq i_r$. This implies the lemma by the definitions of $a'_k, \ldots, a'_{r-1}$.
\end{proof}

We will show that 
\begin{equation}\label{goal}
a_k ^\prime = a_k ^{(r-1)}\ {\rm and}\ \tilde{e}_{i_k} (b_{m_k \varpi_{i_k}} \otimes T_\lambda(\widetilde{\bf m}_{\ge k+1}, {\bf a}_{\ge k+1})) = 0
\tag*{$(11)_k$}
\end{equation}
for $1 \le k \le r-1$ by descending induction on $k$. Note that if $k = r-1$, then the element $b_{m_k \varpi_{i_k}} \otimes T_\lambda(\widetilde{\bf m}_{\ge k+1}, {\bf a}_{\ge k+1})$ is identified with $b_{\lambda^\prime} \otimes \tilde{f}_{i_r} ^{a_r} b_{m \varpi_{i_r}}$. If $i_{r-1} \neq i_r$, then the first assertion of $(11)_{r-1}$ is just Lemma \ref{different case} (2). Also, it follows from Lemma \ref{different case} (1) that $\tilde{e}_{i_{r-1}} \tilde{f}_{i_r} ^{a_r} b_{m \varpi_{i_r}} = 0$, and hence that $\tilde{e}_{i_{r-1}} (b_{\lambda^\prime} \otimes \tilde{f}_{i_r} ^{a_r} b_{m \varpi_{i_r}}) = 0$. This proves $(11)_{r-1}$ when $i_{r-1} \neq i_r$. If $i_{r-1} = i_r$, then it holds that 
\begin{align*}
\Psi ^{r, r-1} _{\bf i}(\widetilde{\bf m}, {\bf a}) &= m_{r-1} - a_r\quad({\rm by\ the\ definition})\\
 &= \varphi_{i_{r-1}}(b_{\lambda'}) - \varepsilon_{i_{r-1}}(\tilde{f}_{i_r} ^{a_r} b_{m \varpi_{i_r}})\quad({\rm by\ Lemma}\ \ref{length of string}\ {\rm and\ the\ equality}\ i_{r-1} = i_r).
\end{align*} 
Therefore, Corollary \ref{tensor product corollary} (1) implies that
\begin{align*}
a_{r-1} ^\prime &= \min\{a_{r-1}, \Psi ^{r, r-1} _{\bf i}(\widetilde{\bf m}, {\bf a})\}\\ 
&= a_{r-1} ^{(r-1)}\quad({\rm by\ the\ definition\ of}\ a_{r-1} ^{(r-1)}). 
\end{align*}
Moreover, we deduce from the (assumed) inequality $\Psi ^{r, r-1} _{\bf i}(\widetilde{\bf m}, {\bf a}) \ge 0$ that $\varphi_{i_{r-1}}(b_{\lambda'}) \ge \varepsilon_{i_{r-1}}(\tilde{f}_{i_r} ^{a_r} b_{m \varpi_{i_r}})$; hence, by the tensor product rule for crystals (see Proposition \ref{tensor product of crystals} (2)), we obtain \[\tilde{e}_{i_{r-1}}(b_{\lambda^\prime} \otimes \tilde{f}_{i_r} ^{a_r} b_{m \varpi_{i_r}}) = \tilde{e}_{i_{r-1}} b_{\lambda^\prime} \otimes \tilde{f}_{i_r} ^{a_r} b_{m \varpi_{i_r}} = 0.\] This proves $(11)_{r-1}$ when $i_{r-1} = i_r$. Now assume that $k < r-1$, and that $(11)_j$ holds for every $k+1 \le j \le r-1$. Then, it follows that
\begin{equation}
\begin{aligned}
T_\lambda(\widetilde{\bf m}_{\ge k+1}, {\bf a}_{\ge k+1}) &= T_{\lambda^\prime}(\widetilde{\bf m}_{[k+1, r-2]}, {\bf a}^\prime _{\ge k+1}) \otimes \tilde{f}_{i_r} ^{a_r + l_{k+1}} b_{m \varpi_{i_r}}\quad({\rm by\ Lemma\ \ref{description of l}})\\
&= T_{\lambda^\prime}(\widetilde{\bf m}_{[k+1, r-2]}, {\bf a}^{(r-1)} _{\ge k+1}) \otimes \tilde{f}_{i_r} ^{a_r + l_{k+1}} b_{m \varpi_{i_r}}\quad({\rm by\ the\ induction\ hypothesis}).
\end{aligned}
\tag*{(12)}
\end{equation} 
We see from equation (\ref{independent of lambda}) and the assumption $(\widetilde{\bf m}_{\le r-2}, {\bf a}^{(r-1)}) \in \mathcal{S} _{{\bf i}_{\le r-1}}$ (see (\ref{RHS})) that \[\Omega_{{\bf i}_{\le r-1}}(T_{\lambda'}(\widetilde{\bf m}_{\le r-2}, {\bf a}^{(r-1)})) = {\bf a}^{(r-1)},\] which implies that
\begin{equation}\label{assumption}
\tilde{e}_{i_k}(b_{m_k \varpi_{i_k}} \otimes T_{\lambda^\prime}(\widetilde{\bf m}_{[k+1, r-2]}, {\bf a}^{(r-1)} _{\ge k+1})) = 0. 
\tag*{(13)}
\end{equation}
We first consider the case that $i_k \neq i_r$. Recall that, in this case, the first assertion of $(11)_k$ is just Lemma \ref{different case} (2). Moreover, we deduce that 
\begin{align*}
&\tilde{e}_{i_k}(b_{m_k \varpi_{i_k}} \otimes T_\lambda(\widetilde{\bf m}_{\ge k+1}, {\bf a}_{\ge k+1}))\\ 
=\ &\tilde{e}_{i_k}\left(b_{m_k \varpi_{i_k}} \otimes T_{\lambda^\prime}(\widetilde{\bf m}_{[k+1, r-2]}, {\bf a}^{(r-1)}_{\ge k+1}) \otimes \tilde{f}_{i_r} ^{a_r + l_{k+1}} b_{m \varpi_{i_r}}\right)\quad({\rm by\ equation}\ (13))\\ 
=\ &0\quad({\rm by\ equation\ \ref{assumption}\ and\ Lemma\ \ref{different case}\ (1)}). 
\end{align*}
This proves $(11)_k$ when $i_k \neq i_r$. We next consider the case that $i_k = i_r$. In this case, we have 
\begin{align*}
\varepsilon_{i_k}(\tilde{f}_{i_r} ^{a_r + l_{k+1}} b_{m \varpi_{i_r}}) &= a_r + l_{k+1}\quad({\rm since}\ i_k = i_r)\\ 
&= a_r + \sum_{k+1 \le j \le r-1;\ i_j = i_r} (a_j - a_j ^\prime)\quad({\rm by\ the\ definition\ of}\ l_{k+1})\\
&= a_r + \sum_{k+1 \le j \le r-1;\ i_j = i_r} (a_j - a_j ^{(r-1)})\quad({\rm by\ the\ induction\ hypothesis}).
\end{align*}
Also, equation \ref{assumption} and Lemma \ref{length of string} imply that
\begin{align*}
\varphi_{i_k}(b_{m_k \varpi_{i_k}} \otimes T_{\lambda^\prime}(\widetilde{\bf m}_{[k+1, r-2]}, {\bf a}^{(r-1)} _{\ge k+1})) &= \langle {\rm wt}(b_{m_k \varpi_{i_k}} \otimes T_{\lambda^\prime}(\widetilde{\bf m}_{[k+1, r-2]}, {\bf a}^{(r-1)} _{\ge k+1})), h_{i_k} \rangle\\
&= \langle m_k \varpi_{i_k} + \sum_{k+1 \le j \le r-2} m_j \varpi_{i_j} - \sum_{k+1 \le j \le r-1} a_j ^{(r-1)} \alpha_{i_j} + \lambda^\prime, h_{i_k} \rangle\\ 
&({\rm by\ the\ definition\ of}\ T_{\lambda^\prime}(\widetilde{\bf m}_{[k+1, r-2]}, {\bf a}^{(r-1)} _{\ge k+1}))\\
&= \sum_{k \le j \le r-1} m_j \delta_{i_k, i_j} - \sum_{k+1 \le j \le r-1}c_{i_k, i_j} a_j ^{(r-1)}\\ 
&({\rm since}\ \lambda^\prime = m_{r-1} \varpi_{i_{r-1}} + (\lambda - m \varpi_{i_r})\ {\rm and}\ \langle \lambda - m \varpi_{i_r}, h_{i_r} \rangle = 0).
\end{align*}
From these, we deduce that
\begin{align*}
&\varphi_{i_k}(b_{m_k \varpi_{i_k}} \otimes T_{\lambda^\prime}(\widetilde{\bf m}_{[k+1, r-2]}, {\bf a}^{(r-1)}_{\ge k+1})) - \varepsilon_{i_k}(\tilde{f}_{i_r} ^{a_r + l_{k+1}} b_{m \varpi_{i_r}})\\
=\ &\sum_{k \le j \le r-1} m_j \delta_{i_k, i_j} - \sum_{k+1 \le j \le r-1}c_{i_k, i_j} a_j ^{(r-1)} - (a_r + \sum_{k+1 \le j \le r-1;\ i_j = i_r} (a_j - a_j ^{(r-1)})).
\end{align*}
By using the equality $a_j ^{(r-1)} = \min\{a_j, \Psi_{\bf i} ^{r, j}(\widetilde{\bf m}, {\bf a})\}$, it is not hard to verify that this is equal to $\Psi ^{r, k} _{\bf i}(\widetilde{\bf m}, {\bf a})$. Therefore, we conclude from the definition of $a_k ^{(r-1)}$ and Corollary \ref{tensor product corollary} (1) that
\begin{align*}
&\tilde{f}_{i_k} ^{a_k}(b_{m_k \varpi_{i_k}} \otimes T_\lambda(\widetilde{\bf m}_{\ge k+1}, {\bf a}_{\ge k+1}))\\ 
=\ &\tilde{f}_{i_k} ^{a_k}\left(b_{m_k \varpi_{i_k}} \otimes T_{\lambda^\prime}(\widetilde{\bf m}_{[k+1, r-2]}, {\bf a}^{(r-1)}_{\ge k+1}) \otimes \tilde{f}_{i_r} ^{a_r + l_{k+1}} b_{m \varpi_{i_r}}\right)\quad({\rm by\ equation}\ (13))\\
=\ &\tilde{f}_{i_k} ^{a_k ^{(r-1)}}(b_{m_k \varpi_{i_k}} \otimes T_{\lambda^\prime}(\widetilde{\bf m}_{[k+1, r-2]}, {\bf a}^{(r-1)}_{\ge k+1})) \otimes \tilde{f}_{i_r} ^{a_r + l_{k+1} + a_k - a_k ^{(r-1)}} b_{m \varpi_{i_r}},
\end{align*}
from which it follows that $a_k ^\prime = a_k ^{(r-1)}$ by the definition of $a' _k$. Moreover, we deduce from the (assumed) inequality $\Psi ^{r, k} _{\bf i}(\widetilde{\bf m}, {\bf a}) \ge 0$ and the tensor product rule for crystals (see Proposition \ref{tensor product of crystals} (2)) that 
\begin{align*}
&\tilde{e}_{i_k}(b_{m_k \varpi_{i_k}} \otimes T_\lambda(\widetilde{\bf m}_{\ge k+1}, {\bf a}_{\ge k+1}))\\ 
=\ &\tilde{e}_{i_k}\left(b_{m_k \varpi_{i_k}} \otimes T_{\lambda^\prime}(\widetilde{\bf m}_{[k+1, r-2]}, {\bf a}^{(r-1)}_{\ge k+1}) \otimes \tilde{f}_{i_r} ^{a_r + l_{k+1}} b_{m \varpi_{i_r}}\right)\quad({\rm by\ equation}\ (13))\\
=\ &\tilde{e}_{i_k}(b_{m_k \varpi_{i_k}} \otimes T_{\lambda^\prime}(\widetilde{\bf m}_{[k+1, r-2]}, {\bf a}^{(r-1)}_{\ge k+1})) \otimes \tilde{f}_{i_r} ^{a_r + l_{k+1}} b_{m \varpi_{i_r}}\\
=\ &0\quad({\rm by\ equation}\ \ref{assumption}).
\end{align*}
Thus, we obtain \ref{goal} when $i_k = i_r$. This proves \ref{goal} for all $1 \le k \le r-1$. Note that the second assertion of \ref{goal} for $1 \le k \le r-1$ implies that $\Omega_{\bf i} (T_\lambda(\widetilde{\bf m}, {\bf a})) = {\bf a}$. This proves the ``if'' part. Finally, by reversing the arguments above, we can prove the ``only if'' part. This completes the proof of the proposition.

\section{Some examples of string polytopes for generalized Demazure modules}

Here, we compute the explicit forms of $\mathcal{S}_{\bf i}$ and $\Delta_{{\bf i}, {\bf m}}$ for some ${\bf i}$ and ${\bf m}$ by using Proposition \ref{inequality} and Corollary \ref{computation of generalized string polytopes}. First, we consider some reduced words for $w_0$.

\vspace{2mm}\begin{ex}\normalfont
If $G = SL_3 (\c)$ (of type $A_2$) and ${\bf i} = (1, 2, 1)$, ${\bf i}^\prime = (2, 1, 2)$, which are reduced words for $w_0$, then we have \[\mathcal{S}_{\bf i} = \mathcal{S}_{{\bf i}^\prime} = \{(m_1, m_2, a_1, a_2, a_3) \in \z^2 _{\ge 0} \times \z^3 _{\ge 0} \mid a_2 - a_3 + m_1 \ge 0\}.\] Also, for ${\bf m} = (m_1, m_2, m_3) \in \z_{\ge 0} ^3$, the generalized string polytope $\Delta_{{\bf i}, {\bf m}} = \Delta_{{\bf i}^\prime, {\bf m}}$ is identical to the following polytope: \[\{(a_1, a_2, a_3) \in \r^3 _{\ge 0} \mid a_3 \le m_3,\ a_3 - m_1 \le a_2 \le a_3 + m_2,\ a_1 \le a_2 - 2a_3 + m_1 + m_3\}.\] In particular, for ${\bf m} = (1, 1, 1)$, we have \[\Delta_{{\bf i}, {\bf m}} = \Delta_{{\bf i}^\prime, {\bf m}} = \{(a_1, a_2, a_3) \in \r^3 _{\ge 0} \mid a_3 \le 1,\ a_2 \le a_3+1,\ a_1 \le a_2-2a_3+2\};\] see the figure below.
\begin{center}
\includegraphics[width=2.8cm]{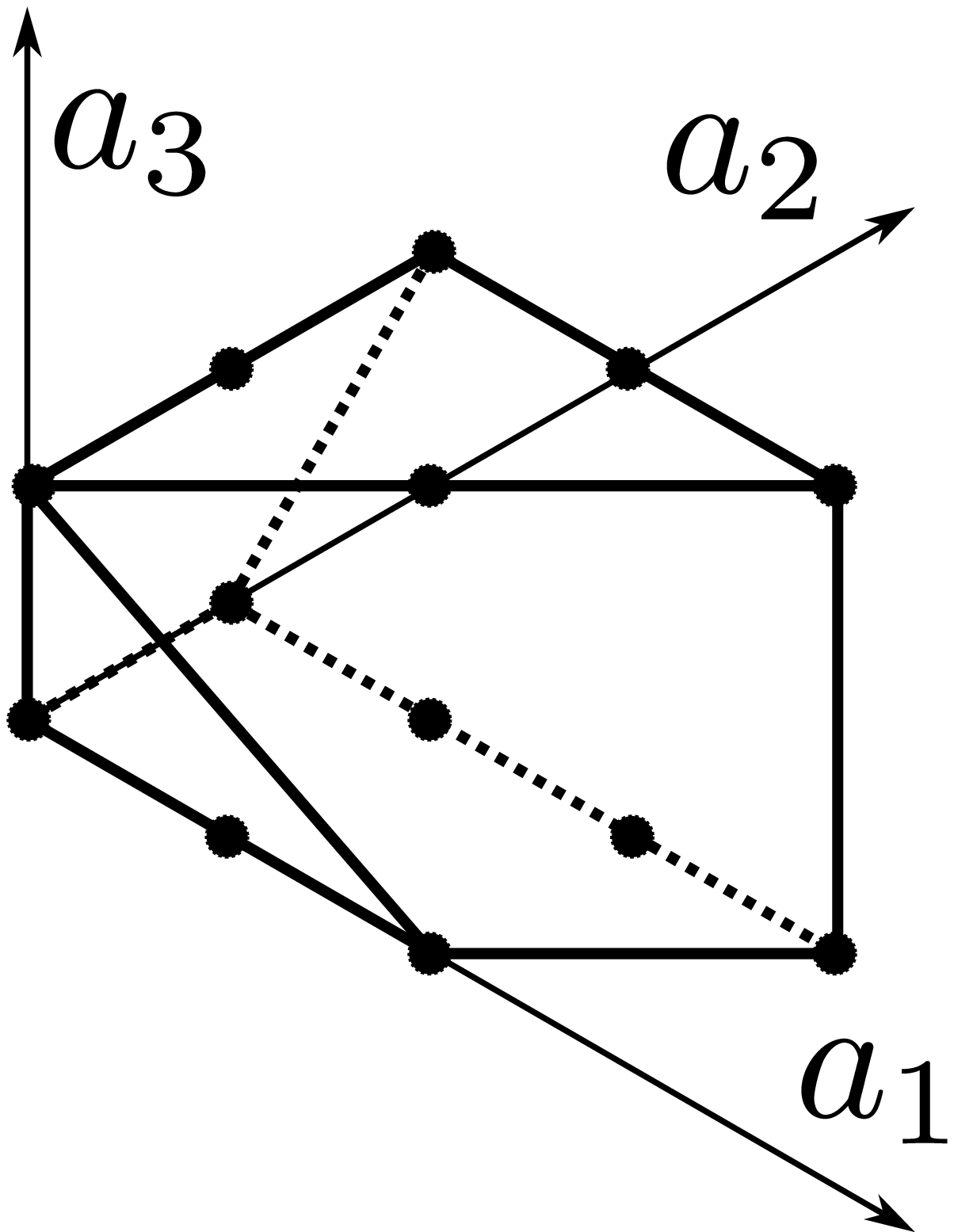}
\end{center}
\end{ex}\vspace{2mm}

\begin{ex}\normalfont
If $G = Sp_4 (\c)$ (of type $C_2$) and ${\bf i} = (1, 2, 1, 2)$, ${\bf i}^\prime = (2, 1, 2, 1)$, which are reduced words for $w_0$, then $\mathcal{S}_{\bf i}$ and $\mathcal{S}_{{\bf i}^\prime}$ are identical to the sets of $(m_1, \ldots, m_3, a_1, \ldots, a_4) \in \z^3 _{\ge 0} \times \z^4 _{\ge 0}$ satisfying the inequalities:
\begin{align*}
&a_3 - a_4 + m_2 \ge 0,\ 2 a_2 - a_3 + m_1 \ge 0,\ a_3 -2 a_4 + m_1 + 2 m_2 \ge 0,\ {\rm and}\\[2mm]
&2a_3 - a_4 + m_2 \ge 0,\ a_2 - a_3 + m_1 \ge 0,\ a_3 - a_4 + m_1 + m_2 \ge 0,\ {\rm respectively}.
\end{align*}
Also, for ${\bf m} = (m_1, \ldots, m_4) \in \z_{\ge 0} ^4$, the generalized string polytopes $\Delta_{{\bf i}, {\bf m}}$ and $\Delta_{{\bf i}^\prime, {\bf m}}$ are identical to the sets of $(a_1, \ldots, a_4) \in \r^4 _{\ge 0}$ satisfying the conditions:
\begin{align*}
&(m_1, \ldots, m_3, a_1, \ldots, a_4) \in \mathcal{S}_{\bf i},\\
&a_4 \le m_4,\ a_3 \le 2a_4 + m_3,\\
&a_2 \le a_3 - 2a_4 + m_2 + m_4,\ a_1 \le 2 a_2 - 2a_3 + 2a_4 + m_1 + m_3,\ {\rm and}\\[2mm]
&(m_1, \ldots, m_3, a_1, \ldots, a_4) \in \mathcal{S}_{{\bf i}^\prime},\\
&a_4 \le m_4,\ a_3 \le a_4 + m_3,\\
&a_2 \le 2a_3 - 2a_4 + m_2 + m_4,\ a_1 \le a_2 - 2a_3 + a_4 + m_1 + m_3,\ {\rm respectively}.
\end{align*}
In particular, for ${\bf m} = (1, 1, 1, 1)$, the generalized string polytopes $\Delta_{{\bf i}, {\bf m}}$ and $\Delta_{{\bf i}^\prime, {\bf m}}$ are given by the inequalities:
\begin{align*}
&0 \le a_4 \le 1,\ 0 \le a_3 \le 2a_4 + 1,\\
&\frac{1}{2} (a_3 - 1) \le a_2 \le a_3 - 2a_4 + 2,\ 0 \le a_1 \le 2 a_2 - 2a_3 + 2a_4 + 2,\ {\rm and}\\[2mm]
&0 \le a_4 \le 1,\ 0 \le a_3 \le a_4 + 1,\\
&a_3 - 1 \le a_2 \le 2a_3 - 2a_4 + 2,\ 0 \le a_1 \le a_2 - 2a_3 + a_4 + 2,\ {\rm respectively}.
\end{align*}
\end{ex}\vspace{2mm}

\begin{ex}\normalfont
If $G$ is of type $G_2$, and ${\bf i} = (1, 2, 1, 2, 1, 2)$, ${\bf i}^\prime = (2, 1, 2, 1, 2, 1)$, which are reduced words for $w_0$, then $\mathcal{S}_{\bf i}$ and $\mathcal{S}_{{\bf i}^\prime}$ are identical to the sets of $(m_1, \ldots m_5, a_1, \ldots, a_6) \in \z_{\ge 0} ^5 \times \z_{\ge 0} ^6$ satisfying the inequalities:
\begin{align*}
&3 a_2 - a_3 + m_1 \ge 0,\ a_3 - a_4 + m_2 \ge 0,\\
&3 a_4 - a_5 + m_3 \ge 0,\ a_5 - a_6 + m_4 \ge 0,\\ 
&2 a_3 - 3 a_4 + m_1 + 3 m_2 \ge 0,\ 2 a_4 - a_5 + m_2 + m_3 \ge 0,\\
&2 a_5 - 3 a_6 + m_3 + 3 m_4 \ge 0,\ 3 a_4 - 2 a_5 + m_1 + 3 m_2 + 2 m_3,\\
& a_5 - 2 a_6 + m_2 + m_3 + 2 m_4 \ge 0,\ a_5 - 3 a_6 + m_1 + 3 m_2 + 2 m_3 + 3 m_4 \ge 0,\ {\rm and}\\[2mm]
&a_2 - a_3 + m_1 \ge 0,\ 3a_3 - a_4 + m_2 \ge 0,\\
&a_4 - a_5 + m_3 \ge 0,\ 3a_5 - a_6 + m_4 \ge 0,\\ 
&2 a_3 - a_4 + m_1 + m_2 \ge 0,\ 2 a_4 - 3a_5 + m_2 + 3m_3 \ge 0,\\
&2 a_5 - a_6 + m_3 + m_4 \ge 0,\ a_4 - 2 a_5 + m_1 + m_2 + 2 m_3,\\
&3 a_5 - 2 a_6 + m_2 + 3m_3 + 2 m_4 \ge 0,\ a_5 - a_6 + m_1 + m_2 + 2 m_3 + m_4 \ge 0,\ {\rm respectively}.
\end{align*}
Also, for ${\bf m} = (m_1, \ldots, m_6) \in \z_{\ge 0} ^6$, the generalized string polytopes $\Delta_{{\bf i}, {\bf m}}$ and $\Delta_{{\bf i}^\prime, {\bf m}}$ are identical to the sets of $(a_1, \ldots, a_6) \in \r^6 _{\ge 0}$ satisfying the conditions:
\begin{align*}
&(m_1, \ldots m_5, a_1, \ldots, a_6) \in \mathcal{S}_{\bf i},\\
&a_6 \le m_6,\ a_5 \le 3a_6 + m_5,\\
&a_4 \le a_5 - 2a_6 + m_4 + m_6,\ a_3 \le 3 a_4 - 2a_5 + 3a_6 + m_3 + m_5,\\
&a_2 \le a_3 - 2 a_4 + a_5 - 2 a_6 + m_2 + m_4 + m_6,\\ 
&a_1 \le 3 a_2 - 2 a_3 + 3 a_4 - 2 a_5 + 3 a_6 + m_1 + m_3 + m_5,\ {\rm and}\\[2mm]
&(m_1, \ldots m_5, a_1, \ldots, a_6) \in \mathcal{S}_{{\bf i}^\prime},\\
&a_6 \le m_6,\ a_5 \le a_6 + m_5,\\
&a_4 \le 3a_5 - 2a_6 + m_4 + m_6,\ a_3 \le a_4 - 2a_5 + a_6 + m_3 + m_5,\\
&a_2 \le 3a_3 - 2 a_4 + 3a_5 - 2 a_6 + m_2 + m_4 + m_6,\\ 
&a_1 \le a_2 - 2 a_3 + a_4 - 2 a_5 + a_6 + m_1 + m_3 + m_5,\ {\rm respectively}.
\end{align*}
In particular, for ${\bf m} = (1, 1, 1, 1, 1, 1) \in \z_{\ge 0} ^6$, the generalized string polytopes $\Delta_{{\bf i}, {\bf m}}$ and $\Delta_{{\bf i}^\prime, {\bf m}}$ are given by the inequalities:
\begin{align*}
&0 \le a_6 \le 1,\ 0 \le a_5 \le 3 a_6 + 1,\\
&\frac{1}{3}(a_5 - 1) \le a_4 \le a_5 - 2 a_6 + 2,\\
&\max\{a_4 - 1, \frac{1}{2}(3 a_4 - 4)\} \le a_3 \le 3 a_4 - 2 a_5 + 3 a_6 + 2,\\
&\frac{1}{3}(a_3 - 1) \le a_2 \le a_3 - 2 a_4 + a_5 - 2 a_6 + 3,\\
&0 \le a_1 \le 3 a_2 - 2 a_3 + 3 a_4 - 2 a_5 + 3 a_6 + 3,\ {\rm and}\\[2mm]
&0 \le a_6 \le 1,\ 0 \le a_5 \le a_6 + 1,\\
&a_5 - 1 \le a_4 \le 3 a_5 - 2 a_6 + 2,\\
&\max\{\frac{1}{3}(a_4 - 1), \frac{1}{2}(a_4 - 2)\} \le a_3 \le a_4 - 2 a_5 + a_6 + 2,\\
&a_3 - 1 \le a_2 \le 3 a_3 - 2 a_4 + 3 a_5 - 2 a_6 + 3,\\
&0 \le a_1 \le a_2 - 2 a_3 + a_4 - 2 a_5 + a_6 + 3,\ {\rm respectively}.
\end{align*}
\end{ex}\vspace{2mm}

Next, we consider some nonreduced words.

\vspace{2mm}\begin{ex}\normalfont
If $G = SL_2 (\c)$ (of type $A_1$), and ${\bf i} = (1, 1, 1)$, which is a nonreduced word, then we have \[\mathcal{S}_{\bf i} = \{(m_1, m_2, a_1, a_2, a_3) \in \z^2 _{\ge 0} \times \z^3 _{\ge 0} \mid m_1 - a_2 \ge 0,\ m_2 - a_3 \ge 0\}.\] Also, for ${\bf m} = (m_1, m_2, m_3) \in \z^3 _{\ge 0}$, the generalized string polytope $\Delta_{{\bf i}, {\bf m}}$ is identical to the set of $(a_1, a_2, a_3) \in \r_{\ge 0} ^3$ satisfying the inequalities:
\begin{align*}
&a_3 \le \min\{m_2, m_3\},\ a_2 \le \min\{m_1, m_2 + m_3 - 2 a_3\},\\
&a_1 \le m_1 + m_2 + m_3 -2 a_2 - 2 a_3.
\end{align*}
In particular, for ${\bf m} = (1, 1, 1)$, we have \[\Delta_{{\bf i}, {\bf m}} = \{(a_1, a_2, a_3) \in \r^3 _{\ge 0} \mid a_3 \le 1,\ a_2 \le \min\{1, 2(1-a_3)\},\ a_1 \le 3 - 2a_2 - 2a_3\};\] see the figure below.
\begin{center}
\includegraphics[width=2.8cm]{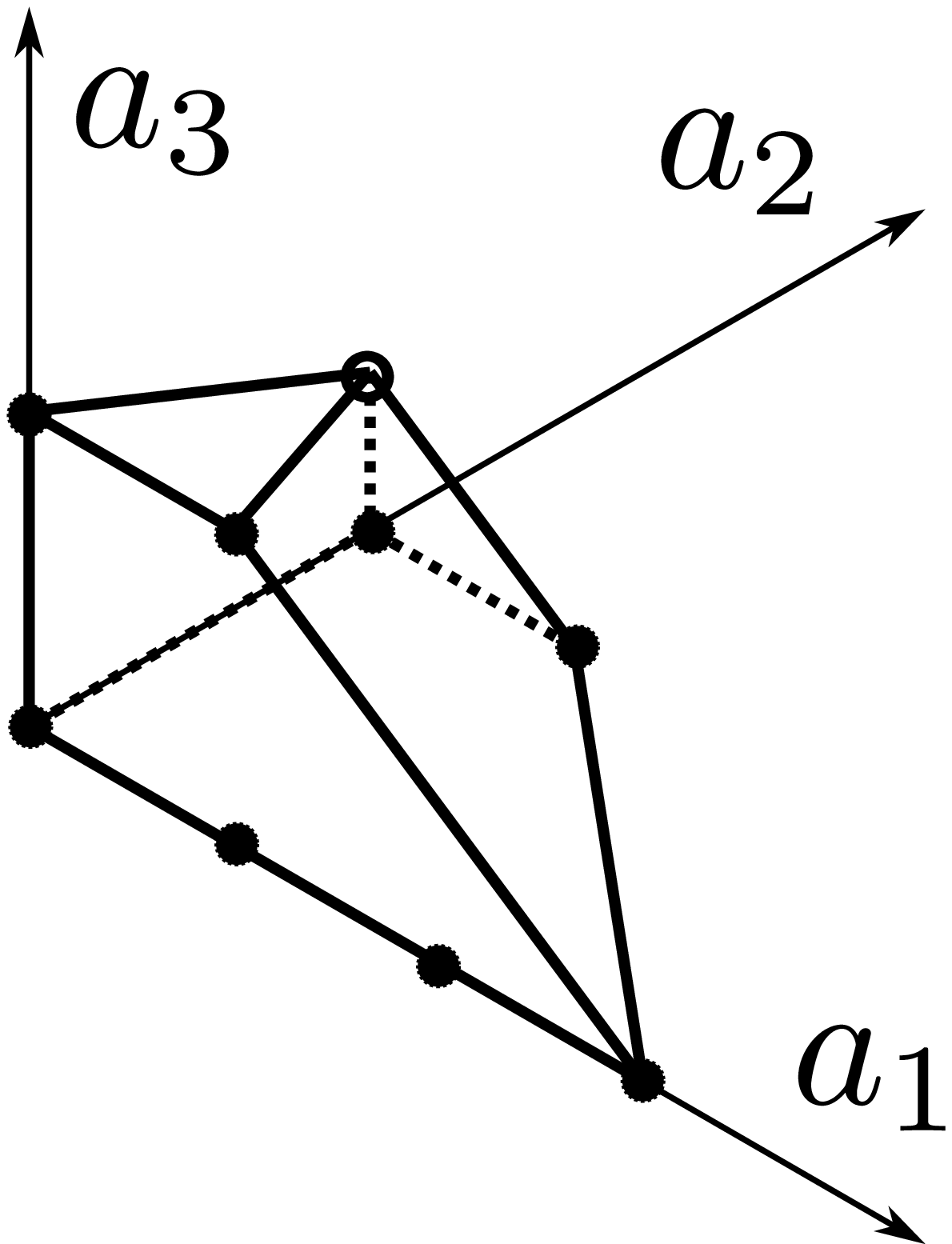}
\end{center}
Note that this is not a lattice polytope since it has $(0, 1, \frac{1}{2})$ as a vertex.
\end{ex}\vspace{2mm}

\begin{ex}\normalfont
If $G = SL_3 (\c)$ and ${\bf i} = (1, 2, 1, 2)$, which is a nonreduced word, then we have \[\mathcal{S}_{\bf i} = \{(m_1, \ldots, m_3, a_1, \ldots, a_4) \in \z^3 _{\ge 0} \times \z^4 _{\ge 0} \mid m_1 + a_2 - a_3 \ge 0,\ m_2 + a_3 - a_4 \ge 0,\ m_1 + m_2 - a_4 \ge 0\}.\] Also, for ${\bf m} = (m_1, \ldots, m_4) \in \z^4 _{\ge 0}$, the generalized string polytope $\Delta_{{\bf i}, {\bf m}}$ is identical to the set of $(a_1, \ldots, a_4) \in \r_{\ge 0} ^4$ satisfying the inequalities:
\begin{align*}
&a_4 \le \min\{m_1 + m_2, m_4\},\ a_4 - m_2 \le a_3 \le a_4 + m_3,\\
&a_3 - m_1 \le a_2 \le a_3 - 2 a_4 + m_2 + m_4,\ a_1 \le a_2 - 2 a_3 + a_4 + m_1 + m_3.
\end{align*}
\end{ex}\vspace{2mm}

\begin{ex}\normalfont
If $m_r \ge 1$, and $m_s$, $1 \le s \le r-1$, are sufficiently larger than $m_{s+1}, \ldots, m_r$, then we have $\Psi ^{j, l} _{\bf i}(\widetilde{\bf m}, {\bf a}) \ge 0$ for all ${\bf a} \in \r_{\ge 0} ^r$. Therefore, the generalized string polytope $\Delta_{{\bf i}, {\bf m}}$ is given by equalities (ii) in Corollary \ref{computation of generalized string polytopes}. This polytope has $2^r$ vertices and $2 r$ facets of dimension $r-1$. 
\end{ex}
\vspace{4mm}

\end{document}